\documentclass[10pt,a4paper]{amsart}
\usepackage[T1]{fontenc}
\usepackage[utf8]{inputenc}
\usepackage[british]{babel}
\usepackage[a4paper,margin=1in]{geometry}
\usepackage{lmodern,microtype}
\usepackage{amsmath,amssymb,amsthm,mathtools,mathrsfs}
\usepackage[shortlabels]{enumitem}
\usepackage{xcolor}
\usepackage[colorlinks=true,linkcolor=blue!55!black,citecolor=blue!55!black,urlcolor=blue!55!black]{hyperref}
\usepackage{aliascnt}
\usepackage[capitalize,nameinlink]{cleveref}

\numberwithin{equation}{section}

\theoremstyle{plain}
\newtheorem{theorem}{Theorem}[section]
\newaliascnt{proposition}{theorem}
\newtheorem{proposition}[proposition]{Proposition}
\aliascntresetthe{proposition}
\newaliascnt{lemma}{theorem}
\newtheorem{lemma}[lemma]{Lemma}
\aliascntresetthe{lemma}
\newaliascnt{corollary}{theorem}
\newtheorem{corollary}[corollary]{Corollary}
\aliascntresetthe{corollary}
\theoremstyle{definition}
\newaliascnt{definition}{theorem}
\newtheorem{definition}[definition]{Definition}
\aliascntresetthe{definition}
\theoremstyle{remark}
\newaliascnt{remark}{theorem}
\newtheorem{remark}[remark]{Remark}
\aliascntresetthe{remark}
\newaliascnt{question}{theorem}
\newtheorem{question}[question]{Question}
\aliascntresetthe{question}

\crefname{theorem}{Theorem}{Theorems}
\Crefname{theorem}{Theorem}{Theorems}
\crefname{proposition}{Proposition}{Propositions}
\Crefname{proposition}{Proposition}{Propositions}
\crefname{lemma}{Lemma}{Lemmas}
\Crefname{lemma}{Lemma}{Lemmas}
\crefname{corollary}{Corollary}{Corollaries}
\Crefname{corollary}{Corollary}{Corollaries}
\crefname{definition}{Definition}{Definitions}
\Crefname{definition}{Definition}{Definitions}
\crefname{remark}{Remark}{Remarks}
\Crefname{remark}{Remark}{Remarks}
\crefname{question}{Question}{Questions}
\Crefname{question}{Question}{Questions}

\newcommand{\C}{\mathbb C}
\newcommand{\ii}{\mathrm i}
\newcommand{\cA}{\mathcal A}
\newcommand{\cE}{\mathcal E}
\newcommand{\cF}{\mathcal F}
\newcommand{\cG}{\mathcal G}
\newcommand{\cH}{\mathcal H}
\newcommand{\cI}{\mathcal I}
\newcommand{\cL}{\mathcal L}
\newcommand{\cS}{\mathcal S}
\newcommand{\cU}{\mathcal U}
\newcommand{\cV}{\mathcal V}
\newcommand{\cW}{\mathcal W}
\newcommand{\cO}{\mathcal O}
\newcommand{\End}{\operatorname{End}}
\newcommand{\GL}{\operatorname{GL}}
\newcommand{\NSQC}{\operatorname{NSQC}}
\newcommand{\Aut}{\operatorname{Aut}}
\newcommand{\diag}{\operatorname{diag}}
\newcommand{\res}{\operatorname{res}}
\newcommand{\Res}{\operatorname{Res}}
\newcommand{\Logm}{\operatorname{Log}_{-}}
\newcommand{\Gm}{\mathbb G_m}
\newcommand{\Fr}{\operatorname{Fr}}
\newcommand{\Rig}{\operatorname{Rig}}
\newcommand{\Isom}{\operatorname{Isom}}
\newcommand{\Fbun}{\mathsf{Fbund}}
\newcommand{\Atoms}{\mathsf{Atoms}}
\newcommand{\RAtoms}{\mathsf{RAtoms}}
\newcommand{\CF}{\mathsf{CF}}
\newcommand{\RCF}{\mathsf{CF}^{\mathrm{res}}}
\newcommand{\ExpShift}{\mathsf{E}}
\newcommand{\Var}{\mathsf{Var}}
\newcommand{\RH}{\operatorname{RH}}
\newcommand{\JType}{\operatorname{JType}}
\newcommand{\LocSys}{\mathsf{LocSys}}
\newcommand{\Zero}{\operatorname{Z}}

\author{Mauricio Corr\^ea}
\address{Dipartimento di Matematica, Universit\`a degli Studi di Bari Aldo Moro, Bari, Italy}
\email{mauricio.barros@uniba.it}

\date{}
\subjclass[2020]{34M35, 14D23, 14M15, 32G34, 14N35}
\keywords{residual atoms, prescribed lattices, affine flag varieties, wall crossing, meromorphic connections, quantum cohomology, F-bundles}

\title[Lattice moduli and residual atoms]{Lattice Moduli and Residual Atoms}
\hypersetup{
  pdftitle={Lattice Moduli and Residual Atoms},
  pdfauthor={Mauricio Corr\^ea},
  pdfsubject={Lattice moduli, wall crossing, residual atoms and projective varieties},
  pdfkeywords={residual atoms, prescribed lattices, affine flag varieties, wall crossing, quantum cohomology, F-bundles}
}
\begin{document}

\begin{abstract}
We study prescribed holomorphic lattices for regular-singular connections and residual factors of quantum $F$-bundles.  For an arbitrary ordered monodromy spectrum, we classify the fully framed local groupoid: each residue fibre is a discrete affine space of dimension $r(r-1)$, with a unique normal form and trivial stabilisers.  We then analyse elementary changes of lattice.  A strictly positive integral residue difference obstructs the oriented wall; otherwise, in arbitrary rank, the extending normal-form arcs form an affine torsor of relative dimension $r(r-1)$, and specialisation to the shifted residue slice is polynomial and triangular by root height.  Adjacent walls exhibit the composite-root correction dictated by the type-$A$ root-subgroup relations, while an example over the dual numbers detects a nilpotent saturation defect.  For one-branch $F$-bundles, the scalar branch is recovered from the residual trace, and scalar residualisation yields a symmetric monoidal equivalence with zero-branch $F$-bundles.  This construction yields residual atoms, functorial Jordan strata, and a residual chemical formula compatible with blow-ups and projective bundles.  In the $A$-model, residualisation removes the unit direction; the rationally marked large-radius residual germ recovers the divisor cubic and distinguishes the marked residual germs of the Pfaffian and Grassmannian Calabi--Yau threefolds.
\end{abstract}

\maketitle

\section{Introduction}

Let \(\Delta\subset\C\) be a disc centred at the origin and containing the point~\(1\).  Kontsevich and Szab\'o consider regular-singular connections on the punctured disc with a prescribed holomorphic lattice across the singular point, an order-two pole bound in that lattice, and an ordered basis at~\(1\) whose full flag is monodromy-invariant~\cite[\S\S1--3]{KS}.  Their principal theorem identifies each connected component of the resulting coarse moduli space with an affine space when the prescribed diagonal monodromy eigenvalues are pairwise distinct and different from~\(1\)~\cite[Theorem~1.1 and Theorem~4.1]{KS}.

The underlying regular-singular connection belongs to the classical Levelt--Deligne theory~\cite{Levelt,Deligne,Corel,Sabbah}; the prescribed lattice, however, is additional structure and need not be a Deligne or Levelt lattice.  Closely related enriched Riemann--Hilbert problems retain parabolic or parahoric data~\cite{Boalch,Inaba,IIS}, while quotient-stack descriptions of logarithmic connections arise in the Lie-groupoid formalism of Bischoff~\cite{Bischoff,BischoffLie}.  The moduli problem considered here is more rigid: the full ordered frame is retained, and morphisms are required to act on the connection and on that frame simultaneously.  This rigidification is the source of the spectrum-independent triangularity proved below. As usual, 
we use the horizontal convention
\[
 dY+A(z)Y\,dz=0,
\]
so that a scalar residue \(\mu\) has monodromy \(e^{-2\pi\ii\mu}\).  Put
\[
 \Logm(c)\coloneqq\{\mu\in\C\mid e^{-2\pi\ii\mu}=c\}.
\]
For a groupoid \(\mathcal G\), the notation \(|\mathcal G|\) denotes its set of complex isomorphism classes.  For a set \(X\), write \(X_{\mathrm{disc}}\) for the discrete groupoid with object set \(X\).

Our first theorem removes both spectral restrictions and upgrades the orbit classification to an equivalence of groupoids.

\begin{theorem}\label{thm:main}
Let $r\geq2$ and let $\mathbf c=(c_1,\ldots,c_r)\in(\C^\times)^r$ be arbitrary.  There is a canonical functor
\[
 \mathbf{res}_{\Delta}:\NSQC^{\mathrm{fr}}_{\mathbf c}
 \longrightarrow
 \left(\prod_{i=1}^r\Logm(c_i)\right)_{\mathrm{disc}},
\]
which is essentially surjective.  For every ordered residue vector
$\boldsymbol\mu=(\mu_1,\ldots,\mu_r)\in\prod_i\Logm(c_i)$, the fibre groupoid
$\mathbf{res}_{\Delta}^{-1}(\boldsymbol\mu)$ is equivalent to the discrete groupoid associated with
\[
 \mathfrak n\oplus\mathfrak n\simeq\mathbb A^{r(r-1)}_{\C}.
\]
Every object of the fibre is isomorphic to a unique connection
\begin{equation}\label{eq:main-normal-form}
 \nabla_{\boldsymbol\alpha,\boldsymbol\beta}
 =d+\left[
 \frac{M_{\boldsymbol\mu}}{z}
 +\sum_{1\leq j<i\leq r}
 \bigl(\alpha_{ji}z^{-2}+\beta_{ji}\rho_{\lambda_{ji}}(z)\bigr)E_{ji}
 \right]dz,
\end{equation}
where
$
 M_{\boldsymbol\mu}=\diag(\mu_1,\ldots,\mu_r),$ and $
  \lambda_{ji}=\mu_i-\mu_j,$
and
\[
 \rho_\lambda(z)=
 \begin{cases}
 z^{\lambda-1},&\lambda\in\mathbb Z_{\geq0},\\
 z^{-1},&\lambda\notin\mathbb Z_{\geq0}.
 \end{cases}
\]
The coordinates $(\boldsymbol\alpha,\boldsymbol\beta)$ are complete invariants, and every fully framed automorphism is the identity.  Equivalently, there is a decomposition of groupoids
\[
 \NSQC^{\mathrm{fr}}_{\mathbf c}
 \simeq
 \bigsqcup_{\boldsymbol\mu\in\prod_i\Logm(c_i)}
 \bigl(\mathbb A^{r(r-1)}_{\C}\bigr)_{\mathrm{disc}}.
\]
In particular, repeated monodromy eigenvalues and the eigenvalue~$1$ do not change either the affine type or the groupoid structure of an ordered residue stratum.
\end{theorem}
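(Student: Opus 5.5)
The plan is to reduce everything to an explicit gauge-theoretic normal-form computation on the trivialised lattice $\cO_\Delta^{\,r}$, working by induction on root height in the lower-triangular nilpotent algebra $\mathfrak n$. I am assuming that a morphism in $\NSQC^{\mathrm{fr}}_{\mathbf c}$ is a holomorphic gauge transformation $g\in\GL_r(\cO(\Delta))$ that carries one frame at $1$ to the other, so that in the given frames $g(1)=1$.

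\textbf{Step 1: triangular form and the residue functor.} First I will show that every object is isomorphic to one whose connection matrix $A(z)$ is lower triangular with respect to the framed basis. Since the full flag at $1$ is monodromy-invariant, parallel transport of this flag along radii gives a flag of horizontal subbundles on the punctured disc. I then intend to check that these subbundles extend to saturated subsheaves of the prescribed lattice, because they are saturations of horizontal subsheaves. The extension to a flag of sub-lattices is where care is needed. A holomorphic gauge $g$ with $g(1)=1$ then makes this flag coordinate, so $A$ becomes lower triangular.

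Next I treat the diagonal entries $a_{ii}=a_iz^{-2}+\mu_iz^{-1}+h_i(z)$. Regular singularity of the graded pieces forces $a_i=0$, since a rank-one connection with a genuine double pole is irregular. Monodromy of the $i$-th graded piece equal to $c_i$ forces $\mu_i\in\Logm(c_i)$. The scalar gauge $\exp\bigl(-\int_1^z h_i\bigr)$, which equals $1$ at $z=1$, removes $h_i$.

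I define $\mathbf{res}_\Delta$ to send an object to $(\mu_1,\ldots,\mu_r)$. To see that this is well defined and functorial, note that morphisms preserve the framed flag, hence are lower triangular, and such gauges do not change the diagonal residues. Essential surjectivity is immediate from the diagonal connection $d+M_{\boldsymbol\mu}\,dz/z$.

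\textbf{Step 2: normal form by root height.} Fix $\boldsymbol\mu$ and act by unipotent lower-triangular gauges $g=1+u$ with $u(1)=0$. I proceed by induction on $i-j$. At each height, the equation for the $(j,i)$ entry modulo lower heights has the form
\[
 a_{ji}\;\mapsto\; a_{ji}+u_{ji}'+\frac{\lambda_{ji}}{z}\,u_{ji}+(\text{terms of lower height}),
\]
with the sign convention chosen to match $dY+AY\,dz=0$. The entry $a_{ji}=\alpha z^{-2}+bz^{-1}+\text{(holomorphic)}$ may keep its $z^{-2}$ coefficient for $j<i$, because regular singularity constrains only the graded pieces.

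The operator $L=\partial_z+\lambda z^{-1}$ acts on $\cO(\Delta)$ and hits each monomial: $z^n\mapsto(n+\lambda)z^{n-1}$. It therefore kills every coefficient of $z^{k}$ for $k\geq -1$ except the resonant one $z^{\lambda-1}$ when $\lambda\in\mathbb Z_{\geq0}$. The normalisation $u(1)=0$ removes the one-dimensional freedom $\ker$, or equivalently fixes the constant term. The upshot is that a single coefficient $\beta$ survives. It sits in front of $z^{\lambda-1}$ in the resonant case, and in front of $z^{-1}$ otherwise, because the constant of integration, once spent on $u(1)=0$, can no longer absorb the residue. The term $\alpha z^{-2}$ is untouched by $L$ applied to holomorphic $u$. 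This yields the normal form \eqref{eq:main-normal-form} with parameters $(\boldsymbol\alpha,\boldsymbol\beta)\in\mathfrak n\oplus\mathfrak n$.

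\textbf{Step 3: uniqueness and trivial stabilisers.} Suppose $g$ is a morphism between two normal forms. Then $g$ is lower triangular, and its diagonal part is a scalar gauge preserving $\mu_iz^{-1}$ with $g_{ii}(1)=1$, so $g_{ii}=1$. Running the same height recursion, the equation $Lu_{ji}=(\text{difference of normal-form terms})$ with $u_{ji}(1)=0$ forces $u_{ji}=0$ and equal coefficients, height by height. This gives complete invariance of $(\boldsymbol\alpha,\boldsymbol\beta)$ and shows that automorphisms are trivial, so each fibre is the discrete groupoid on $\mathbb A^{r(r-1)}$. The decomposition over $\prod_i\Logm(c_i)$ follows.

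\textbf{Main obstacle.} I expect the delicate point to be Step 2 in the resonant case, together with the interaction with lower-height corrections. I need to check that the kernel and cokernel bookkeeping of $L$ on $z^{-2}\cO$ versus $\cO$, constrained by $u(1)=0$, leaves exactly one $\beta$ per root for every $\lambda$, including $\lambda=0$, where $z^{-1}$ and $z^{\lambda-1}$ coincide. It also has to do so uniformly, independently of repeated eigenvalues.
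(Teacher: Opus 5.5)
Your architecture is the paper's. You do triangular reduction along the monodromy-invariant flag, take ordered diagonal residues as the functor, apply a framed diagonal gauge, normalise height by height modulo a two-dimensional scalar cokernel, and prove uniqueness by rerunning the induction. Your reason why morphisms are triangular is a legitimate equivalent of the paper's initial-value argument for the class of $g$ in $\mathfrak{gl}_r/\mathfrak b$: a horizontal gauge with $g(1)=I_r$ carries the flat flag through the standard flag at $1$ to the corresponding flat flag, which in a triangular representative is the coordinate flag.

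\textbf{Sign error in Step~2.} As written, Step~2 has a sign error that changes the answer. Use the convention you invoke: $\nabla\mathbf w=\mathbf wA\,dz$, right action $g^{-1}Ag+g^{-1}g'$, horizontal equation $dY+AY\,dz=0$. Then stability of $\langle w_1,\dots,w_i\rangle$ makes $A$ \emph{upper} triangular. Since $[M_{\boldsymbol\mu},E_{ji}]=(\mu_j-\mu_i)E_{ji}$, the gauge $I_r+pE_{ji}$ with $j<i$ changes the $(j,i)$-coefficient by $p'-\lambda_{ji}z^{-1}p$, not $p'+\lambda_{ji}z^{-1}p$. Your own rule $z^n\mapsto(n+\lambda)z^{n-1}$ is resonant at $z^{-\lambda-1}$ when $-\lambda\in\mathbb Z_{\geq0}$. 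This contradicts your claim that $z^{\lambda-1}$ survives for $\lambda\in\mathbb Z_{\geq0}$. The difference is not cosmetic: for $(\mu_j,\mu_i)=(0,1)$ your operator would remove a constant $(j,i)$-coefficient, whereas that constant is the resonant term $\rho_1=1$ and survives.

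With $L_\lambda(P)=P'-\lambda z^{-1}P$ on $V=\{P\in\cO:P(1)=0\}$ your count closes, but the mechanism differs by case.
\begin{itemize}
\item For $\lambda\notin\mathbb Z_{\geq0}$ there is no kernel to spend. The map $L_\lambda\colon\cO\to z^{-1}\cO$ is bijective, and $z^{-1}\notin L_\lambda(V)$ because its unique preimage is the constant $-1/\lambda\notin V$.
\item For $\lambda=m\in\mathbb Z_{\geq0}$, the kernel $\C z^m$ is what is used to impose $P(1)=0$.
\end{itemize}
You also need the primitives to converge on all of $\Delta$, which follows from Cauchy estimates since the denominators grow linearly. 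This matters because the gauge must be holomorphic on a disc containing $1$.

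\textbf{Step~1 needs regular singularity.} The flat subbundles $F_i^\circ\subset E|_{\Delta^\times}$ do not extend into the prescribed lattice merely because they are horizontal. For example, the line spanned by $e_1+e^{1/z}e_2$ has an essential singularity at $0$ and meets $E$ only in $0$ near the origin. The paper proceeds as follows:
\begin{itemize}
\item it uses regular singularity to extend each $F_i^\circ$ to a meromorphic $\nabla$-stable subbundle $F_i\subset E(*0)$;
\item it sets $E_i=(E\cap F_i)^{\mathrm{sat}}$;
\item it splits the extensions $0\to E_{i-1}\to E_i\to E_i/E_{i-1}\to0$ on the disc, using vanishing $H^1$ and triviality of line bundles;
\item it corrects lifts by sections of $E_{i-1}$ to obtain an adapted frame equal to $\mathbf v$ at $z=1$.
\end{itemize}
Your sentence about a gauge making the flag coordinate compresses exactly these steps.

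\textbf{Admissibility of normal forms.} To identify the fibre with all of $\mathbb A^{r(r-1)}$, you must check that every normal form, including those with $\alpha_{ji}\neq0$, is an object of the groupoid. Your remark that regular singularity constrains only the graded pieces is the relevant fact: regular singularity is stable under extensions. The paper proves this by a moderate-growth estimate for an upper triangular fundamental solution, and you should invoke it explicitly at that point.
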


The proof is given in \Cref{sec:triangular,sec:normalisation-enlarged}.  The residue functor is constructed in \Cref{prop:residue-map}; framed Borel rigidity and diagonal reduction identify each fibre with a unipotent action groupoid; and \Cref{thm:slice} provides the unique affine slice and proves that the stabilisers are trivial.

There are two independent steps.  First, the monodromy-invariant flag is extended to meromorphic flat subbundles; these are then intersected with the prescribed lattice and saturated.  This produces an upper triangular holomorphic frame without any hypothesis on the eigenvalues.  Second, if two such representatives are related by a holomorphic gauge \(g\) satisfying \(g(1)=I_r\), then the image of \(g\) in \(\mathfrak{gl}_r/\mathfrak b\) satisfies a homogeneous linear initial-value problem with zero initial value.  Hence \(g\) is Borel-valued.  The rootwise scalar quotient and the height filtration of the unipotent radical, whose generic-spectrum form originates in~\cite[\S4]{KS}, then apply to every residue spectrum.

The theorem classifies the entire complex fibre groupoid, not merely its set of orbits.  It does not, however, assert representability of the unsliced relative functor over arbitrary bases.  A relative affine quotient is proved in \Cref{sec:relative-slice} after a triangular reduction and a locally constant residue vector have been supplied.  This distinction accords with the general theory of regular-singular connections with parameters, in which the existence of logarithmic models and their behaviour under base change require separate hypotheses~\cite{HaiDosSantosTam,HaiDosSantos,Tam}.

We take the first step beyond the fixed-stratum theorem by constructing the elementary wall charts.

\begin{theorem}\label{thm:intro-elementary-wall}
Let $\lambda$ be the ordered residue difference of a rank-two normal slice, and orient the elementary modification by the family whose adapted holomorphic frame is the matrix $H_t$ of \eqref{eq:rank-two-H}.  If $\lambda\in\mathbb Z_{\geq1}$, no meromorphic arc in the distinguished normal slice crosses this oriented wall.  If $\lambda\notin\mathbb Z_{\geq1}$, every crossing arc is uniquely of the form
\[
 \alpha=\frac{\lambda+1}{t}+a,
 \qquad
 \beta=-\frac{\lambda}{t}+b,
 \qquad a,b\in\cO_S,
\]
and its special-fibre normal coordinates are given by the scalar projection
$\bigl(0,\sigma_{\lambda+2}(a(0),b(0))\bigr)$.
In the minimal rank-three configurations, both simple-root walls admit polynomially adapted affine coordinates.  The natural rootwise coordinates acquire triangular adjacent-to-composite corrections, and the two orders of adjacent elementary modifications differ by the central factor
\[
 I_3+stz^{-2}E_{31}.
\]
Over $\C[\varepsilon]/(\varepsilon^2)$ the relative saturation acquires an additional generator of length one supported at the pole.
\end{theorem}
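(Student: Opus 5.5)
The plan is to work entirely in the normal slices of \Cref{thm:main} and reduce each assertion to an explicit gauge computation with the adapted frame $H_t$ of \eqref{eq:rank-two-H}. In rank two the slice is $d+[M_{\boldsymbol\mu}/z+(\alpha z^{-2}+\beta\rho_\lambda(z))E_{21}]\,dz$ with $\lambda=\mu_2-\mu_1$. A meromorphic arc over $S=\operatorname{Spec}\C[[t]]$ (or a germ of a disc) crosses the oriented wall when the gauge transform $H_t^{-1}\nabla H_t+H_t^{-1}dH_t$ extends holomorphically across $t=0$ with the prescribed pole bound and has residue slice $\boldsymbol\mu+(\text{shift})$ on the special fibre. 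I would first write the transformed connection matrix entrywise. The condition that the $t^{-1}$ poles cancel becomes a linear system in the polar parts of $\alpha$ and $\beta$.

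For $\lambda\in\mathbb Z_{\geq1}$ I expect $\rho_\lambda=z^{\lambda-1}$ to be holomorphic, so the $\beta$-term can never compensate the $z^{-1}$-level pole created by $H_t$. The cancellation system then forces an inconsistent equation of the form $\lambda+1=0$ or $1=0$ at the leading order, which gives the obstruction. For $\lambda\notin\mathbb Z_{\geq1}$ the same system has a unique solution for the principal parts, namely $\alpha_{-1}=\lambda+1$ and $\beta_{-1}=-\lambda$, and the regular parts $a,b$ are free. This yields the torsor description.

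The special-fibre coordinates come from setting $t=0$ in the transformed connection. That connection has residue $\boldsymbol\mu$ shifted by the elementary modification, so its difference becomes $\lambda+2$. It is generally not in normal form, so I would renormalise it using the slice theorem \Cref{thm:slice}. Its unipotent reduction, restricted to the single root, is the scalar projection $\sigma_{\lambda+2}$, which kills the $z^{-2}$ coefficient and replaces the residual term by its image in the cokernel of $z\partial_z-(\lambda+1)$ or the analogous operator. Tracking which term survives gives $\bigl(0,\sigma_{\lambda+2}(a(0),b(0))\bigr)$.

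For rank three I would apply the rank-two analysis to each simple root, embedding it via the corresponding $\GL_2$ block. For each root I would compute how the elementary modification acts on the composite coordinate $E_{31}$. Conjugating $E_{21}$ and $E_{32}$ terms through the diagonal modification produces, by the type-$A$ relation $[E_{32},E_{21}]=E_{31}$, polynomial corrections to the $(3,1)$ coordinates. These corrections are triangular by height, which gives the polynomially adapted affine coordinates. The commutator of the two elementary gauge factors, with parameters $s$ and $t$, equals the group commutator $(I_3+sz^{-1}E_{32},\,I_3+tz^{-1}E_{21})=I_3+stz^{-2}E_{31}$, possibly up to sign conventions. This identifies the central factor.

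For the dual-number statement I would take $S=\operatorname{Spec}\C[\varepsilon]/(\varepsilon^2)$ and the arc with $t$ replaced by $\varepsilon$. I would then compute the saturation of the image lattice in $\cO_S[[z]]^2$ and exhibit an element $\varepsilon z^{-1}e_1$ that lies in the saturation but not in the naive module. Its quotient is $\C$, which has length one and is supported at $z=0$.

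I expect the main obstacle to be the special-fibre renormalisation. Producing the correct scalar projection $\sigma_{\lambda+2}$ uniformly across the cases $\lambda+2\in\mathbb Z_{\geq0}$ and $\lambda+2\notin\mathbb Z_{\geq0}$ requires careful bookkeeping of which monomial is resonant. I would first handle the generic case and then check the resonant case $\lambda\in\mathbb Z_{\leq0}$ separately.
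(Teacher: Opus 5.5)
Your proposal has genuine gaps: the transport in rank two goes in the wrong direction, and several predicted mechanisms are misidentified.

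\textbf{Rank two: the transport direction.} The normal slice \eqref{eq:rank-two-general-open-normal} is upper triangular, with $q=\alpha z^{-2}+\beta\rho_\lambda$ in the $(1,2)$ slot (the root is $E_{12}$, not $E_{21}$). It is written in the adapted frame whose matrix is $H_t$. The crossing condition is therefore holomorphy of $A_{\mathrm{fix}}=H_tA_{\mathrm{open}}H_t^{-1}-(\partial_zH_t)H_t^{-1}$, which is the gauge by $H_t^{-1}$. You propose $H_t^{-1}A_{\mathrm{open}}H_t+H_t^{-1}\partial_zH_t$, which is the gauge by $H_t$. With that formula the upper-right entry is
\[
 q+\frac1t\Bigl(1-\lambda+\frac{\lambda}{z}\Bigr).
\]
Its coefficients $\lambda/t$ of $z^{-1}$ and $(1-\lambda)/t$ of $z^0$ never vanish simultaneously, and $q$ can cancel at most one of them (through $\beta$). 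So no arc would cross for any $\lambda$, contradicting part (ii).

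With the correct transport the entry is $N_\lambda/t$, where $N_\lambda=\lambda z-\lambda-1+tz^2q$. Several of your predicted mechanisms then need correcting:
\begin{enumerate}[(a)]
 \item The constant term $-(\lambda+1)/t$ is always absorbed by the polar part of $\alpha$. The obstruction for $\lambda=m\geq1$ is therefore not an equation such as $\lambda+1=0$. It is the coefficient $m/t$ of $z$ in $N_\lambda/t$, which the $\beta$-term $\beta z^{m+1}$ cannot reach.
 \item For $\lambda\notin\mathbb Z_{\geq1}$, holomorphy gives exactly $\alpha=(\lambda+1)/t+a$ and $\beta=-\lambda/t+b$, and reduces the entry to $a+bz$.
 \item $\widetilde\alpha=0$ is not produced by the normalisation ``killing'' a $z^{-2}$-term. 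The line $\C z^{-2}$ is a complement of $L_\nu(V)$ and is never removed. It holds because the special coefficient $a(0)+b(0)z$ is holomorphic; \Cref{lem:linear-wall-projection} with $\nu=\lambda+2$ (not $\lambda+1$) then gives $\widetilde\beta$.
\end{enumerate}

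\textbf{Rank three.} The correction is not a conjugation through a diagonal modification, and it does not act on lower-triangular $(3,1)$ connection coordinates; the connection coefficients are $q_{12},q_{23},q_{13}$. It has two sources your sketch does not isolate.
\begin{enumerate}[(a)]
 \item $\widehat H^{12}_t$ mixes the lateral pair: $(A_{\mathrm{fix}})_{13}=zq_{13}+\frac{z-1}{t}q_{23}$ and $(A_{\mathrm{fix}})_{23}=tq_{13}+q_{23}$. This forces $\gamma=tC$, $\eta=U-C+D/t$ and $\theta=V-D/t$. Together with the active-root conditions, these give the six parameters $(a,b,C,D,U,V)$.
 \item On the special fibre, removing $q_{12}=a+bz$ by the framed gauge $I_3+pE_{12}$ adds $-pq_{23}=-pDz^{-2}$ to $q_{13}$. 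The composite root has special residue difference $-1$, so by \eqref{eq:constant-projection-minus-one} half of the resulting constant term passes into the $z^{-1}$-coefficient. This is what produces $A$ and $B$ in \eqref{eq:rank-three-12-adapted}.
\end{enumerate}
Your commutator computation for the central factor $I_3+stz^{-2}E_{31}$ is correct.

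\textbf{Dual numbers.} The module to saturate is the base change $I=R(ze_1+\varepsilon e_2)$ of $\cE\cap\cF^\circ=\cO(ze_1+te_2)$. The extra generator is $\varepsilon e_1$: one has $z\cdot\varepsilon e_1=\varepsilon(ze_1+\varepsilon e_2)\in I$, while $\varepsilon e_1\notin I$. Your element $\varepsilon z^{-1}e_1$ does not lie in $R^2$, so it cannot belong to a saturation taken inside the lattice.
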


The theorem is proved in \Cref{sec:wall-crossing}.  A positive integral resonance barrier appears already in rank two, and the rank-three mixing can be absorbed into polynomial triangular coordinates along a single wall.  The first order-dependent compatibility datum occurs when two adjacent walls meet.

The low-rank calculations extend to the full connection problem in arbitrary rank.

\begin{theorem}\label{thm:intro-all-rank-connection-wall}
Fix a simple wall supported in positions $i,i+1$, an ordered residue vector $\boldsymbol\mu$, and put $N=r(r-1)$ and $\lambda_i=\mu_{i+1}-\mu_i$.  If $\lambda_i\in\mathbb Z_{\geq1}$, the distinguished normal slice has no arc crossing that oriented wall.  Otherwise, over the wall disc $S=(\Delta_t,0)$, the extending meromorphic normal-form arcs form an affine torsor under the vector group associated with a free $\cO_S$-module $\mathscr L_{i,\boldsymbol\mu}$ of rank~$N$.  Subtraction of the distinguished minimal crossing identifies this torsor with the relative linear analytic space associated with $\mathscr L_{i,\boldsymbol\mu}$, which is non-canonically isomorphic to $S\times\mathbb A^N_{\C}$.

The module $\mathscr L_{i,\boldsymbol\mu}$ decomposes into the active rank-two direction, a rank-four lateral lattice for each index incident with the active pair, and the unaffected root directions.  Its special fibre carries a polynomial specialisation map
\[
 \operatorname{sp}_{i,\boldsymbol\mu}:
 \mathbb V_S(\mathscr L_{i,\boldsymbol\mu})_0
 \longrightarrow
 \cS_{\boldsymbol\mu-\mathbf e_i+\mathbf e_{i+1}},
\]
which is triangular with respect to root height.  Each root coordinate of height $h$ has total degree at most $h$, and its linear term is the scalar normal-form projection of the transported special coefficient.  The construction represents the wall-extension functor and commutes with every flat local analytic base change.
\end{theorem}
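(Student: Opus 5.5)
We reduce the arbitrary-rank wall to the rank-two computation of \Cref{thm:intro-elementary-wall}, then treat the remaining root directions perturbatively using the height filtration of $\mathfrak n$ from \Cref{thm:main}.

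The first step fixes the setup. We embed the elementary modification in positions $i,i+1$ as the block-diagonal family $H_t^{(i)}=I_{i-1}\oplus H_t\oplus I_{r-i-1}$. We then write a general meromorphic arc in the normal slice $\cS_{\boldsymbol\mu}$ over $S$ in the coordinates $(\boldsymbol\alpha,\boldsymbol\beta)$ of \eqref{eq:main-normal-form}, with entries in $\cO_S[t^{-1}]$. We impose the condition that the gauge-transformed connection $H_t^{(i)}\cdot\nabla_{\boldsymbol\alpha,\boldsymbol\beta}$ extends holomorphically across $t=0$ with pole order at most two in $z$.

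We split this condition by root, using the decomposition of $\mathfrak n$ under the Levi factor $\mathfrak{gl}_2$ of the pair $(i,i+1)$. Three kinds of root appear.
\begin{enumerate}[(a)]
\item The active root $E_{i+1,i}$. Its coordinates satisfy exactly the rank-two equations. By \Cref{thm:intro-elementary-wall}, these are unsolvable when $\lambda_i\in\mathbb Z_{\geq1}$, which gives the obstruction. Otherwise they force $\alpha=(\lambda_i+1)/t+a$ and $\beta=-\lambda_i/t+b$, a rank-two free module.
\item The lateral roots, for each $k\notin\{i,i+1\}$. The four coordinates $(\alpha,\beta)$ of $E_{k,i}$ and $E_{k,i+1}$ (or of $E_{i,k}$ and $E_{i+1,k}$) form a two-dimensional $\mathfrak{gl}_2$-module. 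Conjugation by $H_t$ acts on it by an explicit $t$-dependent matrix. Extendability is a linear condition on the principal parts, and it should cut out a rank-four $\cO_S$-lattice $\mathscr L^{(k)}$ inside $\cO_S[t^{-1}]^4$.
\item The unaffected roots. Their coordinates are untouched and contribute $\cO_S$ directly.
\end{enumerate}
Summing the ranks gives $2+4(r-2)+\bigl(r(r-1)-2-4(r-2)\bigr)=N$. This sum defines $\mathscr L_{i,\boldsymbol\mu}$. The torsor statement follows because the extension condition is affine-linear in the root coordinates once the active part is fixed. We fix it by the distinguished minimal crossing $a=b=0$, and subtracting it identifies the solutions with $\mathbb V_S(\mathscr L_{i,\boldsymbol\mu})$.

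The main obstacle is that the transformed connection at $t=0$ is triangular but not in normal form, so the conditions in (b) and (c) are not literally decoupled. Commutators of the active generator with lateral terms produce composite-root terms, as in the rank-three correction $I_3+stz^{-2}E_{31}$. We handle this by renormalising with \Cref{thm:slice} on the shifted stratum $\boldsymbol\mu-\mathbf e_i+\mathbf e_{i+1}$, proceeding inductively by root height.

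At height $h$, the residue of the unipotent gauge needed to reach normal form is determined by the rootwise scalar quotient. It is a polynomial in lower-height coordinates, built from products of at most $h$ root coordinates whose heights sum to $h$. This gives the degree bound and the triangularity of $\operatorname{sp}_{i,\boldsymbol\mu}$. The linear term is the scalar projection $\sigma$ of the transported coefficient, exactly as in the rank-two formula.

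We must also check that these higher corrections do not change the extendability conditions, only the specialisation. We do this by showing that the corrections are holomorphic in $t$ once the lower-height conditions hold.

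Representability and base change come last. The equations defining the torsor are linear over $\cO_S$ with coefficients polynomial in $\boldsymbol\mu$ and independent of the base. Since $\mathscr L_{i,\boldsymbol\mu}$ is free, its formation commutes with flat base change. The uniqueness of normal forms with trivial stabilisers, from \Cref{thm:main} and its relative version in \Cref{sec:relative-slice}, identifies the wall-extension functor with $\mathbb V_S(\mathscr L_{i,\boldsymbol\mu})$.
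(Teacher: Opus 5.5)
Your overall route matches the paper's: the block matrix $H_i(t)$, the rank-two treatment of the active root, lateral pairs and unaffected roots, then a height induction on $\cS_{\boldsymbol\mu-\mathbf e_i+\mathbf e_{i+1}}$ for $\operatorname{sp}_{i,\boldsymbol\mu}$. However, the torsor part is not established. You first split the extension condition root by root, then assert that commutators couple (b) and (c). Your proposed repair, showing that ``the corrections are holomorphic in $t$ once the lower-height conditions hold'', conflates two different operations. Extension is a condition on the raw transport over the punctured wall disc. The framed normalisation is applied afterwards, only to the special fibre, so it cannot change which arcs extend. The missing ingredient is the exact raw transport of \Cref{prop:levi-local-raw-transport}. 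Since $H_i(t)$ is the identity off the block and $(\partial_zH_i)H_i^{-1}$ is supported in the block, the transport $H_iA_{\mathrm{open}}H_i^{-1}-(\partial_zH_i)H_i^{-1}$ is linear in $A_{\mathrm{open}}$. It has the rank-two direct block, sends each left string $(q_{p,i},q_{p,i+1})$ to $(q_{p,i},q_{p,i+1})H_t^{-1}$, sends each right string $(q_{i,k},q_{i+1,k})^{\mathsf T}$ to $H_t(q_{i,k},q_{i+1,k})^{\mathsf T}$, and leaves every other entry unchanged. No products of root coordinates occur, so the extension problem is literally the direct sum you first wrote down. Conversely, if your coupling claim were true, defining $\mathscr L_{i,\boldsymbol\mu}$ as a direct sum would be unjustified. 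The composite-root mixing you anticipate, such as the $-D(a+b/2)$ term in \Cref{thm:rank-three-wall}, lives only inside $\operatorname{sp}_{i,\boldsymbol\mu}$ as terms of degree at least two. The factor $I_3+stz^{-2}E_{31}$ compares two different walls and plays no role for a single wall.

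The second gap is the rank-four claim, which you state only with ``should''. Linearity of the condition over $R[t^{-1}]$ does not bound the $t$-poles of solutions; for instance, the single condition $x-ty$ holomorphic allows $y$ with arbitrary poles. Without such a bound the solution module need not be finitely generated. The paper proves a two-sided bound.
\begin{itemize}
 \item If $x,y$ are divisible by $t$, both transported expressions are holomorphic.
 \item Conversely, if the transported pair $(u,v)$ is holomorphic, multiplying back by $H_t$ gives $x=zu+tv$ and $y=(z-1)t^{-1}u+v$, so every pole has order at most one.
\end{itemize}
Thus, with $M=V^{\boldsymbol\mu}_{p,i}\oplus V^{\boldsymbol\mu}_{p,i+1}$, one has $tM\otimes R\subseteq\mathscr L^{\mathrm L}_{p,i}\subseteq t^{-1}M\otimes R$. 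Hence $\mathscr L^{\mathrm L}_{p,i}$ is finitely generated, torsion-free and of rank four over the discrete valuation ring $R=\C\{t\}$, and therefore free. The right pairs are treated in the same way using $H_t^{-1}$.

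There are also some smaller points.
\begin{itemize}
 \item Your count is circular. The unaffected roots are the $\binom{r-2}{2}$ roots avoiding $\{i,i+1\}$, each contributing rank two, for a total of $(r-2)(r-3)$.
 \item The active root of the slice is $E_{i,i+1}$, not $E_{i+1,i}$.
 \item The linear term of $\operatorname{sp}_{i,\boldsymbol\mu}$ is the general projection $\pi_{\mu'_k-\mu'_j}(q^{\mathrm{raw}}_{jk})$, not $\sigma$, because lateral raw coefficients are not of the form $a+bz$.
 \item Flat base change holds because flat $A$ preserves the kernels of the coefficient maps between finite free $R$-modules. Freeness of $\mathscr L_{i,\boldsymbol\mu}$ alone does not give this.
\end{itemize}
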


This theorem is proved in \Cref{thm:all-rank-affine-wall}.  It extends the rank-two and rank-three calculations to a relative representability statement for every simple wall and every rank.  The lattice-theoretic structure in arbitrary rank is independent of the normal-form calculation.

\begin{theorem}\label{thm:intro-loop-root}
Put
\[
 D_r=\diag(1,z,\ldots,z^{r-1})
\]
and let $U_r^-$ be the lower unitriangular group.  Conjugation by $D_r$ identifies $U_r^-$ with the finite-dimensional subgroup
\[
 \mathcal U_r^{\mathrm{wall}}
 =D_r^{-1}U_r^-D_r
 =\left\{
 I_r+\sum_{j>i}u_{ji}z^{-(j-i)}E_{ji}
 \right\}
 \subset GL_r(\C((z))).
\]
For
$
 X_{ji}(a)=I_r+a z^{-(j-i)}E_{ji}
 \ (j>i),$
the elementary compatibility relations are
\[
 X_{kj}(b)X_{ji}(a)
 =X_{ki}(ab)X_{ji}(a)X_{kj}(b)
 \qquad (k>j>i),
\]
while two root subgroups commute whenever neither ordered product of their matrix units is non-zero.  In particular, the pairwise compatibility relations between elementary wall matrices in arbitrary rank are determined within rank-two and rank-three Levi blocks.
In rank two, the meromorphic matrix
\[
 G(t)=I_2+t z^{-1}E_{21}
\]
defines the opposite root-subgroup chart of the minimal affine Schubert curve in the affine flag variety of the rank-two Levi.  For $t\ne0$, its projection to the affine Grassmannian has elementary divisors $z^{-1}$ and $z$, whereas $G(0)$ is the base point.  By contrast, the matrix $H_t$ used to pass to a holomorphic adapted frame belongs to $GL_2(\C[[z]])$ for $t\ne0$ and does not itself change the position in the affine Grassmannian.
\end{theorem}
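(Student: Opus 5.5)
We will prove \Cref{thm:intro-loop-root} by direct matrix computation, splitting it into an algebraic part (root subgroups and their commutators) and a geometric part (positions in the affine flag variety and affine Grassmannian).

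\emph{Conjugation.} We first compute
\[
 D_r^{-1}E_{ji}D_r=z^{-(j-1)}z^{i-1}E_{ji}=z^{-(j-i)}E_{ji}.
\]
Hence $D_r^{-1}(I_r+\sum_{j>i}u_{ji}E_{ji})D_r$ is exactly the displayed element of $\mathcal U_r^{\mathrm{wall}}$. Conjugation is a group isomorphism, so $\mathcal U_r^{\mathrm{wall}}$ is a subgroup of $GL_r(\C((z)))$ isomorphic to $U_r^-$, and $X_{ji}(a)$ is the conjugate of the constant elementary matrix $I_r+aE_{ji}$. This reduces the relations to the classical Steinberg relations in $U_r^-$. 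We check them directly from $E_{kj}E_{ji}=E_{ki}$, $E_{ji}E_{kj}=0$ for $k>j>i$, and $E_{ab}E_{cd}=\delta_{bc}E_{ad}$. The commutator of $X_{kj}(b)$ and $X_{ji}(a)$ is $X_{ki}(ab)$, and the $z$-exponents add correctly: $(k-j)+(j-i)=k-i$. If neither ordered product of the matrix units is non-zero, the two one-parameter subgroups commute, since $(I+x)(I+y)=I+x+y=(I+y)(I+x)$ when $xy=yx=0$.

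\emph{Reduction to Levi blocks.} Any pair of roots $(j,i)$, $(l,k)$ involves at most four indices. The products $E_{ji}E_{lk}$ and $E_{lk}E_{ji}$ vanish unless the roots share an index, in which case only three indices appear. Those three indices span a rank-three (or, if $|\{i,j,k,l\}|=2$, rank-two) Levi block, in which the relation is already verified. This gives the claim that all pairwise compatibility relations are determined within rank-two and rank-three Levi blocks.

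\emph{Rank two.} The point $G(t)\,GL_2(\C[[z]])$ has lattice $G(t)\C[[z]]^2$. For $t\neq0$ we compute its elementary divisors via Smith form. The entries have minimal valuation $-1$, and $\det G(t)=1$, so the invariant factors are $z^{-1}$ and $z$. Explicitly, one can exhibit $k_1,k_2\in GL_2(\C[[z]])$ with $k_1G(t)k_2=\diag(z^{-1},z)$ by column operations, using
\[
 \begin{pmatrix}1&0\\tz^{-1}&1\end{pmatrix}
 =\begin{pmatrix}1&t^{-1}z\\0&1\end{pmatrix}
 \begin{pmatrix}0&-t^{-1}z\\tz^{-1}&0\end{pmatrix}
 \begin{pmatrix}1&0\\t^{-1}z&1\end{pmatrix}.
\]
The verification of this factorisation is routine; the middle factor is $\diag(z^{-1},z)$ up to a permutation and units. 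At $t=0$ we have $G(0)=I_2$, the base point.

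For the affine flag statement, note that $t\mapsto G(t)\cdot\mathrm{Iw}$, with $\mathrm{Iw}$ the Iwahori subgroup of the rank-two Levi, is the opposite root-subgroup chart for the affine root $-\alpha-\delta$ (the root of $E_{21}z^{-1}$). Its closure is the minimal Schubert curve $\mathbb P^1$ associated with the affine simple reflection $s_0$. We check this by observing that $E_{21}z^{-1}$ lies outside $\operatorname{Lie}(\mathrm{Iw})$, whose lower-left entry lies in $z\C[[z]]$, so the orbit is one-dimensional. It then remains to identify it with the $s_0$-orbit through the base point, using the standard $SL_2$ attached to $\pm(\alpha_0)$.

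Finally, $H_t$ of \eqref{eq:rank-two-H} is to be checked entrywise to have holomorphic entries and unit determinant for $t\neq0$. Then $H_t\in GL_2(\C[[z]])$ fixes the base point of the affine Grassmannian.

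The main obstacle is this affine flag identification. In particular, we must match conventions for which affine root is simple, since the sign conventions for $\delta$ and for the Iwahori (upper versus lower triangular modulo $z$) must agree with those used in \Cref{sec:wall-crossing}. We will fix the Iwahori as the preimage of the upper Borel, so that $\alpha_0=-\alpha+\delta$ and the chart $G(t)$ is indeed the opposite one.
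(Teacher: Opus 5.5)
Your conjugation by $D_r$, the Chevalley relation, the commutation criterion and the reduction to Levi blocks are correct and follow the paper. Your valuation--determinant computation of the elementary divisors is equivalent to the paper's Smith form of $zG(t)=\begin{pmatrix}z&0\\t&z\end{pmatrix}$, and your check on $H_t$ matches the paper's. However, the factorisation you display is false: its right-hand side multiplies out to $\begin{pmatrix}1-t^{-2}z^2&-t^{-1}z\\tz^{-1}&0\end{pmatrix}$. The correct identity, which the paper uses, has the upper unipotent factor on both sides. With $b=t^{-1}z$ and $x_+(b)=I_2+bE_{12}$,
\[
 G(t)=x_+(b)\begin{pmatrix}0&-b\\tz^{-1}&0\end{pmatrix}x_+(b)
 =x_+(b)\,\diag(t^{-1},t)\,\dot s_0\,x_+(b),
 \qquad
 \dot s_0=\begin{pmatrix}0&-z\\z^{-1}&0\end{pmatrix}.
\]
For the elementary divisors this slip is harmless, but this identity is exactly what the affine-flag claim requires.

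The genuine gap is the affine-flag identification, where your resolution of the convention problem contradicts itself. Take $\alpha$ to be the root of $E_{12}$. For the Iwahori of the upper Borel, the simple affine roots are $\alpha$ and $\alpha_0=-\alpha+\delta$. The opposite chart of the $s_0$-curve is therefore the root subgroup of $-\alpha_0=\alpha-\delta$, i.e.\ of $z^{-1}E_{12}$. It is not the root subgroup of $-\alpha-\delta$, the root of $z^{-1}E_{21}$, which you correctly attach to $G(t)$.

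Under that convention $\alpha+\delta=2\alpha+\alpha_0$ is not simple, and for $t\neq0$ the point $G(t)I$ lies on neither minimal Schubert curve through the base point:
\begin{itemize}
 \item $G(t)\notin GL_2(O)$, so it is not on the $s_1$-curve;
 \item $G(t)e_1=e_1+tz^{-1}e_2\notin\Lambda_1=Oe_1\oplus zOe_2$, so $G(t)\notin P_0=I\cup Is_0I$, and it is not on the $s_0$-curve either.
\end{itemize}
In fact $G(t)I$ lies in the three-dimensional cell of $s_{\alpha+\delta}=s_1s_0s_1$. Your proposed check that $z^{-1}E_{21}\notin\operatorname{Lie}(I)$ only shows that $t\mapsto G(t)I$ is non-constant, which holds for every negative affine root. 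The $SL_2$ attached to $\pm\alpha_0$ does not contain $G(t)$, so the planned identification cannot be completed.

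The missing step is to use the Iwahori for which $\alpha+\delta$ is simple, namely the preimage of the lower Borel. Then $G(t)$ is the root subgroup of $-(\alpha+\delta)$, and $\dot s_0$ above represents the corresponding simple reflection. Since $x_+(b)$ (with $b\in zO$) and $\diag(t^{-1},t)$ lie in this Iwahori, the corrected factorisation places $G(t)I$ in the one-dimensional cell $I\dot s_0I/I$ for $t\neq0$, with $G(0)I$ the base point.

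The paper's proof uses this same factorisation while calling $I$ the upper Iwahori. Both outer factors lie in either Iwahori, but $I\dot s_0I/I$ is one-dimensional only for the lower one; for the upper one it has dimension $3$. The convention therefore has to be fixed this way for the minimal-Schubert-curve assertion to hold.
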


The theorem is proved in \Cref{thm:wall-loop-root-group,prop:rank-two-affine-position}.  It relates the rank-three cocycle to the standard type-$A$ commutator relation for root subgroups and identifies the matrix that records the lattice position.

\subsection{Loop-group and affine flag geometry behind the wall charts}\label{sec:intro-affine-flag}

Affine flag varieties and their Schubert varieties provide the standard local models for relative positions of lattices and lattice chains~\cite{PappasRapoport}.  Put
\[
 K=\C((z)),\qquad O=\C[[z]],
\]
and write
\[
 \operatorname{Gr}_r=GL_r(K)/GL_r(O),
 \qquad
 \operatorname{Fl}_r=GL_r(K)/I,
\]
where $I\subset GL_r(O)$ is the Iwahori subgroup obtained as the inverse image of the standard Borel under reduction modulo~$z$.  The affine Grassmannian records an $O$-lattice in $K^r$, while the affine flag variety records a periodic lattice chain, equivalently a lattice endowed with Iwahori-level structure.  The projection
\[
 q:\operatorname{Fl}_r\longrightarrow\operatorname{Gr}_r
\]
forgets the flag.

Three pieces of data must be distinguished in the wall calculation.  The prescribed lattice $\cE$ is fixed.  The meromorphic horizontal frame determines a comparison lattice; in rank two, its relative position is represented by $G(t)=I_2+t z^{-1}E_{21}$.  The holomorphic matrix $H_t$ of \eqref{eq:rank-two-H}, on the other hand, is an adapted basis of the fixed lattice on the punctured parameter space; since $H_t\in GL_2(O)$, it represents the base point of the affine Grassmannian.  Conflating these two matrices would lead to an incorrect affine flag interpretation.

The result is \Cref{thm:intro-loop-root}.  In the rank-two Levi, $G(t)$ is the opposite chart of a minimal affine Schubert curve.  In higher rank the matrices occurring in the wall atlas form the conjugated finite unipotent group $\mathcal U_r^{\mathrm{wall}}$, so the $A_1$, $A_1\times A_1$ and $A_2$ terminology refers to the corresponding Levi root systems and their Chevalley relations.  It does not, by itself, identify every embedded wall with a codimension-one Schubert stratum of the full affine flag variety.

The connection carries information not encoded by the lattice position alone.  The positive integral resonance barrier of \Cref{thm:rank-two-wall} determines which root-subgroup arcs lift to the distinguished normal slice; the scalar projections determine the limiting connection coordinate; and the square-zero calculation detects infinitesimal saturation data invisible on the reduced affine flag variety.  A global position morphism would, moreover, require a compatible logarithmic or comparison lattice, which is not part of the original unsliced functor.  Constructing the appropriate enhanced local-model stack and proving that its charts descend when the auxiliary comparison data are changed remain open.

Although the affine orbit type is independent of the spectrum, the monodromy map still depends on the residue stratum.  In rank two we compute the monodromy and the re-framing action in all integral resonance regimes.  In arbitrary rank we construct flat isomonodromic families whose punctured meromorphic connections are all meromorphically isomorphic to a fixed connection while the prescribed lattice varies.  If \(F:S\to\mathfrak a\) is holomorphic, where \(\mathfrak a\) is a commutative algebra of nilpotent endomorphisms stable under \(\operatorname{ad}_R\), then
\begin{equation}\label{eq:intro-family}
 \nabla_{R,F}
 =d+\frac{d_S F}{u}+\frac{R}{u}\,du
 +\frac{[R,F]-F}{u^2}\,du
\end{equation}
is flat and satisfies the pole conditions of an \(F\)-bundle.  Its fibrewise meromorphic gauge class is constant, as is its monodromy, whereas its order-two coefficient in the prescribed lattice is \((\operatorname{ad}_R-I)F\).  Although this family belongs to the classical theory of isomonodromic deformation~\cite{MalgrangeIso,Sabbah}, the datum retained here is different: the local system is fixed, whereas its extension across the pole varies.

The global theory is most naturally expressed in the notation of Katzarkov--Kontsevich--Pantev--Yu~\cite[\S\S3--5]{KKPYY}.  If $(\cH,\nabla)/B$ is an $F$-bundle, its residual endomorphism is
\[
 \kappa_{\nabla}
 \coloneqq\left.\nabla_{u^2\partial_u}\right|_{\cH/u\cH}.
\]
In the formal and non-Archimedean categories, the spectral decomposition theorem of Hinault--Yu--Zhang--Zhang decomposes a maximal $F$-bundle near a rigid point according to the generalised eigenspaces of $\kappa_{\nabla}$~\cite[Theorems~3.31 and~3.41]{HYZZ}.  The connected components of the unramified reduced spectral cover give the local atoms of~\cite[\S5]{KKPYY}.

For a rank-$n$ one-branch factor, the branch is scheme-theoretically intrinsic: its characteristic polynomial is $(T+a)^n$, and necessarily $a=-n^{-1}\operatorname{tr}(\kappa_{\nabla})$.  We prove that subtracting the total scalar form $d(a/u)$ gives an equivalence of stacks between one-branch $F$-bundles and pairs consisting of a branch function and a zero-branch $F$-bundle.  The equivalence is compatible with pullback, external products and duals.  In the complex analytic setting it also preserves the pairings and duality automorphisms used in the enhanced theory of atoms~\cite[\S6.4]{KKPYY}.  Thus every geometric atomic $F$-bundle has a branch-residual presentation
\[
 (\cA,\nabla)
 \longleftrightarrow
 \bigl(a,\cA,\nabla-d(a/u)I\bigr).
\]
The branch is retained as part of the residual atom; the zero-branch factor determines a classifying morphism to the unframed residual stack.

Let $\RAtoms_G^K$ be the set of residual $G$-atoms over a characteristic-zero field $K$, defined by transporting the equivalence relation on geometric $G$-atomic $F$-bundles through this branch-residual splitting.  For a smooth projective variety $X$, the residual chemical formula
\[
 \RCF_G(X)\in\mathbb N^{(\RAtoms_G^K)}
\]
is the finite multiset of its residual atoms with their spectral multiplicities.  Its group completion lies in $\mathbb Z^{(\RAtoms_G^K)}$.

\begin{theorem}\label{thm:global-residual-atom-intro}
The branch is recovered intrinsically from the residual endomorphism, and the branch-residual splitting is an equivalence of stacks, functorial under arbitrary base change and symmetric monoidal with respect to addition of scalar branches.  It induces a bijection between geometric $G$-atomic $F$-bundles and residual $G$-atoms, preserves the full horizontal $F$-bundle structure and the enhanced structures retained by the atomic theory, and makes $\RCF_G(X)$ independent of all auxiliary choices.  Moreover,
\begin{align*}
 \RCF_G(X_1\sqcup X_2)&=\RCF_G(X_1)+\RCF_G(X_2),\\
 \RCF_G(\operatorname{Bl}_{Z}X)&=\RCF_G(X)+(c-1)\RCF_G(Z),\\
 \RCF_G(\mathbb P_X(V))&=m\,\RCF_G(X),
\end{align*}
where $Z\subset X$ is smooth of codimension $c$ and $V$ has rank $m$.  The ring structure on geometric atomic $F$-bundle formulae therefore transports to $\mathbb Z^{(\RAtoms_G^K)}$, and the residual chemical formula defines a ring homomorphism
\[
 \mathsf{cf}^{\mathrm{res}}_G:
 K_0(\Var_K)\longrightarrow\mathbb Z^{(\RAtoms_G^K)}.
\]
\end{theorem}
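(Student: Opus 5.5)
The proof splits into a local, purely algebraic part (the branch-residual splitting) and a global part, where the KKPYY atomic formalism is transported through that splitting.

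\emph{The branch is intrinsic.} Let $(\cA,\nabla)$ be a rank-$n$ one-branch $F$-bundle over $B$. By definition, the characteristic polynomial of $\kappa_\nabla$ on $\cA/u\cA$ is $(T+a)^n$ for some function $a$ on $B$. Comparing coefficients of $T^{n-1}$ gives $na=\operatorname{tr}(\kappa_\nabla)$. Since we are in characteristic zero, this forces $a=-n^{-1}\operatorname{tr}(\kappa_\nabla)$, and this formula is stable under arbitrary base change.

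\emph{The twist produces a zero branch.} Next I check that $\nabla'=\nabla-d(a/u)I$ is again an $F$-bundle connection. Flatness holds because $d(a/u)I$ is a closed scalar form. The pole orders are unchanged: the $u$-component $-a\,u^{-2}du$ has pole order two, and the $B$-component $d_Ba/u$ has pole order one. Moreover $\kappa_{\nabla'}=\kappa_\nabla+a$, which is nilpotent, so $\nabla'$ has zero branch. The inverse functor is $(a,\cA',\nabla')\mapsto(\cA',\nabla'+d(a/u))$. Both functors are the identity on underlying sheaves and morphisms, so they are inverse equivalences of stacks, and they commute with pullback.

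\emph{Monoidal structure and enhanced structures.} For external products,
\[
\nabla_1\boxtimes 1+1\boxtimes\nabla_2-d((a_1+a_2)/u)=(\nabla_1-d(a_1/u))\boxtimes1+1\boxtimes(\nabla_2-d(a_2/u)),
\]
so the splitting is symmetric monoidal with respect to addition of branches. For duals, the dual connection has branch $-a$, so the splitting sends duals to duals with negated branch. The pairings and duality automorphisms of \cite[\S6.4]{KKPYY} pair $\cA$ with its $u\mapsto-u$ twisted dual. The branch form $d(a/u)$ changes sign under $u\mapsto-u$, and I would check that it cancels against the dual branch, so the pairings stay horizontal for the twisted connection. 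This sign bookkeeping is the step I expect to be most delicate, because the enhanced structures involve $u\mapsto -u$ together with a twist by a Tate or shift factor. I would verify it on rank-one scalar models $(\cO,d+d(a/u))$, where everything is explicit, and then tensor.

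\emph{Residual atoms and the residual chemical formula.} $\RAtoms_G^K$ is defined by transport, so the bijection with geometric $G$-atoms is tautological once the splitting is known to respect the equivalence relation. That relation is generated by isomorphisms, base change and the enhanced data, all of which were handled above. Independence of $\RCF_G(X)$ from auxiliary choices then follows from the corresponding independence of the KKPYY chemical formula $\CF_G(X)$: the HYZZ spectral decomposition \cite[Theorems~3.31 and~3.41]{HYZZ} is canonical up to the data the atoms already forget.

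\emph{The relations.} The three identities are the images under the splitting of the known KKPYY relations for $\CF_G$:
\begin{itemize}
\item additivity for disjoint unions, coming from direct sums of quantum $F$-bundles;
\item Iritani's blow-up decomposition, in which $c-1$ copies of $Z$ appear with shifted branches;
\item the projective bundle formula, with $m$ copies of $X$.
\end{itemize}
The key point is that shifting a branch changes only $a$ and leaves the residual class unchanged. This is precisely why the residual formula records $(c-1)\RCF_G(Z)$ and $m\,\RCF_G(X)$ with no branch bookkeeping.

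\emph{The ring structure and the motivic homomorphism.} The ring structure on atomic formulae comes from products $X_1\times X_2$, which corresponds to the external tensor product of $F$-bundles. By symmetric monoidality this transports to $\mathbb Z^{(\RAtoms_G^K)}$. Finally, by Bittner's presentation $K_0(\Var_K)$ is generated by classes of smooth projective varieties subject to the blow-up relation. Additivity together with the blow-up identity therefore shows that $X\mapsto\RCF_G(X)$ descends to a homomorphism $\mathsf{cf}^{\mathrm{res}}_G$, and multiplicativity makes it a ring homomorphism.
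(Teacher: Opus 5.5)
Your outline follows the paper's route. It extracts the branch by the trace formula (\Cref{lem:intrinsic-branch}) and twists by $d(a/u)I$, with its inverse (\Cref{thm:valued-residualisation-equivalence}). It handles pairings via the sign $\nu^*d(a/u)=-d(a/u)$ (\Cref{prop:enhanced-residualisation}), transports the KKPYY identities term by term through $\mathfrak{res}$, and finishes with Bittner's presentation. Two steps, however, are wrong as written.

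First, the ``key point'' you give for the blow-up and projective-bundle identities is not the mechanism, and it conflicts with the bijection you assert a paragraph earlier. In the paper a residual atom is branch-labelled: $a_U$ is part of the datum $\mathfrak r_U$ (\Cref{def:residual-atom-category}). This is exactly what makes $\mathfrak{res}$ injective, because $(\cA,\nabla)$ and $(\cA,\nabla+d(b/u)I)$ have the same zero-branch factor. A residual class that ignored branch shifts would make such atoms collide unless they were already related by an atomic correspondence. The identities $\CF^F_G(\operatorname{Bl}_ZX)=\CF^F_G(X)+(c-1)\CF^F_G(Z)$ and $\CF^F_G(\mathbb P_X(V))=m\,\CF^F_G(X)$ already hold for unresidualised atoms in KKPYY. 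There, the $c-1$ copies of the $F$-bundle of $Z$ are matched with the atoms of $Z$ by atomic correspondences $\Psi\colon f^*(\cA',\nabla')\xrightarrow{\sim}(\cA,\nabla)$. Conjugacy of the residual endomorphisms forces $a=f^*a'$, so $\Psi$ intertwines $\nabla'-d(a'/u)I$ with $\nabla-d(a/u)I$ (\Cref{prop:atomic-correspondence-residualises}). The branch is carried along by the base change, not discarded. You should drop the key point and let the transport argument, which you also state, do the work.

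Second, Bittner's relation is $[\operatorname{Bl}_ZX]-[E]=[X]-[Z]$ with $E=\mathbb P_Z(N_{Z/X})$. Additivity and the blow-up identity alone therefore do not suffice. You must also apply the projective-bundle identity to the rank-$c$ bundle $N_{Z/X}$ to get $\RCF_G(E)=c\,\RCF_G(Z)$, whence $\RCF_G(\operatorname{Bl}_ZX)-\RCF_G(E)=\RCF_G(X)-\RCF_G(Z)$.

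Two smaller points. Coefficient comparison gives $na=-\operatorname{tr}(\kappa_\nabla)$, not $na=\operatorname{tr}(\kappa_\nabla)$. Also, the duality automorphisms defined through monodromy need a separate observation: on the punctured disc, residualisation is the single-valued scalar gauge $e^{-a/u}$, so the monodromy is unchanged. Horizontality of the pairing alone does not cover them.
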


\begin{proof}
Intrinsic branch extraction and the branch-residual equivalence are proved in \Cref{lem:intrinsic-branch,thm:valued-residualisation-equivalence}; compatibility with enhanced structures is established in \Cref{prop:enhanced-residualisation}, and compatibility with atomic correspondences in \Cref{prop:atomic-correspondence-residualises}.  The independence and the three displayed identities are established in \Cref{thm:canonical-RAtom}; the ring statement follows in \Cref{cor:residual-grothendieck-ring}.
\end{proof}

The branch-residual presentation determines several further structures on a residual family.  On the zero-branch factor, the nilpotent endomorphism $\kappa^{\mathrm{res}}$ defines a morphism to the nilpotent quotient stack and hence a functorial determinantal Jordan stratification.  We show that every atom arising in dimension at most two admits a representative whose cube-zero locus is non-empty.  Hence a fourfold atom cannot arise from a surface if every representative in its residual atomic class has empty cube-zero locus; combined with the KKPYY criterion, this yields a conditional obstruction to rationality.  For a projective $K3$ surface, the ordinary genus-zero quantum product is classical, and the standard maximal $A$-model residual representative is square-zero and has rank at most one throughout the Hodge base.

A second application concerns the universal unit direction of the $A$-model.  If $t^0$ denotes the coordinate of the cohomological unit, then the quantum connection differs from the pullback of its restriction to $t^0=0$ by the scalar form $d(t^0/u)$.  Thus every branch is translated by $t^0$, while residualisation removes this direction and makes the zero-branch factor descend.  For a Calabi--Yau threefold, the restriction of the unit-extended branch to the unit line through the large-radius point is $t^0$.  On the descended factor, the rationally marked large-radius germ recovers the cubic cup-product form on the N\'eron--Severi space.  For Picard rank one, the resulting class of the ample degree in $\mathbb Q_{>0}^{\times}/(\mathbb Q_{>0}^{\times})^3$ is therefore an invariant of the marked residual germ.  The degrees $14$ and $42$ distinguish the Pfaffian--Grassmannian pair, although the varieties are derived equivalent and have isomorphic $\mathbb Z/2$-graded rational Hodge realisations.  Independently, the family~\eqref{eq:intro-family} shows that the residual lattice is not determined by monodromy: the local system on the punctured disc may remain constant while the lattice Jordan profile varies.

\subsection{Structure of the paper}

\Cref{sec:notation,sec:groupoid,sec:triangular,sec:normalisation-enlarged} treat the local prescribed-lattice problem and its normal form for arbitrary spectra.  \Cref{sec:rank-two,sec:forgetting-frame,sec:isomonodromic-lattices,sec:relative-slice} discuss resonant examples, framed descent, isomonodromic variation of the lattice, and the fixed-stratum relative slice.  \Cref{sec:wall-crossing} classifies the oriented rank-two wall for every residue difference and constructs two polynomial rank-three wall charts.  It then proves that every permitted simple wall in arbitrary rank is represented by a relative affine torsor, with a polynomial specialisation map triangular by root height.  The same section computes the adjacent two-wall compatibility factor, the saturation over the dual numbers, the root-subgroup compatibility law in the loop group, and the rank-two affine position of the meromorphic comparison lattice.  \Cref{sec:residual-atoms} develops the branch-residual splitting and its applications to projective varieties.  The generic-spectrum case is due to Kontsevich and Szab\'o~\cite{KS}; all other dependencies are cited where they enter.

\section{Notation and conventions}\label{sec:notation}

We follow the notation of~\cite[\S\S3--5]{KKPYY} whenever the global theory of atoms is involved.  The local complex coordinate is denoted by $z$, while $u$ is reserved for the coordinate used in the theory of $F$-bundles.
\begin{description}[leftmargin=3.2cm,style=multiline]
 \item[$(\cH,\nabla)/S$] an $F$-bundle over a formal, complex analytic or non-Archimedean base $S$;
 \item[$\cH_0$] the intrinsic lattice fibre $\cH/u\cH$;
 \item[$\kappa_{\nabla}$] the residual endomorphism $\left.\nabla_{u^2\partial_u}\right|_{\cH_0}$;
 \item[$\widetilde U_X\to U_X$] the unramified part of the reduced spectral cover of the maximal $A$-model $F$-bundle of $X$;
 \item[$\Atoms_G^{K,F}$] the set of geometric $G$-atomic $F$-bundles of smooth projective $K$-varieties;
 \item[$\CF_G^F(X)$] the corresponding atomic $F$-bundle chemical formula;
 \item[$\RAtoms_G^K$] the set of branch-labelled residual $G$-atoms introduced in \Cref{sec:residual-atoms};
 \item[$\RCF_G(X)$] the residual chemical formula of $X$.
\end{description}
For a set $S$, the symbols $\mathbb N^{(S)}$ and $\mathbb Z^{(S)}$ denote, respectively, the commutative monoid and the abelian group of finitely supported functions on~$S$.  We use $\sqcup$ for a finite disjoint union and $\bigsqcup$ for an indexed disjoint union.

\section{The prescribed-lattice groupoid and compatible logarithms}\label{sec:groupoid}

Let $B\subset\GL_r(\C)$ be the standard upper triangular Borel subgroup, let $U$ be its unipotent radical, and let $\mathfrak b$ and $\mathfrak n$ be their Lie algebras.  We use the gauge action
\begin{equation}\label{eq:gauge-action}
 (d+A\,dz)\cdot g
 =d+\bigl(g^{-1}Ag+g^{-1}g'\bigr)dz.
\end{equation}

\begin{definition}\label{def:groupoid}
Kontsevich and Szab\'o define this groupoid under the assumption $c_i\neq1$ for every $i$.  We use the same definition for an arbitrary ordered vector
\[
 \mathbf c=(c_1,\ldots,c_r)\in(\C^\times)^r.
\]
The enlarged groupoid
\[
 \NSQC^{\mathrm{fr}}_{\mathbf c}
\]
has as objects triples $(E,\nabla,\mathbf v)$ with the following properties.
\begin{enumerate}[(i)]
 \item $E=\cO^{\oplus r}$, with its standard trivialisation.
 \item $\nabla$ is a meromorphic connection on $E$ which is regular singular at~$0$.
 \item In the prescribed lattice $E$, the connection matrix has a pole of order at most two at~$0$.
 \item $\mathbf v=(v_1(1),\ldots,v_r(1))$ is an ordered basis of $E|_{z=1}$.
 \item The monodromy preserves the full flag defined by $\mathbf v$ and has diagonal entries $c_1,\ldots,c_r$ in that basis.
\end{enumerate}
We regard an ordered basis as a row $\mathbf v=(v_1(1),\ldots,v_r(1))$.  A morphism
\[
 g:(E,\nabla,\mathbf v)\longrightarrow(E,\widetilde\nabla,\widetilde{\mathbf v})
\]
is a holomorphic gauge transformation $g\in\GL_r(\cO)$ satisfying
\[
 \widetilde\nabla=\nabla\cdot g,
 \qquad
 \widetilde{\mathbf v}=\mathbf v\,g(1).
\]
This is the convention used in~\cite{KS}: both the connection matrix and the row of framed basis vectors carry the same right action.  Applying first $g$ and then $h$ is therefore the action of $gh$.  In particular, a morphism between two standard-framed representatives satisfies $g(1)=I_r$.
\end{definition}

All quotient calculations below take place after the ordered frame has been changed to the standard row $\mathbf e(1)$.  At that stage a morphism satisfies $g(1)=I_r$.  
Fixing the standard free module causes no loss of generality relative to the formulation of \cite{KS}: every module occurring there is free of rank $r$, and a choice of trivialisation gives an equivalent groupoid.
The full framing eliminates automorphisms: a horizontal holomorphic endomorphism which is the identity at $z=1$ is the identity everywhere by uniqueness for a linear initial-value problem.

\begin{lemma}\label{lem:triangular-log}
Every $T\in B$ admits $R\in\mathfrak b$ such that $e^R=T$.
\end{lemma}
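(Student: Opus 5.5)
We write $T=DV$ with $D=\diag(T_{11},\ldots,T_{rr})\in B$ and $V\in U$. We then produce a logarithm of the semisimple part and a logarithm of the unipotent part that commute, and add them. The clean way is the multiplicative Jordan decomposition $T=T_sT_u$. Since $T_s$ and $T_u$ are polynomials in $T$ and $T$ is upper triangular, both lie in $B$. Moreover $T_u\in U$, because its diagonal is $1$: the diagonal of a polynomial in $T$ is that polynomial applied to the diagonal entries.

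For the unipotent factor we put $N=\log T_u=\sum_{k=1}^{r-1}(-1)^{k+1}(T_u-I)^k/k$. This is a finite sum, since $T_u-I\in\mathfrak n$ is nilpotent. Hence $N\in\mathfrak n$, and $e^N=T_u$ by the formal identity $\exp\log(1+x)=1+x$ in $\C[[x]]$, truncated at $x^r$.

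For the semisimple factor we use the eigenvalues $c_1,\ldots,c_m$ of $T$, which are the distinct diagonal entries and are all non-zero. Choose $\ell_j$ with $e^{\ell_j}=c_j$. By Lagrange interpolation there is a polynomial $p$ with $p(c_j)=\ell_j$ for all $j$. Set $S=p(T_s)$. Because $T_s$ is diagonalisable with eigenvalues $c_j$, we get $e^{S}=T_s$. Also $T_s$ is a polynomial in $T$, so $S$ is a polynomial in $T$ and hence lies in $\mathfrak b$, since $\mathfrak b$ is an associative subalgebra containing $I$ and $T$.

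Finally set $R=S+N$. Both $S$ and $N$ are polynomials in $T$, so they commute. Therefore $e^R=e^Se^N=T_sT_u=T$, and $R\in\mathfrak b$.

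The main point to get right is that the logarithm stays inside $\mathfrak b$. A naive choice, such as diagonalising $T_s$ by an arbitrary conjugation and taking logarithms there, could leave the Borel subalgebra. Expressing everything as polynomials in $T$ avoids this. The only step that needs care is the existence of the interpolating polynomial $p$, which is immediate because the $c_j$ are distinct.
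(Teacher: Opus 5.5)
Your proof is correct, but it takes a different route from the paper's.

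The paper defines $R=\ell(T)$ in one stroke by the holomorphic functional calculus, using a branch $\ell$ of the logarithm on disjoint discs around the eigenvalues. It gets $R\in\mathfrak b$ because every resolvent $(\zeta I_r-T)^{-1}$ is upper triangular, and mentions only in passing that Hermite interpolation puts $R$ in $\C[T]$.

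You instead split $T=T_sT_u$ by Jordan--Chevalley. You take the truncated logarithm series for $\log T_u$ and a Lagrange interpolant for $\log T_s$, then add the two commuting pieces. This needs no contour integrals and no convergence argument. Membership in $\mathfrak b$ comes from $R\in\C[T]$, which is exactly the paper's alternative remark, now made explicit.

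What the paper's version buys is brevity, and a single formula that depends holomorphically on $T$ as long as the spectrum stays in the chosen discs. What yours buys is an elementary, purely algebraic argument that also makes \Cref{rem:log-constraint} transparent. Since $R$ is a polynomial in $T$, any two equal diagonal entries of $T$ automatically receive equal diagonal entries of $R$.

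Your opening factorisation $T=DV$ is never used and should be dropped. On its own it would not work, since $D$ and $V$ need not commute.
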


\begin{proof}
Write the set of distinct eigenvalues of \(T\) as \(\{\sigma_1,\ldots,\sigma_s\}\).  Choose pairwise disjoint simply connected neighbourhoods \(D_1,\ldots,D_s\subset\C^\times\), with \(\sigma_\nu\in D_\nu\), and choose a holomorphic branch \(\ell_\nu\) of the logarithm on each \(D_\nu\).  Let \(D=\bigsqcup_{\nu=1}^{s} D_\nu\) and let \(\ell\) denote the resulting holomorphic function on~\(D\).
The holomorphic functional calculus defines
\[
 R=\ell(T)=\frac{1}{2\pi\ii}
 \int_\Gamma \ell(\zeta)(\zeta I_r-T)^{-1}\,d\zeta,
\]
where \(\Gamma\subset D\) is a union of positively oriented contours surrounding the eigenvalues.  Since \(T\) is upper triangular, every resolvent \((\zeta I_r-T)^{-1}\) is upper triangular; hence \(R\in\mathfrak b\).  The functional-calculus identity
\[
 \exp(\ell(T))=(\exp\circ\ell)(T)=T
\]
gives \(e^R=T\).  Equivalently, one may use Hermite interpolation to replace \(\ell\) by a polynomial having the same derivatives at the eigenvalues up to the sizes of the Jordan blocks; this also shows directly that \(R\) belongs to the finite-dimensional algebra \(\C[T]\subset\mathfrak b\).  This is the matrix-logarithm step in Levelt's regular-singular reduction~\cite{Levelt,Deligne,Sabbah}.
\end{proof}

\begin{remark}\label{rem:log-constraint}
The lemma does not say that the diagonal entries of a logarithm may be chosen independently.  For example, let
\[
 T=c\begin{pmatrix}1&t\\0&1\end{pmatrix},
 \qquad t\neq0.
\]
For
\[
 R=\begin{pmatrix}a&b\\0&d\end{pmatrix},
 \qquad a\neq d,
\]
one has
\[
 (e^R)_{12}=b\,\frac{e^a-e^d}{a-d}.
\]
If the diagonal entries differ by a non-zero integral multiple of $2\pi\ii$ and have the same exponential, this upper-right entry vanishes.  Such a logarithm therefore cannot exponentiate to the displayed Jordan matrix.  The triangular reduction only requires one compatible logarithm.  The residue strata used later arise from the resulting lattice modification and are not defined by arbitrarily prescribing a logarithm of the full monodromy.
\end{remark}

\section{Framed Borel rigidity and triangular reduction}\label{sec:triangular}

We begin with a lattice-theoretic triangular reduction which is intrinsic to the monodromy flag and the prescribed lattice.

\begin{proposition}\label{prop:triangular-reduction}
Every object of $\NSQC^{\mathrm{fr}}_{\mathbf c}$ is isomorphic to a standard-framed connection
\begin{equation}\label{eq:triangular-expansion}
 d+\left(\sum_{n\geq-2}D_nz^n\right)dz,
 \qquad
 D_{-2}\in\mathfrak n,
 \quad D_n\in\mathfrak b\quad(n\geq-1),
\end{equation}
where the Laurent series converges on~$\Delta$.
No condition is imposed on repetitions among the $c_i$, and the eigenvalue~$1$ is allowed.
\end{proposition}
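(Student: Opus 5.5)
The plan is to follow the lattice-theoretic route announced in the introduction: extend the monodromy-invariant flag to flat meromorphic subbundles, intersect them with the prescribed lattice, saturate, and read off a triangular holomorphic frame. Let $(E,\nabla,\mathbf v)$ be an object and let $0=V_0\subset V_1\subset\cdots\subset V_r=E|_{z=1}$ be the full flag with $V_k=\langle v_1(1),\ldots,v_k(1)\rangle$. The flag is monodromy-invariant, so parallel transport of $V_k$ over the punctured disc defines horizontal local subsystems $\mathbb L_k\subset\ker\nabla$ of rank $k$. By the regular-singular theory (Deligne's extension, via \Cref{lem:triangular-log}), each $\mathbb L_k$ generates a flat meromorphic subbundle $\cF_k\subset E(*0)$. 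Concretely, I would choose a logarithm $R\in\mathfrak b$ of the monodromy matrix in the basis $\mathbf v$ using \Cref{lem:triangular-log}. The horizontal frame $Y(z)=\Phi(z)z^{R'}$, where $R'=-R/2\pi\ii$ fits the horizontal convention, has a first factor $\Phi$ that is single-valued and meromorphic at $0$ by regular singularity. Its first $k$ columns span $\cF_k$ over $\cO(*0)$, and $\cF_k$ is preserved by $\nabla$ because $R'$ is upper triangular.

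Next, put $\cE_k=\cF_k\cap E$. Since $\cF_k$ is a subbundle of $E(*0)$ and $E$ is a lattice, $\cE_k$ is automatically saturated in $E$: if $f\cdot s\in\cE_k$ with $s\in E$, then $s\in\cF_k$. So $\cE_1\subset\cdots\subset\cE_r=E$ is a flag of holomorphic subbundles of ranks $1,\ldots,r$ over the disc, because on a disc a saturated submodule of a free module is a free direct summand. I would then choose a holomorphic frame $e_1,\ldots,e_r$ of $E$ adapted to this filtration, normalised so that $e_j(1)=v_j(1)$. Such a frame exists because $\cE_k|_{z=1}=V_k$: the fibre of $\cF_k$ at $1$ is $V_k$, and saturation makes the fibre of $\cE_k$ equal to it. The adapted frame can be modified by a holomorphic upper-triangular gauge to match $\mathbf v$ at $z=1$, since any two adapted bases at $1$ differ by an element of $B$, which extends as a constant. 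The gauge $g=(e_1,\ldots,e_r)\in\GL_r(\cO)$ then defines a morphism to a standard-framed object.

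In the new frame each $\cE_k$ is stable under $\nabla_{\partial_z}$, up to poles: $\nabla$ maps $\cF_k$ to $\cF_k\otimes\Omega(*0)$ and $E$ to $E\otimes\Omega(2\cdot0)$. It follows that $\nabla e_j\in(\cF_j\cap E(2\cdot0))\,dz=\cE_j(2\cdot 0)\,dz$, where saturation is used again, so the connection matrix $A$ is upper triangular with a pole of order at most two. Its Laurent coefficients satisfy $D_n\in\mathfrak b$, and convergence on $\Delta$ holds because the gauge is holomorphic on $\Delta$.

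The remaining and most delicate point is $D_{-2}\in\mathfrak n$, that is, the diagonal entries $A_{jj}$ have at most a simple pole. Here $A_{jj}$ is the connection form of the rank-one graded piece $\cE_j/\cE_{j-1}$, which is a regular-singular rank-one meromorphic connection $d+a(z)dz$ with holomorphic trivialisation. A rank-one connection with a genuine double pole $a\sim c z^{-2}$ has horizontal sections $\exp(c/z+\cdots)$ and is irregular. Regular singularity passes to subquotients, because moderate growth is inherited, so $a$ has at most a simple pole. I expect the main obstacle to be the saturation and fibre-at-$1$ bookkeeping, especially checking that the graded pieces inherit regular singularity with the lattice-induced trivialisation. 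No eigenvalue hypothesis enters anywhere, which gives the last sentence of \Cref{prop:triangular-reduction}.
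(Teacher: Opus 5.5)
Your proposal is correct and follows the paper's proof essentially step for step. The steps are: meromorphic flat extension of the monodromy-invariant flag, intersection with the prescribed lattice, an adapted holomorphic frame equal to $\mathbf v$ at $z=1$, upper triangularity from $\nabla$-stability of the $\cF_k$, and the simple-pole bound on the diagonal from regular singularity of the rank-one graded pieces. The differences are only cosmetic: you note, correctly, that $\cF_k\cap E$ is already saturated because $\cF_k$ is an $\cO(*0)$-module, and you fix the frame at $z=1$ by one constant element of $B$ instead of the paper's inductive lifting through the split extensions.
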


\begin{proof}
Let
\[
 0=F_0(1)\subset F_1(1)\subset\cdots\subset F_r(1)=E|_{z=1}
\]
be the full flag determined by the ordered frame.  Its monodromy invariance gives, by parallel transport, a filtration by flat subbundles
\[
 F_i^\circ\subset E|_{\Delta^\times}.
\]
Since the connection is regular singular, each $F_i^\circ$ determines a unique meromorphic $\nabla$-stable subbundle
\[
 F_i\subset E(*0).
\]
After choosing a logarithmic model, the corresponding sub-local system extends to a meromorphic subconnection; uniqueness follows because two such extensions agree on the punctured disc.  Equivalently, one may choose an upper triangular logarithmic model of the monodromy and transport its standard flag.  This step uses neither distinctness of the eigenvalues nor exclusion of the eigenvalue~$1$.
Inside the prescribed lattice define
\[
 E_i=(E\cap F_i)^{\mathrm{sat}}\subset E,
\]
where saturation is taken in $E$.  The intersection is coherent: locally, after multiplying $F_i$ by a sufficiently large power of $z$, both $E$ and $z^NF_i$ are coherent submodules of the same free meromorphic module, and their intersection is coherent over the Noetherian ring $\cO_{\Delta,0}$.  On the smooth curve $\Delta$, every saturated coherent subsheaf of a vector bundle is locally free, and each quotient $E_i/E_{i-1}$ is a line bundle.  Saturation changes nothing on $\Delta^\times$, so $E_i(1)=F_i(1)$.  Each exact sequence
\[
 0\longrightarrow E_{i-1}\longrightarrow E_i
 \longrightarrow E_i/E_{i-1}\longrightarrow0
\]
splits holomorphically because
\[
 \operatorname{Ext}^1(E_i/E_{i-1},E_{i-1})
 \cong H^1\!\left(\Delta,
 E_{i-1}\otimes(E_i/E_{i-1})^{-1}\right)=0.
\]
Since $\Delta$ is Stein and contractible, $\operatorname{Pic}(\Delta)=0$; hence each quotient $E_i/E_{i-1}$ is holomorphically trivial.  Choose the splittings inductively.  At the $i$-th stage, choose a nowhere-vanishing section of $E_i/E_{i-1}$ whose value at~$1$ is the image of $v_i(1)$, lift it through the chosen splitting, and modify the lift by a global section of $E_{i-1}$ so that its value at~$1$ is $v_i(1)$.  Such a correction exists because the preceding splittings identify $E_{i-1}$ with a direct sum of trivial line bundles, so evaluation at~$1$ is surjective.  This produces a holomorphic frame $w_1,\ldots,w_r$ of $E$, adapted to $E_\bullet$ and equal to the prescribed ordered frame at~$1$.  The resulting representative is therefore standard-framed.

The matrix of $\nabla$ in the frame $w_1,\ldots,w_r$ is upper triangular on $\Delta^\times$, because every $F_i$ is meromorphically $\nabla$-stable.  Its entries are meromorphic on the disc, so the strictly lower triangular entries, which vanish on the punctured disc, vanish identically.  Because the change of frame is holomorphic in the prescribed lattice, the pole order remains at most two.

It remains to examine the diagonal.  The $i$-th diagonal entry is the connection form induced on the rank-one meromorphic quotient $F_i/F_{i-1}$, expressed in the lattice $E_i/E_{i-1}$.  Regular singularity is stable under subquotients~\cite[Chap.~II]{Sabbah}.  A rank-one regular-singular connection has at most a simple pole in every lattice, since any two rank-one lattices differ by multiplication by an integral power of~$z$, which changes the connection form only by an integral multiple of $dz/z$.  Therefore the diagonal coefficient of $z^{-2}$ vanishes.  Consequently $D_{-2}\in\mathfrak n$, while $D_n\in\mathfrak b$ for every $n\geq-1$, as claimed.
\end{proof}

\begin{lemma}\label{lem:Borel-rigidity}
Let
\[
 \nabla=d+A(z)\,dz,
 \qquad
 \widetilde\nabla=d+\widetilde A(z)\,dz
\]
be meromorphic connections whose matrices take values in $\mathfrak b$ on the punctured disc.  If $g\in\GL_r(\cO)$ satisfies
$
 \widetilde\nabla=\nabla\cdot g,
$ and $
 g(1)=I_r,
$
then $g$ takes values in~$B$.
\end{lemma}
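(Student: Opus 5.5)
The plan is to show that the image of $g$ in $\mathfrak{gl}_r/\mathfrak b$ (its strictly lower triangular block structure relative to the flag) satisfies a homogeneous linear ODE on $\Delta^\times$ with zero initial value at $z=1$, exactly as announced in the introduction. Rewrite the gauge relation as
\[
 g'=Ag-g\widetilde A,
\]
valid on $\Delta^\times$, with $A,\widetilde A$ taking values in $\mathfrak b$ there. Let $\cF_\bullet$ be the standard flag, $\cF_k=\langle e_1,\dots,e_k\rangle$. The statement $g\in B$ is equivalent to $g(\cF_k)\subset\cF_k$ for every $k$, i.e.\ to the vanishing of the compositions
\[
 \pi_k\circ g|_{\cF_k}:\cF_k\to\C^r/\cF_k,
\]
where $\pi_k$ is the quotient map.

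Fix $k$ and write $g$ in block form with respect to $\C^r=\cF_k\oplus\langle e_{k+1},\dots,e_r\rangle$:
\[
 g=\begin{pmatrix}g_{11}&g_{12}\\ g_{21}&g_{22}\end{pmatrix},\qquad
 A=\begin{pmatrix}A_{11}&A_{12}\\ 0&A_{22}\end{pmatrix},\qquad
 \widetilde A=\begin{pmatrix}\widetilde A_{11}&\widetilde A_{12}\\ 0&\widetilde A_{22}\end{pmatrix}.
\]
Here the lower-left blocks of $A,\widetilde A$ vanish because these matrices lie in $\mathfrak b$. Reading off the $(2,1)$ block of the gauge equation gives
\[
 g_{21}'=A_{22}g_{21}-g_{21}\widetilde A_{11},
\]
a homogeneous linear first-order ODE in the matrix unknown $g_{21}$. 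Its coefficients are holomorphic on $\Delta^\times$, so it is a linear ODE on a connected domain containing $1$. Since $g(1)=I_r$, we have $g_{21}(1)=0$. Uniqueness for linear initial-value problems along paths in $\Delta^\times$ (which is connected) gives $g_{21}\equiv0$ on $\Delta^\times$, and by continuity on $\Delta$ as well, since $g$ is holomorphic. Running over all $k=1,\dots,r-1$ shows that every strictly lower triangular entry of $g$ vanishes, so $g$ is upper triangular. Invertibility of $g$ then forces the diagonal entries to be nowhere zero, so $g$ takes values in $B$.

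The only point needing care is the block computation itself. One must check that the $(2,1)$ block of $Ag-g\widetilde A$ involves only $g_{21}$ and not $g_{11},g_{22}$. The relevant blocks are
\[
 (Ag)_{21}=A_{22}g_{21},\qquad (g\widetilde A)_{21}=g_{21}\widetilde A_{11}+g_{22}\cdot 0 .
\]
Both use exactly the vanishing of the lower-left blocks of $A$ and $\widetilde A$, so the equation for $g_{21}$ is indeed homogeneous in $g_{21}$. The one subtlety to flag is that the ODE has a singularity at $0$. This is why uniqueness is applied on the punctured disc, starting from the base point $1$, and then extended to $z=0$ by holomorphy of $g$. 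No hypothesis on the spectrum enters.
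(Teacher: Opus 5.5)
Your argument is correct and is essentially the paper's proof. The paper projects the gauge equation to $\mathfrak{gl}_r/\mathfrak b$ in one step, using that $X\mapsto X\widetilde A-AX$ preserves $\mathfrak b$; you carry out the same projection block by block for each standard parabolic, and both arguments conclude by uniqueness for the homogeneous linear initial-value problem at $z=1$, continuation over $\Delta^\times$, and holomorphy of $g$ at the origin. With the convention \eqref{eq:gauge-action} the equation reads $g'=g\widetilde A-Ag$, the negative of what you wrote, but this does not affect the homogeneity of the equation for $g_{21}$.
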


\begin{proof}
The gauge identity is equivalent to
\begin{equation}\label{eq:gauge-ode}
 g'=g\widetilde A-Ag.
\end{equation}
For each $z\neq0$, the linear operator
\[
 X\longmapsto X\widetilde A(z)-A(z)X
\]
preserves $\mathfrak b$, because products of upper triangular matrices are upper triangular.  It therefore induces a meromorphic linear operator $\overline{\mathcal L}(z)$ on $\mathfrak{gl}_r/\mathfrak b$.  If $\bar g$ denotes the class of $g$, projection of~\eqref{eq:gauge-ode} gives
\[
 \bar g'=\overline{\mathcal L}(z)\bar g,
 \qquad
 \bar g(1)=0.
\]
Uniqueness for this homogeneous linear initial-value problem implies $\bar g=0$ near~$1$.  Analytic continuation on the connected punctured disc gives $\bar g=0$ everywhere on $\Delta^\times$.  Equivalently, every strictly lower triangular entry of $g$ vanishes on $\Delta^\times$; these entries are holomorphic on $\Delta$, hence vanish identically.  Since $g$ is holomorphic and invertible, its diagonal entries are holomorphic units.  Thus $g:\Delta\to B$.
\end{proof}

\begin{corollary}\label{cor:triangular-subgroupoid}
The full subgroupoid of standard-framed upper triangular connections of the form \eqref{eq:triangular-expansion} is essentially surjective in $\NSQC^{\mathrm{fr}}_{\mathbf c}$.  Every morphism between two of its objects is a holomorphic $B$-valued gauge equal to $I_r$ at $z=1$.
\end{corollary}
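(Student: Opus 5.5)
The corollary is a direct combination of \Cref{prop:triangular-reduction} and \Cref{lem:Borel-rigidity}, and I would prove its two assertions separately.

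For essential surjectivity, take an arbitrary object $(E,\nabla,\mathbf v)$ of $\NSQC^{\mathrm{fr}}_{\mathbf c}$. \Cref{prop:triangular-reduction} supplies an isomorphism to a standard-framed connection of the form \eqref{eq:triangular-expansion}. Concretely, this is the gauge $g$ whose columns form the adapted frame $w_1,\ldots,w_r$, with $g(1)$ equal to the matrix of $\mathbf v$. By \Cref{def:groupoid}, $g$ is then a morphism from $(E,\nabla,\mathbf v)$ to $(E,\nabla\cdot g,\mathbf e(1))$, because $\mathbf v\,g(1)^{-1}$ is the standard row. Here one must take care with the direction of the right action, using $g^{-1}$ if necessary. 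The target lies in the stated full subgroupoid, which proves the first claim.

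For the description of morphisms, let $g$ be a morphism between two objects of the subgroupoid. Both frames are the standard row, so the framing condition $\mathbf e(1)=\mathbf e(1)\,g(1)$ forces $g(1)=I_r$. Both connection matrices are of the form \eqref{eq:triangular-expansion}, and so take values in $\mathfrak b$ on $\Delta^\times$. Hence \Cref{lem:Borel-rigidity} applies directly and shows that $g$ is a holomorphic $B$-valued gauge. Conversely, any such $g$ that intertwines the two connections is a morphism, by the definition of a full subgroupoid.

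I expect no real obstacle. The only point needing care is the bookkeeping of the right-action convention for frames in the first step, so that the reframed representative really carries the standard row $\mathbf e(1)$.
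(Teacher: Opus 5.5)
Your proposal is correct and follows the paper's proof: essential surjectivity is exactly \Cref{prop:triangular-reduction}, and for morphisms the two standard framings force $g(1)=I_r$, after which \Cref{lem:Borel-rigidity} gives $B$-valuedness. Your extra bookkeeping about the direction of the frame action is not needed, because \Cref{prop:triangular-reduction} already asserts the isomorphism to a standard-framed representative.
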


\begin{proof}
Essential surjectivity is \Cref{prop:triangular-reduction}.  A morphism between standard-framed representatives satisfies $g(1)=I_r$ and is $B$-valued by \Cref{lem:Borel-rigidity}.
\end{proof}

At this point the exclusion of the eigenvalue~$1$ becomes unnecessary in the fully framed problem.  The proof of~\cite[Lemma~4.2]{KS} uses a residue obstruction to show triangularity of the transported gauge.  The framed initial condition already forces the same conclusion without any restriction on the spectrum.

\begin{proposition}\label{prop:residue-map}
There is a canonical essentially surjective functor
\[
 \mathbf{res}_{\Delta}:
 \NSQC^{\mathrm{fr}}_{\mathbf c}
 \longrightarrow
 \left(\prod_{i=1}^r\Logm(c_i)\right)_{\mathrm{disc}},
\]
defined on objects by the ordered diagonal residues of any standard-framed upper triangular representative and on every morphism by the identity of that residue vector.
\end{proposition}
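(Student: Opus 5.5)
We must show three things: the diagonal residues of a standard-framed upper triangular representative lie in $\prod_i\Logm(c_i)$; they do not depend on the representative, so that every morphism goes to an identity; and every residue vector is attained.

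\emph{Residues are logarithms of the $c_i$.} By \Cref{prop:triangular-reduction}, every object is isomorphic to a connection of the form \eqref{eq:triangular-expansion}. Its $i$-th diagonal entry $a_i(z)\,dz$ has at most a simple pole, since the diagonal of $D_{-2}$ vanishes. Put $\mu_i=\res_0 a_i$. The $i$-th graded quotient $F_i/F_{i-1}$ is a rank-one connection $d+a_i\,dz$. With the horizontal convention, its monodromy is $\exp\bigl(-\oint a_i\,dz\bigr)=e^{-2\pi\ii\mu_i}$. The monodromy preserves the flag of the frame, and the frame is the adapted frame at~$1$, so the $i$-th diagonal entry of the monodromy in $\mathbf v$ is the monodromy of $F_i/F_{i-1}$. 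That entry is $c_i$, hence $\mu_i\in\Logm(c_i)$.

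\emph{Independence of the representative.} Let two such representatives be related by a morphism. Both are standard-framed, so $g(1)=I_r$, and by \Cref{cor:triangular-subgroupoid} $g$ is a holomorphic $B$-valued gauge. The diagonal part of $g^{-1}Ag+g^{-1}g'$ is computed entrywise: for upper triangular matrices the diagonal of a product is the product of the diagonals. Writing $g_{ii}=\gamma_i\in\cO^\times$, this gives
\[
 \widetilde a_i=a_i+\gamma_i'/\gamma_i .
\]
Since $\gamma_i$ is a holomorphic unit, $\gamma_i'/\gamma_i$ is holomorphic, so $\res_0\widetilde a_i=\res_0a_i$. Composites and identities are therefore sent to identities, and the assignment is a functor to the discrete groupoid. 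It does not depend on which representative is chosen: two choices for the same object are related by an isomorphism in the triangular subgroupoid, namely the composite of the reduction isomorphisms, and the argument above applies to it. For canonicity one can also note that $\mu_i$ is the residue of the rank-one subquotient $F_i/F_{i-1}$ in the lattice $E_i/E_{i-1}$. Both are intrinsically attached to $(E,\nabla,\mathbf v)$ by the construction in the proof of \Cref{prop:triangular-reduction}.

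\emph{Essential surjectivity.} Given $\boldsymbol\mu\in\prod_i\Logm(c_i)$, take $\nabla=d+M_{\boldsymbol\mu}\,dz/z$ with the standard frame. Its monodromy is $\diag(e^{-2\pi\ii\mu_i})=\diag(c_i)$, which preserves the standard flag. It has a simple pole, so it is regular singular with pole order at most two, and its residue vector is $\boldsymbol\mu$.

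The main obstacle is the first step. One must check that the diagonal entries of the monodromy in the basis $\mathbf v$ are exactly the monodromies of the rank-one graded pieces, with the correct sign and ordering under the right-action convention. One must also check that the diagonal entry of the triangular connection matrix really computes the graded connection. The second point holds because the adapted frame $w_\bullet$ induces generators of $E_i/E_{i-1}$, and $\nabla w_i\equiv a_iw_i$ modulo $E_{i-1}$.
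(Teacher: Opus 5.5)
Your proof is correct and follows essentially the paper's argument: framed Borel rigidity makes any morphism between standard-framed triangular representatives $B$-valued, so each diagonal coefficient changes by $d\log g_{ii}$ with zero residue; the diagonal scalar equation then places the residues in $\prod_i\Logm(c_i)$; and $d+M_{\boldsymbol\mu}\,dz/z$ gives essential surjectivity. You only reorder these steps and spell out, more explicitly than the paper does, why the diagonal monodromy entries in the frame $\mathbf v$ are the monodromies of the rank-one graded pieces.
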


\begin{proof}
Existence of a standard-framed upper triangular representative follows from \Cref{cor:triangular-subgroupoid}.  Suppose that two such representatives are isomorphic in the original groupoid.  The induced morphism is $B$-valued by \Cref{lem:Borel-rigidity}.  If its diagonal entries are $g_{ii}$, then the $i$-th diagonal connection coefficient changes by $d\log g_{ii}$.  Since $g_{ii}$ is a holomorphic unit on the disc, this logarithmic derivative has residue zero.  The ordered diagonal residue vector is therefore invariant under every morphism and independent of all choices made in the triangular reduction.  This defines the stated functor to the discrete groupoid.

For an upper triangular regular-singular system, the diagonal scalar equation shows that a diagonal residue $\mu_i$ has monodromy $e^{-2\pi\ii\mu_i}$ in our convention.  Hence the image lies in $\prod_i\Logm(c_i)$.  Conversely, every vector in that product is realised by the standard-framed diagonal connection
\[
 d+M_{\boldsymbol\mu}z^{-1}\,dz.
\]
Thus the functor is essentially surjective.
\end{proof}

The functor $\mathbf{res}_{\Delta}$ is not obtained by taking the residue of the connection matrix in the original prescribed lattice, whose pole may have order two.  It records the ordered diagonal residue after framed triangular reduction.  \Cref{prop:residue-map} asserts that this datum is independent of every choice made in the reduction.

\section{Normal forms for arbitrary spectra}\label{sec:normalisation-enlarged}

Once framed Borel rigidity has been established, the remaining normalisation is the rootwise scalar quotient and height-filtered unipotent induction of Kontsevich and Szab\'o~\cite[Proposition~4.3 and Lemma~4.4]{KS}.  We give the analytic calculation in full because the resonant monomial, the framed condition at \(z=1\), and base-change compatibility are all used later.  The scalar cokernel and the two coordinates per root originate in~\cite{KS}; here the same slice is shown to compute the original fully framed groupoid for every spectrum.

\subsection{Diagonal normalisation and residue strata}\label{sec:diagonal}

Let
\[
 \nabla=d+A(z)\,dz
\]
be a standard-framed upper triangular representative.  Denote the diagonal entries of $A$ by $a_i(z)$ and put
\[
 \mu_i=\res_0\bigl(a_i(z)\,dz\bigr),
 \qquad
 M_{\boldsymbol\mu}=\diag(\mu_1,\ldots,\mu_r).
\]

\begin{lemma}\label{lem:diagonal-normalisation}
There is a unique diagonal holomorphic gauge $h:\Delta\to(\C^\times)^r$ satisfying $h(1)=I_r$ for which the diagonal part of the transformed connection is
\[
 \frac{M_{\boldsymbol\mu}}{z}\,dz.
\]
Moreover, every framed $B$-valued gauge between two upper triangular representatives with this fixed diagonal is $U$-valued.
\end{lemma}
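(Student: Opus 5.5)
The plan is to treat the diagonal entries one at a time, since diagonal gauges act on the diagonal of an upper triangular connection entry by entry. For a diagonal gauge $h=\diag(h_1,\ldots,h_r)$ the gauge formula \eqref{eq:gauge-action} gives
\[
 (h^{-1}Ah)_{ii}+h_i^{-1}h_i'=a_i+h_i'/h_i ,
\]
and the off-diagonal entries are only rescaled, $A_{ij}\mapsto h_i^{-1}A_{ij}h_j$, so upper triangularity is preserved. By \Cref{prop:triangular-reduction} the diagonal of $D_{-2}$ vanishes, so each $a_i$ has at most a simple pole. Hence $a_i-\mu_i/z$ is holomorphic on $\Delta$.

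For existence, I will solve $h_i'/h_i=\mu_i/z-a_i$ with $h_i(1)=1$. The right-hand side is holomorphic on the simply connected disc $\Delta$, so
\[
 h_i(z)=\exp\Bigl(\int_1^z\bigl(\mu_i/\zeta-a_i(\zeta)\bigr)\,d\zeta\Bigr)
\]
is a well-defined nowhere-vanishing holomorphic function with $h_i(1)=1$. Since $h(1)=I_r$, the transformed representative is still standard-framed.

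For uniqueness, suppose $h$ and $\tilde h$ both work. Then $k=h^{-1}\tilde h$ is diagonal, holomorphic, satisfies $k(1)=I_r$, and carries the diagonal $M_{\boldsymbol\mu}/z$ to itself. This forces $k_i'/k_i=0$, so each $k_i$ is constant and equal to $k_i(1)=1$.

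For the final assertion, let $g$ be a framed $B$-valued gauge between two upper triangular representatives with diagonal $M_{\boldsymbol\mu}/z$. Because $B$ is a group of upper triangular matrices, the diagonal of $g^{-1}Ag+g^{-1}g'$ equals the diagonal of $A$ plus $\diag(g_{ii}'/g_{ii})$. The strictly upper parts of $g$ and $g^{-1}$ do not contribute, since their products with upper triangular matrices have the same diagonal as the product of the diagonal parts. So $g_{ii}'=0$, and $g_{ii}(1)=1$ gives $g_{ii}\equiv1$. Thus $g$ is $U$-valued.

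There is no serious obstacle. The only points needing care are the single-valuedness of $h_i$, which relies on the residue of $\mu_i/z-a_i$ being zero, so the integral is path independent, and the diagonal bookkeeping for triangular products.
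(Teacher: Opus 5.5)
Your proposal is correct and follows essentially the same route as the paper: you solve the scalar equations $h_i'/h_i=\mu_i/z-a_i$ with $h_i(1)=1$ by exponentiating a holomorphic primitive on the simply connected disc, obtain uniqueness from the vanishing logarithmic derivative of $h^{-1}\tilde h$, and compare diagonal entries of the gauge equation to force $g_{ii}'=0$. Your explicit check that the diagonal of a product of upper triangular matrices is the product of the diagonals is a step the paper leaves implicit.
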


\begin{proof}
By \Cref{prop:triangular-reduction}, each $a_i(z)\,dz$ has at most a simple pole, and
\[
 a_i(z)-\frac{\mu_i}{z}
\]
is holomorphic on~$\Delta$.  The scalar differential equation
\[
 \frac{h_i'}{h_i}=\frac{\mu_i}{z}-a_i(z),
 \qquad h_i(1)=1,
\]
has a unique nowhere-vanishing holomorphic solution, since the integrand below is holomorphic on the simply connected disc, so the integral is path-independent and
\begin{equation}\label{eq:diagonal-gauge}
 h_i(z)=\exp\!\left(\int_1^z
 \left(\frac{\mu_i}{\zeta}-a_i(\zeta)\right)d\zeta\right).
\end{equation}
Taking $h=\diag(h_1,\ldots,h_r)$ proves existence.  If $\widetilde h$ is another such gauge, then each diagonal entry of $\widetilde h h^{-1}$ has zero logarithmic derivative and equals~$1$ at $z=1$; hence $\widetilde h=h$.

Now let $g$ be a framed $B$-valued gauge between two representatives whose diagonal part is $M_{\boldsymbol\mu}dz/z$.  Comparing the $i$-th diagonal entries in the gauge equation gives
\[
 \frac{g_{ii}'}{g_{ii}}=0.
\]
Thus each $g_{ii}$ is constant, and $g(1)=I_r$ forces $g_{ii}=1$.  Hence $g$ is $U$-valued.
\end{proof}

The diagonal gauge need not be polynomial, even when the initial matrix has a convergent Laurent expansion of the form~\eqref{eq:triangular-expansion}.  The natural analytic coefficient space after diagonal normalisation is therefore
\begin{equation}\label{eq:A-mu}
 \cA_{\boldsymbol\mu}
 =\left\{
 d+\left(\frac{M_{\boldsymbol\mu}}{z}+Q(z)\right)dz:
 Q\in\mathfrak n\otimes z^{-2}\cO
 \right\}.
\end{equation}
The analytic unipotent group
\begin{equation}\label{eq:U1}
 \cU_1=\{g\in U(\cO):g(1)=I_r\}
\end{equation}
acts on $\cA_{\boldsymbol\mu}$.  By \Cref{lem:Borel-rigidity,lem:diagonal-normalisation}, it is the full framed gauge group relating elements of this fixed-diagonal space.

\begin{lemma}\label{lem:analytic-regular}
Every connection in $\cA_{\boldsymbol\mu}$ is regular singular at the origin.
\end{lemma}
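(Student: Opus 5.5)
The plan is to show directly that every connection $\nabla=d+(M_{\boldsymbol\mu}/z+Q(z))\,dz$ in $\cA_{\boldsymbol\mu}$ is meromorphically gauge equivalent to a connection with at most a simple pole. Such a connection is regular singular by Fuchs' criterion \cite[Chap.~II]{Sabbah}. The strategy is to exploit the strictly upper triangular structure: the system is an iterated extension of rank-one logarithmic connections. Regular singularity is stable under extensions, since a horizontal frame can be obtained by successive variation of constants with moderate growth.

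Concretely, I would construct a fundamental solution by induction on the size of the triangular system. Write the horizontal equation $Y'=-(M_{\boldsymbol\mu}/z+Q)Y$ and solve row by row from the bottom. The diagonal part gives the solutions $z^{-\mu_i}$. Each off-diagonal entry then satisfies an inhomogeneous scalar equation
\[
 y'+\frac{\mu_j}{z}y=-\sum_{k>j}Q_{jk}(z)\,y_k,
\]
whose right-hand side is a finite sum of terms $z^{-\mu}(\log z)^m f(z)$, with $f$ meromorphic and having a pole of order at most two. Variation of constants gives
\[
 y=z^{-\mu_j}\int z^{\mu_j}(\text{rhs})\,dz,
\]
and it remains to check that a primitive of $z^{\nu}(\log z)^m f(z)$, with $f$ meromorphic, is again of the form $z^{\nu}\sum_{m'\le m+1}(\log z)^{m'}g_{m'}(z)$ with $g_{m'}$ meromorphic.

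That check is done termwise on the Laurent expansion. One integrates $z^{\nu+n}(\log z)^m$ explicitly, which produces an extra logarithm exactly when $\nu+n=-1$. On the annular region, the holomorphic tail of $f$ integrates to a convergent series. Thus every entry of a fundamental solution has moderate growth in sectors, giving regular singularity by the growth characterisation. Alternatively, one obtains an explicit meromorphic gauge to a logarithmic form $d+R\,dz/z$ with $R$ upper triangular constant, namely $Y=P(z)z^{-R}$ with $P$ meromorphic.

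The main obstacle is the convergence and bookkeeping in the integration step: the primitive must be meromorphic at $0$, not merely formal. To handle it, I would split $f$ into its finitely many polar terms, which are integrated by hand, and a holomorphic part on $\Delta$. The holomorphic part's contribution is handled by the standard estimate that $\int_0^z\zeta^{k}(\log\zeta)^m h(\zeta)\,d\zeta$ for holomorphic $h$ remains of moderate growth, and is written as $z^{k+1}$ times a polynomial in $\log z$ with holomorphic coefficients. This follows by repeated integration by parts, reducing the power of the logarithm.

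Since the paper uses only the moderate-growth criterion, I would in fact present the proof as a moderate-growth estimate on the inductively constructed fundamental matrix, rather than insisting on the meromorphic gauge.
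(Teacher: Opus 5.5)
Your proposal is correct and follows essentially the same route as the paper: solve the triangular system from the last row upward by variation of constants, show inductively that the upper triangular fundamental matrix (with diagonal entries $z^{-\mu_i}$) has moderate growth on sectors, and apply the sectorial moderate-growth criterion. The paper avoids your explicit $z^{\nu}(\log z)^m$ bookkeeping, and with it the convergence issue you flag, by a cruder estimate: on a fixed subsector the right-hand side of $u_j'=-\sum_{i>j}z^{\mu_j}q_{ji}y_i$ is bounded by $C|z|^{-N}$, so its integral from a fixed non-zero point grows at most polynomially in $|z|^{-1}$; the paper also checks that the inverse fundamental matrix has moderate growth, by triangular inversion.
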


\begin{proof}
Fix a closed subsector $S$ of a sufficiently small sector at the origin and choose branches of the functions $z^{-\mu_i}$ on it.  We solve the horizontal system from the last component upwards.  The last component is a constant multiple of $z^{-\mu_r}$ and therefore has at most polynomial growth on~$S$.

Assume inductively that the components with index greater than $j$ have at most polynomial growth.  Write the $j$-th equation as
\[
 y_j'+\mu_j z^{-1}y_j
 =-\sum_{i>j}q_{ji}(z)y_i,
 \qquad q_{ji}\in z^{-2}\cO.
\]
After setting $y_j=z^{-\mu_j}u_j$, one obtains
\[
 u_j'
 =-\sum_{i>j}z^{\mu_j}q_{ji}(z)y_i.
\]
On the fixed subsector, the right-hand side is bounded by $C|z|^{-N}$ for some $C,N>0$.  Its integral from a fixed non-zero point to $z$ is bounded by a power of $|z|^{-1}$, with the usual logarithmic bound when $N=1$.  Multiplication by $z^{-\mu_j}$ preserves polynomial growth on~$S$.  This proves the inductive step.  Choosing the diagonal initial constants successively produces an upper triangular fundamental matrix whose diagonal entries are the non-zero functions $z^{-\mu_i}$ and whose off-diagonal entries have moderate growth.  Its inverse also has moderate growth: this follows recursively from triangular inversion, since the diagonal entries are powers of~$z$ on the chosen sector.  The sectorial moderate-growth criterion for regular singularity, in the form recalled in~\cite[(1.3)]{Malgrange}, therefore applies.
\end{proof}

\begin{proposition}\label{prop:admissibility}
Let $\boldsymbol\mu\in\prod_i\Logm(c_i)$.  Every connection in $\cA_{\boldsymbol\mu}$, equipped with the standard framing at $z=1$, is an object of $\NSQC^{\mathrm{fr}}_{\mathbf c}$.  In particular, every connection of the normal form \eqref{eq:main-normal-form} is admissible.
\end{proposition}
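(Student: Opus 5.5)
We verify the five conditions of \Cref{def:groupoid} directly for $\nabla=d+(M_{\boldsymbol\mu}z^{-1}+Q(z))\,dz$ with $Q\in\mathfrak n\otimes z^{-2}\cO$, using the standard frame $\mathbf e(1)$. Conditions (i) and (iv) hold by construction. Condition (iii) holds because $M_{\boldsymbol\mu}z^{-1}+Q$ has a pole of order at most two in the standard lattice. Condition (ii), regular singularity, is exactly \Cref{lem:analytic-regular}.

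The substantive point is (v): the monodromy must preserve the standard flag at $z=1$ and have diagonal entries $c_i$. Since $A=M_{\boldsymbol\mu}z^{-1}+Q$ is upper triangular on $\Delta^\times$, each subbundle $\langle e_1,\ldots,e_i\rangle$ is $\nabla$-stable. Parallel transport therefore preserves the flag $\langle e_1(1),\ldots,e_i(1)\rangle$, and the monodromy matrix in the standard basis at $1$ is upper triangular.

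Its $i$-th diagonal entry is the monodromy of the rank-one quotient connection $d+\mu_i z^{-1}\,dz$ on $\langle e_1,\ldots,e_i\rangle/\langle e_1,\ldots,e_{i-1}\rangle$. The horizontal sections of this quotient are multiples of $z^{-\mu_i}$, so the monodromy is $e^{-2\pi\ii\mu_i}$. This equals $c_i$ precisely because $\mu_i\in\Logm(c_i)$.

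One point needs care here: the horizontal convention $dY+AY\,dz=0$, and the fact that monodromy is computed on a fundamental matrix. I would use the upper triangular fundamental matrix built in the proof of \Cref{lem:analytic-regular}, whose diagonal entries are $z^{-\mu_i}$. Continuing it once around the origin multiplies it on the right by an upper triangular matrix with diagonal entries $e^{-2\pi\ii\mu_i}$. Whichever orientation convention for the loop is used, the diagonal entries are $c_i$ or $c_i^{-1}$ uniformly. I expect matching this with the convention already fixed in \Cref{prop:residue-map} to be the only delicate point, and it is a bookkeeping issue rather than a real obstacle.

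For the final sentence of the statement, I would check that the normal form \eqref{eq:main-normal-form} lies in $\cA_{\boldsymbol\mu}$, after which admissibility follows from the general case. There is a notational subtlety: the entries indexed $E_{ji}$ with $j<i$ are strictly upper triangular, so $Q\in\mathfrak n$. For these entries:
\begin{itemize}
\item The term $\alpha_{ji}z^{-2}$ lies in $z^{-2}\cO$.
\item When $\lambda_{ji}\notin\mathbb Z_{\geq0}$, the term $\rho_{\lambda}(z)=z^{-1}$ lies in $z^{-2}\cO$.
\item When $\lambda\in\mathbb Z_{\geq0}$, the term $z^{\lambda-1}$ has $\lambda-1\geq-1$, so it also lies in $z^{-2}\cO$.
\end{itemize}
Hence every normal form belongs to $\cA_{\boldsymbol\mu}$ and is admissible.
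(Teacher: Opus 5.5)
Your proposal is correct and follows the paper's proof: the pole order holds by definition, regular singularity comes from \Cref{lem:analytic-regular}, the flag is preserved because the matrix is upper triangular, and the diagonal monodromy entries are $e^{-2\pi\ii\mu_i}=c_i$ from the scalar diagonal equations. Your explicit check that the normal form lies in $\cA_{\boldsymbol\mu}$ fills in the ``in particular'', and the orientation worry is not a real issue, since the introduction fixes the convention that a scalar residue $\mu$ has monodromy $e^{-2\pi\ii\mu}$, which is exactly what your computation gives.
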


\begin{proof}
The matrix has a pole of order at most two by definition.  Regular singularity is \Cref{lem:analytic-regular}.  Since the matrix is upper triangular, the standard flag is preserved by parallel transport.  The $i$-th diagonal horizontal equation is
\[
 y_i'+\mu_i z^{-1}y_i=0,
\]
so the corresponding diagonal monodromy entry is $e^{-2\pi\ii\mu_i}=c_i$.  Thus all defining conditions of \Cref{def:groupoid} are satisfied.
\end{proof}

\begin{proposition}\label{prop:diagonal-reduction}
Fix $\boldsymbol\mu\in\prod_i\Logm(c_i)$.  The fibre groupoid $\mathbf{res}_{\Delta}^{-1}(\boldsymbol\mu)$ is equivalent to the action groupoid $[\cA_{\boldsymbol\mu}(\C)/\cU_1(\C)]$.  Every object of the fibre has a standard-framed representative in $\cA_{\boldsymbol\mu}$, and the morphisms between two such representatives are the elements of $\cU_1$ relating them.
\end{proposition}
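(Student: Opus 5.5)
We prove the equivalence by constructing the inclusion functor from the action groupoid $[\cA_{\boldsymbol\mu}(\C)/\cU_1(\C)]$ into $\mathbf{res}_{\Delta}^{-1}(\boldsymbol\mu)$ and then checking that it is essentially surjective and fully faithful.

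\textbf{The functor.} On objects, send a connection in $\cA_{\boldsymbol\mu}$ to itself, equipped with the standard framing $\mathbf e(1)$. By \Cref{prop:admissibility} this is an object of $\NSQC^{\mathrm{fr}}_{\mathbf c}$. It is standard-framed and upper triangular with diagonal part $M_{\boldsymbol\mu}dz/z$, so the definition in \Cref{prop:residue-map} shows that its residue vector is $\boldsymbol\mu$. On morphisms, send an element $g\in\cU_1$ relating two such connections to the same holomorphic gauge. Since $g(1)=I_r$, it carries the standard frame to itself, so it is a morphism in the sense of \Cref{def:groupoid}. Composition is respected because both groupoids use the same right-action convention, under which $g$ followed by $h$ is $gh$.

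\textbf{Essential surjectivity.} Take an object of the fibre. \Cref{cor:triangular-subgroupoid} provides an isomorphic standard-framed upper triangular representative of the form \eqref{eq:triangular-expansion}. Its diagonal residues are $\boldsymbol\mu$, because the residue functor is independent of choices. Next apply the diagonal gauge $h$ of \Cref{lem:diagonal-normalisation}. It is holomorphic, invertible and diagonal with $h(1)=I_r$, so it keeps the representative standard-framed and upper triangular, and it makes the diagonal part exactly $M_{\boldsymbol\mu}dz/z$. Conjugation by a holomorphic diagonal $h$ preserves each strictly upper triangular entry up to multiplication by a holomorphic unit. The diagonal correction $h^{-1}h'$ is diagonal. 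Hence the strictly upper triangular part stays in $\mathfrak n\otimes z^{-2}\cO$. The resulting connection therefore lies in $\cA_{\boldsymbol\mu}$.

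\textbf{Full faithfulness.} Let two objects of $\cA_{\boldsymbol\mu}$ be given. A morphism between them in $\NSQC^{\mathrm{fr}}_{\mathbf c}$ is some $g\in\GL_r(\cO)$ with $g(1)=I_r$, since both objects carry the standard frame.
\begin{enumerate}[(i)]
 \item \Cref{lem:Borel-rigidity} shows that $g$ is $B$-valued.
 \item The second part of \Cref{lem:diagonal-normalisation} then shows that $g$ is $U$-valued, so $g\in\cU_1$.
\end{enumerate}
Conversely, every element of $\cU_1$ relating the two objects is a morphism by construction. The inclusion is therefore bijective on hom-sets, and the stated description of morphisms between representatives follows.

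The only step requiring care is essential surjectivity: one must check that the diagonal normalisation does not raise the pole order off the diagonal and does not destroy the framing. Both follow from the holomorphy of $h$ together with $h(1)=I_r$.
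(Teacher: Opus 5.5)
Your proof is correct and follows essentially the same route as the paper. Essential surjectivity comes from triangular reduction followed by the unique framed diagonal gauge of \Cref{lem:diagonal-normalisation}, and the description of morphisms comes from \Cref{lem:Borel-rigidity} followed by the diagonal argument forcing $g_{ii}=1$. You also make explicit some checks the paper leaves implicit: admissibility of objects of $\cA_{\boldsymbol\mu}$, that their residue vector is $\boldsymbol\mu$, and that the diagonal gauge preserves the off-diagonal pole bound.
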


\begin{proof}
Apply the unique framed diagonal gauge of \Cref{lem:diagonal-normalisation} to any standard-framed triangular representative in the residue fibre.  This gives an element of $\cA_{\boldsymbol\mu}$. 
Conversely, let two elements of $\cA_{\boldsymbol\mu}$ be related in the original groupoid.  By \Cref{lem:Borel-rigidity}, the relating gauge is $B$-valued and equals $I_r$ at $z=1$.  On the diagonal, the gauge equation reads
\[
 \frac{g_{ii}'}{g_{ii}}=0,
\]
because both diagonal connection coefficients are $\mu_i z^{-1}$.  Thus $g_{ii}$ is constant and the framed condition gives $g_{ii}=1$.  The gauge therefore belongs to $\cU_1$.  Conversely, every element of $\cU_1$ is a morphism in the original groupoid.
\end{proof}

The residue fibres are all non-empty by \Cref{prop:residue-map}, although \Cref{rem:log-constraint} shows that a fixed non-semisimple monodromy matrix need not occur in every fibre.

\subsection{The scalar analytic quotient}\label{sec:scalar}

Set
$
 V=\{P\in\cO:P(1)=0\},
 $ and $
 W=z^{-2}\cO.$ 
For $\lambda\in\C$, define
\begin{equation}\label{eq:Llambda}
 L_\lambda:V\longrightarrow W,
 \qquad
 L_\lambda(P)=P'-\lambda z^{-1}P.
\end{equation}

\begin{lemma}\label{lem:scalar-splitting}
For every $\lambda\in\C$, the map $L_\lambda$ is injective and
\begin{equation}\label{eq:scalar-splitting}
 W=L_\lambda(V)\oplus\C z^{-2}\oplus\C\rho_\lambda,
\end{equation}
where
\[
 \rho_\lambda(z)=
 \begin{cases}
  z^{\lambda-1},&\lambda\in\mathbb Z_{\geq0},\\
  z^{-1},&\lambda\notin\mathbb Z_{\geq0}.
 \end{cases}
\]
In particular, $\dim_\C\operatorname{coker}L_\lambda=2$.
\end{lemma}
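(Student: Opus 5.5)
The plan is to work with the rescaled operator. Write $P=z^{\lambda}\phi$ formally, so that $L_\lambda(P)=z^{\lambda}\phi'$, or argue directly with Laurent coefficients. For $P=\sum_{n\ge0}p_nz^n\in\cO$ we have
\[
 L_\lambda(P)=\sum_{n\geq0}(n-\lambda)p_nz^{n-1}.
\]
Hence $L_\lambda(P)$ has no $z^{-2}$ term. Its $z^{-1}$ coefficient is $-\lambda p_0$, and its $z^{n-1}$ coefficient for $n\ge1$ is $(n-\lambda)p_n$. Both injectivity and the cokernel computation will be read off from this diagonal action on monomials, with convergence on $\Delta$ checked separately.

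\emph{Injectivity.} If $L_\lambda P=0$, then $(n-\lambda)p_n=0$ for all $n$, so $P$ is a multiple of $z^\lambda$ when $\lambda\in\mathbb Z_{\ge0}$, and $P=0$ otherwise. In the first case the condition $P(1)=0$ forces the multiple to vanish. Equivalently, $P$ solves the first-order ODE $P'=\lambda z^{-1}P$ on $\Delta^\times$ with $P(1)=0$, so $P=0$ by uniqueness.

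\emph{Decomposition, case $\lambda\notin\mathbb Z_{\ge0}$.} Given $w=\sum_{m\ge-2}w_mz^m\in W$, subtract $w_{-2}z^{-2}$ and $w_{-1}z^{-1}$. The remainder $\sum_{m\ge0}w_mz^m$ is $L_\lambda(Q)$ for the unique
\[
 Q=\sum_{n\ge1}\frac{w_{n-1}}{n-\lambda}\,z^n .
\]
This series converges on $\Delta$, because $|n-\lambda|$ is bounded below and grows linearly. However, $Q(1)$ need not vanish. To fix this we use $P_0=1$, for which $L_\lambda(1)=-\lambda z^{-1}$.

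\begin{itemize}
 \item If $\lambda\ne0$, replace $Q$ by $Q-Q(1)$. This changes $L_\lambda Q$ only by a multiple of $z^{-1}=\rho_\lambda$, which is absorbed into the $\rho_\lambda$ component.
 \item If $\lambda=0$, we have $0\in\mathbb Z_{\ge0}$, so this falls into the other case.
\end{itemize}

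Uniqueness of the decomposition follows by applying injectivity and the coefficient formula to $L_\lambda P=az^{-2}+b\rho_\lambda$. Such a $P$ satisfies $p_n=0$ for $n\ge1$, so $P=p_0$, and $P(1)=0$ kills it.

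\emph{Decomposition, case $\lambda=k\in\mathbb Z_{\ge0}$.} The monomial $z^{k-1}$ is missed by the monomial action, since the coefficient of $p_k$ vanishes. Meanwhile $z^{-1}$ is hit via $p_0$, because $-\lambda\ne0$ when $k\ge1$; when $k=0$ we have $\rho_0=z^{-1}$ itself. So the complement is spanned by $z^{-2}$ and $z^{k-1}=\rho_\lambda$. Solve for $p_n$ with $n\ne k$ from the coefficients, leaving $p_k$ free. Then use the freedom $P\mapsto P+cz^k$, where $L_\lambda(z^k)=0$, to impose $P(1)=0$; this is possible since $z^k(1)=1\ne0$. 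Uniqueness is handled as before.

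\emph{Main obstacle.} The delicate point is the interplay between the normalisation $P(1)=0$ and the kernel element $z^{\lambda}$. In the resonant case the kernel of $L_\lambda$ on all of $\cO$ is exactly what absorbs the framing condition. In the non-resonant case the constant function does this, trading against $\rho_\lambda=z^{-1}$. One must check that no additional holomorphic kernel appears and that the recursive series solution converges on all of $\Delta$ rather than merely formally. For convergence I would instead use the integral formula
\[
 P(z)=z^\lambda\int_1^z\zeta^{-\lambda}w(\zeta)\,d\zeta
\]
after removing the obstructing terms. The remaining integrand $\zeta^{-\lambda}w$ is then a convergent sum of monomials $\zeta^{m-\lambda}$ with $m-\lambda\neq-1$, so it has no residue obstruction. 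This gives a holomorphic $P$ on the disc and confirms $\dim\operatorname{coker}L_\lambda=2$.
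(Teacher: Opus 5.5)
Your main argument is correct and is essentially the paper's proof. It uses the same monomial formula $L_\lambda(P)=\sum_{n\ge0}(n-\lambda)p_nz^{n-1}$. For $\lambda\notin\mathbb Z_{\ge0}$ it imposes $P(1)=0$ by subtracting a constant, paid for in the $z^{-1}$-coefficient since $L_\lambda(1)=-\lambda z^{-1}$. For $\lambda=k\in\mathbb Z_{\ge0}$ it imposes $P(1)=0$ through the kernel element $z^k$. Convergence comes from the bounded denominators $1/(n-\lambda)$, which is what the paper's Cauchy estimates amount to. The only cosmetic difference is that you move $w_{-1}z^{-1}$ directly into the $\rho_\lambda$-component, whereas the paper absorbs it into $p_0=q_{-1}/(-\lambda)$.

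You should, however, drop the closing suggestion to obtain convergence \emph{instead} from $P(z)=z^\lambda\int_1^z\zeta^{-\lambda}w(\zeta)\,d\zeta$, because it is valid only in the resonant case. For $\lambda\notin\mathbb Z_{\ge0}$ and $w=\sum_{m\ge0}w_mz^m$, termwise integration gives
\[
 P(z)=Q(z)-Q(1)\,z^{\lambda},
 \qquad
 Q(z)=\sum_{n\ge1}\frac{w_{n-1}}{n-\lambda}z^n .
\]
This $P$ is multivalued when $\lambda\notin\mathbb Z$, and has a pole when $\lambda\in\mathbb Z_{<0}$, unless $Q(1)=0$. The absence of a $\zeta^{-1}$-term says nothing about single-valuedness when the exponents $m-\lambda$ are non-integral. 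This is exactly why the holomorphic framed solution needs the $z^{-1}$ correction you already made, so in that case the coefficient bound must remain your convergence argument.

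In the resonant case the formula does work. After removing the $z^{-2}$ and $z^{k-1}$ terms, $\zeta^{-k}w$ is a single-valued Laurent series without residue, and the formula gives a holomorphic $P$ with $P(1)=0$.
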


\begin{proof}
Write
\[
 P(z)=\sum_{n\geq0}p_n z^n,
 \qquad
 Q(z)=\sum_{k\geq-2}q_k z^k.
\]
A direct calculation gives
\begin{equation}\label{eq:L-monomials}
 L_\lambda(P)=\sum_{n\geq0}(n-\lambda)p_nz^{n-1}.
\end{equation}
In particular, no element of the image has a \(z^{-2}\)-term.  It remains to split the restriction of \(L_\lambda\) inside \(z^{-1}\cO\).
Assume first that \(\lambda\notin\mathbb Z_{\geq0}\).  For
\(Q_1=\sum_{k\geq-1}q_k z^k\), define
\[
 P_0(z)=\sum_{n\geq0}\frac{q_{n-1}}{n-\lambda}z^n.
\]
Choose radii $1<r_0<R_0$ such that the closed disc $\{|z|\leq R_0\}$ is contained in $\Delta$.  Cauchy estimates give a constant $C_{R_0}$ such that $|q_{n-1}|\leq C_{R_0}R_0^{-n}$.  Since $|n-\lambda|^{-1}=O(n^{-1})$, the series for $P_0$ converges normally on $|z|\leq r_0$.  Exhausting $\Delta$ by such discs shows that $P_0\in\cO$ and $L_\lambda(P_0)=Q_1$.  The only obstruction to \(P_0\in V\) is its value at~\(1\).  Because
\[
 L_\lambda(c)=-\lambda c z^{-1},
\]
the function
$
 P=P_0-P_0(1)
$
satisfies \(P(1)=0\) and
\[
 Q_1=L_\lambda(P)-\lambda P_0(1)z^{-1}.
\]
Thus \(z^{-1}\cO=L_\lambda(V)+\C z^{-1}\).  The sum is direct: if \(L_\lambda(P)=\beta z^{-1}\), coefficient comparison in \eqref{eq:L-monomials} forces \(P=-\beta/\lambda\), and the condition \(P(1)=0\) gives \(P=\beta=0\).

Now, let \(\lambda=m\in\mathbb Z_{\geq0}\).  The coefficient of \(z^{m-1}\) in \eqref{eq:L-monomials} is identically zero.  Given \(Q_1\), subtract \(q_{m-1}z^{m-1}\) and define
\[
 p_n=\frac{q_{n-1}}{n-m}\quad(n\neq m).
\]
The same exhaustion and Cauchy estimate prove normal convergence of \(\sum_{n\neq m}p_n z^n\) on every compact subset of~$\Delta$.  The coefficient \(p_m\) is free, because \(L_m(z^m)=0\), and is uniquely chosen so that
\[
 p_m=-\sum_{n\neq m}p_n,
\]
which is the condition \(P(1)=0\).  Hence
\[
 z^{-1}\cO=L_m(V)\oplus\C z^{m-1}.
\]
Directness follows by inspecting the coefficient of \(z^{m-1}\).

Finally, if \(L_\lambda(P)=0\), then \eqref{eq:L-monomials} gives \(P=0\) unless \(\lambda=m\in\mathbb Z_{\geq0}\), in which case \(P=cz^m\); the framed condition \(P(1)=0\) again gives \(c=0\).  Adding the independent \(z^{-2}\)-coefficient proves the claimed direct sum.  This is the analytic version of the two-dimensional scalar cokernel computed in~\cite[Lemma~4.4]{KS}.
\end{proof}

\begin{lemma}\label{lem:rootwise-gauge}
Let $E_{ji}$ denote the matrix unit in row $j$ and column $i$, with $j<i$; define its height to be $i-j$, and set $\lambda_{ji}=\mu_i-\mu_j$.  For a holomorphic function $p$ with $p(1)=0$, the gauge $g=I_r+pE_{ji}$ changes the $(j,i)$-coefficient, modulo terms of height strictly greater than $i-j$, by
\[
 q_{ji}\longmapsto q_{ji}
 +p'+(\mu_j-\mu_i)z^{-1}p
 =q_{ji}+L_{\lambda_{ji}}(p).
\]
\end{lemma}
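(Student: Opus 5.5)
The plan is to compute the gauge action \eqref{eq:gauge-action} directly for $g=I_r+pE_{ji}$ and to track heights. The matrix unit $E_{ji}$ with $j<i$ is strictly upper triangular, so $E_{ji}^2=0$. Hence
\[
 g^{-1}=I_r-pE_{ji},
 \qquad
 g^{-1}g'=(I_r-pE_{ji})\,p'E_{ji}=p'E_{ji}.
\]
It remains to expand $g^{-1}Ag$ for an element $A=M_{\boldsymbol\mu}z^{-1}+Q$ of $\cA_{\boldsymbol\mu}$, where $Q\in\mathfrak n\otimes z^{-2}\cO$. By \Cref{prop:diagonal-reduction} we may assume that the representative has exactly this diagonal. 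This gives
\[
 g^{-1}Ag=A+p(AE_{ji}-E_{ji}A)-p^2E_{ji}AE_{ji}.
\]

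Next I would grade $\mathfrak{gl}_r$ by height, assigning $E_{kl}$ the height $l-k$. Multiplication adds heights, so the product of two height-graded components lands in the sum of their heights. For the diagonal part $M_{\boldsymbol\mu}z^{-1}$, which has height $0$, the commutator is exact:
\[
 M_{\boldsymbol\mu}E_{ji}-E_{ji}M_{\boldsymbol\mu}=(\mu_j-\mu_i)E_{ji}.
\]
This contributes $(\mu_j-\mu_i)z^{-1}p$ to the $(j,i)$-entry and nothing elsewhere. For the nilpotent part $Q$, every component has height at least $1$. Therefore $p(QE_{ji}-E_{ji}Q)$ has height at least $(i-j)+1$. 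The quadratic term $p^2E_{ji}AE_{ji}$ vanishes for the diagonal part, because $E_{ji}M_{\boldsymbol\mu}E_{ji}=\mu_iE_{ji}E_{ji}=0$ when $i\neq j$. Its $Q$-part has height at least $2(i-j)+1$. Both of these contributions therefore lie strictly above height $i-j$. Collecting terms, the $(j,i)$-coefficient changes by $p'+(\mu_j-\mu_i)z^{-1}p$, and all other changes occur in heights greater than $i-j$.

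Finally, I would compare with \eqref{eq:Llambda}. Since $\lambda_{ji}=\mu_i-\mu_j$, the change is exactly
\[
 p'-\lambda_{ji}z^{-1}p=L_{\lambda_{ji}}(p).
\]
The hypothesis $p(1)=0$ gives $g(1)=I_r$, so $g\in\cU_1$ and $p\in V$, which makes the formula meaningful as a map into $W$. The new coefficients also remain in $z^{-2}\cO$: $p$ is holomorphic, $p'$ is holomorphic, and $z^{-1}p\in z^{-1}\cO$.

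The only point requiring care is the sign convention of the right action, since it determines whether the term is $\mu_j-\mu_i$ or its negative. A second, milder point is to confirm that the other entries of height exactly $i-j$ are unchanged, not merely the $(j,i)$-entry. The commutator of the diagonal part with $E_{ji}$ is proportional to $E_{ji}$ itself, and every other contribution has height greater than $i-j$, so no other entry of height $i-j$ changes. This justifies the phrase "modulo terms of height strictly greater than $i-j$".
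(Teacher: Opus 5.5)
Your proposal is correct and follows the paper's own argument: expand $g^{-1}Ag+g^{-1}g'$ directly using $E_{ji}^2=0$, use the exact diagonal commutator $[M_{\boldsymbol\mu},E_{ji}]=(\mu_j-\mu_i)E_{ji}$, and observe that every contribution from the strictly upper triangular part has height greater than $i-j$. Your explicit treatment of the quadratic term $p^2E_{ji}AE_{ji}$, and your check that no other entry of height $i-j$ changes, spell out details the paper leaves implicit.
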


\begin{proof}
Since $E_{ji}^2=0$, one has $g^{-1}=I_r-pE_{ji}$.  The diagonal contribution to $g^{-1}(M_{\boldsymbol\mu}/z)g$ is
\[
 \frac{1}{z}[M_{\boldsymbol\mu},pE_{ji}]
 =\frac{\mu_j-\mu_i}{z}pE_{ji},
\]
while $g^{-1}g'$ contributes $p'E_{ji}$.  Commutators with the strictly upper triangular part of the connection have height strictly greater than $i-j$.  The stated formula follows.
\end{proof}

For the root $E_{ji}$, the resonance parameter is therefore $\mu_i-\mu_j$.  Reversing this sign does not change the dimension of the affine slice, but it changes the resonant monomial.  For example, when $(\mu_1,\mu_2)=(0,1)$, a constant upper-right coefficient is resonant and cannot be removed by a holomorphic framed gauge.

\subsection{The non-abelian unipotent normal form}\label{sec:unipotent}

For $1\leq h\leq r$, let
\[
 \mathfrak n^{(h)}
 =\operatorname{span}\{E_{ji}:j<i,\ i-j\geq h\}.
\]
Then
\[
 \mathfrak n=\mathfrak n^{(1)}\supset\mathfrak n^{(2)}\supset\cdots\supset\mathfrak n^{(r)}=0
\]
and
\begin{equation}\label{eq:height-bracket}
 [\mathfrak n^{(a)},\mathfrak n^{(b)}]
 \subseteq\mathfrak n^{(a+b)}.
\end{equation}

Define the affine slice
\begin{equation}\label{eq:slice}
 \cS_{\boldsymbol\mu}
 =\left\{
 d+\left(\frac{M_{\boldsymbol\mu}}{z}
 +\sum_{j<i}
 (\alpha_{ji}z^{-2}+\beta_{ji}\rho_{\lambda_{ji}})E_{ji}
 \right)dz
 \right\}.
\end{equation}

\begin{proposition}\label{prop:existence}
Every element of $\cA_{\boldsymbol\mu}$ is gauge equivalent under $\cU_1$ to an element of $\cS_{\boldsymbol\mu}$.
\end{proposition}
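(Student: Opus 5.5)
We argue by descending induction on root height, using the filtration $\mathfrak n^{(h)}$ and the rootwise scalar quotient of \Cref{lem:scalar-splitting,lem:rootwise-gauge}. Start with $\nabla=d+(M_{\boldsymbol\mu}/z+Q)dz\in\cA_{\boldsymbol\mu}$. Suppose that after a finite sequence of gauges in $\cU_1$ every coefficient $q_{ji}$ with $i-j<h$ already lies in $\C z^{-2}\oplus\C\rho_{\lambda_{ji}}$. We treat the roots of height exactly $h$ as follows. For each such root $E_{ji}$, \Cref{lem:scalar-splitting} gives a unique decomposition
\[
 q_{ji}=L_{\lambda_{ji}}(p_{ji})+\alpha_{ji}z^{-2}+\beta_{ji}\rho_{\lambda_{ji}},\qquad p_{ji}\in V.
\]
We apply the gauge $g_h=I_r-\sum_{i-j=h}p_{ji}E_{ji}$, which lies in $\cU_1$ because each $p_{ji}(1)=0$.

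Now compute the effect of $g_h$. It has the form $I_r+X$ with $X\in\mathfrak n^{(h)}\otimes\cO$, so $g_h^{-1}=I_r-X+X^2-\cdots$ is again holomorphic and equal to $I_r$ at $z=1$. Expanding $g_h^{-1}(M_{\boldsymbol\mu}/z+Q)g_h+g_h^{-1}g_h'$ and using \eqref{eq:height-bracket}, the following hold. The coefficients of height $<h$ are unchanged. Each height-$h$ coefficient changes by $-L_{\lambda_{ji}}(p_{ji})$; this is \Cref{lem:rootwise-gauge} applied simultaneously to all height-$h$ roots, which is legitimate because products of two height-$h$ terms have height $2h>h$. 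All other changes land in $\mathfrak n^{(h+1)}$.

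We must also check that the new matrix still lies in $\cA_{\boldsymbol\mu}$. Its diagonal stays $M_{\boldsymbol\mu}/z$, and its off-diagonal part stays in $\mathfrak n\otimes z^{-2}\cO$. The latter holds because $g_h^{\pm1}$ are holomorphic, $Q\in z^{-2}\cO$, $[M_{\boldsymbol\mu},X]/z\in z^{-1}\cO$, and $g_h^{-1}g_h'$ is holomorphic. After this step the height-$h$ coefficients equal $\alpha_{ji}z^{-2}+\beta_{ji}\rho_{\lambda_{ji}}$, so the inductive hypothesis holds with $h+1$. Since $\mathfrak n^{(r)}=0$, after the steps $h=1,\dots,r-1$ the composite gauge $g_1g_2\cdots g_{r-1}\in\cU_1$ carries $\nabla$ into $\cS_{\boldsymbol\mu}$.

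The main point needing care is the claim that higher-height corrections never feed back into lower heights. This is exactly \eqref{eq:height-bracket}, together with the fact that conjugation by $I_r+X$, $X\in\mathfrak n^{(h)}$, acts as the identity modulo $\mathfrak n^{(h)}$ and modulo $\mathfrak n^{(h+1)}$ on the height-$h$ graded piece apart from the linear term. I would verify this by writing $g_h^{-1}Ag_h-A=[A,X]+(\text{terms in }\mathfrak n^{(2h)}\text{ or with a factor }[Q_{\ge1},X]\in\mathfrak n^{(h+1)})$. The analytic input, namely that each $p_{ji}$ is holomorphic on all of $\Delta$, is supplied by \Cref{lem:scalar-splitting}. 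Hence no convergence issue arises, and the procedure terminates after finitely many steps.
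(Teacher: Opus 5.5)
Your proof is correct and is essentially the paper's argument: an ascending induction on root height (your word ``descending'' is a slip, since you actually proceed from $h=1$ upward), the unique decomposition from \Cref{lem:scalar-splitting}, a height-$h$ gauge $I_r+P_h$ (your sign convention is equivalent), and the observation via \eqref{eq:height-bracket} that only the linear rootwise term survives modulo $\mathfrak n^{(h+1)}$. Your explicit check that each step stays inside $\cA_{\boldsymbol\mu}$ is a harmless addition that the paper leaves implicit.
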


\begin{proof}
We construct the normalising gauge by induction on the height of a positive root.  Let \(A^{(0)}\) be the given matrix.  Suppose that, after applying gauges of heights \(1,\ldots,h-1\), the resulting matrix \(A^{(h-1)}\) has all entries of height smaller than~\(h\) in the chosen complementary spaces.

For each pair \(j<i\) with \(i-j=h\), write the height-\(h\) coefficient of \(A^{(h-1)}\) as \(q_{ji}\in W\).  By \Cref{lem:scalar-splitting}, there are unique numbers \(\alpha_{ji},\beta_{ji}\in\C\) and a unique \(p_{ji}\in V\) such that
\[
 q_{ji}+L_{\lambda_{ji}}(p_{ji})
 =\alpha_{ji}z^{-2}+\beta_{ji}\rho_{\lambda_{ji}}.
\]
Set
\[
 P_h=\sum_{i-j=h}p_{ji}E_{ji},
 \qquad g_h=I_r+P_h.
\]
Because \(P_h\) is strictly upper triangular, \(g_h\) is invertible with a finite polynomial inverse; moreover \(P_h(1)=0\), so \(g_h(1)=I_r\).

Reduce the gauge action modulo \(\mathfrak n^{(h+1)}\).  Every product containing two factors of height at least~\(h\) has height at least \(2h\geq h+1\), and \eqref{eq:height-bracket} shows that commutators of \(P_h\) with the already present strictly upper triangular coefficients have height greater than~\(h\).  Consequently the only contribution at height~\(h\) is the linear rootwise term of \Cref{lem:rootwise-gauge}.  Thus \(g_h\) puts every height-\(h\) coefficient into the required complement and leaves all lower heights unchanged.

Define recursively \(A^{(h)}=A^{(h-1)}\cdot g_h\).  The process stops after height \(r-1\), and the finite product
\[
 g=g_1g_2\cdots g_{r-1}\in\cU_1
\]
normalises every positive-root coefficient.  No convergence issue occurs: the number of heights is finite, and each factor is a holomorphic unipotent matrix on~\(\Delta\).  The resulting connection is an element of \(\cS_{\boldsymbol\mu}\).
\end{proof}

\begin{proposition}\label{prop:uniqueness}
If two elements of $\cS_{\boldsymbol\mu}$ are related by a gauge in $\cU_1$, then the connections coincide and the gauge is the identity.
\end{proposition}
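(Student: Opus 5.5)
The plan is to argue by induction on root height, mirroring the existence proof of \Cref{prop:existence}. Let $\nabla_1,\nabla_2\in\cS_{\boldsymbol\mu}$ have matrices $A_1,A_2$, and let $g\in\cU_1$ satisfy $\nabla_2=\nabla_1\cdot g$. Write $g=I_r+P$ with $P\in\mathfrak n\otimes\cO$ and $P(1)=0$. We show by induction on $h$ that $P\in\mathfrak n^{(h)}\otimes\cO$ and that $A_1$ and $A_2$ agree modulo $\mathfrak n^{(h)}$. The case $h=1$ is trivial. At $h=r$ this gives $P=0$, hence $g=I_r$ and $\nabla_1=\nabla_2$.

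For the inductive step, assume $P\in\mathfrak n^{(h)}$ and that the coefficients of height $<h$ coincide. Reduce the gauge identity $A_2=g^{-1}A_1g+g^{-1}g'$ modulo $\mathfrak n^{(h+1)}$.

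\begin{itemize}
\item Modulo $\mathfrak n^{(2h)}\subseteq\mathfrak n^{(h+1)}$, one has $g^{-1}=I_r-P$.
\item By \eqref{eq:height-bracket}, every commutator of $P$ with the strictly upper triangular part of $A_1$ lies in height $\geq h+1$.
\item The surviving terms at height $h$ are therefore exactly those of \Cref{lem:rootwise-gauge}.
\end{itemize}

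Hence, for each $j<i$ with $i-j=h$,
\[
 q^{(2)}_{ji}-q^{(1)}_{ji}=L_{\lambda_{ji}}(p_{ji}),
\]
where $q^{(k)}_{ji}=\alpha^{(k)}_{ji}z^{-2}+\beta^{(k)}_{ji}\rho_{\lambda_{ji}}$. The left side lies in $\C z^{-2}\oplus\C\rho_{\lambda_{ji}}$ and the right side lies in $L_{\lambda_{ji}}(V)$. The directness of the decomposition \eqref{eq:scalar-splitting} forces both sides to vanish. This gives $\alpha^{(1)}_{ji}=\alpha^{(2)}_{ji}$ and $\beta^{(1)}_{ji}=\beta^{(2)}_{ji}$. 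The injectivity of $L_{\lambda_{ji}}$ on $V$ (\Cref{lem:scalar-splitting}), together with $p_{ji}(1)=0$, then gives $p_{ji}=0$. So $P\in\mathfrak n^{(h+1)}$ and the height-$h$ coefficients agree, which closes the induction.

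The main point requiring care is the claim that, modulo $\mathfrak n^{(h+1)}$, the height-$h$ part of the gauge action is \emph{purely linear} in $P$. Two facts need checking.

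\begin{itemize}
\item The lower-height entries of $P$ vanish by the inductive hypothesis. Without this, height-$(a,b)$ products with $a+b=h$ would contribute.
\item The term $g^{-1}g'$ contributes exactly $P'$ at height $h$, because $P P'$ has height $\geq 2h$.
\end{itemize}

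Once these are recorded, the argument reduces rootwise to the scalar splitting already proved. It uses no hypothesis on the spectrum, since \Cref{lem:scalar-splitting} holds for every $\lambda\in\C$, including the resonant case $\lambda\in\mathbb Z_{\geq0}$, where the framing $P(1)=0$ kills the kernel $\C z^{m}$.
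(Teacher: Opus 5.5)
Your proposal is correct and follows essentially the same route as the paper. The paper phrases it as a contradiction with the least height $h$ at which $g-I_r$ is non-zero in $\mathfrak n^{(h)}/\mathfrak n^{(h+1)}$, while you run a forward induction on $h$. The key step is the same in both: reducing modulo $\mathfrak n^{(h+1)}$ gives the rootwise equation $\widetilde q_{ji}-q_{ji}=L_{\lambda_{ji}}(p_{ji})$, and the directness and injectivity of the scalar splitting in \Cref{lem:scalar-splitting} then force both sides and $p_{ji}$ to vanish.
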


\begin{proof}
Let \(A\) and \(\widetilde A\) be two matrices in the slice and let \(g\in\cU_1\) satisfy \(A\cdot g=\widetilde A\).  We prove simultaneously, by increasing height, that the two matrices agree and that \(g\) is the identity.
Assume that \(g\neq I_r\), and let \(h\) be the least integer for which the image of \(g-I_r\) in
\[
 \mathfrak n^{(h)}/\mathfrak n^{(h+1)}
\]
is non-zero.  Write this image as
\[
 P_h=\sum_{i-j=h}p_{ji}E_{ji}.
\]
Since \(g(1)=I_r\), every \(p_{ji}\) belongs to~\(V\).  Reducing the gauge equation modulo \(\mathfrak n^{(h)}\) shows first that \(A\) and \(\widetilde A\) agree in all smaller heights.  Reducing one step further, modulo \(\mathfrak n^{(h+1)}\), eliminates every nonlinear commutator by \eqref{eq:height-bracket} and yields
\[
 \widetilde q_{ji}-q_{ji}=L_{\lambda_{ji}}(p_{ji})
 \qquad(i-j=h).
\]
The left-hand side lies in
\(\C z^{-2}\oplus\C\rho_{\lambda_{ji}}\), because both matrices belong to the slice; the right-hand side lies in \(L_{\lambda_{ji}}(V)\).  The direct decomposition of \Cref{lem:scalar-splitting} forces both sides to vanish.  Injectivity of \(L_{\lambda_{ji}}\) then gives \(p_{ji}=0\) for every root of height~\(h\), contradicting the definition of~\(h\).
Therefore \(g=I_r\), and the gauge equation gives \(A=\widetilde A\).  Hence the slice representative is unique and its fully framed stabiliser is trivial.
\end{proof}

\begin{theorem}\label{thm:slice}
For every residue vector $\boldsymbol\mu\in\C^r$, the map
\[
 \Theta_{\boldsymbol\mu}:
 \cS_{\boldsymbol\mu}(\C)\times\cU_1(\C)
 \longrightarrow\cA_{\boldsymbol\mu}(\C),
 \qquad
 (\sigma,g)\longmapsto\sigma\cdot g^{-1},
\]
is a bijection.  Equivalently, the action groupoid is equivalent to a discrete affine groupoid:
\[
 [\cA_{\boldsymbol\mu}(\C)/\cU_1(\C)]
 \simeq
 \cS_{\boldsymbol\mu}(\C)_{\mathrm{disc}}
 \simeq
 (\mathfrak n\oplus\mathfrak n)_{\mathrm{disc}}.
\]
Thus every orbit meets the slice in a single point and every stabiliser is trivial.
\end{theorem}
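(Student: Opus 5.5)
The plan is to assemble \Cref{thm:slice} from the two preceding propositions, with one small check on the group law. First, $\Theta_{\boldsymbol\mu}$ is well defined: by \Cref{lem:scalar-splitting}, every function $\alpha z^{-2}+\beta\rho_\lambda$ lies in $W=z^{-2}\cO$, so $\cS_{\boldsymbol\mu}\subset\cA_{\boldsymbol\mu}$. The gauge action \eqref{eq:gauge-action} preserves $\cA_{\boldsymbol\mu}$ under $\cU_1$. Indeed, for $g\in U(\cO)$ the diagonal of $g^{-1}(M_{\boldsymbol\mu}/z+Q)g+g^{-1}g'$ stays $M_{\boldsymbol\mu}/z$, and the strictly upper part stays in $\mathfrak n\otimes z^{-2}\cO$ because $g$ and $g^{-1}$ are holomorphic. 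Moreover, $\cU_1$ is a group under matrix multiplication, and the action is a right action: $(A\cdot g)\cdot h=A\cdot(gh)$.

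\emph{Surjectivity.} Given $A\in\cA_{\boldsymbol\mu}$, \Cref{prop:existence} produces $g\in\cU_1$ with $\sigma\coloneqq A\cdot g\in\cS_{\boldsymbol\mu}$. Then $A=\sigma\cdot g^{-1}=\Theta_{\boldsymbol\mu}(\sigma,g)$.

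\emph{Injectivity.} Suppose $\sigma\cdot g^{-1}=\widetilde\sigma\cdot\widetilde g^{-1}$. Applying $\widetilde g$ gives $\sigma\cdot(g^{-1}\widetilde g)=\widetilde\sigma$, with $g^{-1}\widetilde g\in\cU_1$. \Cref{prop:uniqueness} then forces $g^{-1}\widetilde g=I_r$ and $\sigma=\widetilde\sigma$, so $(\sigma,g)=(\widetilde\sigma,\widetilde g)$. The same proposition, applied with $\sigma=\widetilde\sigma$, shows that every stabiliser of a slice point is trivial. Since every orbit meets the slice, every stabiliser in $\cA_{\boldsymbol\mu}$ is conjugate to one of these and is therefore trivial. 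Both facts hold for arbitrary $\boldsymbol\mu\in\C^r$, since neither proposition used $\boldsymbol\mu\in\prod_i\Logm(c_i)$.

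\emph{Groupoid equivalence.} Consider the inclusion functor $\cS_{\boldsymbol\mu}(\C)_{\mathrm{disc}}\to[\cA_{\boldsymbol\mu}(\C)/\cU_1(\C)]$. It is essentially surjective by the surjectivity step. It is fully faithful because the morphism set between two slice points is $\{g\in\cU_1:\sigma\cdot g=\widetilde\sigma\}$, which by \Cref{prop:uniqueness} is $\{I_r\}$ if $\sigma=\widetilde\sigma$ and empty otherwise. Finally, the coefficient map $\sigma\mapsto\bigl((\alpha_{ji}),(\beta_{ji})\bigr)$ is a bijection $\cS_{\boldsymbol\mu}(\C)\to\mathfrak n\oplus\mathfrak n$. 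Distinct coefficient vectors give distinct connections because $z^{-2}$ and $\rho_{\lambda}$ are linearly independent in $W$, which is again the directness in \Cref{lem:scalar-splitting}.

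I expect no genuine obstacle, since the analytic content is carried by \Cref{prop:existence,prop:uniqueness}. The only points needing care are bookkeeping ones: the order of composition in the right action, so that $g^{-1}\widetilde g$ rather than $\widetilde g g^{-1}$ appears, and the verification that $\cU_1$ preserves the fixed-diagonal space, which relies on the height filtration \eqref{eq:height-bracket}.
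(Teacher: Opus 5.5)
Your proposal is correct and follows the paper's proof, which likewise obtains bijectivity of $\Theta_{\boldsymbol\mu}$ directly from \Cref{prop:existence,prop:uniqueness} and identifies the slice with $\mathfrak n\oplus\mathfrak n$ through the coefficients $(\boldsymbol\alpha,\boldsymbol\beta)$. Your version adds explicit bookkeeping for the right action, the fully faithful inclusion and the stabilisers. One small correction: $\cU_1$ preserves $\cA_{\boldsymbol\mu}$ simply because conjugation by a unipotent upper triangular matrix fixes the diagonal, so the height bracket \eqref{eq:height-bracket} is not needed for that step.
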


\begin{proof}
Existence and uniqueness are \Cref{prop:existence,prop:uniqueness}.  If $A\cdot g=\sigma$ is the unique normalisation of $A$, then $A=\sigma\cdot g^{-1}$; conversely, uniqueness of the normalising gauge makes this factorisation unique.  The coefficients $(\alpha_{ji})_{j<i}$ and $(\beta_{ji})_{j<i}$ identify the slice with two copies of~$\mathfrak n$.
\end{proof}

\subsection{Proof of the orbit classification}\label{sec:orbits}

\begin{proof}[Proof of \Cref{thm:main}]
The residue functor and its essential surjectivity are established in \Cref{prop:residue-map}.  For a fixed residue vector $\boldsymbol\mu$, \Cref{prop:diagonal-reduction} identifies the fibre groupoid with the action groupoid $[\cA_{\boldsymbol\mu}(\C)/\cU_1(\C)]$.  The unique-slice theorem \Cref{thm:slice} identifies this groupoid with the discrete affine groupoid $\mathfrak n\oplus\mathfrak n$ and supplies the unique representative \eqref{eq:main-normal-form}.  Conversely, \Cref{prop:admissibility} shows that every point of the slice is an admissible object with the required diagonal monodromy.  The equivalence on each residue fibre is induced by the unique normalisation, and all automorphism groups are trivial.
\end{proof}

\begin{corollary}\label{cor:special}
Every fully framed residue fibre is a discrete groupoid equivalent to the complex points of $\mathbb A^{r(r-1)}_{\C}$.  In particular, unit eigenvalues and repeated diagonal monodromy eigenvalues change neither the affine parameter count nor the stabiliser groups: the latter are always trivial.
\end{corollary}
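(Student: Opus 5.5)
The corollary is a repackaging of \Cref{thm:main}, or more precisely of \Cref{prop:diagonal-reduction} combined with \Cref{thm:slice}. The plan has three steps.

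First, fix an arbitrary ordered residue vector $\boldsymbol\mu\in\prod_i\Logm(c_i)$. By \Cref{prop:residue-map} the fibre $\mathbf{res}_{\Delta}^{-1}(\boldsymbol\mu)$ is non-empty. By \Cref{prop:diagonal-reduction} it is equivalent to the action groupoid $[\cA_{\boldsymbol\mu}(\C)/\cU_1(\C)]$.

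Second, I invoke \Cref{thm:slice}. The map $\Theta_{\boldsymbol\mu}$ is a bijection, so every orbit meets $\cS_{\boldsymbol\mu}$ in exactly one point and every stabiliser in $\cU_1$ is trivial. An action groupoid with free action is equivalent to the discrete groupoid on its orbit set. The orbit set is $\cS_{\boldsymbol\mu}(\C)$, which the coordinates $(\alpha_{ji},\beta_{ji})_{j<i}$ identify with $\mathfrak n\oplus\mathfrak n$. Since $\dim\mathfrak n=r(r-1)/2$, this is $\C^{r(r-1)}=\mathbb A^{r(r-1)}_{\C}(\C)$.

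Third, I address the claim about unit and repeated eigenvalues. The point is to observe that none of the ingredients used a spectral hypothesis.
\begin{itemize}
\item \Cref{prop:triangular-reduction} and \Cref{lem:Borel-rigidity} are stated for arbitrary $\mathbf c$.
\item \Cref{lem:scalar-splitting} gives a two-dimensional cokernel and injectivity of $L_\lambda$ for every $\lambda\in\C$. This covers $\lambda_{ji}=0$ (repeated residues) and integral $\lambda_{ji}$ (which arise, for instance, from repeated $c_i$ or from $c_i=1$ with $\mu_i\in\mathbb Z$).
\end{itemize}
Hence the parameter count $2\cdot r(r-1)/2$ and the triviality of stabilisers hold uniformly. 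Only the resonant monomial $\rho_\lambda$ changes, not the dimension.

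The only point needing care is the last step: confirming that the injectivity of $L_\lambda$ on $V$, and hence the triviality of stabilisers, survives the resonant case $\lambda\in\mathbb Z_{\geq0}$. Here the kernel $\C z^m$ of the unframed operator is killed precisely by the framing condition $P(1)=0$. This is already contained in \Cref{lem:scalar-splitting}, so I expect no genuine obstacle.
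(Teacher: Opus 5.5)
Your proposal is correct and is essentially the paper's argument: the paper deduces the corollary in one line from \Cref{thm:main} together with $2\dim\mathfrak n=r(r-1)$, and the proof of \Cref{thm:main} is the chain \Cref{prop:residue-map}, \Cref{prop:diagonal-reduction}, \Cref{thm:slice} that you unpack. One small correction to your parenthetical: $\lambda_{ji}=\mu_i-\mu_j$ is integral exactly when $c_i=c_j$, so a unit eigenvalue by itself creates no resonance; its exclusion in \cite{KS} mattered only for triangularity of the transported gauge, which \Cref{lem:Borel-rigidity} handles through the framing, as you also note.
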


\begin{proof}
This follows from \Cref{thm:main}, since $2\dim\mathfrak n=r(r-1)$.
\end{proof}

The conclusion does not identify different strata as moduli problems.  Their affine parameter spaces are abstractly isomorphic, but the universal normal-form connections and their monodromy maps depend on the residue differences.  This distinction is illustrated in rank two below.

\section{Rank-two degenerate geometry}\label{sec:rank-two}

Assume $r=2$ and the two prescribed diagonal monodromy eigenvalues are equal to $c$.  Fix $\mu_1,\mu_2\in\Logm(c)$ and set
\[
 \lambda=\mu_2-\mu_1\in\mathbb Z.
\]
The normal form is
\begin{equation}\label{eq:rank-two-form}
 \nabla_{\alpha,\beta}
 =d+
 \begin{pmatrix}
  \mu_1z^{-1}&q_{\alpha,\beta}(z)\\
  0&\mu_2z^{-1}
 \end{pmatrix}dz,
 \qquad
 q_{\alpha,\beta}=\alpha z^{-2}+\beta\rho_\lambda.
\end{equation}

\begin{proposition}\label{prop:rank-two-monodromy}
With the horizontal convention fixed in the introduction, the monodromy of \eqref{eq:rank-two-form} with respect to the standard frame at $z=1$ is
\[
 T_{\alpha,\beta}
 =c\begin{pmatrix}
  1&-2\pi\ii\,\nu_\lambda(\alpha,\beta)\\
  0&1
 \end{pmatrix},
\]
where
\begin{equation}\label{eq:nu}
 \nu_\lambda(\alpha,\beta)
 =\res_0\bigl(q_{\alpha,\beta}(z)z^{-\lambda}\,dz\bigr)
 =
 \begin{cases}
  \beta,&\lambda\geq0,\\
  \alpha,&\lambda=-1,\\
  0,&\lambda\leq-2.
 \end{cases}
\end{equation}
\end{proposition}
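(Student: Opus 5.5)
The plan is to integrate the triangular system explicitly and read off the monodromy from the analytic continuation of a fundamental matrix normalised at $z=1$.

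With the horizontal convention, $Y'=-A(z)Y$ for the matrix in \eqref{eq:rank-two-form}. The second component of a solution satisfies $y_2'=-\mu_2z^{-1}y_2$, so $y_2$ is a constant multiple of $z^{-\mu_2}$. For the first component, substituting $y_1=z^{-\mu_1}u$ into $y_1'+\mu_1z^{-1}y_1=-q_{\alpha,\beta}\,z^{-\mu_2}$ gives
\[
 u'=-q_{\alpha,\beta}(z)\,z^{\mu_1-\mu_2}=-q_{\alpha,\beta}(z)\,z^{-\lambda}.
\]
Since $\lambda\in\mathbb Z$, the right-hand side is a single-valued meromorphic function on $\Delta$. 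This is exactly where the hypothesis $\mu_1,\mu_2\in\Logm(c)$ with the same $c$ enters.

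Fix branches of $z^{-\mu_i}$ near $z=1$ with value $1$ at $z=1$. Put $u(z)=-\int_1^z q_{\alpha,\beta}(\zeta)\zeta^{-\lambda}\,d\zeta$. The fundamental matrix
\[
 \Phi(z)=\begin{pmatrix} z^{-\mu_1}&z^{-\mu_1}u\\ 0&z^{-\mu_2}\end{pmatrix}
\]
satisfies $\Phi(1)=I_2$, so its columns are the horizontal sections through the standard frame.

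Continue $\Phi$ once around the positively oriented loop based at $1$.
\begin{itemize}
 \item Each $z^{-\mu_i}$ is multiplied by $e^{-2\pi\ii\mu_i}=c$.
 \item By the residue theorem, $u$ changes by $-2\pi\ii\,\res_0(q_{\alpha,\beta}z^{-\lambda}dz)=-2\pi\ii\,\nu_\lambda$.
\end{itemize}
Hence the continued first column is $c$ times the first column. The continued second column is $c$ times the second column minus $2\pi\ii\nu_\lambda c$ times the first column. That is, the continuation of $\Phi$ equals $\Phi\,T_{\alpha,\beta}$ with $T_{\alpha,\beta}$ as stated.

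The only delicate point is bookkeeping: the sign of the exponent, the orientation of the loop, and the right action of monodromy on the row of frame vectors. I would check these against the diagonal case, where $\mu$ must give $e^{-2\pi\ii\mu}$ as in \Cref{prop:residue-map}. This fixes all conventions consistently with \Cref{def:groupoid}.

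It remains to compute the residue in \eqref{eq:nu}. We have $q_{\alpha,\beta}z^{-\lambda}=\alpha z^{-2-\lambda}+\beta\rho_\lambda z^{-\lambda}$.
\begin{itemize}
 \item If $\lambda\geq0$, then $\rho_\lambda=z^{\lambda-1}$. The $\beta$-term is $\beta z^{-1}$, with residue $\beta$. The $\alpha$-term $z^{-2-\lambda}$ has exponent $\leq-2$, so it has no residue.
 \item If $\lambda<0$, then $\rho_\lambda=z^{-1}$. The $\beta$-term is $\beta z^{-1-\lambda}$ with $-1-\lambda\geq0$, which is holomorphic and has no residue. The $\alpha$-term $\alpha z^{-2-\lambda}$ has a residue exactly when $-2-\lambda=-1$, i.e.\ $\lambda=-1$, and it then equals $\alpha$.
\end{itemize}
For $\lambda\leq-2$ both terms are holomorphic, so $\nu_\lambda=0$. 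This gives the three cases of \eqref{eq:nu}.
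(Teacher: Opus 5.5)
Your proposal is correct and matches the paper's proof essentially step for step. Both arguments use the upper triangular fundamental matrix normalised to $I_2$ at $z=1$ and reduce to the scalar equation $f'=-q_{\alpha,\beta}z^{-\lambda}$, which is single-valued because $\lambda\in\mathbb Z$. Both then read off the monodromy from the common multiplier $c$ and the period $-2\pi\ii\,\nu_\lambda$, and finish with the same exponent case analysis.
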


\begin{proof}
On a simply connected sector containing the base point $z=1$, choose branches of $z^{-\mu_1}$ and $z^{-\mu_2}$ normalised to equal $1$ at $z=1$, and choose the integration constant below so that $f(1)=0$.  Then the resulting fundamental solution satisfies $Y(1)=I_2$ and may be written as
\[
 Y(z)=
 \begin{pmatrix}
  z^{-\mu_1}&z^{-\mu_1}f(z)\\
  0&z^{-\mu_2}
 \end{pmatrix}.
\]
The equation $dY+A Y\,dz=0$ gives
\[
 f'(z)=-q_{\alpha,\beta}(z)z^{-\lambda}.
\]
Since $\lambda\in\mathbb Z$, the integrand is single-valued on the punctured disc.  After one positive circuit, $f$ changes by
\[
 -\oint q_{\alpha,\beta}(z)z^{-\lambda}\,dz
 =-2\pi\ii\,\nu_\lambda(\alpha,\beta).
\]
Both diagonal factors acquire the same multiplier $c$.  Right multiplication of $Y$ by the displayed matrix therefore gives its analytic continuation. 
If $\lambda=m\geq0$, then
\[
 q_{\alpha,\beta}z^{-\lambda}
 =\alpha z^{-m-2}+\beta z^{-1},
\]
so the residue is $\beta$.  If $\lambda=-1$, then
\[
 q_{\alpha,\beta}z^{-\lambda}=\alpha z^{-1}+\beta,
\]
and the residue is $\alpha$.  If $\lambda\leq-2$, neither term has exponent $-1$, and the residue vanishes.
\end{proof}

\begin{corollary}\label{cor:rank-two-strata}
Every repeated-eigenvalue residue stratum is the affine plane $\mathbb A^2_{\alpha,\beta}$, but its Jordan stratification depends on $\lambda$:
\begin{enumerate}[(i)]
 \item if $\lambda\geq0$, the non-scalar locus is $\{\beta\neq0\}$;
 \item if $\lambda=-1$, the non-scalar locus is $\{\alpha\neq0\}$;
 \item if $\lambda\leq-2$, the monodromy is scalar throughout the stratum.
\end{enumerate}
Thus isomorphic affine orbit spaces may carry different universal connections and different monodromy maps.
\end{corollary}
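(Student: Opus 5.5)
The corollary follows directly from \Cref{prop:rank-two-monodromy}.

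First, the affine-plane assertion is the case $r=2$ of \Cref{thm:main} (equivalently \Cref{cor:special}). Since $\dim\mathfrak n=1$, each ordered residue stratum is identified with $\mathbb A^2_{\alpha,\beta}$ through the unique normal form \eqref{eq:rank-two-form}. The monodromy of the point $(\alpha,\beta)$ is therefore the matrix $T_{\alpha,\beta}$ computed with respect to the standard frame at $z=1$.

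The Jordan type is read off from that formula. We have $T_{\alpha,\beta}=c\,(I_2+N)$, where
\[
 N=-2\pi\ii\,\nu_\lambda(\alpha,\beta)E_{12}.
\]
This matrix is scalar exactly when its off-diagonal entry vanishes, that is, when $\nu_\lambda(\alpha,\beta)=0$. Otherwise it is a single Jordan block of size two with eigenvalue $c$. Substituting the three cases of \eqref{eq:nu} gives the three items:
\begin{enumerate}[(i)]
 \item for $\lambda\geq0$, $\nu_\lambda=\beta$, so the non-scalar locus is $\{\beta\neq0\}$;
 \item for $\lambda=-1$, $\nu_\lambda=\alpha$, so the non-scalar locus is $\{\alpha\neq0\}$;
 \item for $\lambda\leq-2$, $\nu_\lambda\equiv0$, so the monodromy is scalar throughout the stratum.
\end{enumerate}
The Jordan type of $T_{\alpha,\beta}$ is invariant under re-framing by conjugation, so this describes the stratification intrinsically.

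The final sentence needs only an example. Take $\mu_1$ fixed and vary $\mu_2\in\Logm(c)$ so that $\lambda$ runs through $0$, $-1$ and $-2$. The three strata are all $\mathbb A^2$, but their universal connections differ: for $\lambda\in\{0,-2\}$ the slice term is $\rho_\lambda=z^{\lambda-1}=z^{-1}$ or $\rho_\lambda=z^{-1}$ respectively, whereas for $\lambda=-1$ it is again $z^{-1}$ but with a different resonance. Their monodromy maps $\mathbb A^2\to\GL_2$ also differ, since the loci where they are non-scalar are a line complement in $\beta$, a line complement in $\alpha$, and empty, respectively. No isomorphism of affine planes can intertwine a non-constant monodromy map with a constant one.

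The only point needing care is making sure that the convention-dependent sign in $\lambda=\mu_2-\mu_1$ matches the resonance used in \Cref{prop:rank-two-monodromy}. That proposition already fixes this sign, so there is no real obstacle.
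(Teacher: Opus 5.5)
Your proof is correct and follows the paper's argument: the affine-plane claim is \Cref{thm:main} for $r=2$, and the three cases are read off from the vanishing locus of $\nu_\lambda(\alpha,\beta)$ in \Cref{prop:rank-two-monodromy}. Your added justification of the final sentence is sound, but the remark about $\rho_\lambda$ is muddled: in all three of your cases the slice monomials are $z^{-2}$ and $z^{-1}$, and the universal connections differ only through the diagonal residue $\mu_2=\mu_1+\lambda$; moreover, the monodromy maps for $\lambda=0$ and $\lambda=-1$ are intertwined by swapping $\alpha$ and $\beta$, so it is the constant-versus-non-constant comparison with $\lambda\leq-2$ that actually carries the argument.
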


\begin{proof}
The affine-plane statement is \Cref{thm:main} in rank two.  By \Cref{prop:rank-two-monodromy}, the monodromy is non-scalar if and only if $\nu_\lambda(\alpha,\beta)\neq0$.  Formula~\eqref{eq:nu} identifies this condition with $\beta\neq0$ for $\lambda\geq0$ and with $\alpha\neq0$ for $\lambda=-1$; for $\lambda\leq-2$, the non-scalar locus is empty.
\end{proof}

The last case does not contradict the existence of Jordan monodromy with eigenvalue~$c$.  It says only that such monodromy cannot occur in residue strata with $\lambda\leq-2$.  This is the normal-form manifestation of the constraint in \Cref{rem:log-constraint}.

\section{Descent from the ordered frame}\label{sec:forgetting-frame}

The geometry of invariant flags, Borel quotients and Springer fibres used below is classical; see Springer~\cite{Springer}, and compare the parahoric Riemann--Hilbert description of Boalch~\cite{Boalch}.  This section identifies what is forgotten in passing from the fully framed Kontsevich--Szab\'o slice to flagged and unframed objects, especially for repeated spectra and spectra containing the eigenvalue~$1$.

\subsection{Re-framing and the obstruction to intrinsic affine coordinates}\label{sec:reframing}

The affine slice of \Cref{thm:main} classifies fully framed objects.  We next examine the effect of changing the ordered frame while keeping the underlying meromorphic connection fixed.  This is the local analogue of passing from framed to parabolic or parahoric Riemann--Hilbert data~\cite{Boalch,Inaba}; it is also the first necessary test for attaching the coordinates of the slice to an atom without a chosen framing.

Let $\nabla_{\boldsymbol\alpha,\boldsymbol\beta}$ be the standard-framed normal form \eqref{eq:main-normal-form}.  Replace the standard row frame $\mathbf e(1)$ by $\mathbf e(1)C$, where $C\in B$.  Returning the new framed object to the standard frame uses the constant gauge $C^{-1}$; hence the connection matrix becomes
\begin{equation}\label{eq:reframing-conjugation}
 A(z)\longmapsto C A(z) C^{-1}.
\end{equation}
The resulting matrix is again upper triangular, and it may then be returned to the distinguished slice by the unique framed unipotent normalisation of \Cref{thm:slice}.

\begin{proposition}\label{prop:torus-reframing}
Let $D=\diag(d_1,\ldots,d_r)\in B$ be diagonal.  The equivalence relation on the affine slice induced by replacing the ordered frame by $\mathbf e(1)D$ is
\begin{equation}\label{eq:torus-reframing}
 \alpha_{ji}\longmapsto \frac{d_j}{d_i}\alpha_{ji},
 \qquad
 \beta_{ji}\longmapsto \frac{d_j}{d_i}\beta_{ji}
 \qquad (j<i).
\end{equation}
Consequently, none of the individual affine coordinates descends to the object obtained by forgetting the ordered framing.
\end{proposition}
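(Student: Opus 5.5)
The plan is to compute the effect of the constant gauge $D^{-1}$ directly on the normal form and to check that no unipotent renormalisation is needed afterwards. By \eqref{eq:reframing-conjugation} with $C=D$, the matrix $A(z)$ of $\nabla_{\boldsymbol\alpha,\boldsymbol\beta}$ becomes $DA(z)D^{-1}$. Since $D$ is diagonal, it commutes with $M_{\boldsymbol\mu}$, and for every matrix unit $DE_{ji}D^{-1}=(d_j/d_i)E_{ji}$. Hence the transformed matrix is
\[
 \frac{M_{\boldsymbol\mu}}{z}+\sum_{j<i}\frac{d_j}{d_i}\bigl(\alpha_{ji}z^{-2}+\beta_{ji}\rho_{\lambda_{ji}}\bigr)E_{ji}.
\]
The residue vector $\boldsymbol\mu$ is unchanged, so each $\lambda_{ji}$ and each $\rho_{\lambda_{ji}}$ is unchanged as well. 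The new matrix therefore already lies in $\cS_{\boldsymbol\mu}$.

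The next step is to confirm that this slice point is the one attached to the re-framed object. The gauge $D^{-1}$ is a holomorphic $B$-valued morphism from $(E,\nabla,\mathbf e(1)D)$ to a standard-framed object. By the uniqueness part of \Cref{thm:slice} (equivalently \Cref{prop:uniqueness}), the unique slice representative of the re-framed object is the displayed connection. This gives \eqref{eq:torus-reframing}. The equivalence relation in question is generated by the action of the torus $T=(\C^\times)^r\subset B$ through these formulas, one for each $D$.

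For the descent statement, I would argue that a function on the slice descends to the frame-forgotten object only if it is invariant under all such re-framings, in particular under the torus. Fix any root $(j,i)$ and choose $D$ with $d_j/d_i=s$ for an arbitrary $s\in\C^\times$. This is possible for $j\neq i$; for instance take $d_j=s$ and all other $d_k=1$. Then $\alpha_{ji}\mapsto s\alpha_{ji}$. Since the slice contains points with $\alpha_{ji}\neq0$, the coordinate $\alpha_{ji}$ is not invariant, and the same argument applies to $\beta_{ji}$.

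The main subtlety is not computational. It is to make precise that "descends" means invariance under the re-framing relation, and to point out that the objects are genuinely identified once the frame is forgotten. That identification holds because $D^{-1}$ is a holomorphic gauge relating the same underlying connection with lattice. Only ratios of monomials that are invariant under the torus weights $d_j/d_i$ could possibly descend.
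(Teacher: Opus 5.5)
Your proposal is correct and follows the paper's proof. Conjugating the normal form by the diagonal $D$ via \eqref{eq:reframing-conjugation}, with $[D,M_{\boldsymbol\mu}]=0$ and $DE_{ji}D^{-1}=(d_j/d_i)E_{ji}$, lands directly in $\cS_{\boldsymbol\mu}$ with no unipotent renormalisation, and choosing $d_j/d_i\neq1$ shows that no non-zero coordinate is invariant. Your check that $D^{-1}$ is a framed morphism and your appeal to the uniqueness in \Cref{thm:slice} only spell out what the paper leaves implicit.
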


\begin{proof}
Since $D$ commutes with $M_{\boldsymbol\mu}$ and
\[
 D E_{ji}D^{-1}=\frac{d_j}{d_i}E_{ji},
\]
formula \eqref{eq:reframing-conjugation} sends the normal form directly to another point of the same distinguished slice, with coordinates given by \eqref{eq:torus-reframing}.  No further unipotent normalisation is required.  Choosing $D$ with $d_j/d_i\neq1$ proves that a non-zero coordinate cannot be invariant under change of framing.
\end{proof}

Let $H\cong(\Gm)^r\subset B$ denote the diagonal torus.

\begin{corollary}\label{cor:torus-invariants}
The ring of polynomial invariants of the diagonal re-framing torus on the affine slice is trivial:
\[
 \C[\mathfrak n\oplus\mathfrak n]^H=\C,
\]
where the central scalar subgroup acts trivially and $H/\Gm$ acts effectively.  Hence the affine categorical quotient of the slice by diagonal re-framing is a point.
\end{corollary}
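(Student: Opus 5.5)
The torus $H$ acts diagonally on the coordinate ring $\C[\mathfrak n\oplus\mathfrak n]=\C[\alpha_{ji},\beta_{ji}]_{j<i}$ by the characters of \Cref{prop:torus-reframing}. Every monomial is therefore a weight vector, and a polynomial is invariant exactly when each of its monomials has trivial weight. So I would first compute the weight of a monomial and then show that only the constant monomial has weight zero.

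By \eqref{eq:torus-reframing}, $\alpha_{ji}$ and $\beta_{ji}$ are scaled by $d_j/d_i$, so their coordinate functions have weight $\varepsilon_i-\varepsilon_j$ or its negative, depending on the dual convention. Either way the sign is uniform across all coordinates. Hence a monomial $\prod\alpha_{ji}^{a_{ji}}\beta_{ji}^{b_{ji}}$ has weight $\pm\sum_{j<i}(a_{ji}+b_{ji})(\varepsilon_i-\varepsilon_j)$. This is a non-negative integral combination of positive roots, since every $\varepsilon_i-\varepsilon_j$ with $j<i$ is a sum of simple roots $\varepsilon_{k+1}-\varepsilon_k$.

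To rule out non-trivial invariants, I would pair this weight with the cocharacter $t\mapsto\diag(1,t,t^2,\ldots,t^{r-1})$. It acts on $\alpha_{ji}$ and $\beta_{ji}$ by $t^{\pm(i-j)}$, so a monomial is scaled by $t^{\pm\sum(i-j)(a_{ji}+b_{ji})}$. This vanishes only if every exponent is zero. Therefore only constants are invariant, which gives $\C[\mathfrak n\oplus\mathfrak n]^H=\C$. The affine categorical quotient is $\operatorname{Spec}$ of the invariant ring, hence a point. Equivalently, $0$ lies in the closure of every orbit, since the limit $t\to0$ along this cocharacter (or its inverse) sends every point to $0$.

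For the remaining assertions: the central subgroup $\{d_1=\cdots=d_r\}$ clearly acts trivially, because every ratio $d_j/d_i$ equals $1$. For effectivity of $H/\Gm$, suppose $(d_j/d_i)=1$ for all $j<i$. Taking just the simple roots $i=j+1$ already forces all $d_k$ to be equal, so the kernel of the action is exactly the scalars. This uses $r\ge2$, so that coordinates exist. None of the steps is a real obstacle. The only care needed is to keep the sign convention consistent between characters and the dual action on functions, and the argument above does not depend on that sign.
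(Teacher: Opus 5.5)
Your proof is correct and essentially the paper's argument. The paper uses the one-parameter subgroup $D(t)=\diag(t^{-1},\ldots,t^{-r})$, which is the inverse of your cocharacter up to a central scalar, so every coordinate gets weight $i-j>0$ and no non-constant monomial is invariant; your extra check that the kernel of the action is exactly the scalars (via the simple roots) supplies the effectiveness claim that the paper only asserts.
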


\begin{proof}
Consider the one-parameter subgroup
\[
 D(t)=\diag(t^{-1},t^{-2},\ldots,t^{-r}).
\]
For every $j<i$, both $\alpha_{ji}$ and $\beta_{ji}$ have strictly positive weight $i-j$ under \eqref{eq:torus-reframing}.  Every non-constant monomial therefore has strictly positive weight and cannot be invariant.  Thus only constants are invariant under this one-parameter subgroup, and hence under the full diagonal torus.
\end{proof}

\begin{remark}
The corollary does not assert that forgetting the framing destroys the orbit data; rather, those data are invisible to the affine categorical quotient.  The quotient stack
\[
 [\,(\mathfrak n\oplus\mathfrak n)/H\,]
\]
retains stabilisers and root-weight data, and is already a more faithful recipient than the affine quotient.  If one forgets the full ordered frame rather than merely its diagonal scaling, the relevant re-framing group is larger than~$H$.
\end{remark}

The additional unipotent ambiguity can be described in rank two.  Put
\[
 A_{\alpha,\beta}(z)
 =\frac{1}{z}\begin{pmatrix}\mu_1&0\\0&\mu_2\end{pmatrix}
 +\bigl(\alpha z^{-2}+\beta\rho_\lambda(z)\bigr)E_{12},
 \qquad \lambda=\mu_2-\mu_1.
\]
For $s\in\C$, set $N_s=I_2+sE_{12}$.

\begin{proposition}\label{prop:rank-two-unipotent-reframing}
The equivalence relation induced by replacing the standard frame by $\mathbf e(1)N_s$ and returning to the distinguished slice is
\[
 (\alpha,\beta)\longmapsto
 \begin{cases}
  (\alpha,\beta),&\lambda\in\mathbb Z_{\geq0},\\[1mm]
  (\alpha,\beta+s\lambda),&\lambda\notin\mathbb Z_{\geq0}.
 \end{cases}
\]
In particular, when $\lambda\notin\mathbb Z_{\geq0}$, the parameter $\beta$ can be removed after the ordered framing is forgotten.
\end{proposition}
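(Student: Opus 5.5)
The computation is short and follows the re-framing recipe \eqref{eq:reframing-conjugation}. Replacing $\mathbf e(1)$ by $\mathbf e(1)N_s$ and returning to the standard frame with the constant gauge $N_s^{-1}$ sends $A_{\alpha,\beta}$ to $N_sA_{\alpha,\beta}N_s^{-1}$. We then renormalise into the slice $\cS_{\boldsymbol\mu}$ using the unique framed unipotent gauge of \Cref{thm:slice}. Uniqueness of the slice representative (\Cref{prop:uniqueness}) guarantees that the resulting coordinates are well defined, so it suffices to exhibit one admissible normalising gauge in each case.

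\emph{Step 1: conjugation.} Since $N_s^{-1}=I_2-sE_{12}$ and $E_{12}$ commutes with $N_s$, the nilpotent part $q_{\alpha,\beta}E_{12}$ is unchanged. Only the diagonal part moves, and
\[
 N_s\frac{M_{\boldsymbol\mu}}{z}N_s^{-1}
 =\frac{M_{\boldsymbol\mu}}{z}+\frac{s}{z}[E_{12},M_{\boldsymbol\mu}]
 =\frac{M_{\boldsymbol\mu}}{z}+s\lambda z^{-1}E_{12}.
\]
The transported matrix is therefore upper triangular with the same diagonal $M_{\boldsymbol\mu}/z$, so it lies in $\cA_{\boldsymbol\mu}$. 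Its off-diagonal coefficient is
\[
 \alpha z^{-2}+\beta\rho_\lambda+s\lambda z^{-1}.
\]

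\emph{Step 2: case $\lambda\notin\mathbb Z_{\geq0}$.} Here $\rho_\lambda=z^{-1}$. The new coefficient is $\alpha z^{-2}+(\beta+s\lambda)z^{-1}$, which already lies in the slice, giving $(\alpha,\beta+s\lambda)$. Since $0\in\mathbb Z_{\geq0}$, we have $\lambda\neq0$. Choosing $s=-\beta/\lambda$ then moves $\beta$ to $0$, which is the final assertion.

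\emph{Step 3: case $\lambda=m\in\mathbb Z_{\geq0}$.} If $m=0$ the extra term vanishes. If $m\geq1$, we remove $s m z^{-1}$ with the framed gauge $I_2+pE_{12}$, where $p=c(1-z^m)$, so that $p(1)=0$. By \eqref{eq:L-monomials},
\[
 L_m(p)=-mcz^{-1},
\]
and by \Cref{lem:rootwise-gauge} this is the exact change of the coefficient, since in rank two there are no higher-height terms. Taking $c=s$ cancels the extra term and returns $(\alpha,\beta)$.

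The only point requiring care is Step 3. One must see that $z^{-1}$ lies in $L_m(V)$ when $m\geq1$, even though it does not for non-integral $\lambda$. This is exactly where the resonant monomial $z^m$ supplies the freedom needed to meet the framed condition at $z=1$.
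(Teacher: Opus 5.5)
Your proposal is correct and follows the paper's own proof. You conjugate by $N_s$ to pick up the term $s\lambda z^{-1}E_{12}$, read it off as a shift of $\beta$ when $\rho_\lambda=z^{-1}$, and, in the resonant case $\lambda=m\geq1$, remove it with a framed primitive $s(1-z^m)$, which is a scalar multiple of the paper's $(z^m-1)/m$ satisfying $L_m\bigl((z^m-1)/m\bigr)=z^{-1}$.
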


\begin{proof}
A direct calculation gives
\[
 N_s A_{\alpha,\beta}(z)N_s^{-1}
 =A_{\alpha,\beta}(z)+s\lambda z^{-1}E_{12}.
\]
It remains to project $s\lambda z^{-1}$ to the scalar normal-form complement of \Cref{lem:scalar-splitting}.  If $\lambda\notin\mathbb Z_{\geq0}$, then $\rho_\lambda=z^{-1}$, so this changes $\beta$ by $s\lambda$.  If $\lambda=0$, the added term vanishes.  If $\lambda=m\geq1$, then
\[
 z^{-1}=L_m\!\left(\frac{z^m-1}{m}\right),
\]
with $(z^m-1)/m$ vanishing at $z=1$; hence the added term is framed-gauge trivial and the distinguished coordinates do not change.
\end{proof}

\begin{corollary}\label{cor:rank-two-unframed-orbits}
In rank two, after allowing upper triangular re-framing, the affine coordinates are never canonical.  If $\lambda\notin\mathbb Z_{\geq0}$, every re-framing class contains a representative with $\beta=0$, and the remaining diagonal scaling has only the two set-theoretic orbits $\alpha=0$ and $\alpha\neq0$.  If $\lambda\in\mathbb Z_{\geq0}$, the unipotent re-framing acts trivially on the distinguished slice, while diagonal scaling identifies non-zero scalar multiples of $(\alpha,\beta)$.
\end{corollary}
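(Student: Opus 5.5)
The plan is to read off the statement directly from \Cref{prop:torus-reframing} and \Cref{prop:rank-two-unipotent-reframing}, because every upper triangular re-framing factors as $C=DN_s$ with $D$ diagonal and $N_s=I_2+sE_{12}$. First I would check that composite re-framings act compatibly on the slice. Re-framing by $\mathbf e(1)C$ acts by conjugation $A\mapsto CAC^{-1}$, followed by the unique framed unipotent normalisation of \Cref{thm:slice}. By uniqueness of that normalisation, the induced map on slice coordinates is a right action of $B$: the normalising gauges are $g(1)=I_r$ unipotents, and these commute past constant conjugations up to another such gauge. So it suffices to understand the diagonal and unipotent generators separately.

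The case $\lambda\notin\mathbb Z_{\geq0}$ goes as follows. By \Cref{prop:rank-two-unipotent-reframing}, $N_s$ sends $(\alpha,\beta)$ to $(\alpha,\beta+s\lambda)$. Since $\lambda\neq0$ here, the choice $s=-\beta/\lambda$ produces a representative with $\beta=0$. Then \Cref{prop:torus-reframing} gives $\alpha\mapsto(d_1/d_2)\alpha$. This scaling preserves $\beta=0$, and $d_1/d_2$ ranges over all of $\C^\times$, so the points $(\alpha,0)$ fall into exactly the two orbits $\alpha=0$ and $\alpha\neq0$. One detail needs checking: applying $N_s$ again after scaling does not merge these two orbits, since $N_s$ never changes $\alpha$.

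The case $\lambda\in\mathbb Z_{\geq0}$ follows the same pattern. \Cref{prop:rank-two-unipotent-reframing} says $N_s$ acts trivially, so the only remaining identification comes from \Cref{prop:torus-reframing}, namely $(\alpha,\beta)\mapsto t(\alpha,\beta)$ with $t=d_1/d_2\in\C^\times$. This is exactly the identification of non-zero scalar multiples.

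Finally, the claim that the coordinates are never canonical follows because the scaling is non-trivial on any non-zero point in both cases. Where the coordinates vanish, the statement reduces to the descent failure already recorded in \Cref{prop:torus-reframing}. The only step needing real care is the first one, the compatibility of composite re-framings with slice normalisation. I expect it to follow from the triviality of stabilisers in \Cref{thm:slice}.
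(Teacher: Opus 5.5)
Your proposal is correct and follows the paper's proof. The paper likewise combines the unipotent translation of \Cref{prop:rank-two-unipotent-reframing} with the weight-one diagonal scaling of \Cref{prop:torus-reframing}, case by case in the two regimes. Your extra compatibility check for composite re-framings is what the paper records separately in \Cref{prop:algebraic-borel-action}. It rests on uniqueness of the slice representative together with $C\,\cU_1\,C^{-1}=\cU_1$ for constant $C\in B$, rather than on triviality of stabilisers. Whether the resulting action is called left or right does not affect the orbit statement.
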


\begin{proof}
Combine the unipotent translation of \Cref{prop:rank-two-unipotent-reframing} with the common weight-one diagonal action of \Cref{prop:torus-reframing}.  In the non-resonant range the additive parameter sets $\beta$ equal to zero, after which diagonal scaling has the two stated set-theoretic orbits in the remaining coordinate.  In the resonant range the unipotent action is trivial and the diagonal torus identifies the non-zero scalar multiples.
\end{proof}

The results above show that the first descent problem has no coordinatewise solution.  The coordinates $(\boldsymbol\alpha,\boldsymbol\beta)$ are invariants of a fully framed residual constituent, not of a constituent after the ordered basis has been forgotten.  There are, however, two distinct forgetting operations.  Quotienting by the standard Borel forgets the ordered basis while retaining the full invariant flag that it spans.  Forgetting the flag itself is a further operation and is not represented by the same quotient in the presence of repeated eigenvalues.  The next section describes this distinction.

\subsection{The flagged quotient and its structural interpretation}\label{sec:intrinsic-quotient}

Fix a residue vector $\boldsymbol\mu$ and write
\[
 \cS_{\boldsymbol\mu}=\mathfrak n\oplus\mathfrak n
\]
for the affine normal-form slice of \Cref{thm:main}.  The passage from a basis to the full flag it spans is classical, as is the appearance of parabolic or Borel data on the Betti side of the Riemann--Hilbert correspondence; compare~\cite{IIS,Inaba,Boalch}.  What is specific here is the induced action on the affine slice and its interaction with resonance.  The construction of \Cref{sec:reframing} assigns to each $C\in B$ and $x\in\cS_{\boldsymbol\mu}$ a new point of the same slice: change the standard frame by $C$, conjugate the connection by $C$, and apply the unique framed unipotent normalisation.  We denote this action by
\[
 B\times\cS_{\boldsymbol\mu}\longrightarrow\cS_{\boldsymbol\mu},
 \qquad
 (C,x)\longmapsto C\star x.
\]
The central scalar subgroup $\Gm\subset B$ acts trivially.

\begin{proposition}\label{prop:algebraic-borel-action}
For every fixed residue vector $\boldsymbol\mu$, the operation $\star$ is an algebraic action of $B$ on the affine space $\cS_{\boldsymbol\mu}$.  With respect to the height filtration of $\mathfrak n$, the action is triangular: the transformation of a coordinate of height $h$ depends only on the entries of $C$ and on normal-form coordinates of height at most $h$.  Its restriction to the diagonal torus is the root-weight action of \eqref{eq:torus-reframing}.
\end{proposition}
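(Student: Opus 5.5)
The plan is to define $C\star x$ concretely and check the three assertions in turn: it is an action, it is algebraic and triangular, and it restricts correctly to the torus. For $x\in\cS_{\boldsymbol\mu}$ with matrix $A_x$ and $C\in B$, the matrix $CA_xC^{-1}$ is again upper triangular. Its diagonal is $M_{\boldsymbol\mu}/z$, because conjugating a diagonal matrix by an element of $B$ changes it only by terms in $\mathfrak n$. Its strictly upper triangular part lies in $\mathfrak n\otimes z^{-2}\cO$. Hence $CA_xC^{-1}\in\cA_{\boldsymbol\mu}$, and \Cref{thm:slice} gives a unique $g_{C,x}\in\cU_1$ with $(CA_xC^{-1})\cdot g_{C,x}\in\cS_{\boldsymbol\mu}$. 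This point is $C\star x$.

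To prove the action law, I will use the identity
\[
 C\bigl(A\cdot g\bigr)C^{-1}=\bigl(CAC^{-1}\bigr)\cdot\bigl(CgC^{-1}\bigr)
\]
for constant $C$. This holds because $C$ is constant, so $(CgC^{-1})'=Cg'C^{-1}$. Since $C\in B$, conjugation preserves $U(\cO)$ and the condition $g(1)=I_r$, so $CgC^{-1}\in\cU_1$. Then $C'\star(C\star x)$ is obtained from $(C'C)A_x(C'C)^{-1}$ by the framed gauge $(C'g_{C,x}C'^{-1})\,g_{C',C\star x}\in\cU_1$, and it lies in the slice. The uniqueness in \Cref{thm:slice} therefore gives $C'\star(C\star x)=(C'C)\star x$. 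Scalars act trivially because $CA_xC^{-1}=A_x$ when $C$ is scalar.

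For algebraicity and triangularity I will rerun the height induction of \Cref{prop:existence} while tracking the dependence on parameters. At height $h$, the linear projections $\pi_\alpha,\pi_\beta\colon W\to\C$ and the section $L_\lambda^{-1}\circ(1-\pi)\colon W\to V$ from \Cref{lem:scalar-splitting} are applied to the current coefficient $q_{ji}$. I will show inductively the following claim. There are finite sets of fixed holomorphic functions $\mathcal F_h\subset W$ and $\mathcal P_h\subset V$ such that every height-$h$ coefficient of the intermediate matrix $A^{(h-1)}$ is a finite sum $\sum_k\phi_k f_k$ with $f_k\in\mathcal F_h$. Here each $\phi_k$ is a polynomial in the entries of $C$ and $C^{-1}$ and in the coordinates of $x$ of height at most $h$. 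Likewise, each gauge entry $p_{ji}$ is a polynomial combination of elements of $\mathcal P_h$ with polynomials in the same variables of height at most $h$ as coefficients. The base case is the conjugated matrix: its entries of height $h$ are combinations of $z^{-2}$, $z^{-1}$ and $\rho_{\lambda_{ji'}}$, with coefficients that are polynomial in $C$, $C^{-1}$ and the slice coordinates of height at most $h$.

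In the inductive step, \eqref{eq:height-bracket} shows that the new height-$h$ terms are products of lower-height gauge entries $p$, their derivatives $p'$, and lower-height coefficients. Together these involve only finitely many new functions, and the coefficients in front of them have height at most $h$. Applying the linear maps then keeps $\alpha_{ji}$ and $\beta_{ji}$ of $C\star x$ polynomial, and they depend only on coordinates of height at most $h$. Since $C^{-1}$ is polynomial in the entries of $C$ and $\det C^{-1}$, the map is a morphism $B\times\cS_{\boldsymbol\mu}\to\cS_{\boldsymbol\mu}$. I expect this bookkeeping to be the main obstacle: one must verify that the function families stay finite and that no infinite series in the parameters appears. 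Finiteness of the number of heights, and of the number of steps at each height, is what guarantees this.

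Finally, for diagonal $C=D$ no normalisation is needed, because $g_{D,x}=I_r$ by the proof of \Cref{prop:torus-reframing}. The restriction of the action to the diagonal torus is therefore exactly \eqref{eq:torus-reframing}.
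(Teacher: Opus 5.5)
Your proposal is correct and follows the paper's route. You conjugate by $C$ and renormalise with the unique framed unipotent gauge, and the action law comes from uniqueness of the slice representative. Regularity and height-triangularity come from the finiteness of the height-by-height normalisation together with \eqref{eq:height-bracket}, and the torus case is read off from \Cref{prop:torus-reframing}. Your explicit identity $C(A\cdot g)C^{-1}=(CAC^{-1})\cdot(CgC^{-1})$ and the finite-function-family bookkeeping only spell out steps the paper states briefly.
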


\begin{proof}
Conjugation by a constant element of $B$ is regular in the entries of $C$, in the inverses of its diagonal entries, and in the affine coordinates of the slice.  It preserves upper triangularity and the diagonal residue vector.  Starting from a point of $\cS_{\boldsymbol\mu}$, constant conjugation produces only a finite linear combination of the Laurent monomials already present together with the diagonal $z^{-1}$-term.  On this finite-dimensional coefficient space, the projection furnished by \Cref{lem:scalar-splitting} is a fixed complex-linear map depending only on the constants $\mu_i-\mu_j$.  The height-by-height normalisation therefore uses only finitely many additions, multiplications, and inversions of the diagonal entries of $C$.  Inductively, every normalised coordinate is a regular function on $B\times\cS_{\boldsymbol\mu}$.  Uniqueness of the distinguished normal form shows that the transformation associated with $C_1C_2$ agrees with the composite of those associated with $C_2$ and $C_1$.  Thus $\star$ is an algebraic group action.  The filtration statement follows from
\[
 [\mathfrak n^{(a)},\mathfrak n^{(b)}]\subseteq\mathfrak n^{(a+b)},
\]
and the torus formula is \Cref{prop:torus-reframing}.
\end{proof}

\begin{definition}\label{def:flagged-stack}
The intrinsic \emph{flagged} orbit stack in the residue stratum $\boldsymbol\mu$ is
\[
 \mathscr A^{\mathrm{flag}}_{\boldsymbol\mu}
 \coloneqq[\cS_{\boldsymbol\mu}/B].
\]
Its effective rigidification with respect to the trivially acting central scalar subgroup is
\[
 [\cS_{\boldsymbol\mu}/(B/\Gm)].
\]
\end{definition}

\begin{proposition}\label{prop:meaning-flagged-stack}
The stack $\mathscr A^{\mathrm{flag}}_{\boldsymbol\mu}$ classifies, on complex points, prescribed-lattice connections in the residue stratum $\boldsymbol\mu$ together with a marked invariant full flag, but without a chosen ordered basis inducing that flag.  It is thus the quotient obtained from the fully framed Kontsevich--Szab\'o problem by forgetting the ordered basis while retaining its flag.
\end{proposition}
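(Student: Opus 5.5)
We will build a groupoid of flagged objects directly and compare it with the action groupoid $[\cS_{\boldsymbol\mu}(\C)/B(\C)]$ through the fully framed slice of \Cref{thm:main}.

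\textbf{The flagged groupoid.} An object is a triple $(E,\nabla,F_\bullet(1))$. Here $(E,\nabla)$ satisfies conditions (i)--(iii) of \Cref{def:groupoid}, and $F_\bullet(1)$ is a monodromy-invariant full flag of $E|_{z=1}$. We require that some, equivalently every, ordered basis inducing $F_\bullet(1)$ makes the triple an object of $\NSQC^{\mathrm{fr}}_{\mathbf c}$ with residue vector $\boldsymbol\mu$. A morphism is a holomorphic gauge $g\in\GL_r(\cO)$ with $\widetilde\nabla=\nabla\cdot g$ that carries one flag to the other at $z=1$. The phrase ``every'' requires justification. Two bases inducing the same flag differ by some $C\in B$. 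The diagonal monodromy entries in the two bases agree, since $C$ is triangular. The residue vector also agrees, because re-framing by $C$ preserves upper triangularity and the diagonal residues; this is \Cref{prop:algebraic-borel-action} together with the invariance in \Cref{prop:residue-map}.

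\textbf{The comparison functor.} We define $\Phi:[\cS_{\boldsymbol\mu}/B]\to$ (flagged groupoid) by sending a point $x$ of the slice to its normal-form connection with the standard flag. A morphism $C:x\to C\star x$ goes to the composite of the constant gauge $C^{-1}$ with the unique framed unipotent normalising gauge $n_{C,x}\in\cU_1$ of \Cref{thm:slice}. Functoriality follows from the uniqueness of the normalising gauge, exactly as in the proof that $\star$ is an action. Essential surjectivity is direct. Given a flagged object, choose an ordered basis inducing the flag. By \Cref{thm:main} the resulting framed object is isomorphic to a unique slice point $x$, and that framed isomorphism preserves flags.

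\textbf{Full faithfulness.} This is the main step. Let $g$ be a flagged morphism between $\Phi(x)$ and $\Phi(y)$, and set $C=g(1)$. Since $g$ preserves the standard flag, $C\in B$. Then $g$ is a framed isomorphism from $(\nabla_x,\mathbf e(1))$ to $(\nabla_y,\mathbf e(1)C)$. Composing with the re-framing by $C$, we obtain a framed morphism between standard-framed slice representatives. By \Cref{lem:Borel-rigidity}, \Cref{prop:diagonal-reduction} and \Cref{thm:slice}, this framed morphism is unique. It follows that $y=C\star x$, or $C^{-1}\star x$ depending on the direction convention, and that $g$ equals $\Phi(C)$. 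Conversely, distinct $C$ give distinct $g$, because $g(1)=C$. Hence $\operatorname{Hom}(x,y)\cong\{C\in B:C\star x=y\}$, and $\Phi$ is an equivalence.

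The obstacle I expect is bookkeeping of the right-action conventions of \Cref{def:groupoid}, namely $\widetilde{\mathbf v}=\mathbf v\,g(1)$, so that the identification of morphisms matches the composition order of $\star$. I will handle it by fixing once and for all that $C$ acts by $\mathbf e(1)\mapsto\mathbf e(1)C$ and checking the identity $\Phi(C_1C_2)=\Phi(C_1)\Phi(C_2)$ using uniqueness of the framed normalisation.

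Finally, the trivially acting centre $\Gm$ corresponds to scalar gauges $cI_r$. These fix every connection and every flag, which explains the rigidification $[\cS_{\boldsymbol\mu}/(B/\Gm)]$.
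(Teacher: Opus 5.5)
Your proposal is correct and follows the same route as the paper's proof. Bases inducing a given flag differ by an element of $B$, the fully framed slice classification assigns each of them a unique point, and re-framing within the flag induces $\star$; you make the resulting equivalence explicit (comparison functor, essential surjectivity, and full faithfulness via framed rigidity), whereas the paper leaves it implicit, and the only slip is directional: with your definition $\Phi(C)(1)=C^{-1}$, a flagged morphism with $g(1)=C$ is $\Phi(C^{-1})$ and $y=C^{-1}\star x$, which your convention hedge already covers.
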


\begin{proof}
Two ordered bases induce the same full flag if and only if they differ by right multiplication by an element of $B$.  The fully framed classification identifies every such object with a unique point of $\cS_{\boldsymbol\mu}$, and changing the basis within its flag induces the action $\star$.  Passing to the action groupoid therefore forgets the basis and remembers the flag.  Stabiliser groups are retained by the quotient stack, as required when the connection admits non-trivial flag-preserving automorphisms after the framing has been removed.
\end{proof}

\begin{remark}
The quotient by $B$ does not, in general, produce an unflagged constituent.  It retains the full flag in the fibre at $z=1$ which is preserved by the monodromy.  To forget that flag one must pass to the groupoid of the underlying residual connections modulo arbitrary changes of basis.  Equivalently, one may form the $\GL_r$-equivariant induced space
\[
 \GL_r\times^B\cS_{\boldsymbol\mu}
\]
and then forget the marked flag in the associated pair.  The quotient
\[
 [\GL_r\times^B\cS_{\boldsymbol\mu}/\GL_r]
 \simeq[\cS_{\boldsymbol\mu}/B]
\]
still classifies the connection together with a marked invariant full flag.  The scheme of possible marked flags is determined by the monodromy and is described in \Cref{sec:unflagged-descent}; the stack-theoretic fibre also retains the action of the automorphism group of the connection.  In particular, after fixing the ordering of distinct eigenvalues, the regular semisimple flag scheme is a point, whereas repeated eigenvalues may produce positive-dimensional Springer fibres.
\end{remark}

We can compute the flagged quotient in rank two.  Put
\[
 B_{\mathrm{eff}}=B/\Gm\cong\Gm\ltimes\mathbb G_a.
\]
Write an element as $D(t)N_s$, where $t=d_1/d_2$, and retain the notation $\lambda=\mu_2-\mu_1$.

\begin{theorem}\label{thm:rank-two-flagged-quotient}
On the rank-two affine slice $\mathbb A^2_{\alpha,\beta}$, the effective Borel action is
\[
 (t,s)\star(\alpha,\beta)=
 \begin{cases}
  (t\alpha,t\beta),&\lambda\in\mathbb Z_{\geq0},\\[1mm]
  (t\alpha,t(\beta+s\lambda)),&\lambda\notin\mathbb Z_{\geq0}.
 \end{cases}
\]
Consequently:
\begin{enumerate}[(i)]
 \item if $\lambda\notin\mathbb Z_{\geq0}$, then
 $
  [\mathbb A^2/B_{\mathrm{eff}}]\simeq[\mathbb A^1_{\alpha}/\Gm];
 $
 \item if $\lambda\in\mathbb Z_{\geq0}$, the unipotent radical acts trivially, and the rigidification of the quotient by that trivially acting subgroup is
 \[
  [\mathbb A^2_{\alpha,\beta}/\Gm],
 \]
 with both coordinates of weight one.  Its non-zero locus has geometric quotient $\mathbb P^1$.
\end{enumerate}
In both cases the affine categorical quotient is a point, but the quotient stack retains non-trivial orbit and stabiliser data.
\end{theorem}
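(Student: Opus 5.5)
The plan is to compute the action $\star$ of \Cref{prop:algebraic-borel-action} on generators and then identify the quotient stacks directly.

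\emph{Step 1: the action formula.} Since $\star$ is an action and every element of $B_{\mathrm{eff}}$ factors as $D(t)N_s$, it is enough to know the two generators and compose them.
\begin{itemize}
\item[(a)] For the diagonal factor, \Cref{prop:torus-reframing} with $r=2$ gives $(\alpha,\beta)\mapsto (d_1/d_2)(\alpha,\beta)=(t\alpha,t\beta)$.
\item[(b)] For the unipotent factor, \Cref{prop:rank-two-unipotent-reframing} gives the identity if $\lambda\in\mathbb Z_{\geq0}$, and the translation $\beta\mapsto\beta+s\lambda$ otherwise.
\end{itemize}
Composing, with $N_s$ acting first and then $D(t)$ (this matches the right-action conventions of \Cref{def:groupoid}), yields the displayed formula. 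One must check the order, since the other order would give $t\beta+s\lambda$. Either way the orbit structure is the same after reparametrising $s$. I would settle the order by the explicit conjugation $CA C^{-1}$ with $C=D(t)N_s$: this gives $D(t)(N_sAN_s^{-1})D(t)^{-1}$, so the unipotent conjugation comes first and the torus scaling is applied to its result.

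\emph{Step 2: case (i), $\lambda\notin\mathbb Z_{\geq0}$.} Here $\lambda\neq0$, so the subgroup $\mathbb G_a$ acts on $\mathbb A^2$ by free translation in the $\beta$ direction. The slice $\{\beta=0\}$ meets every $\mathbb G_a$-orbit exactly once. Hence $\mathbb A^2\cong \mathbb A^1_\alpha\times\mathbb G_a$ equivariantly, and the inclusion of the slice induces
\[
 [\mathbb A^2/B_{\mathrm{eff}}]\simeq[\mathbb A^1_\alpha/\Gm].
\]
The precise route is to write $\mathbb A^2=B_{\mathrm{eff}}\times^{\Gm}\mathbb A^1_\alpha$ via $(t,s,\alpha)\mapsto (t\alpha,ts\lambda)$. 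This map is an isomorphism because $\lambda$ is invertible, and the equivalence then follows from the standard identity $[G\times^H Y/G]\simeq[Y/H]$. The residual torus acts on $\alpha$ with weight one.

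\emph{Step 3: case (ii), $\lambda\in\mathbb Z_{\geq0}$.} Here the formula shows that $\mathbb G_a$ acts trivially. Rigidifying along this normal, trivially acting subgroup gives $[\mathbb A^2/(B_{\mathrm{eff}}/\mathbb G_a)]=[\mathbb A^2/\Gm]$ with weights $(1,1)$. On the non-zero locus $\Gm$ acts freely, so the quotient is $(\mathbb A^2\setminus0)/\Gm=\mathbb P^1$.

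\emph{Step 4: the categorical quotient.} In both cases the affine categorical quotient is a point by \Cref{cor:torus-invariants}, since the invariants of $B_{\mathrm{eff}}$ lie inside the torus invariants. Non-trivial stabilisers remain in the stacks: the origin has stabiliser $\Gm$ in case (i), and $B_{\mathrm{eff}}$ in case (ii) before rigidification. There are also several orbits: two in case (i), and a $\mathbb P^1$ of orbits plus the origin in case (ii).

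The main obstacle is Step 1. The two cited propositions are stated as equivalence relations, not as a group law, so I must check the composition order and confirm that no further unipotent renormalisation interacts with the torus scaling. I will verify both with the explicit conjugation $CAC^{-1}$ described in Step 1.
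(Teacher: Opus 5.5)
Your proposal is correct and matches the paper's proof. The action is obtained by composing \Cref{prop:torus-reframing} and \Cref{prop:rank-two-unipotent-reframing} in the order $D(t)N_s$. Case (i) uses the slice $\{\beta=0\}$ with the residual weight-one torus action, and case (ii) rigidifies the trivially acting unipotent radical. The invariant ring is reduced to constants by positive torus weights. Your explicit check $CAC^{-1}=D(t)(N_sAN_s^{-1})D(t)^{-1}$ and the balanced-product isomorphism $B_{\mathrm{eff}}\times^{\Gm}\mathbb A^1_\alpha\cong\mathbb A^2$ make precise two steps that the paper states only briefly.
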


\begin{proof}
The displayed action combines \Cref{prop:torus-reframing,prop:rank-two-unipotent-reframing}; the order $D(t)N_s$ gives the common factor $t$.  If $\lambda\notin\mathbb Z_{\geq0}$, the additive subgroup acts freely and transitively on every affine line with fixed $\alpha$, and every orbit meets $\{\beta=0\}$ once.  The residual action on that section is $\alpha\mapsto t\alpha$, which gives the first equivalence of quotient stacks.  If $\lambda\in\mathbb Z_{\geq0}$, the additive subgroup acts trivially by \Cref{prop:rank-two-unipotent-reframing}; rigidifying it leaves the simultaneous weight-one action of $\Gm$.  The quotient of $\mathbb A^2\setminus\{0\}$ by this action is $\mathbb P^1$.  Finally, in either case all non-constant polynomial functions have positive diagonal weight after the free additive direction is removed, so the affine invariant ring consists only of constants.
\end{proof}

\begin{corollary}\label{cor:precise-recipient}
For the local moduli problem defined by Kontsevich and Szab\'o, the quotient stack $[\cS_{\boldsymbol\mu}/B]$ is the natural recipient for intrinsic data after the ordered basis is forgotten but the invariant full flag is retained.  When eigenvalues repeat, this stack is not, in general, equivalent to the unflagged moduli problem.  The latter requires, in addition, the descent along the forgetful morphism from marked invariant flags to the underlying residual connection.
\end{corollary}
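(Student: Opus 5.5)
The plan is to assemble the corollary from \Cref{def:flagged-stack}, \Cref{prop:meaning-flagged-stack} and the rank-two computation of \Cref{thm:rank-two-flagged-quotient}. There are two assertions. The first is that $[\cS_{\boldsymbol\mu}/B]$ is the natural recipient for data that forget the basis but keep the flag. The second is that, for repeated eigenvalues, this stack generally differs from the unflagged problem.

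For the first assertion I would use \Cref{prop:meaning-flagged-stack} directly, stated as a universal property. Take any assignment of intrinsic data to fully framed objects that is invariant under changing the ordered basis within its flag. By \Cref{thm:main}, such an assignment is a $B$-invariant function, or more generally a $B$-equivariant family, on $\cS_{\boldsymbol\mu}$ for the action $\star$ of \Cref{prop:algebraic-borel-action}. It therefore factors uniquely through the quotient stack $[\cS_{\boldsymbol\mu}/B]$. Because $B$ is exactly the subgroup of $\GL_r$ preserving the standard flag, no larger quotient is forced. The point of using the stack rather than the categorical quotient is that it keeps the stabilisers, whereas \Cref{cor:torus-invariants} shows the categorical quotient collapses to a point.

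For the second assertion I would construct an explicit rank-two counterexample with $c_1=c_2=c$. Choose a stratum where the monodromy is scalar, for instance $\lambda\le-2$ as in \Cref{cor:rank-two-strata}(iii). Then every line in the fibre at $1$ is monodromy-invariant, so the scheme of marked invariant flags over a fixed underlying connection is the whole of $\mathbb P^1$. This is a positive-dimensional Springer fibre, as in the remark preceding \Cref{thm:rank-two-flagged-quotient}.

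The forgetful morphism from flagged objects to unflagged residual connections then has positive-dimensional fibres over such points. The key step is to check that these fibres are not absorbed by automorphisms. Concretely, I must show that distinct invariant flags on the same connection give points of $[\cS_{\boldsymbol\mu}/B]$ that the unflagged groupoid identifies, but that are not isomorphic in the flagged stack. This needs the flags to be inequivalent under the automorphism group of the connection.

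I expect this step to be the main obstacle. To handle it, I would compare the isotropy groups. In the flagged stack the isotropy lies in $B$, while in the unflagged groupoid it lies in the centraliser of the monodromy, which for scalar monodromy is large. If I can exhibit a connection whose unflagged automorphism group does not preserve any chosen flag, then the two stacks have different isotropy at that point, and so they are inequivalent.

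Finally, I note why the phrase ``in general'' is needed. For distinct ordered eigenvalues the regular semisimple flag scheme is a point, so the forgetful map is an equivalence there. This explains why the failure is stated only for repeated eigenvalues.
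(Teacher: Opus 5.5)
Your proposal is correct. For the first assertion it coincides with the paper, which simply cites \Cref{prop:meaning-flagged-stack}. Your universal-property gloss adds nothing essential: $B$-invariant functions already factor through the categorical quotient, which is a point by \Cref{cor:torus-invariants}, so the content lies in the equivariant data and stabilisers, as you note.

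For the second assertion the routes differ. The paper reads the failure off the general fibre description of \Cref{prop:fibre-forgetful-map}, which identifies the fibre over $x$ with $[\operatorname{Flag}_{\mathbf c}(T_x)/\Aut(x)]$. Once the Springer-fibre product of \Cref{thm:product-springer-fibres} is not a point, quotienting by $B$ alone cannot forget the flag. You instead produce a single rank-two witness and test fullness through isotropy. This makes explicit something the paper leaves implicit: $\Aut(x)$ may act transitively on the relevant flags, so the difference is invisible on isomorphism classes and appears only in automorphism groups.

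The step you flag as the main obstacle closes at once. At the origin of a stratum with $\lambda\le-2$, the gauge $I_2+c\,z^{-\lambda}E_{21}$ is a holomorphic automorphism of $d+M_{\boldsymbol\mu}z^{-1}dz$ whose value at $z=1$ is not in $B$. By contrast, \Cref{thm:rank-two-flagged-quotient} shows that the $\star$-stabiliser of $(0,0)$ is only the diagonal torus. Alternatively, the trivial connection in the stratum $\boldsymbol\mu=(0,0)$ has $\Aut(x)=\GL_2(\C)$, against stabiliser $B$.

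You should drop the intermediate claim that distinct invariant flags on one connection give points of the same $[\cS_{\boldsymbol\mu}/B]$. In your $\lambda\le-2$ example the line $\C e_2$ yields the swapped residue vector $(\mu_2,\mu_1)$. All other lines form a single $\Aut(x)$-orbit, so inside the fixed stratum only the isotropy comparison detects the difference.

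The paper's argument is general and uniform over all repeated spectra with non-trivial Springer fibres. Yours covers a single instance, but it actually verifies non-fullness rather than inferring it from the shape of the fibre.
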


\begin{proof}
The first assertion is \Cref{prop:meaning-flagged-stack}.  The second follows from the description of the further forgetful fibre in \Cref{prop:fibre-forgetful-map}: when repeated eigenvalues occur, the scheme of invariant full flags need not be a point, so quotienting only by $B$ cannot forget the flag itself.
\end{proof}

\subsection{Invariant monodromy flags and the unflagged descent}\label{sec:unflagged-descent}

The full flag occurring in the original groupoid is a flag in the fibre at $z=1$ preserved by the monodromy.  It is not a constant flag in a chosen trivialisation which must be preserved by every Laurent coefficient of the connection matrix.  Accordingly, the scheme of possible invariant flags is governed by the monodromy operator itself; the corresponding 2-fibre also records the action of the automorphism group of the underlying connection.

Let $V$ be a complex vector space of dimension $r$, let $T\in\GL(V)$, and let
\[
 \mathbf c=(c_1,\ldots,c_r)\in(\C^\times)^r
\]
be an ordered list whose multiset is the spectrum of $T$, counted with algebraic multiplicity.

\begin{definition}\label{def:invariant-flag-scheme}
The ordered invariant-flag functor $\operatorname{Flag}_{\mathbf c}(T)$ assigns to a complex scheme $S$ the set of full flags of subbundles
\[
 0=\cF_0\subset\cF_1\subset\cdots\subset\cF_r=V\otimes_\C\cO_S
\]
such that
\[
 (T\otimes1)(\cF_i)\subseteq\cF_i
\]
for every $i$, and the endomorphism induced by $T\otimes1$ on the line bundle $\cF_i/\cF_{i-1}$ is multiplication by $c_i$.
\end{definition}

\begin{proposition}\label{prop:invariant-flags-projective}
The functor $\operatorname{Flag}_{\mathbf c}(T)$ is represented by a closed projective subscheme of the full flag variety $\operatorname{Flag}(V)$.
\end{proposition}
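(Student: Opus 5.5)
The plan is to realise $\operatorname{Flag}_{\mathbf c}(T)$ as the zero scheme of finitely many sections of vector bundles on the full flag variety $\operatorname{Flag}(V)$. Since $\operatorname{Flag}(V)$ is projective, any closed subscheme is automatically projective. Recall that $\operatorname{Flag}(V)$ represents the functor of full flags of subbundles $\cF_\bullet\subset V\otimes\cO_S$. It carries the universal flag $\cF^{\mathrm{univ}}_\bullet$, and the quotients $Q_i=(V\otimes\cO)/\cF^{\mathrm{univ}}_i$ are locally free. It therefore suffices to show that the two defining conditions cut out a closed subfunctor, that is, one defined by the vanishing of sections of vector bundles.

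\emph{Invariance.} For each $i$, consider the composite
\[
 s_i:\cF^{\mathrm{univ}}_i\hookrightarrow V\otimes\cO\xrightarrow{T\otimes1}V\otimes\cO\twoheadrightarrow Q_i .
\]
This is a global section of the locally free sheaf $\mathcal{H}om(\cF^{\mathrm{univ}}_i,Q_i)$. For a morphism $f:S\to\operatorname{Flag}(V)$, the condition $(T\otimes1)(\cF_i)\subseteq\cF_i$ is equivalent to $f^*s_i=0$. This uses two facts: pullback commutes with formation of $\cF_i$ and $Q_i$, because the $Q_i$ are locally free and so the sequences remain exact after pullback; and the composite $\cF_i\to Q_i$ vanishes exactly when $T\otimes1$ maps $\cF_i$ into $\ker(V\otimes\cO_S\to f^*Q_i)=\cF_i$. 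The zero scheme $Z_1=\bigcap_i\Zero(s_i)$ is therefore closed and represents the invariant full flags.

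\emph{Eigenvalue condition.} On $Z_1$, the map $T\otimes1$ induces an endomorphism $\tau_i$ of each line bundle $L_i=\cF_i/\cF_{i-1}$. This $\tau_i$ is a regular function, because $\mathcal{E}nd(L_i)=\cO$. The condition "$\tau_i$ is multiplication by $c_i$" is the vanishing of the regular function $\tau_i-c_i$. Again this is compatible with pullback, since $L_i$ commutes with base change. Set
\[
 Z=\bigcap_i\Zero(\tau_i-c_i)\subset Z_1 .
\]
Then $Z$ is closed in $\operatorname{Flag}(V)$ and represents $\operatorname{Flag}_{\mathbf c}(T)$. It is projective because $\operatorname{Flag}(V)$ is projective, being a closed subscheme of a product of Grassmannians via the Plücker embeddings.

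The main point needing care is functoriality: the conditions must be checked scheme-theoretically over an arbitrary base $S$, not only on points. This is exactly why the constructions use the quotient bundles $Q_i$ and the graded pieces $L_i$, whose formation commutes with arbitrary base change. Everything else is formal. The statement makes no claim that $Z$ is non-empty or reduced, and indeed for repeated eigenvalues the scheme structure may be non-reduced.
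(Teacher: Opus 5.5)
Your proposal is correct and follows the paper's proof. The paper likewise imposes invariance on the universal flag as the vanishing of the composite $\cF_i^{\mathrm{univ}}\to V\otimes\cO\to (V\otimes\cO)/\cF_i^{\mathrm{univ}}$, then imposes the eigenvalue condition as the vanishing of an endomorphism of the line bundle $\cF_i^{\mathrm{univ}}/\cF_{i-1}^{\mathrm{univ}}$, and gets projectivity from that of $\operatorname{Flag}(V)$. Your explicit base-change check (locally free quotients, so each zero scheme represents the subfunctor) just spells out what the paper leaves implicit, and your closing aside on possible non-reducedness plays no role in the argument.
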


\begin{proof}
Let
\[
 0=\cF_0^{\mathrm{univ}}\subset\cF_1^{\mathrm{univ}}\subset\cdots\subset
 \cF_r^{\mathrm{univ}}=V\otimes\cO_{\operatorname{Flag}(V)}
\]
be the universal flag.  The condition that $T$ preserve $\cF_i^{\mathrm{univ}}$ is the vanishing of the bundle morphism
\[
 \cF_i^{\mathrm{univ}}
 \xrightarrow{\ T\ }
 V\otimes\cO_{\operatorname{Flag}(V)}
 \longrightarrow
 (V\otimes\cO_{\operatorname{Flag}(V)})/\cF_i^{\mathrm{univ}}.
\]
It therefore cuts out a closed subscheme.  On that closed subscheme, the requirement that the induced endomorphism of the universal line quotient
\[
 \cL_i=\cF_i^{\mathrm{univ}}/\cF_{i-1}^{\mathrm{univ}}
\]
be $c_i\operatorname{Id}_{\cL_i}$ is again a closed condition, namely the vanishing of an endomorphism of a line bundle.  Intersecting these finitely many zero schemes gives $\operatorname{Flag}_{\mathbf c}(T)$.  Since the full flag variety is projective, so is this closed subscheme.
\end{proof}

Let the distinct eigenvalues of $T$ be $a\in\Sigma$.  Write
\[
 V=\bigoplus_{a\in\Sigma}V_a,
 \qquad
 T|_{V_a}=aI_{V_a}+N_a,
\]
where $V_a$ is the generalised $a$-eigenspace and $N_a$ is nilpotent.  Denote by $\mathcal B(N_a)$ the Springer fibre of $N_a$, namely the scheme of full flags in $V_a$ preserved by $N_a$; see~\cite{Springer}.

\begin{theorem}\label{thm:product-springer-fibres}
There is an isomorphism of projective schemes
\[
 \operatorname{Flag}_{\mathbf c}(T)
 \cong
 \prod_{a\in\Sigma}\mathcal B(N_a).
\]
The isomorphism depends on the prescribed order $\mathbf c$ only through the positions at which each eigenvalue occurs.
\end{theorem}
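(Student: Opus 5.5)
We construct the isomorphism on functors of points and then invoke \Cref{prop:invariant-flags-projective}, so that both sides are projective schemes representing the same functor. For each $a\in\Sigma$ put $I_a=\{i\mid c_i=a\}$ and $n_a=|I_a|=\dim V_a$. For a $T$-invariant flag $\cF_\bullet$ over $S$, set
\[
 \cF_i^{a}=\cF_i\cap(V_a\otimes\cO_S),
\]
the intersection taken inside $V\otimes\cO_S$. The goal is to show that, as $i$ increases, $\cF_i^{a}$ jumps exactly at the indices $i\in I_a$. Discarding the repetitions then yields a full $N_a$-invariant flag $\cF^a_\bullet$ in $V_a\otimes\cO_S$, indexed by $I_a$. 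The inverse map sends a tuple of flags $(\cG^a_\bullet)_a$ to the flag
\[
 \cF_i=\bigoplus_a\cG^a_{|I_a\cap\{1,\dots,i\}|}.
\]
This is a full flag of subbundles, it is invariant because each $V_a$ is $T$-stable and each $\cG^a$ is $N_a$-stable, and its graded piece at $i\in I_a$ is a quotient of $\cG^a$ on which $T$ acts by $a+N_a$. That action induces multiplication by $a=c_i$, since $N_a$ acts by zero on the graded pieces of an $N_a$-stable full flag. The construction uses $\mathbf c$ only through the sets $I_a$, which is the asserted dependence.

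The main step is to show that any $T$-invariant flag over an arbitrary base $S$ is the direct sum of its intersections with the $V_a$, and that these intersections are subbundles of the expected ranks. For this we use the spectral projectors $\pi_a\in\C[T]$: $T$-invariance of $\cF_i$ gives $\pi_a\cF_i\subseteq\cF_i$, so $\cF_i=\bigoplus_a\pi_a\cF_i$. Since $\pi_a\cF_i=\cF_i\cap(V_a\otimes\cO_S)$, each summand is a direct summand of the locally free sheaf $\cF_i$, hence locally free. For the ranks, we argue by induction on $i$. The quotient $\cF_i/\cF_{i-1}$ is a line bundle equal to $\bigoplus_a\pi_a\cF_i/\pi_a\cF_{i-1}$, and $T$ acts on it by $c_i$. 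On the summand indexed by $a$, the operator $T-c_i$ acts as $(a-c_i)+N_a$, which is invertible whenever $a\neq c_i$. Thus that summand is killed by an invertible operator and vanishes. So only the $a=c_i$ summand survives, and the rank of $\cF_i^a$ increases by one exactly at $i\in I_a$.

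One technical point needs care: the graded pieces $\pi_a\cF_i/\pi_a\cF_{i-1}$ must be locally free, not merely coherent, for the rank count to be valid. Each is a direct summand of the line bundle $\cF_i/\cF_{i-1}$, hence locally free of rank $0$ or $1$, so this is the point at which the argument works without a reducedness hypothesis on $S$. Finally, the two constructions are mutually inverse and natural in $S$, so Yoneda's lemma gives the isomorphism of schemes.
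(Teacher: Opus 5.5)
Your route matches the paper's proof. Spectral idempotents split each $\cF_i$, and a rank count on the line quotients locates the jumps; your invertibility of $(a-c_i)+N_a$ replaces the paper's remark that $e_a$ acts on $\cF_k/\cF_{k-1}$ by $p_a(c_k)$. The inverse is the direct sum indexed by $|I_a\cap\{1,\dots,i\}|$. The forward direction is sound.

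The inverse direction, however, rests on a false claim: that $N_a$ acts by zero on the graded pieces of every $N_a$-stable full flag. Over a base $S$, $N_a$ acts on a line quotient $\cG_k/\cG_{k-1}$ by some $f\in\Gamma(S,\cO_S)$ with $f^{n_a}=0$. This forces $f=0$ when $S$ is reduced, but not in general.

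Here is a counterexample. Take $V=\C^2$, $T=cI+N$ with $N=E_{12}$ (so $Ne_1=0$, $Ne_2=e_1$), $\mathbf c=(c,c)$, $S=\operatorname{Spec}\C[\varepsilon]/(\varepsilon^2)$ and $\cG_1=\cO_S(e_1+\varepsilon e_2)$. Then $N(e_1+\varepsilon e_2)=\varepsilon e_1=\varepsilon(e_1+\varepsilon e_2)$. So $\cG_1$ is an $N$-stable line subbundle on which $T$ acts by $c+\varepsilon$, and your inverse map sends it outside $\operatorname{Flag}_{(c,c)}(T)(S)$.

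Globally, the scheme of $N$-stable lines is $\operatorname{Spec}\C[y]/(y^2)$: in the chart spanned by $e_1+ye_2$ the condition is $y^2=0$, and the other chart is empty. By contrast, $\operatorname{Flag}_{(c,c)}(T)$ is the reduced point $\cO e_1$, as in \Cref{cor:rank-two-unflagged-fibre}. So if $\mathcal B(N_a)$ is read literally as the scheme of full flags preserved by $N_a$, your forward map is only a closed immersion $\operatorname{Flag}_{\mathbf c}(T)\hookrightarrow\prod_a\mathcal B(N_a)$ that is bijective on points. No argument can close this step.

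The paper's proof makes the same assertion without justification, namely that $T$ acts on $\cF_i/\cF_{i-1}$ by $c_i$ for the direct-sum flag. So the repair lies in the scheme structure rather than in your argument. Take $\mathcal B(N_a)$ to consist of the full flags $\cG_\bullet$ with $N_a\cG_k\subseteq\cG_{k-1}$ for all $k$. This is the fibre over $N_a$ of the Springer resolution of the nilpotent cone of $\mathfrak{gl}(V_a)$, not the fibre of Grothendieck's simultaneous resolution, which only asks $N_a\cG_k\subseteq\cG_k$. It is also the structure under which the paper's subsequent claim, that a regular nilpotent has a reduced-point Springer fibre, holds.

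With this definition, your argument goes through:
\begin{itemize}
\item Your forward map lands in it, since $T$ acts by $c_i=a$ on the graded pieces at $i\in I_a$.
\item Your inverse map is then correct by definition.
\item Everything else, including the rank count, works verbatim over non-reduced $S$.
\end{itemize}
Your closing remark about reducedness is therefore aimed at the wrong step: the rank count needs none, but the inverse direction does under the literal definition.

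Finally, add one line showing that $\pi_a\cF_i$ is a subbundle of $V_a\otimes\cO_S$ and not merely locally free. This holds because $(V_a\otimes\cO_S)/\pi_a\cF_i$ is the $\pi_a$-summand of the locally free sheaf $(V\otimes\cO_S)/\cF_i$.
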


\begin{proof}
For each $a\in\Sigma$, let $e_a=p_a(T)$ be the spectral idempotent projecting $V$ onto $V_a$.  If $W\subseteq V\otimes\cO_S$ is a $T$-stable subbundle, then every polynomial in $T$ preserves $W$.  The restriction $e_a|_W$ is an idempotent $\cO_S$-endomorphism, so its image $e_a(W)$ is a locally free direct summand of $W$, and
\[
 W=\bigoplus_{a\in\Sigma} e_a(W).
\]
Apply this to every member $\cF_i$ of a flag in $\operatorname{Flag}_{\mathbf c}(T)(S)$.  Let
\[
 m_a(i)=\#\{k\leq i:c_k=a\}.
\]
On the line quotient $\cF_k/\cF_{k-1}$, the idempotent $e_a=p_a(T)$ acts as multiplication by $p_a(c_k)$, which is $1$ if $c_k=a$ and $0$ otherwise.  It follows inductively that $e_a\cF_i$ has rank $m_a(i)$.  Removing repetitions as $i$ varies produces a full flag
\[
 0\subset\cG_{a,1}\subset\cdots\subset\cG_{a,\dim V_a}=V_a\otimes\cO_S
\]
which is preserved by $T|_{V_a}$, equivalently by $N_a$.

Conversely, given an $N_a$-stable full flag $(\cG_{a,k})_k$ in every $V_a\otimes\cO_S$, define
\[
 \cF_i=\bigoplus_{a\in\Sigma}\cG_{a,m_a(i)},
 \qquad
 \cG_{a,0}=0.
\]
This is a full flag, it is preserved by $T$, and $T$ acts on $\cF_i/\cF_{i-1}$ as multiplication by $c_i$.  The two constructions are inverse and are functorial in $S$, hence define the asserted scheme isomorphism.
\end{proof}

\begin{corollary}\label{cor:regular-semisimple-flag}
If $T$ is regular semisimple and the ordering of its distinct eigenvalues is fixed, then $\operatorname{Flag}_{\mathbf c}(T)$ is a single reduced point.  If $T=cI_V$, then
\[
 \operatorname{Flag}_{(c,\ldots,c)}(T)=\operatorname{Flag}(V).
\]
If $T$ has a single eigenvalue and one Jordan block, then the invariant-flag scheme is a single reduced point.
\end{corollary}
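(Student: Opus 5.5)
The plan is to read off all three assertions of \Cref{cor:regular-semisimple-flag} from \Cref{thm:product-springer-fibres}.  That theorem gives
\[
 \operatorname{Flag}_{\mathbf c}(T)\cong\prod_{a\in\Sigma}\mathcal B(N_a),
\]
so in each case it suffices to identify the Springer fibres $\mathcal B(N_a)$ as schemes, not merely as sets.

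\emph{Regular semisimple case.}  Here every generalised eigenspace $V_a$ is a line and $N_a=0$.  The full flag variety of a one-dimensional space is $\operatorname{Spec}\C$, so each factor is a reduced point.  A finite product of copies of $\operatorname{Spec}\C$ over $\C$ is again $\operatorname{Spec}\C$.  The fixed ordering enters only through the position functions $m_a(i)$, and these are determined by $\mathbf c$.

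\emph{Scalar case.}  If $T=cI_V$, then $\Sigma=\{c\}$, $V_c=V$ and $N_c=0$.  The stability condition in \Cref{def:invariant-flag-scheme} is then vacuous.  The eigenvalue condition also holds automatically, since $T\otimes1$ acts by $c$ on every quotient.  Hence the closed subscheme of \Cref{prop:invariant-flags-projective} is all of $\operatorname{Flag}(V)$; this can also be seen directly, without the product theorem.

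\emph{Single Jordan block.}  This is the only step requiring an argument at the level of $S$-points, and the expected obstacle is reducedness rather than the set-theoretic statement.  Set-theoretically, a regular nilpotent $N$ on $V$ with $\dim V=r$ has exactly one invariant subspace in each dimension, namely $\ker N^i$.  To obtain the scheme-theoretic statement, the plan is to show by induction on $i$ that for any base $S$ and any $N$-stable flag of subbundles one has $\cF_i=\ker(N^i)\otimes\cO_S$.

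The first step is the case $i=1$.  Since $N$ is nilpotent on the line bundle $\cF_1$, and an endomorphism of a line bundle is a function, $N|_{\cF_1}$ is a nilpotent function $f$.  The full-flag condition forces more: the induced endomorphism on each quotient $\cF_k/\cF_{k-1}$ is multiplication by $c_k=c$, so $T-c=N$ acts by zero on every graded piece.  In particular $N$ acts by zero on $\cF_1$, giving $\cF_1\subseteq\ker N\otimes\cO_S$.  Both are rank-one subbundles, and $\ker N\otimes\cO_S$ is a subbundle, so equality holds.  This last point needs a short argument: an inclusion of line subbundles of $V\otimes\cO_S$, with both subbundles, is an isomorphism, because the composite $\cF_1\to V\otimes\cO_S\to(V/\ker N)\otimes\cO_S$ vanishes and $\cF_1$ is locally split.

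The inductive step passes to $V/\ker N$ with the induced regular nilpotent operator and the flag $\cF_\bullet/\cF_1$.  The functor therefore equals $\operatorname{Spec}\C$, a single reduced point.
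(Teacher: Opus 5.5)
Your proposal is correct and follows the paper's route. You read the regular semisimple and scalar cases off \Cref{thm:product-springer-fibres} (or directly from the definition), identify the Jordan-block case with the unique flag $\ker N\subset\ker N^2\subset\cdots$, and your induction on $S$-points supplies the reducedness that the paper only asserts. One correction to wording: that induction must run over flags in $\operatorname{Flag}_{\mathbf c}(T)(S)$, i.e.\ flags with $N\cF_i\subseteq\cF_{i-1}$, as your execution in fact does, and not over arbitrary $N$-stable flags as the opening sentence of that step says, because for $r=2$ and $N=E_{12}$ the scheme of merely $N$-stable lines $[x:y]$ is the non-reduced double point $\{y^2=0\}\subset\mathbb P^1$.
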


\begin{proof}
In the regular semisimple case every generalised eigenspace is one-dimensional, so every factor in \Cref{thm:product-springer-fibres} is a point.  For a scalar operator every flag is invariant.  A regular nilpotent operator has a unique stable full flag, namely the flag by kernels of its successive powers; its Springer fibre is a reduced point.
\end{proof}

We now describe the fibre of the forgetting morphism.

\begin{definition}\label{def:flagged-unflagged-groupoids}
Let $\mathscr A_{\mathbf c}^{\mathrm{unflag}}$ be the groupoid whose objects are pairs $(E,\nabla)$ satisfying the connection-theoretic conditions of \Cref{def:groupoid} and whose monodromy has characteristic polynomial
\[
 \prod_{i=1}^r(t-c_i),
\]
with morphisms given by holomorphic gauges.  Let $\mathscr A_{\mathbf c}^{\mathrm{flag}}$ be the groupoid of triples $(E,\nabla,F_\bullet)$, where $(E,\nabla)$ is such an object and $F_\bullet$ is a full flag in $E|_{z=1}$ represented by a point of $\operatorname{Flag}_{\mathbf c}(T_\nabla)$.  Morphisms are gauges whose value at $z=1$ carries the marked flag to the marked flag.  There is a forgetful morphism
\[
 \pi_{\mathrm{forg}}:\mathscr A_{\mathbf c}^{\mathrm{flag}}
 \longrightarrow
 \mathscr A_{\mathbf c}^{\mathrm{unflag}}.
\]
\end{definition}

Let $x=(E,\nabla)$ be an unflagged residual constituent and let $T_x$ be its monodromy on $E|_{z=1}$.  Let $\Aut(x)$ be the group of holomorphic gauge automorphisms of $(E,\nabla)$.  Evaluation at $z=1$ gives an injective homomorphism
\[
 \operatorname{ev}_1:\Aut(x)\hookrightarrow Z_{\GL(E|_1)}(T_x),
\]
because an automorphism is horizontal, is determined by its value at one point, and its value commutes with monodromy.

\begin{proposition}\label{prop:fibre-forgetful-map}
The 2-fibre of $\pi_{\mathrm{forg}}$ over the unflagged object $x=(E,\nabla)$ is equivalent to the action groupoid
\[
 \left[
 \operatorname{Flag}_{\mathbf c}(T_x)/\Aut(x)
 \right],
\]
where $\Aut(x)$ acts through evaluation at $z=1$.
\end{proposition}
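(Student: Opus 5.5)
We would prove \Cref{prop:fibre-forgetful-map} by writing down the 2-fibre explicitly and exhibiting an equivalence with the action groupoid. By definition, an object of the 2-fibre of $\pi_{\mathrm{forg}}$ over $x=(E,\nabla)$ is a triple $(y,\phi)$, where $y=(E',\nabla',F'_\bullet)$ is an object of $\mathscr A^{\mathrm{flag}}_{\mathbf c}$ and $\phi:(E',\nabla')\to(E,\nabla)$ is a holomorphic gauge isomorphism. A morphism $(y,\phi)\to(y_1,\phi_1)$ is a flagged morphism $\psi:y\to y_1$ with $\phi_1\circ\psi=\phi$. We then define a functor
\[
 \Phi:\pi_{\mathrm{forg}}^{-1}(x)\longrightarrow\bigl[\operatorname{Flag}_{\mathbf c}(T_x)/\Aut(x)\bigr],
 \qquad
 (y,\phi)\longmapsto \phi(1)(F'_\bullet).
\]
The first check is that $\Phi$ lands in $\operatorname{Flag}_{\mathbf c}(T_x)$. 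Because $\phi$ is horizontal, $\phi(1)$ intertwines parallel transport along the loop, so $\phi(1)T_{\nabla'}=T_x\phi(1)$. Consequently the transported flag is $T_x$-stable and has the same ordered diagonal eigenvalues $c_i$ on its graded pieces.

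The second step is to define $\Phi$ on morphisms. Given $\psi$ as above, the element $\phi_1\psi\phi^{-1}$ is the identity, so this choice is useless. We would use the standard normalisation instead. Every object of the fibre is isomorphic to one of the form $(x,F_\bullet,\mathrm{id})$, obtained by transporting the structure along $\phi$. Between two such normalised objects $(x,F_\bullet,\mathrm{id})$ and $(x,G_\bullet,\mathrm{id})$, a morphism in the strict 2-fibre would be forced to be the identity. The correct groupoid to use is therefore the standard one, whose objects are pairs $(y,\phi)$ and whose morphisms are pairs $(\psi,a)$ with $a\in\Aut(x)$ and $\phi_1\psi=a\phi$. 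This is the 2-fibre over the groupoid $\mathscr A^{\mathrm{unflag}}$. We send $(\psi,a)$ to $\operatorname{ev}_1(a)$. The relation $\phi_1(1)\psi(1)=a(1)\phi(1)$, together with the fact that $\psi(1)$ carries $F'_\bullet$ to $F'_{1,\bullet}$, shows that $a(1)$ carries $\Phi(y,\phi)$ to $\Phi(y_1,\phi_1)$. Functoriality follows because composition multiplies the elements $a$.

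The third step is to show that $\Phi$ is an equivalence. For essential surjectivity, every point of $\operatorname{Flag}_{\mathbf c}(T_x)$ is hit by $(x,F_\bullet,\mathrm{id})$, and this object is flagged by \Cref{def:flagged-unflagged-groupoids}. For full faithfulness, take normalised objects $(x,F_\bullet,\mathrm{id})$ and $(x,G_\bullet,\mathrm{id})$. Morphisms between them are pairs $(\psi,a)$ with $\psi=a$ and $a(1)F_\bullet=G_\bullet$. These correspond bijectively to the elements $g=\operatorname{ev}_1(a)$ of the image with $gF_\bullet=G_\bullet$; injectivity of $\operatorname{ev}_1$, which was noted just before the statement, is exactly what makes this correspondence bijective. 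General objects reduce to normalised ones through the isomorphism $(\phi,\mathrm{id}):(y,\phi)\to(x,\phi(1)F'_\bullet,\mathrm{id})$.

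We expect the main obstacle to be bookkeeping rather than anything substantive. The 2-fibre has to be set up with the correct convention, with morphisms recorded as pairs, so that the automorphisms of $x$ appear at all. In addition, the action of $\Aut(x)$ must be taken through $\operatorname{ev}_1$, so that the group acting is $\Aut(x)$ itself and not its possibly larger ambient centraliser $Z_{\GL(E|_1)}(T_x)$.
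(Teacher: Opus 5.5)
Your argument is correct and is essentially the paper's proof made explicit: you identify objects of the fibre with points of $\operatorname{Flag}_{\mathbf c}(T_x)$ after transporting along $\phi$, and morphisms with automorphisms of $(E,\nabla)$ carrying one flag to the other; injectivity of $\operatorname{ev}_1$ (a horizontal automorphism is determined by its value at $z=1$) then gives the action groupoid. One terminological caution: the groupoid you settle on, with morphisms $(\psi,a)$ satisfying $\phi_1\psi=a\phi$, is the fibre product with $B\Aut(x)\to\mathscr A^{\mathrm{unflag}}_{\mathbf c}$ (equivalently, the full subgroupoid of flagged objects whose underlying object is isomorphic to $x$), not the 2-fibre over the point, which by your own first computation is only the discrete set of flags. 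This is exactly the reading the paper's proof implicitly adopts, so you should state it as the definition being used rather than calling it ``the standard one''.
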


\begin{proof}
An object of the 2-fibre is a full flag in $E|_{z=1}$ preserved by $T_x$, with the prescribed ordered action $c_i$ on the successive quotients.  Thus its objects are the points of $\operatorname{Flag}_{\mathbf c}(T_x)$.  A morphism between two such lifts is an automorphism of $(E,\nabla)$ carrying one flag to the other.  Since a horizontal automorphism is determined by its value at $z=1$, this is the action groupoid displayed above.
\end{proof}

\begin{corollary}\label{cor:rank-two-unflagged-fibre}
Let $r=2$ and suppose that the characteristic polynomial of $T$ is $(t-c_1)(t-c_2)$.
\begin{enumerate}[(i)]
 \item If $c_1\neq c_2$, then $\operatorname{Flag}_{(c_1,c_2)}(T)$ is the single reduced point $\mathbb P(\ker(T-c_1I))$.
 \item If $c_1=c_2=c$ and $T\neq cI$, then $T=cI+N$ with $N\neq0$ and $N^2=0$, and
 \[
  \operatorname{Flag}_{(c,c)}(T)=\mathbb P(\ker N)
 \]
 is a single reduced point.
 \item If $T=cI$, then
 \[
  \operatorname{Flag}_{(c,c)}(T)=\mathbb P^1.
 \]
\end{enumerate}
Consequently, in rank two the choice of invariant flag is non-trivial only on the scalar-monodromy locus; there the parameter scheme before dividing by automorphisms is $\mathbb P^1$.
\end{corollary}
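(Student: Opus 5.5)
The plan is to apply \Cref{thm:product-springer-fibres} with $r=2$ and read off each case from the generalised eigenspace decomposition of $T$. The underlying scheme $\operatorname{Flag}(V)$ is $\mathbb P^1 = \mathbb P(V)$. A full flag is determined by its line $\cF_1$. By \Cref{def:invariant-flag-scheme}, $\operatorname{Flag}_{(c_1,c_2)}(T)$ is the closed subscheme of lines $\cF_1$ that are $T$-stable with $T|_{\cF_1}=c_1$. The condition on $\cF_2/\cF_1$ is then automatic, because the determinant is $c_1c_2$.

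\emph{Case (i), $c_1\neq c_2$.} Both generalised eigenspaces are one-dimensional, so each Springer fibre factor is a point. By \Cref{cor:regular-semisimple-flag} the scheme is a single reduced point, and its line is the one on which $T$ acts by $c_1$, namely $\ker(T-c_1I)$. To confirm reducedness directly, I would note that for an $S$-point given by a line subbundle $\cL\subset V\otimes\cO_S$, the condition $(T-c_1)\cL=0$ forces $\cL\subset\ker(T-c_1)\otimes\cO_S$, which is a line subbundle. Hence $\cL$ equals it, and the functor is the constant point.

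\emph{Case (ii), $c_1=c_2=c$ and $T\neq cI$.} Cayley--Hamilton gives $N=T-cI$ with $N^2=0$, and $N\neq0$ by hypothesis. So $N$ is a regular nilpotent in rank two. \Cref{thm:product-springer-fibres} identifies the scheme with $\mathcal B(N)$, and \Cref{cor:regular-semisimple-flag} says this is a reduced point. It is the flag $\ker N\subset V$. The same functorial argument as in case (i) works: $N\cL=0$ forces $\cL\subset(\ker N)\otimes\cO_S$, since $N$ has constant rank one. Constant rank is the reason the kernel is a subbundle, and it is the one point where reducedness has to be checked rather than assumed. A line subbundle contained in a line subbundle equals it.

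\emph{Case (iii), $T=cI$.} Every flag is invariant and every eigenvalue condition holds identically. \Cref{cor:regular-semisimple-flag} then gives $\operatorname{Flag}(V)=\mathbb P^1$.

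For the final sentence, I would combine the three cases. Only in case (iii) is the scheme positive-dimensional; otherwise it is a point. By \Cref{prop:fibre-forgetful-map}, the 2-fibre is $[\operatorname{Flag}_{\mathbf c}(T_x)/\Aut(x)]$, so the parameter scheme before dividing by automorphisms is $\mathbb P^1$ exactly on the scalar-monodromy locus.
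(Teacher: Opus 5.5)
Your proof is correct and is essentially the paper's own argument. The paper also identifies a full flag with a line $L$ on which $T$ must act by $c_1$, reads off $L=\ker(T-c_1I)$, $L=\ker N$, or an arbitrary line in the three cases, and obtains reducedness from the linearity of the equation $(T-c_1I)x=0$. Your detour through \Cref{thm:product-springer-fibres} and \Cref{cor:regular-semisimple-flag} is unnecessary but harmless. Your functor-of-points check of reducedness, together with the remark that the condition on $\cF_2/\cF_1$ is forced by the determinant, makes explicit two points the paper leaves implicit.
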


\begin{proof}
A full flag in dimension two is a line $L\subset V$.  The ordered condition requires $T|_L=c_1I_L$.  If $c_1\neq c_2$, the $c_1$-eigenspace is one-dimensional and gives the unique line.  If $c_1=c_2=c$ and $T\neq cI$, the nilpotent operator $N=T-cI$ has rank one and its kernel is the unique line on which $T$ acts as $c$.  If $T=cI$, every line is admissible.  The equations are linear in homogeneous coordinates, so the unique-point schemes in the first two cases are reduced.
\end{proof}

The invariant-flag scheme is determined by the monodromy on the framed fibre.  Requiring a constant flag in a chosen trivialisation to be preserved by every Laurent coefficient of a connection matrix would be a strictly stronger, gauge-dependent condition and is not the forgetting problem attached to the groupoid of Kontsevich and Szab\'o.

\subsection{The rigidification torsor of a residual constituent}\label{sec:rigidification-torsor}

We next determine the ordered-framing ambiguity before comparing the local connection with a global atom.  Let $V$ be a complex vector space of dimension $r$, let $T\in\GL(V)$, and let
\[
 \mathbf c=(c_1,\ldots,c_r)\in(\C^\times)^r
\]
be an ordering of the eigenvalues of $T$, counted with algebraic multiplicity.  Put
\[
 B_{\mathbf c}=\{b\in B:b_{ii}=c_i\text{ for every }i\}.
\]

\begin{definition}\label{def:ordered-frame-scheme}
The ordered Kontsevich--Szab\'o frame scheme of $T$ is
\[
 \Fr_{\mathbf c}(T)
 =\left\{
 \phi\in\Isom(\C^r,V):\phi^{-1}T\phi\in B_{\mathbf c}
 \right\}.
\]
The morphism
\[
 q_T:\Fr_{\mathbf c}(T)\longrightarrow\operatorname{Flag}_{\mathbf c}(T)
\]
sends an ordered frame to the full flag which it defines.
\end{definition}

\begin{proposition}\label{prop:frame-torsor}
The scheme $\Fr_{\mathbf c}(T)$ is the inverse image of $\operatorname{Flag}_{\mathbf c}(T)$ under the standard principal bundle
\[
 \Isom(\C^r,V)\longrightarrow\operatorname{Flag}(V).
\]
Consequently, $\Fr_{\mathbf c}(T)$ is a closed subscheme of $\Isom(\C^r,V)$ and
\[
 q_T:\Fr_{\mathbf c}(T)\longrightarrow\operatorname{Flag}_{\mathbf c}(T)
\]
is a principal right $B$-bundle.  In particular,
\[
 \Fr_{\mathbf c}(T)/B\cong\operatorname{Flag}_{\mathbf c}(T).
\]
\end{proposition}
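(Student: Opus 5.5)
The plan is to realise $\operatorname{Flag}(V)$ as the quotient $\Isom(\C^r,V)/B$, where $B$ acts on the right. A frame $\phi$ is sent to the flag $F_i=\phi(\C^i)$, and $\Isom(\C^r,V)\to\operatorname{Flag}(V)$ is the standard Zariski-locally trivial principal right $B$-bundle. The first step is to show that, functorially in a test scheme $S$, a frame $\phi\in\Isom(\C^r,V)(S)$ lies in $\Fr_{\mathbf c}(T)(S)$ if and only if the flag $\phi(\C^\bullet\otimes\cO_S)$ lies in $\operatorname{Flag}_{\mathbf c}(T)(S)$.

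For this equivalence, write $M=\phi^{-1}T\phi$, an $S$-point of $\GL_r$. The flag is $T$-stable exactly when $M$ preserves the standard flag $\C^\bullet\otimes\cO_S$, that is, when $M$ is $B$-valued. Under the identification of the standard line quotients with $\cO_S$, the endomorphism induced on $\cF_i/\cF_{i-1}$ is multiplication by $M_{ii}$. The eigenvalue condition $M_{ii}=c_i$ is therefore exactly the second condition of \Cref{def:invariant-flag-scheme}. Both conditions are equalities of matrix entries, so they are compatible with base change, and the scheme-theoretic (not merely set-theoretic) inverse image statement follows. Closedness of $\Fr_{\mathbf c}(T)$ is then immediate from \Cref{prop:invariant-flags-projective}, or directly from the fact that $B_{\mathbf c}$ is closed in $\GL_r$.

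Next, the restriction of a principal $B$-bundle to a closed subscheme of the base is again a principal $B$-bundle. Concretely, take local sections of $\Isom(\C^r,V)\to\operatorname{Flag}(V)$ over big-cell opens and restrict them. For $b\in B$ one has $(\phi b)^{-1}T(\phi b)=b^{-1}Mb$, which has the same diagonal as $M$ because $B$ is upper triangular. So $\Fr_{\mathbf c}(T)$ is $B$-stable, and the action is the restricted one. The quotient identification $\Fr_{\mathbf c}(T)/B\cong\operatorname{Flag}_{\mathbf c}(T)$ holds because a torsor's quotient is its base.

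The only genuinely delicate point is the intrinsic identification of the induced endomorphism on the line quotient with $M_{ii}$ over an arbitrary base $S$. Here I would use that $\phi(e_i)$ gives a trivialising section of $\cF_i/\cF_{i-1}$, and that $T\phi(e_i)=\sum_{k\le i}M_{ki}\phi(e_k)$. This yields $T\phi(e_i)\equiv M_{ii}\,\phi(e_i)$ modulo $\cF_{i-1}$. Everything else is formal.
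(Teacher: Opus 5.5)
Your proof is correct and takes essentially the same route as the paper: you identify $\Fr_{\mathbf c}(T)$ with the preimage of $\operatorname{Flag}_{\mathbf c}(T)$ through the matrix $\phi^{-1}T\phi$, deduce closedness, and obtain the principal $B$-bundle structure by base change of the frame-to-flag bundle. Your functor-of-points argument, including the explicit check that $T$ acts on $\cF_i/\cF_{i-1}$ as multiplication by $M_{ii}$, makes scheme-theoretic over arbitrary bases what the paper states only pointwise.
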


\begin{proof}
An isomorphism $\phi:\C^r\to V$ determines the flag $\phi(F_\bullet^{\mathrm{std}})$.  The matrix $\phi^{-1}T\phi$ belongs to $B_{\mathbf c}$ if and only if this flag is $T$-invariant and the induced action on its successive quotients is multiplication by $c_1,\ldots,c_r$.  Hence $\Fr_{\mathbf c}(T)$ is the inverse image of $\operatorname{Flag}_{\mathbf c}(T)$.  The latter is closed by \Cref{prop:invariant-flags-projective}, so the inverse image is closed.  The standard frame-to-flag morphism is a principal right $B$-bundle, and the claim follows by base change.  Equivalently, two ordered frames define the same full flag if and only if they differ by a unique element of $B$.
\end{proof}

Combining \Cref{prop:frame-torsor,thm:product-springer-fibres} gives the intrinsic geometry of the rigidification space.

\begin{corollary}\label{cor:frame-torsor-springer}
With the notation of \Cref{thm:product-springer-fibres}, there is a principal $B$-bundle
\[
 \Fr_{\mathbf c}(T)
 \longrightarrow
 \prod_{a\in\Sigma}\mathcal B(N_a).
\]
It is a $B$-torsor over a point if and only if every factor $\mathcal B(N_a)$ is a point.  This holds, in particular, when the ordered monodromy is regular semisimple, and also when each nilpotent part $N_a$ is regular nilpotent on its generalised eigenspace.  If $T=cI$, then
\[
 \Fr_{(c,\ldots,c)}(T)=\Isom(\C^r,V)
\]
and the quotient by $B$ is the full flag variety of $V$.
\end{corollary}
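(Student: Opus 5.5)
The corollary follows by composing \Cref{prop:frame-torsor} with the scheme isomorphism of \Cref{thm:product-springer-fibres}. First, \Cref{prop:frame-torsor} exhibits $q_T:\Fr_{\mathbf c}(T)\to\operatorname{Flag}_{\mathbf c}(T)$ as a principal right $B$-bundle, obtained by base change of $\Isom(\C^r,V)\to\operatorname{Flag}(V)$. Composing $q_T$ with the isomorphism $\operatorname{Flag}_{\mathbf c}(T)\cong\prod_{a\in\Sigma}\mathcal B(N_a)$ preserves the principal bundle structure, since $B$ acts only on the source and an isomorphism of bases transports local triviality. This gives the displayed bundle.

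For the torsor criterion, I would argue as follows. A principal $B$-bundle over a scheme $Y$ is a $B$-torsor over a point exactly when $Y$ is a point, read scheme-theoretically as $\operatorname{Spec}\C$ reduced. Since $\operatorname{Flag}_{\mathbf c}(T)$ is non-empty (an ordering $\mathbf c$ of the spectrum admits a triangularising frame), the product $\prod_a\mathcal B(N_a)$ is $\operatorname{Spec}\C$ if and only if each factor is. To justify this I would note that each factor is non-empty and projective; a product of non-empty $\C$-schemes of finite type is a reduced point only if each factor is. Indeed, a $\C$-point of the other factors gives a closed embedding of each factor into the product, and conversely a product of reduced points is a reduced point.

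The two special cases come from \Cref{cor:regular-semisimple-flag}. If $T$ is regular semisimple, each $V_a$ is one-dimensional and each $\mathcal B(N_a)$ is a point. If each $N_a$ is regular nilpotent, its Springer fibre is the reduced point given by the kernel flag, applied factor by factor on each $V_a$ as in the proof of \Cref{cor:regular-semisimple-flag}. For $T=cI$, the condition $\phi^{-1}T\phi=cI\in B_{(c,\ldots,c)}$ holds for every $\phi$, so $\Fr=\Isom(\C^r,V)$ and $\Fr/B=\operatorname{Flag}(V)$, matching \Cref{cor:regular-semisimple-flag}.

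The only delicate point is the scheme-theoretic meaning of ``point'' in the criterion, that is, reducedness. I would handle it using the closed embeddings of the factors into the product described above, which reduces the question to the factors themselves.
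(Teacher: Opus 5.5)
Your proposal is correct and is essentially the paper's proof: pull back the principal $B$-bundle of \Cref{prop:frame-torsor} along the isomorphism of \Cref{thm:product-springer-fibres}, read off the special cases from \Cref{cor:regular-semisimple-flag}, and check $T=cI$ directly; your closed-embedding argument simply spells out why a product of non-empty projective schemes is a reduced point exactly when each factor is. The one genuinely delicate point lies upstream of your argument, in the scheme structure on $\mathcal B(N_a)$ that you inherit from the paper. Both the product isomorphism and the reduced-point claim for regular nilpotent $N_a$ require the flags to satisfy $N_a\cG_{a,k}\subset\cG_{a,k-1}$, which is forced by the diagonal condition in $\operatorname{Flag}_{\mathbf c}(T)$; mere $N_a$-stability already gives the non-reduced scheme $\operatorname{Spec}\C[x]/(x^2)$ for a regular nilpotent in rank two.
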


\begin{proof}
Pull back the principal $B$-bundle of \Cref{prop:frame-torsor} along the product decomposition of the invariant-flag scheme in \Cref{thm:product-springer-fibres}.  The remaining statements are the corresponding special cases described there.
\end{proof}

Let $x=(E,\nabla)$ be an unflagged residual constituent and let $T_x$ denote its monodromy on $V=E|_{z=1}$.  The automorphism group $\Aut(x)$ acts on $\Fr_{\mathbf c}(T_x)$ by left composition through evaluation at $z=1$; this action commutes with the right action of $B$.

\begin{definition}\label{def:rigidification-stack}
The Kontsevich--Szab\'o rigidification stack of $x$ with ordered diagonal $\mathbf c$ is
\[
 \Rig_{\mathbf c}(x)
 =\left[\Fr_{\mathbf c}(T_x)/\Aut(x)\right].
\]
\end{definition}

\begin{proposition}\label{prop:fully-framed-fibre}
The 2-fibre over $x$ of the morphism which forgets the ordered framing from the fully framed groupoid to $\mathscr A_{\mathbf c}^{\mathrm{unflag}}$ is equivalent to $\Rig_{\mathbf c}(x)$.  The further morphism which forgets only the ordered basis and retains the invariant full flag is induced by the principal $B$-bundle of \Cref{prop:frame-torsor} and has target
\[
 \left[\operatorname{Flag}_{\mathbf c}(T_x)/\Aut(x)\right].
\]
\end{proposition}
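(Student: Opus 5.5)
The plan is to compute the 2-fibre directly from the definitions, exactly as in \Cref{prop:fibre-forgetful-map}, with ordered frames in place of flags. Let $\pi_{\mathrm{fr}}:\NSQC^{\mathrm{fr}}_{\mathbf c}\to\mathscr A_{\mathbf c}^{\mathrm{unflag}}$ forget $\mathbf v$. An object of the 2-fibre over $x=(E,\nabla)$ is a triple $(E',\nabla',\mathbf v)$ together with an isomorphism $\psi:(E',\nabla')\to x$ in the unflagged groupoid. Transporting $\mathbf v$ along $\psi$ gives an equivalence with the strict fibre, whose objects are ordered bases $\mathbf v$ of $V=E|_{z=1}$. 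So it suffices to identify the strict-fibre groupoid.

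For the objects, regard $\mathbf v$ as an isomorphism $\phi_{\mathbf v}:\C^r\to V$. Condition (v) of \Cref{def:groupoid} says that $T_x$ preserves the flag spanned by $\mathbf v$ and has diagonal entries $c_1,\ldots,c_r$ in that basis. This is precisely $\phi_{\mathbf v}^{-1}T_x\phi_{\mathbf v}\in B_{\mathbf c}$. Hence the objects are the points of $\Fr_{\mathbf c}(T_x)$.

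For the morphisms, a morphism $\mathbf v\to\widetilde{\mathbf v}$ lying over the identity-type isomorphisms is a holomorphic gauge $g$ with $\nabla\cdot g=\nabla$ and $\widetilde{\mathbf v}=\mathbf v\,g(1)$. Such a $g$ is an element of $\Aut(x)$, and it acts on frames through $\operatorname{ev}_1$. The right-action convention $\mathbf v\mapsto\mathbf v\,g(1)$ should be rewritten as left composition $\phi\mapsto g(1)\circ\phi$ in the trivialisation fixed by $E=\cO^{\oplus r}$; one must check that this agrees with the action defining $\Rig_{\mathbf c}(x)$, inverting $g$ if needed to get a left action. Injectivity of $\operatorname{ev}_1$ shows that morphisms correspond bijectively to the group elements of the action groupoid. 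Composition matches because, as recorded after \Cref{def:groupoid}, applying $g$ then $h$ is the action of $gh$. This yields the equivalence of the fibre with $[\Fr_{\mathbf c}(T_x)/\Aut(x)]$.

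For the second assertion, forgetting the basis but keeping the flag sends $\mathbf v$ to $q_{T_x}(\phi_{\mathbf v})$. On fibres over $x$ this is the morphism of action groupoids induced by $q_{T_x}$, which is $\Aut(x)$-equivariant because left and right actions commute. By \Cref{prop:frame-torsor}, $q_{T_x}$ is a principal right $B$-bundle with quotient $\operatorname{Flag}_{\mathbf c}(T_x)$. Combined with \Cref{prop:fibre-forgetful-map}, this identifies the target with $[\operatorname{Flag}_{\mathbf c}(T_x)/\Aut(x)]$.

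I expect the main obstacle to be bookkeeping rather than substance. One must ensure that the 2-fibre, with morphisms over non-identity isomorphisms of $x$, reduces correctly to the strict fibre. One must also reconcile the right-action frame convention with left composition by $\Aut(x)$, fixing on which side the inverse appears.
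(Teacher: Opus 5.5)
Your proposal is correct and follows the paper's argument. Fully framed lifts of $x$ are the points of $\Fr_{\mathbf c}(T_x)$, morphisms are automorphisms of $x$ acting on frames through $\operatorname{ev}_1$ by left composition, and the second assertion comes from $q_{T_x}$ because the left $\Aut(x)$-action commutes with the right $B$-action. Your extra bookkeeping is more explicit than the paper, which simply asserts these points: you reduce the 2-fibre to the strict fibre, and you read the frame action as $\phi\mapsto g(1)\circ\phi$, which is the reading that preserves $\Fr_{\mathbf c}(T_x)$ since $g(1)$ commutes with $T_x$.
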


\begin{proof}
A fully framed lift of $x$ is an ordered basis of $V$ in which $T_x$ is upper triangular with diagonal $\mathbf c$, hence a point of $\Fr_{\mathbf c}(T_x)$.  An isomorphism between two such lifts is an automorphism of the connection carrying one ordered basis to the other.  This is the displayed action groupoid.  Forgetting the basis while retaining its flag is induced by $q_{T_x}$; the left $\Aut(x)$-action and the right $B$-action commute, so the map descends to the quotient stacks.
\end{proof}

The rigidification space is not, in general, merely a copy of $B$.  It is a principal $B$-bundle over the product of Springer fibres which parametrises the invariant full flags.  It becomes a $B$-torsor only on the locus where the ordered invariant flag is unique.  Thus forgetting an ordered frame leaves a single $B$-orbit of choices if and only if the ordered invariant flag is unique.
This describes the descent from the fully framed affine slice to flagged and unflagged residual constituents.  With these rigidification ambiguities separated from the intrinsic connection data, we turn to the variation of the prescribed lattice.

\section{Isomonodromic variation of the prescribed lattice}\label{sec:isomonodromic-lattices}

The order-two part of the affine slice records lattice moduli that are invisible to the punctured connection and its monodromy.  The rank-two family is the simplest instance of the following general construction.

Let $S$ be a complex analytic space, let $V$ be a finite-dimensional complex vector space, and let $R\in\End(V)$.  Let $\mathfrak a\subset\End(V)$ be a finite-dimensional commutative associative subalgebra, not necessarily unital, consisting of nilpotent endomorphisms, and assume that
\[
 [R,\mathfrak a]\subset\mathfrak a.
\]
The algebra $\mathfrak a$ is nilpotent.  To see this, choose a basis $A_1,\ldots,A_m$ and integers $N_i\geq1$ such that $A_i^{N_i}=0$.  By commutativity, every product of more than $\sum_i(N_i-1)$ basis elements contains some factor $A_i^{N_i}$ and therefore vanishes.  Consequently, for every holomorphic map $F:S\to\mathfrak a$, the exponential
\[
 G(s,u)=\exp\!\left(\frac{F(s)}{u}\right)
\]
is a finite sum, with a truncation order independent of $s$, and hence a meromorphic gauge on $S\times\Delta$.

\begin{theorem}\label{thm:isomonodromic-lattice-family}
The gauge transform of the constant logarithmic connection
\[
 \nabla_0=d+\frac{R}{u}\,du
\]
by $G$ is
\begin{equation}\label{eq:isomonodromic-lattice-family}
 \nabla_{R,F}
 =d+\frac{d_S F}{u}+\frac{R}{u}\,du
 +\frac{[R,F]-F}{u^2}\,du.
\end{equation}
It is flat.  Moreover, for every vector field $\xi$ on $S$,
\[
 (\nabla_{R,F})_{u\xi}=u\xi+\xi(F),
 \qquad
 (\nabla_{R,F})_{u^2\partial_u}
 =u^2\partial_u+uR+[R,F]-F,
\]
so the standard lattice satisfies the usual $F$-bundle pole conditions.
For every $s\in S$, the fibre connection
\[
 \nabla_s=d+\left(\frac{R}{u}+
 \frac{(\operatorname{ad}_R-I)F(s)}{u^2}\right)du
\]
is meromorphically gauge equivalent to $d+R\,du/u$.  In particular, all punctured fibres are meromorphically isomorphic to the fixed connection $d+R\,du/u$, and their monodromy conjugacy classes are constant, whereas the prescribed-lattice coefficient
\[
 K_{-2}(s)=(\operatorname{ad}_R-I)F(s)
\]
may vary.
\end{theorem}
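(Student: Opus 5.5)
The plan is to compute the gauge transform directly, using the convention \eqref{eq:gauge-action} extended to the total space $S\times\Delta^\times$:
\[
\nabla_0\cdot G=d+G^{-1}\Bigl(\frac{R}{u}\,du\Bigr)G+G^{-1}dG .
\]
The proof reduces to two algebraic simplifications, both resting on the commutativity of $\mathfrak a$.

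\emph{The logarithmic derivative.} Since $\mathfrak a$ is a linear subspace, $d_SF$ takes values in $\mathfrak a$. The total differential of $X=F/u$ is $dX=d_SF/u-F\,du/u^2$, which is $\mathfrak a$-valued. Hence $X$ and $dX$ commute, and the finite exponential series gives
\[
G^{-1}dG=dX=\frac{d_SF}{u}-\frac{F}{u^2}\,du .
\]

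\emph{The conjugate of the residue.} Here I use $G^{-1}RG=e^{-\operatorname{ad}_X}R$. The first bracket is $[X,R]=-[R,F]/u$, which lies in $\mathfrak a$ by the hypothesis $[R,\mathfrak a]\subset\mathfrak a$. The second bracket $[X,[X,R]]$ is a commutator of two elements of the commutative algebra $\mathfrak a$, so it vanishes. Therefore
\[
G^{-1}RG=R+\frac{[R,F]}{u},
\]
and adding the two contributions yields exactly \eqref{eq:isomonodromic-lattice-family}.

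\emph{Flatness.} The connection $\nabla_0$ is flat on $S\times\Delta^\times$: its form $R\,du/u$ has no $S$-dependence, and $R\,du/u\wedge R\,du/u=0$. Gauge transformation by the meromorphic, invertible $G$ (its inverse is $\exp(-F/u)$, also a finite sum) preserves curvature zero. Since the resulting connection form is meromorphic, flatness extends as an identity of meromorphic forms.

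\emph{Pole conditions.} Contracting the connection form with $u\xi$ gives $u\xi+\xi(F)$, because the $du$ terms do not pair with $\xi$. Contracting with $u^2\partial_u$ gives $u^2\partial_u+uR+[R,F]-F$. Both operators preserve the standard lattice, which is precisely the $F$-bundle pole condition.

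\emph{Fibrewise statements.} Fixing $s$, restriction of $G$ to $\{s\}\times\Delta$ is a meromorphic gauge carrying $d+R\,du/u$ to $\nabla_s$. The $u^{-2}$ coefficient of $\nabla_s$ is $[R,F(s)]-F(s)=(\operatorname{ad}_R-I)F(s)$, which is $K_{-2}(s)$. Meromorphic gauge equivalence preserves the local system on the punctured disc, so the monodromy conjugacy class of every fibre equals that of $d+R\,du/u$, independently of $s$. Meanwhile $K_{-2}(s)$ varies with $F$ in general.

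The only delicate step is the truncation of $e^{-\operatorname{ad}_X}R$ after the linear term. It uses both hypotheses together: $\operatorname{ad}_R$-stability places $[F,R]$ in $\mathfrak a$, and commutativity then kills the double bracket. A parallel check is that $X$ commutes with $dX$ for the logarithmic derivative. Without either hypothesis, higher powers of $u^{-1}$ would appear and the order-two pole bound would fail.
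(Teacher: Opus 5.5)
Your proposal is correct and follows essentially the same route as the paper. Both arguments compute $G^{-1}dG=dX$ and truncate $e^{-\operatorname{ad}_X}R$ after the linear term, using commutativity and $\operatorname{ad}_R$-stability of $\mathfrak a$. Both get flatness from gauge invariance on the punctured product, extended by meromorphy, and both use the same contractions and fibrewise gauge argument for the pole conditions and the constancy of monodromy.
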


\begin{proof}
Put \(X=F/u\), so \(G=e^X\).  Since \(\mathfrak a\) is commutative and stable under \(\operatorname{ad}_R\), the elements \(F\), \(d_S F\) and \([R,F]\) all lie in the same commutative algebra.  Hence
\[
 G^{-1}dG=dX=\frac{d_S F}{u}-\frac{F}{u^2}\,du.
\]
Moreover, the adjoint exponential terminates after the first commutator:
\[
 G^{-1}RG=e^{-\operatorname{ad}_X}(R)
 =R+[R,X]
 =R+\frac{[R,F]}{u},
\]
because \([X,[R,X]]=0\).  Applying the gauge convention \eqref{eq:gauge-action} to \(\nabla_0=d+R\,du/u\) gives
\[
 G^{-1}\nabla_0G
 =d+G^{-1}dG+\frac{G^{-1}RG}{u}\,du,
\]
which is \eqref{eq:isomonodromic-lattice-family}.

Gauge invariance gives zero curvature on the punctured product.  Since the curvature of~\eqref{eq:isomonodromic-lattice-family} is a meromorphic two-form, it vanishes identically.  Equivalently, one may verify the mixed component directly from \(d_S^2F=0\), \([F,d_S F]=0\), and \([R,d_S F]=d_S[R,F]\).
For a vector field \(\xi\) on~\(S\), contraction with \(u\xi\) gives
\[
 (\nabla_{R,F})_{u\xi}=u\xi+\xi(F),
\]
which preserves the standard lattice.  Contraction with \(u^2\partial_u\) gives
\[
 (\nabla_{R,F})_{u^2\partial_u}
 =u^2\partial_u+uR+[R,F]-F,
\]
which also preserves it.  These are the pole conditions in the definition of an \(F\)-bundle used in~\cite[Definition~2.2]{HYZZ}.

Finally, for fixed \(s\), the meromorphic gauge \(G(s,u)\) identifies the fibre with \(d+R\,du/u\) on the punctured disc.  Hence the punctured meromorphic gauge class and the monodromy conjugacy class are independent of~\(s\).  The coefficient of \(u^{-2}du\) in the fixed lattice is nevertheless
\[
 [R,F(s)]-F(s)=(\operatorname{ad}_R-I)F(s),
\]
which may vary holomorphically, as claimed.
\end{proof}

\begin{corollary}\label{cor:isomonodromic-affine-map}
Assume that $R=M_{\boldsymbol\mu}$ is diagonal and $\mathfrak a\subset\mathfrak n$.  Write
\[
 F(s)=\sum_{j<i}f_{ji}(s)E_{ji}.
\]
Then every fibre of \eqref{eq:isomonodromic-lattice-family} is already in the distinguished normal form, with
\[
 \alpha_{ji}(s)=(\mu_j-\mu_i-1)f_{ji}(s)
 =-(\lambda_{ji}+1)f_{ji}(s),
 \qquad \beta_{ji}(s)=0.
\]
Equivalently, its fully framed classifying morphism is
\[
 S\longrightarrow\mathfrak n\oplus\mathfrak n,
 \qquad
 s\longmapsto\bigl((\operatorname{ad}_{M_{\boldsymbol\mu}}-I)F(s),0\bigr).
\]
\end{corollary}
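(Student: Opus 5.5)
The plan is to specialise \Cref{thm:isomonodromic-lattice-family} with $R=M_{\boldsymbol\mu}$, read off the fibre matrix, and check that it already lies in the slice $\cS_{\boldsymbol\mu}$ of \eqref{eq:slice}.

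First, for fixed $s$, the fibre connection given by \Cref{thm:isomonodromic-lattice-family} is
\[
 d+\left(\frac{M_{\boldsymbol\mu}}{u}+\frac{(\operatorname{ad}_{M_{\boldsymbol\mu}}-I)F(s)}{u^2}\right)du .
\]
Since $\operatorname{ad}_{M_{\boldsymbol\mu}}$ acts on the matrix unit $E_{ji}$ by the scalar $\mu_j-\mu_i$, as in the proof of \Cref{lem:rootwise-gauge}, the order-two coefficient is
\[
 \sum_{j<i}(\mu_j-\mu_i-1)f_{ji}(s)E_{ji}.
\]
Because $\mathfrak a\subset\mathfrak n$, this coefficient is strictly upper triangular. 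The diagonal part of the matrix is exactly $M_{\boldsymbol\mu}/u$, and the strictly upper triangular part is a pure $u^{-2}$ term with no holomorphic or $u^{-1}$ contribution.

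Next, compare with \eqref{eq:slice}, identifying the coordinate $u$ with $z$. Each root coefficient has the form $\alpha_{ji}u^{-2}+\beta_{ji}\rho_{\lambda_{ji}}$ with
\[
 \alpha_{ji}=(\mu_j-\mu_i-1)f_{ji}(s)=-(\lambda_{ji}+1)f_{ji}(s),\qquad \beta_{ji}=0.
\]
So each fibre is literally a point of $\cS_{\boldsymbol\mu}$, with the standard framing at $u=1$. By \Cref{prop:admissibility} it is an object of the residue fibre over $\boldsymbol\mu$. By the uniqueness of \Cref{thm:slice}, these coordinates are its complete fully framed invariants, so no normalising gauge is needed. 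This applies even when some $\lambda_{ji}$ is a non-negative integer, because $\beta_{ji}=0$ makes the choice of $\rho_\lambda$ irrelevant.

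Finally, the classifying morphism $S\to\mathfrak n\oplus\mathfrak n$ is $s\mapsto\bigl((\operatorname{ad}_{M_{\boldsymbol\mu}}-I)F(s),0\bigr)$, which is holomorphic because $F$ is. The only point needing care is the bookkeeping of which coordinate sees which coefficient. The slice separates the $z^{-2}$ coefficient from the $\rho_\lambda$ coefficient, and there is no $u^{-1}$ off-diagonal term here that could feed $\beta$. I therefore expect no genuine obstacle beyond checking the sign $\mu_j-\mu_i=-\lambda_{ji}$.
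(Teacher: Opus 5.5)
Your proposal is correct and follows the paper's proof. Both compute $[M_{\boldsymbol\mu},E_{ji}]=(\mu_j-\mu_i)E_{ji}$ and observe that the only off-diagonal term of each fibre is the $u^{-2}$ coefficient $(\operatorname{ad}_{M_{\boldsymbol\mu}}-I)F(s)$, so the fibre lies directly in the slice with $\beta_{ji}=0$. Your extra appeal to admissibility and to uniqueness in \Cref{thm:slice} (or in \Cref{thm:relative-normal-form}, for the holomorphic classifying map) only makes explicit what the paper leaves implicit.
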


\begin{proof}
For $j<i$ one has
\[
 [M_{\boldsymbol\mu},E_{ji}]=(\mu_j-\mu_i)E_{ji}.
\]
The fibre contains no off-diagonal term other than the displayed coefficient of $u^{-2}du$, so it belongs directly to the affine slice with the stated coordinates.
\end{proof}

\begin{corollary}\label{cor:scalar-isomonodromic-family}
Let $R=\mu I_V$.  Then
\[
 \nabla_{\mu,F}
 =d+\frac{d_S F}{u}+\frac{\mu I_V}{u}\,du-
 \frac{F}{u^2}\,du,
\]
and its classifying map is $s\mapsto(-F(s),0)$.  Thus every holomorphic family in a commutative nilpotent subalgebra is realised as a fully framed isomonodromic variation of the prescribed lattice, for arbitrary scalar monodromy $e^{-2\pi\ii\mu}$.
\end{corollary}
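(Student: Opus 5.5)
The statement to prove is \Cref{cor:scalar-isomonodromic-family}. The plan is to specialise \Cref{thm:isomonodromic-lattice-family} to $R=\mu I_V$ and then read off the classifying map via \Cref{cor:isomonodromic-affine-map}.

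First, the hypotheses of \Cref{thm:isomonodromic-lattice-family} hold automatically. For $R=\mu I_V$ one has $[R,X]=0$ for every $X$, so $[R,\mathfrak a]=0\subset\mathfrak a$ for any commutative nilpotent subalgebra $\mathfrak a$.

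Substituting $[R,F]=0$ into \eqref{eq:isomonodromic-lattice-family} turns the order-two term $([R,F]-F)/u^2$ into $-F/u^2$. This gives the displayed formula for $\nabla_{\mu,F}$. The theorem then supplies the remaining properties:
\begin{itemize}
 \item flatness;
 \item the $F$-bundle pole conditions;
 \item meromorphic equivalence of every fibre to $d+\mu\,du/u$.
\end{itemize}
The fibre monodromy is therefore the scalar $e^{-2\pi\ii\mu}$ for every $s$.

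For the classifying map, identify $V$ with $\C^r$ through a frame in which $\mathfrak a\subset\mathfrak n$. Such a frame exists because a commutative family of nilpotent endomorphisms admits a common invariant full flag (Lie's theorem, or directly via common kernels). In that frame $R=M_{\boldsymbol\mu}$ with $\boldsymbol\mu=(\mu,\ldots,\mu)$, so every $\lambda_{ji}=0$.

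\Cref{cor:isomonodromic-affine-map} then gives $\alpha_{ji}=-(\lambda_{ji}+1)f_{ji}=-f_{ji}$ and $\beta_{ji}=0$. Equivalently, the classifying map is $s\mapsto((\operatorname{ad}_R-I)F(s),0)=(-F(s),0)$.

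Since $-F$ ranges over every holomorphic map $S\to\mathfrak a$ as $F$ does, every holomorphic family in $\mathfrak a$ is realised as a classifying map, up to the harmless sign. The only point needing care is choosing the triangularising frame for $\mathfrak a$. Beyond that, the argument is a direct substitution.
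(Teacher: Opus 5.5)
Your proposal is correct and follows the paper's proof exactly: for $R=\mu I_V$ you substitute $[R,F]=0$ into the general isomonodromic formula and into the classifying map of the diagonal-$R$ corollary. Your remark that a commutative nilpotent algebra can be simultaneously conjugated into $\mathfrak n$ supplies a step the paper leaves implicit. It ensures the hypothesis $\mathfrak a\subset\mathfrak n$ of that corollary holds, while $\mu I_V$ stays scalar in the new frame.
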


\begin{proof}
For $R=\mu I_V$ one has $[R,F]=0$.  Substitute this identity into \eqref{eq:isomonodromic-lattice-family} and into the classifying map of \Cref{cor:isomonodromic-affine-map}.
\end{proof}

In particular, if $N_1,\ldots,N_m$ are commuting nilpotent upper triangular matrices and
\[
 F(t_1,\ldots,t_m)=\sum_{a=1}^m t_a N_a,
\]
then
\[
 (t_1,\ldots,t_m)\longmapsto
 \left(-\sum_{a=1}^m t_a N_a,0\right)
\]
is the classifying map.  The original rank-two example is recovered by taking $R=0$, $m=1$, and $N_1=E_{12}$:
\[
 \nabla=d+\frac{E_{12}}{u}\,dt-\frac{tE_{12}}{u^2}\,du.
\]
Its monodromy is the identity for every $t$, while its affine coordinate is $(\alpha,\beta)=(-t,0)$.

The theorem concerns the prescribed lattice and, in the displayed coordinates, the fully framed normal-form slice.  After the frame or the flag is forgotten, distinct values of $F(s)$ may become conjugate.  What is invariant without further rigidification is the corresponding orbit of $(\operatorname{ad}_R-I)F(s)$, not its individual matrix entries.

\begin{remark}
If $\mathfrak a$ is decomposed into $\operatorname{ad}_R$-weight spaces, then the order-two coefficient on a weight-$\lambda$ vector is $(\lambda-1)F_\lambda$.  The weight $\lambda=1$ is therefore the direction in which this elementary family does not vary the prescribed lattice at order two.  This direction is not absent from the moduli problem; it is merely invisible to the present deformation.  When $R$ is diagonal, an $E_{ji}$-root vector of $\operatorname{ad}_R$-weight $1$ has $\lambda_{ji}=\mu_i-\mu_j=-1$, the rank-two regime in which the monodromy detects the $z^{-2}$ normal-form coordinate in \Cref{prop:rank-two-monodromy}.
\end{remark}
\section{Relative normal forms after triangular reduction}\label{sec:relative-slice}

Regular-singular connections with parameters admit a rich relative theory; see, for example, the algebraic Riemann--Hilbert equivalence of Phung Ho Hai, dos Santos and Pham Thanh Tam~\cite{HaiDosSantosTam}, the relative complex theory of~\cite{HaiDosSantos}, and the logarithmic decomposition results of~\cite{Tam}.  The result below is more elementary but also more rigid: the triangular lattice reduction, residue vector and ordered frame are fixed in advance, and we represent the remaining unipotent quotient.

The prescribed lattice may vary even in a meromorphically isotrivial family with constant monodromy, as \Cref{sec:isomonodromic-lattices} shows.  Once a relative triangular lattice reduction has been produced, however, the normalisation is finite, coefficientwise and compatible with arbitrary base change.  We work over an arbitrary complex analytic space, possibly non-reduced.

Let $S$ be a complex analytic space and let
\[
 p:S\times\Delta\longrightarrow S
\]
be the projection.  Write
\[
 \cH_S=p_*\cO_{S\times\Delta},
 \qquad
 \cV_S=\ker\bigl(\operatorname{ev}_1:\cH_S\to\cO_S\bigr),
 \qquad
 \cW_S=z^{-2}\cH_S.
\]
Thus a section of $\cV_S$ is a holomorphic function $P(s,z)$ satisfying $P(s,1)=0$, whereas a section of $\cW_S$ is a meromorphic function with a pole of order at most two along $S\times\{0\}$.  For $\lambda\in\C$, set
\[
 L_{\lambda,S}:\cV_S\longrightarrow\cW_S,
 \qquad
 L_{\lambda,S}(P)=\partial_z P-\lambda z^{-1}P.
\]

\begin{lemma}\label{lem:relative-scalar-splitting}
For every complex analytic space $S$ and every $\lambda\in\C$, the morphism $L_{\lambda,S}$ is injective and there is a direct-sum decomposition of sheaves of $\cO_S$-modules
\begin{equation}\label{eq:relative-scalar-splitting}
 \cW_S
 =L_{\lambda,S}(\cV_S)
 \oplus\cO_S z^{-2}
 \oplus\cO_S\rho_\lambda.
\end{equation}
The decomposition is functorial under arbitrary morphisms of complex analytic spaces.
\end{lemma}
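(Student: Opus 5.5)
We follow the proof of \Cref{lem:scalar-splitting}, carried out coefficientwise in $z$ with coefficients in $\cO_S$. Every explicit inverse used there is a linear operator on Taylor coefficients whose constants depend only on $\lambda$, so it works equally well with coefficients in $\cO_S$. Since the statement is local on $S$, we work over a sufficiently small open set. A section $Q\in\cW_S$ has a unique expansion $Q(s,z)=\sum_{k\geq-2}q_k(s)z^k$. The coefficients $q_k\in\cO_S$ are obtained by relative Cauchy integrals $q_k(s)=\frac{1}{2\pi\ii}\oint Q(s,z)z^{-k-1}\,dz$, and these integrals make sense on a non-reduced $S$ because $\cO_{S\times\Delta}$ is the relative analytic structure sheaf. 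The monomial formula \eqref{eq:L-monomials} holds verbatim: $L_{\lambda,S}(\sum_n p_nz^n)=\sum_n(n-\lambda)p_nz^{n-1}$.

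We then define the candidate inverse exactly as before. Split off $q_{-2}z^{-2}$. If $\lambda\notin\mathbb Z_{\geq0}$, set $P_0=\sum_{n\geq0}\frac{q_{n-1}}{n-\lambda}z^n$ and $P=P_0-P_0(s,1)$; the remainder is then $-\lambda P_0(s,1)z^{-1}\in\cO_S\rho_\lambda$. If $\lambda=m\in\mathbb Z_{\geq0}$, drop the coefficient $q_{m-1}$, which goes to $\cO_S\rho_m$. Set $p_n=q_{n-1}/(n-m)$ for $n\neq m$, and $p_m=-\sum_{n\neq m}p_n$. Directness and injectivity follow from coefficient comparison as in the absolute case. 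This comparison is valid over $\cO_S$ because it only uses that $n-\lambda$ is an invertible constant for the relevant $n$. In particular it uses no reducedness of $S$: a relation $L_{\lambda,S}(P)=a z^{-2}+b\rho_\lambda$ forces $a=0$ and the coefficient identities $(n-\lambda)p_n=\delta$-terms, and these already determine $P$ inside $\cO_S[[z]]$.

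The main obstacle is to show that the series $P$ actually defines a section of $\cH_S=p_*\cO_{S\times\Delta}$, that is, a holomorphic function on $S\times\Delta$, and not merely a formal power series with $\cO_S$-coefficients. For reduced $S$ this is the uniform Cauchy estimate. Locally on $S$ we have $\sup|q_{n-1}(s)|\leq C R_0^{-n}$ on compacts, and $|n-\lambda|^{-1}=O(n^{-1})$, so the series converges normally on $K\times\{|z|\leq r_0\}$. For non-reduced $S$ we would embed $S$ locally as a closed subspace of a domain $\Omega\subset\C^N$. We lift $Q$ to a holomorphic function $\widetilde Q$ on $\Omega\times\Delta$ with a pole of order two along $z=0$; this is possible after shrinking, by Cartan's Theorem B on the Stein space $\Omega\times\Delta'$. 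We then apply the construction to $\widetilde Q$ and restrict. The resulting decomposition is independent of the lift, by the uniqueness already proved over $S$. Alternatively, one may observe that the operator $Q\mapsto P$ is given by an integral kernel: for $\lambda\notin\mathbb Z_{\geq0}$, $P_0(z)=z^{\lambda}\int_0^z\zeta^{-\lambda}Q_1(\zeta)\,d\zeta$, with the usual term-by-term meaning. Such a kernel operator visibly preserves relative holomorphy and commutes with restriction.

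Functoriality under a morphism $f:T\to S$ is then immediate. Pullback $f^*$ acts on the coefficients $q_k$, while the formulas for $p_n$, for the evaluation at $z=1$, and for the two complementary coefficients are $\cO$-linear with constants depending only on $\lambda$. Hence $f^*$ carries each of the three summands of \eqref{eq:relative-scalar-splitting} over $S$ into the corresponding summand over $T$. By uniqueness of the decomposition, this gives compatibility of the projections with base change.
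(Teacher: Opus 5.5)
Your proposal is correct and follows essentially the same route as the paper. It applies the absolute inverse formulas coefficientwise, handles non-reduced bases by lifting $z^2Q$ along a local embedding $T\hookrightarrow\Omega$ and using Cauchy estimates, and gets base change because only Taylor-coefficient extraction, evaluation at $z=1$ and fixed scalar operations occur. The differences are cosmetic: you deduce independence of the lift from uniqueness of the decomposition, whereas the paper notes that the linear construction sends lifts differing by the ideal to series differing by the ideal; and the lift should be taken on all of $\Omega\times\Delta$ without shrinking the disc, which must contain $1$.
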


\begin{proof}
The assertion is local on $S$.  Let $Q$ be a section of $\cW_S$ over an open subset $T\subset S$, and write
\[
 Q=q_{-2}z^{-2}+\sum_{k\geq-1}q_k z^k,
 \qquad q_k\in\cO_S(T).
\]
The coefficient $q_{-2}$ is the unique coefficient of the first complementary summand. 
Suppose first that $\lambda\notin\mathbb Z_{\geq0}$.  Put
\[
 P_0=\sum_{n\geq0}\frac{q_{n-1}}{n-\lambda}z^n,
 \qquad
 \beta=-\lambda P_0(1),
 \qquad
 P=P_0+\frac{\beta}{\lambda}.
\]
To justify convergence when $T$ is possibly non-reduced, work locally on $T$ and choose a closed embedding into a polydisc $T\hookrightarrow\Omega$.  After shrinking $\Omega$, lift $Q$ to a holomorphic function on $\Omega\times\Delta$.  On a relatively compact polydisc $\Omega_0\Subset\Omega$, choose radii $1<r_0<R_0$ with $\{|z|\leq R_0\}\subset\Delta$.  Cauchy estimates on $\Omega_0\times\{|z|\leq R_0\}$ bound the lifted Taylor coefficients by $C R_0^{-n}$, while the denominators grow linearly.  Hence the series defining the lift of $P_0$ converges normally on $\Omega_0\times\{|z|\leq r_0\}$.  If two lifts of $Q$ differ by a section with coefficients in the defining ideal of $T$, then the corresponding lifted series differ coefficientwise by sections in the same ideal, because the construction is complex-linear and uses only fixed scalar denominators.  The construction is therefore independent of the chosen lift and descends to $T$.  The descended local sums agree on overlaps by uniqueness and define a section of $\cH_S(T)$.  Moreover, $P(1)=0$ and
\[
 Q=q_{-2}z^{-2}+L_{\lambda,S}(P)+\beta z^{-1}.
\]
This expression is unique by the same coefficient calculation as in \Cref{lem:scalar-splitting}.

Now let $\lambda=m\in\mathbb Z_{\geq0}$.  Set
\[
 \beta=q_{m-1},
 \qquad
 p_n=\frac{q_{n-1}}{n-m}\quad(n\neq m),
 \qquad
 p_m=-\sum_{n\neq m}p_n,
\]
and put $P=\sum_{n\geq0}p_n z^n$.  The same lifted Cauchy-estimate argument gives normal convergence on relatively compact products and descends to the possibly non-reduced base, while the definition of $p_m$ gives $P(1)=0$.  One then has
\[
 Q=q_{-2}z^{-2}+L_{m,S}(P)+\beta z^{m-1}.
\]
The coefficient of $z^{m-1}$ vanishes on the image of $L_{m,S}$, so the expression is unique.  Injectivity follows coefficientwise as in the absolute case: a solution of $L_{\lambda,S}(P)=0$ is zero unless $\lambda=m\in\mathbb Z_{\geq0}$, when it has the form $az^m$ with $a\in\cO_S(T)$; the equation $P(1)=0$ then forces $a=0$.

All these formulae use only addition, multiplication by fixed complex scalars, extraction of Taylor coefficients, and evaluation at $z=1$.  They therefore commute with pullback along every morphism $S'\to S$, without any reducedness assumption.
\end{proof}

Fix a residue vector $\boldsymbol\mu=(\mu_1,\ldots,\mu_r)\in\C^r$.  For a complex analytic space $S$, let $\cA_{\boldsymbol\mu}(S)$ be the set of relative connections
\[
 d_z+\left(\frac{M_{\boldsymbol\mu}}{z}+Q(s,z)\right)dz,
 \qquad
 Q\in\Gamma(S\times\Delta,z^{-2}\cO_{S\times\Delta}\otimes\mathfrak n),
\]
where $d_z$ denotes the relative differential in the disc direction.  Let
\[
 \cU_1(S)
 =\left\{
 g:S\times\Delta\to U\text{ holomorphic}:g|_{z=1}=I_r
 \right\}
\]
act by the relative form of \eqref{eq:gauge-action}.  Finally, let $\cS_{\boldsymbol\mu}(S)$ consist of the connections
\begin{equation}\label{eq:relative-normal-form}
 d_z+\left[
 \frac{M_{\boldsymbol\mu}}{z}
 +\sum_{j<i}
 \bigl(\alpha_{ji}(s)z^{-2}+\beta_{ji}(s)\rho_{\lambda_{ji}}(z)\bigr)E_{ji}
 \right]dz,
\end{equation}
where $\alpha_{ji},\beta_{ji}\in\Gamma(S,\cO_S)$ and $\lambda_{ji}=\mu_i-\mu_j$.

\begin{theorem}\label{thm:relative-normal-form}
For every complex analytic space $S$ and every element $A\in\cA_{\boldsymbol\mu}(S)$, there is a unique gauge $g\in\cU_1(S)$ such that $A\cdot g$ belongs to $\cS_{\boldsymbol\mu}(S)$.  The resulting coefficient functions
\[
 \alpha_{ji},\beta_{ji}\in\Gamma(S,\cO_S)
\]
are functorial under arbitrary base change. 
Namely, if $N=2\dim\mathfrak n=r(r-1)$, the normal-form coefficients define a natural transformation
\[
 \pi_{\boldsymbol\mu}:\cA_{\boldsymbol\mu}\longrightarrow\mathbb A^N_{\C},
\]
and the map
\[
 \Theta_{\boldsymbol\mu,S}:
 \mathbb A^N_{\C}(S)\times\cU_1(S)
 \longrightarrow\cA_{\boldsymbol\mu}(S),
 \qquad
 (\sigma,g)\longmapsto\sigma\cdot g^{-1},
\]
is bijective, functorially in~$S$.  Hence $\pi_{\boldsymbol\mu}$ is a functorially trivial torsor for the sheaf of groups $\cU_1$, the quotient stack $[\cA_{\boldsymbol\mu}/\cU_1]$ is represented by $\mathbb A^N_{\C}$, and all stabilisers are trivial.
\end{theorem}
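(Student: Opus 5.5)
The plan is to repeat the absolute argument of \Cref{prop:existence,prop:uniqueness} over the base, with \Cref{lem:relative-scalar-splitting} in place of \Cref{lem:scalar-splitting}. Fix $S$ and $A\in\cA_{\boldsymbol\mu}(S)$. The first point to record is that the relative analogue of \Cref{lem:rootwise-gauge} holds verbatim. The computation of $g^{-1}(M_{\boldsymbol\mu}/z)g+g^{-1}\partial_z g$ for $g=I_r+pE_{ji}$ is purely algebraic in $p$ and $\partial_z p$, so it makes sense for sections $p\in\cV_S$ and for non-reduced $S$. It gives the change $q_{ji}\mapsto q_{ji}+L_{\lambda_{ji},S}(p)$ modulo $\mathfrak n^{(h+1)}\otimes\cW_S$. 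The height-bracket inclusion \eqref{eq:height-bracket} is a statement about constant matrices, so it survives tensoring with $\cO_{S\times\Delta}$.

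\textbf{Existence.} I will induct on height $h=1,\ldots,r-1$. At stage $h$, each height-$h$ coefficient $q_{ji}\in\Gamma(S,\cW_S)$ is decomposed by \eqref{eq:relative-scalar-splitting} as $\alpha_{ji}z^{-2}+\beta_{ji}\rho_{\lambda_{ji}}-L_{\lambda_{ji},S}(p_{ji})$. This gives unique global sections $\alpha_{ji},\beta_{ji}\in\Gamma(S,\cO_S)$ and $p_{ji}\in\Gamma(S,\cV_S)$; they are global because the decomposition is one of sheaves and unique, so the local pieces glue. I then set $g_h=I_r+\sum_{i-j=h}p_{ji}E_{ji}\in\cU_1(S)$ and take $g=g_1\cdots g_{r-1}$. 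This is a finite product, so no convergence question arises beyond the one already settled in the lemma.

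\textbf{Uniqueness.} I will copy \Cref{prop:uniqueness}, but without any "least $h$ with $g\neq I_r$" argument, since over a non-reduced base one should avoid pointwise reasoning. Instead I will show by induction on $h$ that $g\equiv I_r$ modulo $\mathfrak n^{(h)}$ and that the two slice matrices agree below height $h$. Suppose this holds at height $h$. Reducing the gauge equation modulo $\mathfrak n^{(h+1)}$ gives $\widetilde q_{ji}-q_{ji}=L_{\lambda_{ji},S}(p_{ji})$ for the height-$h$ component $p_{ji}\in\cV_S$ of $g-I_r$. This step needs care: because $g\equiv I_r$ modulo $\mathfrak n^{(h)}$, all nonlinear terms land in height $\geq h+1$. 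The left side lies in $\cO_Sz^{-2}\oplus\cO_S\rho_{\lambda_{ji}}$, so directness of \eqref{eq:relative-scalar-splitting} forces both sides to vanish. Injectivity of $L_{\lambda_{ji},S}$ then forces $p_{ji}=0$, which completes the induction step.

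\textbf{Conclusion.} Existence and uniqueness give bijectivity of $\Theta_{\boldsymbol\mu,S}$, exactly as in \Cref{thm:slice}. Functoriality of $\pi_{\boldsymbol\mu}$ under $S'\to S$ follows by uniqueness: the pullback of the normalising gauge normalises the pullback connection, and \Cref{lem:relative-scalar-splitting} commutes with base change, so each inductive step pulls back to the corresponding step. A bijection $\mathbb A^N(S)\times\cU_1(S)\cong\cA_{\boldsymbol\mu}(S)$, natural in $S$ and $\cU_1$-equivariant for right translation, exhibits $\cA_{\boldsymbol\mu}$ as a trivial $\cU_1$-torsor over $\mathbb A^N_{\C}$. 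This gives $[\cA_{\boldsymbol\mu}/\cU_1]\simeq\mathbb A^N_{\C}$ with trivial stabilisers. I expect the only delicate point to be the non-reduced bookkeeping in the uniqueness step. That is handled by the filtration induction above together with the sheaf-level directness and injectivity already proved in \Cref{lem:relative-scalar-splitting}.
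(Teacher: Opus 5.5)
Your proposal is correct and takes essentially the same route as the paper's proof: height-by-height normalisation with the sheaf-level splitting of \Cref{lem:relative-scalar-splitting}, uniqueness by induction along the filtration $\mathfrak n^{(h)}$ so that no pointwise argument is needed over non-reduced bases, and base-change compatibility inherited from that of the splitting. The differences are cosmetic: you read off global sections directly from the uniqueness of the sheaf decomposition instead of working on open subsets and gluing, and you deduce functoriality from the uniqueness of the normalising gauge, which is a slightly cleaner form of the same argument.
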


\begin{proof}
The argument is the absolute height induction carried out sheafwise over the base.

\emph{Existence.}  Work locally on an open subset \(T\subset S\).  Suppose that all coefficients of height smaller than \(h\) have been normalised.  For every \(j<i\) with \(i-j=h\), apply the functorial direct sum \eqref{eq:relative-scalar-splitting} to the current coefficient \(q_{ji}\).  There are unique sections
\[
 p_{ji}\in\cV_S(T),
 \qquad \alpha_{ji},\beta_{ji}\in\cO_S(T)
\]
such that
\[
 q_{ji}+L_{\lambda_{ji},S}(p_{ji})
 =\alpha_{ji}z^{-2}+\beta_{ji}\rho_{\lambda_{ji}}.
\]
Set
\[
 P_h=\sum_{i-j=h}p_{ji}E_{ji},
 \qquad g_h=I_r+P_h.
\]
The matrix \(P_h\) is nilpotent, so \(g_h^{-1}\) is a finite polynomial in \(P_h\); it is therefore holomorphic on \(T\times\Delta\).  The condition \(p_{ji}(s,1)=0\) gives \(g_h|_{z=1}=I_r\).  The bracket estimate \eqref{eq:height-bracket} shows that the height-\(h\) part of the gauge action is the direct sum of the scalar operators \(L_{\lambda_{ji},S}\), while every nonlinear term has greater height.  Thus \(g_h\) normalises height~\(h\) and preserves the preceding stages.  The finite product \(g_1\cdots g_{r-1}\) gives the required gauge on~\(T\).

The scalar splitting is unique, hence the gauges constructed on two overlapping open subsets coincide there.  They therefore glue to a global element of \(\cU_1(S)\), and the coefficient functions \(\alpha_{ji},\beta_{ji}\) glue to global sections of~\(\cO_S\).

\emph{Uniqueness.}  Let \(g\in\cU_1(S)\) carry one relative normal form to another.  We argue locally on~\(S\) and induct on the height filtration, a formulation which is valid without any reducedness hypothesis.  Project the gauge equation first to \(\bigl(\mathfrak n^{(1)}/\mathfrak n^{(2)}\bigr)\otimes\cO_S\).  For every root of height one it gives
\[
 \widetilde q_{ji}-q_{ji}
 =L_{\lambda_{ji},S}(p_{ji}).
\]
The left-hand side lies in the complementary summand of \eqref{eq:relative-scalar-splitting}, whereas the right-hand side lies in the image summand.  Directness and injectivity force both sides to vanish.  Suppose inductively that the image of \(g-I_r\) vanishes through height \(h-1\) and that the two normal forms agree in those heights.  Projection modulo \(\mathfrak n^{(h+1)}\) gives the same equation for every root of height \(h\), because all nonlinear contributions have larger height.  The direct splitting again forces the height-\(h\) image of \(g-I_r\) to vanish and the corresponding normal-form coefficients to agree.  The finite filtration ends at height \(r-1\), so \(g=I_r\) and the two normal forms agree.

\emph{Base change and representability.}  Every operation used above---Taylor coefficient extraction, evaluation at \(z=1\), multiplication by fixed complex scalars, and finite matrix multiplication in \(\mathfrak n\)---commutes with pullback.  The classifying coefficients and the normalising gauge therefore commute with arbitrary base change, including pullback to non-reduced spaces.  They are the \(N=r(r-1)\) coordinate functions of the point \(\pi_{\boldsymbol\mu}(A)\in\mathbb A^N_{\C}(S)\).  The factorisation
\[
 A=\sigma\cdot g^{-1}
\]
is unique and functorial.  Hence \(\Theta_{\boldsymbol\mu}\) is an isomorphism of functors
\[
 \mathbb A^N_{\C}\times\cU_1\xrightarrow{\sim}\cA_{\boldsymbol\mu},
\]
equivariant for right translation on the second factor.  This is a functorial trivialisation of the \(\cU_1\)-torsor \(\pi_{\boldsymbol\mu}\).  The quotient stack is therefore represented by \(\mathbb A^N_{\C}\), and all stabilisers are trivial.
\end{proof}

\begin{corollary}\label{cor:relative-deformation-theory}
Fix a complex normal form $\sigma_0\in\cS_{\boldsymbol\mu}(\C)$, and let $A$ be a local Artinian $\C$-algebra with maximal ideal $\mathfrak m_A$.  Within the fixed triangular-reduction and residue stratum, the groupoid of deformations of $\sigma_0$ over $\operatorname{Spec}_{\mathrm{an}}A$ is discrete, and its set of objects is identified with
\[
 (\mathfrak n\oplus\mathfrak n)\otimes_{\C}\mathfrak m_A.
\]
Consequently, the framed deformation problem is unobstructed, has tangent space $\mathfrak n\oplus\mathfrak n$, and has no infinitesimal automorphisms.
\end{corollary}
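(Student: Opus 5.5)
The corollary should follow by specialising \Cref{thm:relative-normal-form} to the Artinian base $S=\operatorname{Spec}_{\mathrm{an}}A$. The plan is as follows.

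A deformation of $\sigma_0$ over $A$, within the fixed triangular reduction and residue stratum, is an element $\widetilde A\in\cA_{\boldsymbol\mu}(A)$ together with an identification of its restriction to the closed point with $\sigma_0$. A morphism between two deformations is an element of $\cU_1(A)$ relating them and restricting to the identity at the closed point. By \Cref{thm:relative-normal-form}, the bijection $\Theta_{\boldsymbol\mu,A}$ writes each $\widetilde A$ uniquely as $\sigma\cdot g^{-1}$ with $\sigma\in\mathbb A^N(A)$ and $g\in\cU_1(A)$. Hence $[\cA_{\boldsymbol\mu}(A)/\cU_1(A)]$ is the discrete groupoid on $\mathbb A^N(A)=(\mathfrak n\oplus\mathfrak n)\otimes_\C A$, and all stabilisers are trivial.

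Next I impose the condition at the closed point. Base-change functoriality, applied to $\operatorname{Spec}\C\to\operatorname{Spec}_{\mathrm{an}}A$, shows that the normal-form coordinates of the reduction of $\widetilde A$ are the reductions of the coordinates of $\widetilde A$. So the deformations of $\sigma_0$ are exactly the points $\sigma\in(\mathfrak n\oplus\mathfrak n)\otimes A$ with $\sigma\bmod\mathfrak m_A=\sigma_0$. Subtracting $\sigma_0$ identifies this set with $(\mathfrak n\oplus\mathfrak n)\otimes\mathfrak m_A$. The identification of the restriction with $\sigma_0$ adds nothing, because $\sigma_0$ has trivial automorphism group by \Cref{thm:slice}.

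The consequences then follow directly.

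\emph{Unobstructedness.} Every small extension $A'\to A$ induces a surjection $\mathfrak m_{A'}\to\mathfrak m_A$, so every deformation lifts, for instance by lifting coordinates.

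\emph{Tangent space.} Taking $A=\C[\varepsilon]/(\varepsilon^2)$ gives tangent space $(\mathfrak n\oplus\mathfrak n)\otimes\C\varepsilon\cong\mathfrak n\oplus\mathfrak n$.

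\emph{Infinitesimal automorphisms.} These vanish because stabilisers in $\cU_1(A)$ are trivial, again by \Cref{thm:relative-normal-form}.

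The only delicate point is making sure \Cref{thm:relative-normal-form} applies over the non-reduced analytic space $\operatorname{Spec}_{\mathrm{an}}A$. That theorem is stated for arbitrary, possibly non-reduced, complex analytic spaces, and \Cref{lem:relative-scalar-splitting} handles convergence by lifting to a polydisc. Here this is even simpler: a section over $A\times\Delta$ is just a finite $A$-linear combination of holomorphic functions on $\Delta$, so the splitting can be applied coefficientwise in a $\C$-basis of $A$.
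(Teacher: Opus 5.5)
Your proposal is correct and follows the paper's proof. Both specialise \Cref{thm:relative-normal-form} to $S=\operatorname{Spec}_{\mathrm{an}}A$, identify the deformation groupoid with the fibre of $\mathbb A^{r(r-1)}_{\C}(\operatorname{Spec}_{\mathrm{an}}A)\to\mathbb A^{r(r-1)}_{\C}(\operatorname{Spec}_{\mathrm{an}}\C)$ over $\sigma_0$, and read off discreteness, the tangent space and unobstructedness from the smoothness of affine space. Your coordinate-lifting argument is that smoothness statement. Your extra checks are details the paper leaves implicit: base-change functoriality handles the closed-point condition, the trivial automorphism group of $\sigma_0$ makes the marking harmless, and over an Artinian base the splitting can be applied coefficientwise in a $\C$-basis of $A$.
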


\begin{proof}
By \Cref{thm:relative-normal-form}, the deformation groupoid is the fibre over $\sigma_0$ of
\[
 \mathbb A^{r(r-1)}_{\C}(\operatorname{Spec}_{\mathrm{an}}A)
 \longrightarrow
 \mathbb A^{r(r-1)}_{\C}(\operatorname{Spec}_{\mathrm{an}}\C).
\]
Using the affine coordinates $(\boldsymbol\alpha,\boldsymbol\beta)$, this fibre is $(\mathfrak n\oplus\mathfrak n)\otimes\mathfrak m_A$.  The quotient groupoid is discrete, so there are no automorphisms.  Affine space is smooth, hence every deformation lifts across a small extension and there are no obstruction classes.
\end{proof}

\begin{corollary}\label{cor:relative-classifying-map}
Let $(\cE,\nabla)$ be a family of residual constituents over a complex analytic space $S$.  Assume that on $S$ it is supplied with
\begin{enumerate}[(i)]
 \item a holomorphic ordered framing at $z=1$;
 \item a holomorphic triangular reduction in the prescribed lattice; and
 \item a locally constant diagonal-residue vector $\boldsymbol\mu$.
\end{enumerate}
Then the family determines a holomorphic classifying morphism
\[
 \Phi^{\mathrm{fr}}_{S,\boldsymbol\mu}:
 S\longrightarrow\mathbb A^{r(r-1)}_{\C},
\]
whose value at every complex point is the affine normal-form coordinate of \Cref{thm:main}.  The construction commutes with arbitrary base change.
\end{corollary}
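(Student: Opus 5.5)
The plan is to reduce the family, using the three supplied data, to an element of the relative coefficient space $\cA_{\boldsymbol\mu}(S)$, and then to apply \Cref{thm:relative-normal-form}.

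\emph{Step one: standard frame.}  Using the holomorphic ordered framing (i), apply the holomorphic gauge over $S$ that carries the framing to the standard row $\mathbf e(1)$.  Concretely, first trivialise $\cE$ locally on $S$ by a frame adapted to the triangular reduction (ii) and equal to the given framing at $z=1$, in the manner of \Cref{prop:triangular-reduction}.  The adapted frame exists because the graded pieces of the reduction are line bundles on $S\times\Delta$.  One then corrects by sections of the earlier pieces so that evaluation at $z=1$ matches the framing.  In this frame the connection matrix is upper triangular, and it has a pole of order at most two.

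\emph{Step two: diagonal.}  The diagonal terms have at most simple poles, by the rank-one argument of \Cref{prop:triangular-reduction} applied fibrewise.  Since (iii) makes their residues the locally constant $\mu_i$, each $a_i-\mu_i/z$ is holomorphic on $S\times\Delta$.  The relative version of the diagonal gauge \eqref{eq:diagonal-gauge} is $h_i=\exp\int_1^z(\mu_i/\zeta-a_i)\,d\zeta$.  It is holomorphic in $(s,z)$ and satisfies $h_i(s,1)=1$.  Applying it gives an element of $\cA_{\boldsymbol\mu}(S)$.

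\emph{Step three: classifying map.}  \Cref{thm:relative-normal-form} then produces unique coefficients $\alpha_{ji},\beta_{ji}\in\Gamma(S,\cO_S)$, and these define $\Phi^{\mathrm{fr}}_{S,\boldsymbol\mu}$.

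\emph{Well-definedness.}  The local choices in the first two steps differ by framed $B$-valued gauges equal to $I_r$ at $z=1$.  By the relative form of \Cref{lem:diagonal-normalisation}, which forces the diagonal gauge entries to be constant and hence $1$, such gauges lie in $\cU_1$.  Uniqueness in \Cref{thm:relative-normal-form} then shows that the local classifying maps agree on overlaps and glue.

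\emph{Pointwise value and base change.}  At a complex point $s$, base change of \Cref{thm:relative-normal-form} to $s$ identifies $\Phi^{\mathrm{fr}}_{S,\boldsymbol\mu}(s)$ with the normal form of \Cref{thm:main}.  This uses \Cref{prop:residue-map}: the absolute triangular reduction is unique up to framed Borel gauge, so the absolute coordinates do not depend on which triangular representative was used.  Compatibility with arbitrary base change follows because each step commutes with pullback.  The adapted frame, the diagonal exponential and the normalisation of \Cref{thm:relative-normal-form} all pull back.

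\emph{Main obstacle.}  I expect the hardest step to be the diagonal one over a possibly non-reduced $S$.  There the fibrewise argument that the diagonal has at most a simple pole does not immediately give the statement sheaf-theoretically.  I would read the statement as assuming that the supplied reduction comes with this pole bound, or would deduce the bound by lifting to a reduced ambient space as in \Cref{lem:relative-scalar-splitting}.
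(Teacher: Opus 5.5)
Your proposal is correct and follows the paper's route: the paper's proof is the single step of applying \Cref{thm:relative-normal-form} after the supplied triangular reduction and the framed diagonal normalisation, and your framing, gluing and base-change checks simply spell this out. On your stated obstacle, the paper implicitly uses your first reading, namely that the simple-pole bound on the diagonal is part of the supplied data (ii)--(iii); your fallback of deducing it from a reduced lift would fail, because over $\C[\varepsilon]/(\varepsilon^2)$ the rank-one connection $d+(\varepsilon z^{-2}+\mu z^{-1})\,dz$ is regular singular (the meromorphic gauge $1+\varepsilon z^{-1}$ carries it to $d+\mu z^{-1}\,dz$), yet no holomorphic diagonal gauge removes its nilpotent double pole, and any reduced lift has irregular generic fibres.
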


\begin{proof}
Apply the functoriality in \Cref{thm:relative-normal-form} after the prescribed triangular lattice reduction and diagonal normalisation.
\end{proof}

\Cref{thm:relative-normal-form} represents the analytic unipotent quotient on each fixed triangular lattice-and-residue stratum, but it does not construct triangular reductions in families.  The discrete relative-position data entering the lattice comparison of Kontsevich--Szab\'o may jump, and a global family need not admit a single residue vector or a single holomorphic invariant flag.  The theorem therefore solves the relative analytic quotient once the lattice reduction has been fixed, but it does not construct that reduction in families.

\section{Wall charts, connection lifts in arbitrary rank, and affine flag compatibility}\label{sec:wall-crossing}

The relative normal-form theorem of \Cref{sec:relative-slice} begins after a
holomorphic triangular reduction has been supplied.  We now analyse the first
change of relative lattice position.  The main points are as follows.  First,
the elementary rank-two wall can be classified for every residue difference:
there is an extension chart outside the strictly positive integral
resonance range, whereas the distinguished normal slice does not meet the
oriented wall in that range.  Second, the first rank-three correction couples adjacent and composite roots rather than acting independently root by root.  The correction can itself
be absorbed into polynomially adapted wall coordinates, but two adjacent
lattice modifications differ by a central composite-root cocycle.  Third,
relative saturation acquires an additional generator over a square-zero
thickening.

Let $S$ be a small disc with coordinate $t$, and write
$\cO_{S\times\Delta}$ simply as $\cO$.  If $\cI\subset\cE$ is a coherent
submodule of a locally free module, its saturation along the polar divisor is
\[
 \cI^{\mathrm{sat}_z}
 =\{s\in\cE\mid z^N s\in\cI\text{ for some }N\geq0\}.
\]
This operation need not commute with restriction to a parameter fibre.

\subsection{The scalar projection needed at a wall}

The special fibres below contain holomorphic linear terms which must still be
projected to the framed normal complement of \Cref{lem:scalar-splitting}.  We
record that projection.

\begin{lemma}\label{lem:linear-wall-projection}
For $\nu\in\C$ and $a,b\in\C$, define
\begin{equation}\label{eq:wall-sigma}
 \sigma_\nu(a,b)=
 \begin{cases}
 -\nu\left(\dfrac{a}{1-\nu}+\dfrac{b}{2-\nu}\right),
      &\nu\notin\mathbb Z_{\geq0},\\[3mm]
 0,&\nu=0,\\
 a,&\nu=1,\\
 b,&\nu=2,\\
 0,&\nu\in\mathbb Z_{\geq3}.
 \end{cases}
\end{equation}
Then the unique framed normal representative of the scalar coefficient
$a+bz$ for residue difference $\nu$ is
\[
 \sigma_\nu(a,b)\rho_\nu(z).
\]
Equivalently, there is a unique $P\in\cO(\Delta)$ with $P(1)=0$ such that
 $
 a+bz+L_\nu(P)=\sigma_\nu(a,b)\rho_\nu(z).
$
\end{lemma}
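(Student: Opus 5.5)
The plan is to get uniqueness from \Cref{lem:scalar-splitting} and then prove existence by writing down an explicit polynomial gauge $P$ in each case of \eqref{eq:wall-sigma}. For uniqueness: the coefficient $a+bz$ lies in $W=z^{-2}\cO$ and has no $z^{-2}$-term. The image $L_\nu(V)$ has no $z^{-2}$-term either, by \eqref{eq:L-monomials}. So in the decomposition \eqref{eq:scalar-splitting} the $\C z^{-2}$-component of $a+bz$ vanishes. The normal representative is therefore of the form $\sigma\rho_\nu$, and $\sigma$ and $P$ are unique by directness of the sum and injectivity of $L_\nu$. It remains to exhibit, in each case, a polynomial $P$ with $P(1)=0$ and $a+bz+L_\nu(P)=\sigma_\nu(a,b)\rho_\nu$. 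The only tool is the monomial rule $L_\nu(z^n)=(n-\nu)z^{n-1}$ from \eqref{eq:L-monomials}.

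\emph{Case $\nu\notin\mathbb Z_{\geq0}$.} Here $\rho_\nu=z^{-1}$. Put
\[
 P_0=\frac{a}{1-\nu}z+\frac{b}{2-\nu}z^2 .
\]
The denominators are non-zero, and $L_\nu(P_0)=a+bz$. The constant $P_0(1)$ satisfies $L_\nu(P_0(1))=-\nu P_0(1)z^{-1}$. Take $P=-(P_0-P_0(1))$, which vanishes at $z=1$. Then
\[
 a+bz+L_\nu(P)=-\nu P_0(1)z^{-1},
\]
and $-\nu P_0(1)$ is exactly the first line of \eqref{eq:wall-sigma}.

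\emph{Case $\nu=m\in\mathbb Z_{\geq0}$.} Here $\rho_m=z^{m-1}$, and $z^m$ lies in $\ker L_m$, so it may be added freely to enforce $P(1)=0$. Every monomial $z^k$ with $k\neq m-1$ equals $L_m\bigl(z^{k+1}/(k+1-m)\bigr)$. Only the $z^{m-1}$-coefficient of $a+bz$ survives. That coefficient is $a$ for $m=1$, $b$ for $m=2$, and $0$ for $m=0$ or $m\geq3$, which matches \eqref{eq:wall-sigma}. Concretely, I will remove the non-resonant terms by subtracting $L_m$ of
\[
 \frac{a}{1-m}z\ (m\neq1),\qquad \frac{b}{2-m}z^2\ (m\neq2),
\]
and then add $cz^m$ with $c$ chosen to make $P(1)=0$. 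For $m=0$, the constant $z^0$ plays the role of the kernel element.

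I expect no real obstacle. The step needing most care is the sign bookkeeping in the non-resonant case, since $L_\nu$ of a constant produces a $z^{-1}$ term with coefficient $-\nu$. I also need to check that the three resonant values $\nu=0,1,2$ are handled by the kernel correction and not by the generic formula, whose denominators vanish exactly at $\nu=1,2$.
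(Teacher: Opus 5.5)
Your proposal is correct and follows the paper's proof essentially verbatim. In the non-resonant case you use the same primitive $P_0=\frac{a}{1-\nu}z+\frac{b}{2-\nu}z^2$ with $P=-P_0+P_0(1)$; in the resonant case $\nu=m$ you read off the surviving $z^{m-1}$-coefficient and use the free $z^m$-coefficient to impose $P(1)=0$. Your only addition is to derive uniqueness explicitly from \Cref{lem:scalar-splitting}, using that the $z^{-2}$-component vanishes, which the paper leaves implicit.
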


\begin{proof}
Assume first that $\nu\notin\mathbb Z_{\geq0}$.  Put
\[
 P_0(z)=\frac{a}{1-\nu}z+\frac{b}{2-\nu}z^2,
 \qquad
 P(z)=-P_0(z)+P_0(1).
\]
Then $P(1)=0$ and a direct calculation gives
\[
 a+bz+L_\nu(P)
 =-\nu P_0(1)z^{-1}
 =\sigma_\nu(a,b)z^{-1}.
\]
If $\nu=m\in\mathbb Z_{\geq0}$, the coefficient of $z^{m-1}$ is the
resonant coefficient omitted by $L_m$.  For $m=0$ the polynomial $a+bz$ is
a framed derivative; for $m=1$ its constant coefficient survives; for $m=2$
its linear coefficient survives; and for $m\geq3$ it has no
$z^{m-1}$-coefficient.  The free coefficient of $z^m$ in the primitive is
then chosen uniquely to impose $P(1)=0$.  This gives
\eqref{eq:wall-sigma}.
\end{proof}

In particular, for $\nu=-1$ one has
\begin{equation}\label{eq:constant-projection-minus-one}
 K+L_{-1}\left(-\frac K2(z-1)\right)=\frac K2z^{-1}.
\end{equation}
Thus a constant term is not removed without changing the normal
$z^{-1}$-coefficient.  Consequently, removing a constant term changes the resonant normal coordinate; this contribution is essential in the rank-three calculation below.

\subsection{The elementary rank-two lattice jump}

Set $\cE=\cO e_1\oplus\cO e_2$ and consider
\begin{equation}\label{eq:rank-two-wall-fixed}
 \nabla=d+\frac{t}{z^2}E_{21}\,dz.
\end{equation}
At $z=1$ choose the ordered frame
\[
 v_1=e_1+te_2,\qquad v_2=e_2,
\]
and, on the punctured disc, put
\begin{equation}\label{eq:rank-two-moving-line}
 \cF^\circ=\cO(*0)\left(e_1+\frac tz e_2\right).
\end{equation}

\begin{proposition}\label{prop:rank-two-lattice-jump}
The line $\cF^\circ$ is horizontal for \eqref{eq:rank-two-wall-fixed}, every
punctured fibre has trivial monodromy, and $\cF^\circ|_{z=1}$ is the first
member of the flag determined by $(v_1,v_2)$.  Moreover,
\begin{equation}\label{eq:rank-two-relative-intersection}
 \cE\cap\cF^\circ=\cO(ze_1+te_2).
\end{equation}
This module is $z$-saturated over the reduced base $S$, but
\begin{equation}\label{eq:rank-two-base-change-failure}
 (\cE\cap\cF^\circ)\otimes_{\cO_S}\C_0
 =\cO_\Delta ze_1
 \neq
 \cE_0\cap\cF_0^\circ
 =\cO_\Delta e_1.
\end{equation}
\end{proposition}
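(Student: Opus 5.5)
The plan is to verify each claim by direct computation in the global frame $e_1,e_2$, using the horizontal convention $dY+AY\,dz=0$ with $A=tz^{-2}E_{21}$. Write sections as column vectors, so that $E_{21}e_1=e_2$ and $E_{21}e_2=0$.

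\emph{Horizontality, monodromy and the flag at $z=1$.} For $w=e_1+(t/z)e_2$ one has $\partial_z w=-tz^{-2}e_2$ and $Aw=tz^{-2}e_2$. These cancel, so $w$ is a single-valued horizontal section and $\cF^\circ$ is a horizontal line. Since $Ae_2=0$, the section $e_2$ is also horizontal. Hence the matrix $Y=\begin{pmatrix}1&0\\ t/z&1\end{pmatrix}$ with columns $w,e_2$ is a single-valued fundamental solution on each punctured fibre. Consequently the monodromy is the identity for every $t$; this includes $t=0$, where the connection is trivial. Evaluating at $z=1$ gives $w(1)=e_1+te_2=v_1$, so $\cF^\circ|_{z=1}=\C v_1$, which is the first member of the flag of $(v_1,v_2)$.

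\emph{The intersection and its saturation.} A section of $\cF^\circ$ has the form $f\,w$ with $f$ meromorphic along $z=0$. It lies in $\cE$ exactly when $f$ and $ft/z$ are both holomorphic on $S\times\Delta$. The local ring $\cO_{S\times\Delta,(t,0)}$ is a UFD in which $z$ and $t$ are coprime, so $z\mid ft$ forces $z\mid f$. Writing $f=zg$ gives $fw=g(ze_1+te_2)$, which proves \eqref{eq:rank-two-relative-intersection}. For saturation, suppose $s=(s_1,s_2)\in\cE$ satisfies $z^Ns=g(ze_1+te_2)$. Then $z^N\mid gt$, hence $z^N\mid g$ by the same coprimality, and so $s$ already lies in $\cO(ze_1+te_2)$. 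Away from $z=0$ there is nothing to check.

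\emph{Failure of base change.} Tensoring with $\C_0$, i.e.\ setting $t=0$, sends the generator $ze_1+te_2$ to $ze_1$, so the restriction is $\cO_\Delta ze_1$. On the special fibre, $\cF^\circ_0=\cO(*0)e_1$, and its intersection with $\cE_0$ is $\cO_\Delta e_1$. These two modules differ, since $e_1\notin\cO_\Delta ze_1$.

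The only step requiring care is the saturation over a two-dimensional base. It reduces to coprimality of $z$ and $t$ in the regular local ring, and it is exactly this property that fails after specialising $t=0$. No earlier result is needed beyond the gauge and horizontality conventions of \Cref{sec:groupoid}.
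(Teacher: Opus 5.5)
Your proof is correct and follows the paper's argument. You verify that $(e_1+tz^{-1}e_2,\,e_2)$ is a single-valued horizontal frame, use coprimality of $z$ and $t$ in the local ring to compute $\cE\cap\cF^\circ$ and its saturation, and then restrict to $t=0$; the only cosmetic differences are that you test saturation with $z^N$ rather than a single factor of $z$, and argue stalkwise rather than in $\C\{t,z\}$.
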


\begin{proof}
Put $f=e_1+tz^{-1}e_2$.  Since
\[
 df=-tz^{-2}e_2\,dz,
 \qquad
 \frac{t}{z^2}E_{21}f\,dz=tz^{-2}e_2\,dz,
\]
one has $\nabla f=0$.  The meromorphic frame $(f,e_2)$ is horizontal, and
$f(1)=v_1$. 
Write $R=\C\{t,z\}$.  An element $hf$ belongs to $Re_1\oplus Re_2$ if and
only if $h\in R$ and $ht/z\in R$.  Since $t$ and $z$ are relatively prime,
this is equivalent to $z\mid h$, which proves
\eqref{eq:rank-two-relative-intersection}.  If
\[
 z(x_1e_1+x_2e_2)=a(ze_1+te_2),
\]
then $x_1=a$ and $zx_2=at$.  Relative primality gives $a=zb$, and hence
$x_1e_1+x_2e_2=b(ze_1+te_2)$.  Thus the module is already $z$-saturated.
Restriction to $t=0$ gives \eqref{eq:rank-two-base-change-failure}.
\end{proof}

On $S^\times=S\setminus\{0\}$ use the standard-framed adapted basis
\begin{equation}\label{eq:rank-two-open-frame}
 w_1=ze_1+te_2,
 \qquad
 w_2=\frac{z-1}{t}e_1+e_2,
\end{equation}
whose matrix is
\begin{equation}\label{eq:rank-two-H}
 H_t=
 \begin{pmatrix}
 z&(z-1)/t\\
 t&1
 \end{pmatrix},
 \qquad \det H_t=1.
\end{equation}

\begin{theorem}\label{thm:rank-two-wall}
Let $\mu_1,\mu_2\in\C$, put $\lambda=\mu_2-\mu_1$, and consider on
$S^\times$ the standard normal form
\begin{equation}\label{eq:rank-two-general-open-normal}
 A_{\mathrm{open}}
 =\begin{pmatrix}
 \mu_1z^{-1}&q(z)\\
 0&\mu_2z^{-1}
 \end{pmatrix},
 \qquad
 q(z)=\alpha z^{-2}+\beta\rho_\lambda(z),
\end{equation}
where $\alpha$ and $\beta$ are meromorphic in $t$.  Transport it to the fixed
lattice by
\[
 A_{\mathrm{fix}}
 =H_tA_{\mathrm{open}}H_t^{-1}-H_t'H_t^{-1}.
\]
Set
\begin{equation}\label{eq:rank-two-Nlambda}
 N_\lambda(t,z)
 =\lambda z-\lambda-1+t z^2q(z).
\end{equation}
Then
\begin{equation}\label{eq:rank-two-general-fixed-matrix}
 A_{\mathrm{fix}}=
 \begin{pmatrix}
 -\lambda+\mu_2z^{-1}-tzq&N_\lambda/t\\[1mm]
 -t(\lambda+tzq)/z&\lambda+\mu_1z^{-1}+tzq
 \end{pmatrix}.
\end{equation}
The following alternatives hold.
\begin{enumerate}[(i)]
 \item If $\lambda\in\mathbb Z_{\geq1}$, no meromorphic arc in the
 distinguished normal slice \eqref{eq:rank-two-general-open-normal} extends
 through this oriented elementary wall.
 \item If $\lambda\notin\mathbb Z_{\geq1}$, then
 $\rho_\lambda=z^{-1}$ and the arc extends holomorphically across $t=0$, with
 pole order at most two in the fixed lattice, if and only if there are unique
 holomorphic functions $a,b\in\cO_S$ such that
 \begin{equation}\label{eq:rank-two-all-spectrum-renormalisation}
  \alpha=\frac{\lambda+1}{t}+a,
  \qquad
  \beta=-\frac{\lambda}{t}+b.
 \end{equation}
 Its special fibre has residue vector
 \begin{equation}\label{eq:rank-two-shifted-residue}
  (\mu_1-1,\mu_2+1)
 \end{equation}
 and, before framed normalisation, upper-right coefficient $a(0)+b(0)z$.
 Its unique special normal coordinates are
 \begin{equation}\label{eq:rank-two-special-all-spectrum}
  (\widetilde\alpha,\widetilde\beta)
  =\bigl(0,\sigma_{\lambda+2}(a(0),b(0))\bigr).
 \end{equation}
\end{enumerate}
\end{theorem}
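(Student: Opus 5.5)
The plan is to compute the transported matrix directly and then read off, entry by entry, when it extends holomorphically across $t=0$. First I verify \eqref{eq:rank-two-general-fixed-matrix} by expanding $H_tA_{\mathrm{open}}H_t^{-1}-H_t'H_t^{-1}$. This is routine because $\det H_t=1$, so
\[
H_t^{-1}=\begin{pmatrix}1&-(z-1)/t\\-t&z\end{pmatrix},
\qquad
H_t'=\begin{pmatrix}1&1/t\\0&0\end{pmatrix}.
\]
I collect the $q$-contributions separately from the diagonal contributions. In every entry the combination $tzq$ appears, and the upper-right entry collects into $N_\lambda/t$. Since $H_t(1)$ is the matrix of the prescribed frame $(v_1,v_2)$, and $w_1,w_2$ is adapted to the relative intersection of \Cref{prop:rank-two-lattice-jump}, the transported connection is standard-framed in the fixed lattice. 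It therefore remains only to decide holomorphy in $t$ and the $z$-pole order.

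Next I split according to $\rho_\lambda$. If $\lambda\notin\mathbb Z_{\geq1}$, then $\lambda\notin\mathbb Z_{\geq0}$ or $\lambda=0$, and in both cases $\rho_\lambda=z^{-1}$. Then $tz^2q=t\alpha+t\beta z$ and
\[
N_\lambda=(\lambda+t\beta)z+(t\alpha-\lambda-1).
\]
Holomorphy of $N_\lambda/t$ at $t=0$ is equivalent to $t\beta+\lambda\in t\cO_S$ and $t\alpha-\lambda-1\in t\cO_S$. This is precisely \eqref{eq:rank-two-all-spectrum-renormalisation}, and $a,b$ are unique because they are determined by $\alpha$ and $\beta$. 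With these substitutions,
\[
tzq=\frac{\lambda+1+ta}{z}-\lambda+tb
\]
is holomorphic in $t$ with at most a simple pole in $z$. Hence the diagonal entries extend, and the lower-left entry $-t(\lambda+tzq)/z$ extends and vanishes at $t=0$. Conversely, if the arc extends, the upper-right entry forces the same form. The pole order in $z$ is at most one in every entry, so the order-two bound holds.

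To obtain the special fibre I set $t=0$. The diagonal becomes $(\mu_2-\lambda-1)z^{-1}=(\mu_1-1)z^{-1}$ and $(\lambda+\mu_1+1)z^{-1}=(\mu_2+1)z^{-1}$, which gives \eqref{eq:rank-two-shifted-residue}. The upper-right coefficient is $a(0)+b(0)z$ and the lower-left entry is $0$, so the fibre is already upper triangular with the correct diagonal. Its residue difference is $\lambda+2$. The coefficient $a(0)+b(0)z$ has no $z^{-2}$-term, so $\widetilde\alpha=0$. By \Cref{lem:linear-wall-projection} with $\nu=\lambda+2$, together with the uniqueness in \Cref{thm:slice}, its normal coordinate is $\widetilde\beta=\sigma_{\lambda+2}(a(0),b(0))$. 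This proves (ii).

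For (i), I take $\lambda=m\geq1$, so $\rho_\lambda=z^{m-1}$ and
\[
N_\lambda=mz-m-1+t\alpha+t\beta z^{m+1}.
\]
Since $m+1\geq2$, the $z^1$-coefficient of $N_\lambda/t$ is exactly $m/t$, whatever the meromorphic functions $\alpha,\beta$ are. This has a genuine pole at $t=0$, so no arc in the slice extends. The main obstacle is only bookkeeping. One must confirm that "extends" is measured in the fixed lattice with the fixed framing, so that no further $t$-dependent gauge is allowed. One must also confirm that the lower-left entry imposes no additional condition beyond those coming from the upper-right entry; the expansion of $tzq$ above settles that.
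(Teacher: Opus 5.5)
Your proposal is correct and follows the paper's proof essentially step for step: you compute the transport by direct multiplication, observe the uncancellable coefficient $m/t$ of $z$ in $N_\lambda/t$ when $\lambda=m\in\mathbb Z_{\geq1}$, impose coefficientwise divisibility of $N_\lambda=(t\alpha-\lambda-1)+(t\beta+\lambda)z$ by $t$ otherwise, and apply \Cref{lem:linear-wall-projection} with $\nu=\lambda+2$ on the special fibre.

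Two small inaccuracies do not affect the argument. First, the lower-left entry equals $-t(\lambda+1+ta)z^{-2}-t^2bz^{-1}$, which is a genuine double pole (compare $tz^{-2}E_{21}$ in \Cref{cor:rank-two-wall-contraction}). So not every entry has at most a simple pole, although the required order-two bound still holds. Second, the frame at $z=1$ is $(e_1+te_2,e_2)=\mathbf e(1)H_t(1)$, which is standard only at $t=0$; that is, however, all the special-fibre normalisation needs.
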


\begin{proof}
A direct multiplication gives \eqref{eq:rank-two-general-fixed-matrix}.  If
$\lambda=m\in\mathbb Z_{\geq1}$, then
$z^2\rho_\lambda=z^{m+1}$ and
\[
 \frac{N_\lambda}{t}
 =\frac{t\alpha-m-1}{t}+\frac{m}{t}z+\beta z^{m+1}.
\]
The monomials $1,z,z^{m+1}$ are distinct.  Hence holomorphicity in~$t$ of
the upper-right entry would force each of their coefficients to be
holomorphic, but the coefficient of $z$ is $m/t$.  This is impossible.

Assume now that $\lambda\notin\mathbb Z_{\geq1}$.  Then
$\rho_\lambda=z^{-1}$, including the resonant case $\lambda=0$, and
\[
 N_\lambda
 =(t\alpha-\lambda-1)+(t\beta+\lambda)z.
\]
Thus $N_\lambda/t$ is holomorphic if and only if
\eqref{eq:rank-two-all-spectrum-renormalisation} holds with $a,b$
holomorphic.  Substitution into \eqref{eq:rank-two-general-fixed-matrix}
shows that every entry then extends and that the diagonal special residues
are \eqref{eq:rank-two-shifted-residue}.  The special upper-right coefficient
is $a(0)+b(0)z$.  Its residue difference is $\lambda+2$, so
\Cref{lem:linear-wall-projection} gives
\eqref{eq:rank-two-special-all-spectrum}.
\end{proof}

\begin{corollary}\label{cor:rank-two-wall-contraction}
For $(\mu_1,\mu_2)=(1,-1)$ every extending arc is uniquely of the form
\[
 \alpha=-t^{-1}+a,
 \qquad
 \beta=2t^{-1}+b,
 \qquad a,b\in\cO_S,
\]
and every special normal representative is the origin of the residue
stratum $(0,0)$.  The distinguished choice $a=b=0$ is the family
\eqref{eq:rank-two-wall-fixed}.  The finite wall parameters are
\begin{equation}\label{eq:rank-two-finite-wall-parameters}
 a=\frac{1+t\alpha}{t},
 \qquad
 b=\frac{-2+t\beta}{t};
\end{equation}
the numerators alone merely express the required divisibility by $t$.
\end{corollary}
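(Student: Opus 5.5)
This is a direct specialisation of \Cref{thm:rank-two-wall} to $(\mu_1,\mu_2)=(1,-1)$, so $\lambda=\mu_2-\mu_1=-2$.

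\emph{Step 1: the extending arcs.} Since $-2\notin\mathbb Z_{\geq1}$, alternative~(ii) applies, and moreover $\rho_{-2}=z^{-1}$. Substituting $\lambda=-2$ into \eqref{eq:rank-two-all-spectrum-renormalisation} gives
\[
 \alpha=\frac{-1}{t}+a,\qquad \beta=\frac{2}{t}+b,
\]
with $a,b\in\cO_S$ unique. Solving for $a$ and $b$ yields \eqref{eq:rank-two-finite-wall-parameters} at once. The remark about the numerators follows because $1+t\alpha$ and $-2+t\beta$ must vanish at $t=0$, which is exactly the divisibility by $t$.

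\emph{Step 2: the special fibre.} By \eqref{eq:rank-two-shifted-residue}, the special residue vector is $(\mu_1-1,\mu_2+1)=(0,0)$. The special coordinates are $(0,\sigma_{\lambda+2}(a(0),b(0)))=(0,\sigma_0(a(0),b(0)))$. Since \Cref{lem:linear-wall-projection} gives $\sigma_0\equiv0$, every special normal representative is the origin of that stratum. Here $\rho_0=z^{-1}$, and the polynomial $a+bz$ is a framed derivative $L_0(P)$ with $P(1)=0$.

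\emph{Step 3: the distinguished choice $a=b=0$.} I would take $\alpha=-1/t$ and $\beta=2/t$, so that $q=(-z^{-2}+2z^{-1})/t$ and $tz^2q=-1+2z$. Then \eqref{eq:rank-two-Nlambda} gives $N_{-2}=-2z+2-1-1+2z=0$, so the upper-right entry of $A_{\mathrm{fix}}$ vanishes.

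For the diagonal entries I would use $tzq=-z^{-1}+2$. The $(1,1)$ entry is $2-z^{-1}-(-z^{-1}+2)=0$, and the $(2,2)$ entry is $-2+z^{-1}+(-z^{-1}+2)=0$. For the lower-left entry, $\lambda+tzq=-z^{-1}$, so the entry is $-t(-z^{-1})/z=tz^{-2}$.

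Hence $A_{\mathrm{fix}}=tz^{-2}E_{21}$, which is \eqref{eq:rank-two-wall-fixed}. One should also check that the frame agrees: $H_t$ at $z=1$ has columns $e_1+te_2$ and $e_2$, matching $(v_1,v_2)$. This is consistent with \Cref{prop:rank-two-lattice-jump}.

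The only real point requiring care is Step 3, which is a bookkeeping verification. The main obstacle is keeping the sign conventions of $A_{\mathrm{fix}}$ straight; everything else is substitution.
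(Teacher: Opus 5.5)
Your proposal is correct and follows the paper's proof: you specialise \Cref{thm:rank-two-wall} to $\lambda=-2$, use $\sigma_0=0$ for the special fibre, and substitute $a=b=0$ into \eqref{eq:rank-two-general-fixed-matrix} to recover $tz^{-2}E_{21}$. Your entrywise verification and the frame check at $z=1$ spell out what the paper treats as a one-line substitution.
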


\begin{proof}
Here $\lambda+2=0$ and $\sigma_0=0$.  Substituting $a=b=0$ into
\eqref{eq:rank-two-general-fixed-matrix} gives
$t z^{-2}E_{21}$.
\end{proof}

The theorem exhibits a resonance barrier which is invisible in the
fixed-stratum dimension count: the oriented wall meets every distinguished
normal slice except those with strictly positive integral residue difference.
The reverse orientation is a different chart and must be analysed
separately.

\subsection{Polynomial rank-three wall coordinates}

We first cross the wall in the first adjacent pair.  Embed $H_t$ into the
first two coordinates and put
\[
 \widehat H_t^{12}=
 \begin{pmatrix}
 z&(z-1)/t&0\\
 t&1&0\\
 0&0&1
 \end{pmatrix}.
\]
On $S^\times$ consider
\begin{equation}\label{eq:rank-three-open-normal}
 A_{\mathrm{open}}^{12}=
 \begin{pmatrix}
 z^{-1}&\alpha z^{-2}+\beta z^{-1}&\eta z^{-2}+\theta z^{-1}\\
 0&-z^{-1}&\gamma z^{-2}+\delta z^{-1}\\
 0&0&-z^{-1}
 \end{pmatrix}.
\end{equation}

\begin{theorem}\label{thm:rank-three-wall}
Transport \eqref{eq:rank-three-open-normal} by $\widehat H_t^{12}$.  The
result extends across $t=0$ if and only if there are unique holomorphic
functions $a,b,C,D,U,V$ satisfying
\begin{equation}\label{eq:rank-three-12-parameters}
 \begin{gathered}
 \alpha=-t^{-1}+a,
 \qquad
 \beta=2t^{-1}+b,
 \qquad
 \gamma=tC,
 \qquad
 \delta=D,\\
 \eta=U-C+D/t,
 \qquad
 \theta=V-D/t.
 \end{gathered}
\end{equation}
In these parameters the transported matrix is
\begin{equation}\label{eq:rank-three-12-fixed}
 \begin{pmatrix}
 -t(a+bz)/z&a+bz&-Cz^{-2}+Uz^{-1}+V\\
 t(1-t(a+bz))/z^2&t(a+bz)/z&(D+tU)z^{-2}+tVz^{-1}\\
 0&0&-z^{-1}
 \end{pmatrix}.
\end{equation}
The special residue vector is $(0,0,-1)$.  Define
\begin{equation}\label{eq:rank-three-12-adapted}
 A=-C-D\left(a+\frac b2\right),
 \qquad
 B=U+Da+\frac V2+\frac{Db}{4}.
\end{equation}
After unique framed normalisation of the special fibre one has
\begin{align}
 (\widetilde\alpha_{12},\widetilde\beta_{12})&=(0,0),\label{eq:special-root12}\\
 (\widetilde\alpha_{23},\widetilde\beta_{23})&=(D(0),0),\label{eq:special-root23}\\
 (\widetilde\alpha_{13},\widetilde\beta_{13})&=(A(0),B(0)).\label{eq:special-root13}
\end{align}
Moreover,
\begin{equation}\label{eq:rank-three-polynomial-coordinate-change}
 (a,b,C,D,U,V)\longmapsto(a,b,D,V,A,B)
\end{equation}
is a polynomial automorphism of the six-dimensional extension parameter
space.  Thus the correction in the natural root coordinates is triangular
and admits polynomially adapted wall coordinates.
\end{theorem}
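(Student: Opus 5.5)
The plan is to treat the transport blockwise.  The matrix $\widehat H_t^{12}$ is block diagonal, with the $2\times2$ block $H_t$ in positions $1,2$ and the entry $1$ in position $3$.  Moreover $\widehat H_t^{12\,\prime}$ vanishes in the third row and column.  The transported matrix $\widehat H A\widehat H^{-1}-\widehat H'\widehat H^{-1}$ therefore decomposes into four pieces:
\begin{itemize}
 \item the upper-left $2\times2$ block, which is exactly the rank-two transport of \Cref{thm:rank-two-wall} with $(\mu_1,\mu_2)=(1,-1)$, so $\lambda=-2$;
 \item the column $(1,2)\to3$, equal to $H_t\,(q_{13},q_{23})^{\mathsf T}$;
 \item the vanishing row $3\to(1,2)$;
 \item the entry $-z^{-1}$ in position $(3,3)$.
 \end{itemize}
For the first block I invoke \Cref{cor:rank-two-wall-contraction}.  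It gives extension exactly when $\alpha=-t^{-1}+a$ and $\beta=2t^{-1}+b$, and it reproduces the upper-left $2\times2$ part of \eqref{eq:rank-three-12-fixed}.  Note that $N_{-2}/t=a+bz$ here.

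For the third column I compute
\[
H_t\binom{q_{13}}{q_{23}}
=\binom{z q_{13}+(z-1)q_{23}/t}{t q_{13}+q_{23}},
\qquad q_{13}=\eta z^{-2}+\theta z^{-1},\quad q_{23}=\gamma z^{-2}+\delta z^{-1}.
\]
I then expand both entries in the monomials $z^{-2},z^{-1},1$ and require their coefficients to be holomorphic in $t$.
\begin{itemize}
 \item The $(1,3)$ entry has coefficients $-\gamma/t$ at $z^{-2}$, $\eta+(\gamma-\delta)/t$ at $z^{-1}$, and $\theta+\delta/t$ at $1$.
 \item The $(2,3)$ entry is $(t\eta+\gamma)z^{-2}+(t\theta+\delta)z^{-1}$.
\end{itemize}
Holomorphicity forces $\gamma=tC$.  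It then forces $\delta=D$ to be holomorphic, because $(t\theta+\delta)$ must be holomorphic and $\theta+\delta/t$ holomorphic makes $t\theta+\delta$ divisible appropriately.  The remaining conditions give $\eta=U-C+D/t$ and $\theta=V-D/t$.  These are \eqref{eq:rank-three-12-parameters}, and uniqueness is clear from the linear solving.  Substituting back yields the third column of \eqref{eq:rank-three-12-fixed}, which I will check by direct multiplication.

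Setting $t=0$, the special fibre is upper triangular with diagonal $(0,0,-1)$.  This follows from the diagonal entries $\mp t(a+bz)/z$ and from the lower-left entry carrying the factor $t$.  The special fibre is
\[
\begin{pmatrix}0&a+bz&-Cz^{-2}+Uz^{-1}+V\\0&0&Dz^{-2}\\0&0&-z^{-1}\end{pmatrix},
\]
where $a,b,\dots$ are evaluated at $0$.  I will normalise it by the height induction of \Cref{prop:existence}.

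At height one the residue differences are $\lambda_{12}=0$ and $\lambda_{23}=-1$.
\begin{itemize}
 \item For the root $12$, \Cref{lem:linear-wall-projection} with $\nu=0$ gives $\sigma_0=0$, via the gauge $p_{12}=-(a(z-1)+\tfrac b2(z^2-1))$.
 \item For the root $23$, the coefficient $Dz^{-2}$ is already normal, giving \eqref{eq:special-root12} and \eqref{eq:special-root23}.
\end{itemize}

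The height-two step is the main obstacle.  After the gauge $I+p_{12}E_{12}$, the $(1,3)$ coefficient acquires the nonlinear correction $-p_{12}q_{23}=-p_{12}Dz^{-2}$.  The sign is to be fixed from the gauge formula $g^{-1}Ag+g^{-1}g'$.  This correction produces the terms in $z^{-2},z^{-1},1$ that are linear in $D$.  The resulting Laurent polynomial is then projected with $\lambda_{13}=-2$.
\begin{itemize}
 \item The $z^{-2}$ coefficient gives $\widetilde\alpha_{13}$ directly.
 \item The part in $z^{-1}$ and $1$ is projected with $\rho_{-2}=z^{-1}$.  The constant term contributes through \Cref{lem:linear-wall-projection} with $\nu=-2$, namely $\sigma_{-2}(K,0)=K\cdot 2/3$.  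I will redo this by direct computation of $L_{-2}$ on $z-1$.
\end{itemize}
I expect to obtain exactly $A=-C-D(a+b/2)$ and $B=U+Da+V/2+Db/4$.  Careful bookkeeping of the constants $p_{12}(0)$ and $p_{12}$'s $z$-coefficients is where errors are likely, so I will verify the result by checking that $L_{-2}(P)$ plus the corrected coefficient equals $Az^{-2}+Bz^{-1}$ for an explicit $P$ with $P(1)=0$.

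Finally, the map \eqref{eq:rank-three-polynomial-coordinate-change} is triangular.  One has $C=-A-D(a+b/2)$ and $U=B-Da-V/2-Db/4$, and both are polynomial in $(a,b,D,V,A,B)$.  The inverse is therefore polynomial, which proves that the map is a polynomial automorphism.
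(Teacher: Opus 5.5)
Your overall route is the paper's: decompose the transport blockwise, treat the active block with the rank-two corollary, and impose coefficientwise holomorphicity on $H_t(q_{13},q_{23})^{\mathsf T}$. You then normalise the special fibre height by height. Your raw transported entries, the gauge $p_{12}$, and the correction $-p_{12}Dz^{-2}$ with its sign are all correct.

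\textbf{The genuine error is in the height-two step.} You project the composite coefficient with $\lambda_{13}=-2$, which is the residue difference of the \emph{open} fibre $\boldsymbol\mu=(1,-1,-1)$. The normalisation is carried out on the special fibre, whose residue vector is $(0,0,-1)$. You used this correctly at height one, taking $\lambda_{12}=0$ and $\lambda_{23}=-1$. So the composite root has residue difference $-1-0=-1$, not $-2$.

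This is not cosmetic, since $\sigma_{-2}(K,0)=\tfrac23K$ whereas $\sigma_{-1}(K,0)=\tfrac12K$. After the height-one gauge, the $(1,3)$-coefficient of the special fibre (all parameters evaluated at $t=0$) is
\[
 \bigl[-C-D(a+\tfrac b2)\bigr]z^{-2}+(U+Da)z^{-1}+\bigl(V+\tfrac{Db}{2}\bigr).
\]
Your computation with $\nu=-2$ would therefore give $\widetilde\beta_{13}=U+Da+\tfrac23V+\tfrac13Db$, not $B(0)$. The verification you propose via $L_{-2}$ would only confirm that wrong value, so the formula you say you expect does not follow from the procedure you describe.

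The fix is to use $\nu=-1$, i.e.\ \eqref{eq:constant-projection-minus-one}: $K+L_{-1}\bigl(-\tfrac K2(z-1)\bigr)=\tfrac K2z^{-1}$ with $K=V+Db/2$. This gives exactly $B=U+Da+\tfrac V2+\tfrac{Db}{4}$. The coefficient $A$ is unaffected, because the $z^{-2}$-term is the independent $\alpha$-coordinate for every $\nu$.

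\textbf{A smaller slip: your justification that $\delta$ is holomorphic is wrong.} Holomorphicity of $\theta+\delta/t$ already implies that of $t\theta+\delta$. Together these two conditions do not constrain $\delta$ at all; take $\delta=t^{-1}$ and $\theta=-t^{-2}$. Holomorphy of $\delta$ comes instead from the $z^{-2}$-coefficient of the $(2,3)$-entry, via the identity
\[
 t\eta+\gamma=t\bigl(\eta+(\gamma-\delta)/t\bigr)+\delta .
\]
So once the $z^{-1}$-coefficient $U$ of the $(1,3)$-entry is holomorphic, $t\eta+\gamma$ is holomorphic if and only if $\delta$ is. With this correction, \eqref{eq:rank-three-12-parameters} follows as you say. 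The rest of your argument is fine: the special residues, the height-one normalisation, and the triangular polynomial inverse.
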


\begin{proof}
Direct multiplication gives
\[
 \widehat H_t^{12}A_{\mathrm{open}}^{12}(\widehat H_t^{12})^{-1}
 -(\widehat H_t^{12})'(\widehat H_t^{12})^{-1}
 =
 \begin{pmatrix}
 -\Xi/z&\Xi/t&-Cz^{-2}+Uz^{-1}+V\\
 t(1-\Xi)/z^2&\Xi/z&(D+tU)z^{-2}+tVz^{-1}\\
 0&0&-z^{-1}
 \end{pmatrix},
\]
where
\[
 \Xi=1-2z+t\alpha+t\beta z,
 \quad
 C=\gamma/t,
 \quad
 U=(\gamma-\delta)/t+\eta,
 \quad
 V=\delta/t+\theta,
 \quad
 D=\delta.
\]
Coefficientwise holomorphicity is therefore equivalent to
\eqref{eq:rank-three-12-parameters}, and then $\Xi=t(a+bz)$, giving
\eqref{eq:rank-three-12-fixed}.
At $t=0$ the height-one coefficients are
\[
 q_{12}=a+bz,
 \qquad
 q_{23}=Dz^{-2}.
\]
The unique framed gauge removing $q_{12}$ is
\[
 g=I_3+p(z)E_{12},
 \qquad
 p(z)=-a(z-1)-\frac b2(z^2-1).
\]
It changes the composite coefficient to
\[
 \left[-C-D\left(a+\frac b2\right)\right]z^{-2}
 +(U+Da)z^{-1}+V+\frac{Db}{2}.
\]
The special residue difference for the root $13$ is $-1$.  By
\eqref{eq:constant-projection-minus-one}, the constant term contributes half
of itself to the normal $z^{-1}$-coefficient.  This yields
\eqref{eq:rank-three-12-adapted} and
\eqref{eq:special-root12}--\eqref{eq:special-root13}.

Finally, \eqref{eq:rank-three-polynomial-coordinate-change} is triangular,
with inverse
\[
 C=-A-D\left(a+\frac b2\right),
 \qquad
 U=B-Da-\frac V2-\frac{Db}{4}.
\]
Hence it is a polynomial automorphism.
\end{proof}

The opposite simple-root chart is described in the same way.  Embed $H_t$ into the
last two coordinates and put
\[
 \widehat H_t^{23}=
 \begin{pmatrix}
 1&0&0\\
 0&z&(z-1)/t\\
 0&t&1
 \end{pmatrix}.
\]
On $S^\times$ take
\begin{equation}\label{eq:rank-three-open-normal-23}
 A_{\mathrm{open}}^{23}=
 \begin{pmatrix}
 z^{-1}&\gamma z^{-2}+\delta z^{-1}&\eta z^{-2}+\theta z^{-1}\\
 0&z^{-1}&\alpha z^{-2}+\beta z^{-1}\\
 0&0&-z^{-1}
 \end{pmatrix}.
\end{equation}

\begin{proposition}\label{prop:rank-three-wall-23}
The transport of \eqref{eq:rank-three-open-normal-23} by
$\widehat H_t^{23}$ extends if and only if there are unique holomorphic
functions $a,b,C,D,U,V$ such that
\begin{equation}\label{eq:rank-three-23-parameters}
 \begin{gathered}
 \alpha=-t^{-1}+a,
 \qquad
 \beta=2t^{-1}+b,
 \qquad
 \gamma=tC,
 \qquad
 \delta=D,\\
 \eta=U+C-D/t,
 \qquad
 \theta=V+D/t.
 \end{gathered}
\end{equation}
The special residue vector is $(1,0,0)$.  If
\begin{equation}\label{eq:rank-three-23-adapted}
 A^+=C+D\left(a+\frac b2\right),
 \qquad
 B^+=U-Da+\frac V2-\frac{Db}{4},
\end{equation}
then its special normal coordinates are
\begin{align*}
 (\widetilde\alpha_{23},\widetilde\beta_{23})&=(0,0),\\
 (\widetilde\alpha_{12},\widetilde\beta_{12})&=(D(0),0),\\
 (\widetilde\alpha_{13},\widetilde\beta_{13})&=(A^+(0),B^+(0)).
\end{align*}
The map
\[
 (a,b,C,D,U,V)\longmapsto(a,b,D,V,A^+,B^+)
\]
is again a polynomial automorphism.
\end{proposition}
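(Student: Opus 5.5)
The plan is to mirror the proof of \Cref{thm:rank-three-wall}, now with the rank-two wall matrix embedded in positions $2,3$. First I will compute the transported matrix
\[
 \widehat H_t^{23}A_{\mathrm{open}}^{23}(\widehat H_t^{23})^{-1}-(\widehat H_t^{23})'(\widehat H_t^{23})^{-1}.
\]
Since $\widehat H_t^{23}$ is block diagonal $1\oplus H_t$, three pieces can be treated separately. The lower $2\times2$ block is exactly \eqref{eq:rank-two-general-fixed-matrix} with $(\mu_1,\mu_2)=(1,-1)$ and $\lambda=-2$. The $(1,1)$ entry is $z^{-1}$, and the $(2,1),(3,1)$ entries vanish. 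The first row $(q_{12},q_{13})$ is multiplied on the right by $H_t^{-1}=\begin{pmatrix}1&-(z-1)/t\\-t&z\end{pmatrix}$.

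This gives new first-row entries
\[
 q_{12}-tq_{13}
 \qquad\text{and}\qquad
 -\tfrac{z-1}{t}\,q_{12}+z\,q_{13}.
\]
I will write $\Xi=1-2z+t\alpha+t\beta z$ for the lower block, as before. By \Cref{cor:rank-two-wall-contraction}, holomorphicity of that block is equivalent to $\alpha=-t^{-1}+a$ and $\beta=2t^{-1}+b$. The lower block then becomes the analogue of the $12$-block in \eqref{eq:rank-three-12-fixed}, with diagonal special residues $(0,0)$ in positions $2,3$. Together with $\mu_1=1$, this gives the special residue vector $(1,0,0)$.

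Next I expand the first-row entries coefficientwise in $z^{-2},z^{-1},1$ and impose holomorphicity in $t$. From $-\tfrac{z-1}{t}(\gamma z^{-2}+\delta z^{-1})+z(\eta z^{-2}+\theta z^{-1})$, the $z^{-2}$ coefficient is $\gamma/t$, which forces $\gamma=tC$. The $z^{-1}$ coefficient is $(\delta-\gamma)/t+\eta$, and the constant coefficient is $-\delta/t+\theta$.

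Setting $\delta=D$, $U=\eta+(D-\gamma)/t$ and $V=\theta-D/t$ gives exactly \eqref{eq:rank-three-23-parameters}. Uniqueness is automatic because these are invertible substitutions. The $(1,2)$ entry $q_{12}-tq_{13}$ is then $(tC-t\eta)z^{-2}+\dots$, which is holomorphic, and its special fibre is $Dz^{-2}$ (I will double-check the sign in the computation).

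For the special fibre I normalise by height. Height one consists of $q_{23}=a+bz$, with residue difference $0$, and $q_{12}$, with residue difference $-1$. The gauge $I_3+pE_{23}$ with $p=-a(z-1)-\tfrac b2(z^2-1)$ removes $q_{23}$, by \Cref{lem:linear-wall-projection} with $\nu=0$; so $(\widetilde\alpha_{23},\widetilde\beta_{23})=(0,0)$. The height-one term $q_{12}=Dz^{-2}$ is already normal, so $(\widetilde\alpha_{12},\widetilde\beta_{12})=(D(0),0)$.

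The gauge $I_3+pE_{23}$ changes $q_{13}$ by the commutator term $q_{12}\,p$, with the sign fixed by \Cref{lem:rootwise-gauge}'s conventions. This produces the $\pm D(a+\tfrac b2)z^{-2}$ and $\mp Da\,z^{-1}$, $\mp \tfrac{Db}{2}$ corrections. The root $13$ has special residue difference $-1$, so \eqref{eq:constant-projection-minus-one} sends the constant term $V-\tfrac{Db}{2}$ to half of itself in the $z^{-1}$ coefficient. This yields $A^+$ and $B^+$.

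The main obstacle is bookkeeping of signs. The commutator now appears as $q_{12}p$ on the right rather than $p\,q_{23}$, which is why the signs flip relative to \eqref{eq:rank-three-12-adapted}. I will verify them by a direct multiplication $g^{-1}Ag$.

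Finally, the map $(a,b,C,D,U,V)\mapsto(a,b,D,V,A^+,B^+)$ is triangular, with explicit polynomial inverse
\[
 C=A^+-D\left(a+\tfrac b2\right),
 \qquad
 U=B^++Da-\tfrac V2+\tfrac{Db}{4}.
\]
It is therefore a polynomial automorphism.
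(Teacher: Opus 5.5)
Your proposal is correct and follows the paper's proof. Both arguments use the same steps:
\begin{itemize}
\item block transport by $1\oplus H_t$, with the lower block given by the rank-two wall at $\lambda=-2$;
\item coefficientwise extension conditions on the first row;
\item the framed gauge $I_3+pE_{23}$, whose $A g$ term contributes $+p\,q_{12}$ to the composite root;
\item the $\nu=-1$ constant projection;
\item triangular inversion.
\end{itemize}
One step needs tightening. The $(1,3)$ entry alone does not force $\delta=D$ to be holomorphic; that comes from the $z^{-2}$-coefficient $D-tU$ of the $(1,2)$ entry. In the ``only if'' direction you must therefore use the $(1,2)$ entry as a constraint, not just check it.
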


\begin{proof}
A direct multiplication yields the extended root coefficients
\begin{align*}
 q_{13}^{\mathrm{fix}}&=Cz^{-2}+Uz^{-1}+V,\\
 q_{12}^{\mathrm{fix}}&=(D-tU)z^{-2}-tVz^{-1},
\end{align*}
together with the rank-two wall matrix in the $(2,3)$ block.  This is
equivalent to \eqref{eq:rank-three-23-parameters}.  At $t=0$ one has
\[
 q_{23}=a+bz,
 \qquad
 q_{12}=Dz^{-2},
 \qquad
 q_{13}=Cz^{-2}+Uz^{-1}+V.
\]
Removing $q_{23}$ by the framed gauge
$I_3+p(z)E_{23}$ with
$p=-a(z-1)-\frac b2(z^2-1)$ changes the composite coefficient to
\[
 \left[C+D\left(a+\frac b2\right)\right]z^{-2}
 +(U-Da)z^{-1}+V-\frac{Db}{2}.
\]
Applying \eqref{eq:constant-projection-minus-one} gives
\eqref{eq:rank-three-23-adapted}.  Triangularity gives the final assertion.
\end{proof}

The two propositions show what is intrinsic in the first
rank-three calculation.  The natural rootwise coordinates are not
independent across the wall, but their mixing is polynomially triangular and
can be absorbed into adapted wall coordinates.  Thus this calculation does
not disprove every abstract product chart.  What survives coordinate changes
is the ordered compatibility at the intersection of two adjacent walls, to which we now
turn.

\subsection{Elementary wall torsors in arbitrary rank}\label{sec:all-rank-connection-wall}

We now pass from the rank-two and rank-three calculations to a single elementary wall in arbitrary rank.  The first statement is a block calculation.  The second packages all extension equations into a free lattice over the wall parameter and thereby yields a relative affine torsor parametrising connection lifts. 
For $1\leq i<r$, let $H_i(t)$ be the identity outside the coordinates $i,i+1$ and have the block
\[
 H_t=
 \begin{pmatrix}
 z&(z-1)/t\\
 t&1
 \end{pmatrix}
\]
on those coordinates.  Write a meromorphic arc in the distinguished normal slice as
\begin{equation}\label{eq:all-rank-wall-open}
 A_{\mathrm{open}}(t,z)
 =\frac{M_{\boldsymbol\mu}}{z}
 +\sum_{1\leq j<k\leq r}q_{jk}(t,z)E_{jk},
 \qquad
 q_{jk}=\alpha_{jk}z^{-2}+\beta_{jk}\rho_{\mu_k-\mu_j}(z),
\end{equation}
where $\alpha_{jk}$ and $\beta_{jk}$ are meromorphic germs in~$t$.

\begin{proposition}\label{prop:levi-local-raw-transport}
Fix $i$, put $\lambda_i=\mu_{i+1}-\mu_i$ and $q_i=q_{i,i+1}$, and set
\[
 A_{\mathrm{fix}}
 =H_i(t)A_{\mathrm{open}}H_i(t)^{-1}
 -(\partial_zH_i(t))H_i(t)^{-1}.
\]
Then the only entries changed by the raw transport are the direct $2\times2$ block and the two root strings meeting it.  If
\[
 N_i(t,z)=\lambda_i z-\lambda_i-1+t z^2q_i(t,z),
\]
then the direct block is
\begin{align}
 (A_{\mathrm{fix}})_{ii}
 &=-\lambda_i+\mu_{i+1}z^{-1}-tzq_i,\label{eq:all-rank-wall-direct-11}\\
 (A_{\mathrm{fix}})_{i,i+1}
 &=N_i/t,\label{eq:all-rank-wall-direct-12}\\
 (A_{\mathrm{fix}})_{i+1,i}
 &=-t(\lambda_i+tzq_i)/z,\label{eq:all-rank-wall-direct-21}\\
 (A_{\mathrm{fix}})_{i+1,i+1}
 &=\lambda_i+\mu_i z^{-1}+tzq_i.\label{eq:all-rank-wall-direct-22}
\end{align}
For $p<i$ one has
\begin{align}
 (A_{\mathrm{fix}})_{p,i}
 &=q_{p,i}-tq_{p,i+1},\label{eq:all-rank-wall-left-1}\\
 (A_{\mathrm{fix}})_{p,i+1}
 &=zq_{p,i+1}-\frac{z-1}{t}q_{p,i},\label{eq:all-rank-wall-left-2}
\end{align}
and, for $k>i+1$,
\begin{align}
 (A_{\mathrm{fix}})_{i,k}
 &=zq_{i,k}+\frac{z-1}{t}q_{i+1,k},\label{eq:all-rank-wall-right-1}\\
 (A_{\mathrm{fix}})_{i+1,k}
 &=tq_{i,k}+q_{i+1,k}.\label{eq:all-rank-wall-right-2}
\end{align}
Every other off-diagonal entry is unchanged.  Consequently, extension through $t=0$ is equivalent to the rank-two condition on $q_i$ together with holomorphicity in~$t$ of the finitely many expressions in \eqref{eq:all-rank-wall-left-1}--\eqref{eq:all-rank-wall-right-2}; there are no further raw extension equations.
\end{proposition}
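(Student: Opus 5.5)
The computation is a block-matrix calculation, and I would organise it around the decomposition $\C^r=\C^{i-1}\oplus\C^2\oplus\C^{r-i-1}$, with the middle summand spanned by $e_i,e_{i+1}$. In this decomposition $H_i(t)=\diag(I_{i-1},H_t,I_{r-i-1})$. Since $\det H_t=1$,
\[
 H_t^{-1}=\begin{pmatrix}1&-(z-1)/t\\-t&z\end{pmatrix},
\]
and $\partial_zH_i(t)$ is supported in the middle block, where it equals $E_{12}z$-derivative $\begin{pmatrix}1&1/t\\0&0\end{pmatrix}$. Write $A_{\mathrm{open}}$ in $3\times3$ block form $(A_{ab})_{a,b=1,2,3}$. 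It is block upper triangular, with diagonal blocks $M'/z+N'$, the middle block $\begin{pmatrix}\mu_iz^{-1}&q_i\\0&\mu_{i+1}z^{-1}\end{pmatrix}$, and $M''/z+N''$. Conjugation by a block-diagonal matrix gives $A_{ab}\mapsto h_aA_{ab}h_b^{-1}$, where $h_1=h_3=I$ and $h_2=H_t$. The derivative term only affects the $(2,2)$ block.

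\textbf{Step 1 (unchanged entries).} The blocks $A_{11},A_{13},A_{33}$ are untouched, and so are the lower blocks $A_{21},A_{31},A_{32}$, which are zero. So every off-diagonal entry not in row or column $i,i+1$ is unchanged.

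\textbf{Step 2 (the direct block).} The $(2,2)$ block is exactly the rank-two computation $H_tA_{\mathrm{open}}^{(2)}H_t^{-1}-H_t'H_t^{-1}$ with $(\mu_1,\mu_2,q)$ replaced by $(\mu_i,\mu_{i+1},q_i)$. The entries \eqref{eq:all-rank-wall-direct-11}--\eqref{eq:all-rank-wall-direct-22} therefore follow verbatim from \eqref{eq:rank-two-general-fixed-matrix} of \Cref{thm:rank-two-wall}. I would only check that the rank-two derivation never used $r=2$, which is clear since it is an identity of $2\times2$ matrices.

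\textbf{Step 3 (the root strings).} For $p<i$, row $p$ of $A_{12}$ is $(q_{p,i},q_{p,i+1})$. Multiplying on the right by $H_t^{-1}$ gives
\[
 \bigl(q_{p,i}-tq_{p,i+1},\;-\tfrac{z-1}{t}q_{p,i}+zq_{p,i+1}\bigr),
\]
which is \eqref{eq:all-rank-wall-left-1}--\eqref{eq:all-rank-wall-left-2}. For $k>i+1$, column $k$ of $A_{23}$ is $(q_{i,k},q_{i+1,k})^{T}$. Multiplying on the left by $H_t$ gives $\bigl(zq_{i,k}+\tfrac{z-1}{t}q_{i+1,k},\;tq_{i,k}+q_{i+1,k}\bigr)^{T}$, which is \eqref{eq:all-rank-wall-right-1}--\eqref{eq:all-rank-wall-right-2}.

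\textbf{Step 4 (the extension criterion).} Extension through $t=0$ means that every entry of $A_{\mathrm{fix}}$ is holomorphic in $t$ (with pole order at most two in $z$). The unchanged entries are already fixed normal-form data, and I would need these to be holomorphic in $t$ as part of the arc hypothesis. If the intended meaning is that the arc's unaffected coordinates are merely meromorphic, this is precisely the content of the phrase "unaffected root directions", so the condition is holomorphicity of the original $\alpha_{jk},\beta_{jk}$ there. I would state this explicitly.

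For the direct block, the diagonal entries and the $(i+1,i)$ entry are holomorphic as soon as $tq_i$ is. The $(i,i)$ and $(i+1,i+1)$ entries differ from $\pm tzq_i$ by holomorphic terms, and holomorphicity of $N_i/t$ forces $tz^2q_i$ to be holomorphic. So the block conditions reduce to the rank-two condition analysed in \Cref{thm:rank-two-wall}. The remaining conditions are exactly the finitely many lateral expressions.

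The main obstacle is bookkeeping rather than mathematics. I must check that no entry outside rows and columns $i,i+1$ is touched; this holds because $H_i(t)$ is the identity there and $A_{\mathrm{open}}$ is upper triangular. I must also be precise about what the unchanged entries require, as discussed above.
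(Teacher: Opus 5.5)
Your proposal is correct and follows the paper's proof essentially step for step. The direct block is the rank-two identity of \Cref{thm:rank-two-wall}. The lateral strings come from multiplying the row $(q_{p,i},q_{p,i+1})$ by $H_t^{-1}$ and the column $(q_{i,k},q_{i+1,k})^{\mathsf T}$ by $H_t$. All other entries are untouched because $H_i(t)$ and $\partial_zH_i(t)$ are supported on the active block. Your explicit remark that the unchanged root coordinates must themselves be holomorphic in $t$ is a worthwhile precision that the statement leaves implicit; it reappears as the summand $V_{\boldsymbol\mu}^{\mathrm{out}}\otimes R$ in \eqref{eq:all-rank-wall-lattice}.
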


\begin{proof}
On the active block,
\[
 H_t^{-1}=
 \begin{pmatrix}
 1&-(z-1)/t\\
 -t&z
 \end{pmatrix}.
\]
The direct block is the rank-two calculation in \Cref{thm:rank-two-wall}.  Multiplying the row $(q_{p,i},q_{p,i+1})$ by $H_t^{-1}$ gives \eqref{eq:all-rank-wall-left-1}--\eqref{eq:all-rank-wall-left-2}, while multiplying $H_t$ by the column $(q_{i,k},q_{i+1,k})^{\mathsf T}$ gives \eqref{eq:all-rank-wall-right-1}--\eqref{eq:all-rank-wall-right-2}.  Entries having neither row nor column in the active pair are unchanged.  Since the displayed formula is the full matrix transport, its coefficientwise holomorphicity is necessary and sufficient.
\end{proof}

Put $R=\C\{t\}$ and $K_t=R[t^{-1}]$.  For every positive root $j<k$, let
\[
 V_{jk}^{\boldsymbol\mu}
 =\C z^{-2}\oplus\C\rho_{\mu_k-\mu_j}(z),
 \qquad
 V_{\boldsymbol\mu}
 =\bigoplus_{j<k}V_{jk}^{\boldsymbol\mu}E_{jk}.
\]
Thus $\dim_{\C}V_{\boldsymbol\mu}=N=r(r-1)$.  For $\nu\in\C$, write
$
 \pi_\nu:z^{-2}\cO(\Delta)\longrightarrow\C^2
$
for the linear normal-coordinate projection defined by the direct sum in \Cref{lem:scalar-splitting}: if
$q=L_\nu(P)+\alpha z^{-2}+\beta\rho_\nu$, then
$\pi_\nu(q)=(\alpha,\beta)$.

Assume from now on that $\lambda_i\notin\mathbb Z_{\geq1}$, so $\rho_{\lambda_i}=z^{-1}$, and define the distinguished minimal crossing
\begin{equation}\label{eq:all-rank-minimal-crossing}
 q^0_{i,\boldsymbol\mu}(t,z)
 =\left(\frac{\lambda_i+1}{t}z^{-2}
       -\frac{\lambda_i}{t}z^{-1}\right)E_{i,i+1}.
\end{equation}

For $p<i$, define the left lateral lattice
\begin{align*}\mathscr L^{\mathrm L}_{p,i}=\bigl\{(x,y)\in (V_{p,i}^{\boldsymbol\mu}\oplus V_{p,i+1}^{\boldsymbol\mu})\otimes_{\C}K_t:\ x-ty\in z^{-2}R\{z\},\quad zy-(z-1)t^{-1}x\in z^{-2}R\{z\}\bigr\}.\end{align*}
and, for $k>i+1$, define the right lateral lattice
 \begin{align*}\mathscr L^{\mathrm R}_{i,k}=\bigl\{(x,y)\in (V_{i,k}^{\boldsymbol\mu}\oplus V_{i+1,k}^{\boldsymbol\mu})\otimes_{\C}K_t:\ zx+(z-1)t^{-1}y\in z^{-2}R\{z\},\quad tx+y\in z^{-2}R\{z\}\bigr\}.\end{align*}
Finally, let $V_{\boldsymbol\mu}^{\mathrm{out}}$ be the sum of the root spaces whose two indices avoid $\{i,i+1\}$, and put
\begin{equation}\label{eq:all-rank-wall-lattice}\mathscr L_{i,\boldsymbol\mu}=(V_{i,i+1}^{\boldsymbol\mu}\otimes R)\oplus\bigoplus_{p<i}\mathscr L^{\mathrm L}_{p,i}\oplus\bigoplus_{k>i+1}\mathscr L^{\mathrm R}_{i,k}\oplus(V_{\boldsymbol\mu}^{\mathrm{out}}\otimes R).\end{equation}

For a flat local analytic $R$-algebra $A$ with $A/tA\neq0$, let
$\mathbf{Wall}_{i,\boldsymbol\mu}(A)$ be the set of coefficients
$Q\in V_{\boldsymbol\mu}\otimes_{\C}A[t^{-1}]$ for which the transport of
$M_{\boldsymbol\mu}/z+Q$ by $H_i(t)$ has entries in $z^{-2}A\{z\}$.

\begin{theorem}\label{thm:all-rank-affine-wall}
\begin{enumerate}[(i)]
 \item If $\lambda_i\in\mathbb Z_{\geq1}$, then
 $\mathbf{Wall}_{i,\boldsymbol\mu}(A)=\varnothing$ for every flat local analytic $R$-algebra~$A$ with non-empty special fibre.
 \item If $\lambda_i\notin\mathbb Z_{\geq1}$, then, functorially in every such~$A$,
 \begin{equation}\label{eq:all-rank-wall-affine-translate}
  \mathbf{Wall}_{i,\boldsymbol\mu}(A)
  =q^0_{i,\boldsymbol\mu}
   +\mathscr L_{i,\boldsymbol\mu}\otimes_R A.
 \end{equation}
 Every lateral module in \eqref{eq:all-rank-wall-lattice} is a free $R$-module of rank~$4$, and $\mathscr L_{i,\boldsymbol\mu}$ is free of rank
 \[
  2+4(r-2)+(r-2)(r-3)=r(r-1)=N.
 \]
 Thus the elementary connection-lift functor is represented by the affine
$S$-space
\[
 q^0_{i,\boldsymbol\mu}+\mathbb V_S(\mathscr L_{i,\boldsymbol\mu}),
\]
a torsor under the analytic vector bundle
$\mathbb V_S(\mathscr L_{i,\boldsymbol\mu})$.  Subtraction of the distinguished
minimal crossing identifies it with that vector bundle; after choosing an
$R$-basis, it is non-canonically isomorphic to
$S\times\mathbb A^N_{\C}$.
 \item Put
 \[
  \boldsymbol\mu'
  =\boldsymbol\mu-\mathbf e_i+\mathbf e_{i+1}.
 \]
 Evaluation of the transported universal connection at $t=0$, followed by the unique framed normalisation, defines a polynomial morphism
 \begin{equation}\label{eq:all-rank-specialisation-map}
  \operatorname{sp}_{i,\boldsymbol\mu}:
  \mathbb V_S(\mathscr L_{i,\boldsymbol\mu})_0
  \longrightarrow
  \cS_{\boldsymbol\mu'}\simeq\mathbb A^N_{\C}.
 \end{equation}
 If $q^{\mathrm{raw}}_{jk}$ denotes the transported special coefficient and $h=k-j$, then
 \begin{equation}\label{eq:all-rank-height-triangular-map}
  (\widetilde\alpha_{jk},\widetilde\beta_{jk})
  =\pi_{\mu'_k-\mu'_j}(q^{\mathrm{raw}}_{jk})
   +P_{jk}\bigl(q^{\mathrm{raw}}_{ab}:b-a<h\bigr),
 \end{equation}
 where $P_{jk}$ is a $\C^2$-valued polynomial of total degree at most~$h$.  For $h=1$ one has $P_{jk}=0$; for $h\geq2$, the polynomial $P_{jk}$ has no terms of total degree less than~$2$.  In particular, the specialisation map is triangular by root height, and its degree-one part is the scalar projection of the transported raw coefficient.  On the active root, writing
 \[
  \alpha_{i,i+1}=\frac{\lambda_i+1}{t}+a_i,
  \qquad
  \beta_{i,i+1}=-\frac{\lambda_i}{t}+b_i,
 \]
 it is
 \[
  (a_i,b_i)\longmapsto
  \bigl(0,\sigma_{\lambda_i+2}(a_i(0),b_i(0))\bigr).
 \]
 \item The represented wall functor and the polynomial specialisation map commute with flat local analytic base change.  No analogous assertion is made for non-flat thickenings; the defect in \Cref{prop:nilpotent-saturation-wall} shows why additional saturation data are then required.
\end{enumerate}
\end{theorem}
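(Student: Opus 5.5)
The plan is to reduce everything to the entrywise description of the raw transport in \Cref{prop:levi-local-raw-transport}. That proposition says extension through $t=0$ splits into independent conditions on four kinds of coefficients: the active root $q_i$, each left pair $(q_{p,i},q_{p,i+1})$ with $p<i$, each right pair $(q_{i,k},q_{i+1,k})$ with $k>i+1$, and the untouched roots. All conditions are $A$-linear apart from the inhomogeneous constants in $N_i$. So $\mathbf{Wall}_{i,\boldsymbol\mu}(A)$ is the product of the solution sets of these conditions.

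For part (i), I would repeat the argument of \Cref{thm:rank-two-wall}(i) with coefficients in $A$. The coefficient of $z$ in $N_i/t$ is $\lambda_i/t$, and it cannot lie in $A$: if it did, then $\lambda_i\in tA$ with $\lambda_i\neq0$, so $t$ would be a unit and $A/tA=0$, contrary to hypothesis. For $\lambda_i\notin\mathbb Z_{\geq1}$, the same coefficient comparison gives the active direction. One has $N_i/t\in A\{z\}$ exactly when $\alpha=(\lambda_i+1)/t+a$ and $\beta=-\lambda_i/t+b$ with $a,b\in A$. Then the remaining block entries \eqref{eq:all-rank-wall-direct-11}--\eqref{eq:all-rank-wall-direct-22} are automatically holomorphic, which yields the summand $q^0+V_{i,i+1}\otimes A$. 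The out-of-block roots are unchanged, so they must have coefficients in $A$. Each lateral condition is a homogeneous $A$-linear condition cut out from $(V\oplus V)\otimes A[t^{-1}]$. I would write it as $\mathscr L^{\mathrm L}_{p,i}\otimes_R A$, using flatness of $A$ over $R$: the lattice is the kernel of an $R$-linear map $(V\oplus V)\otimes K_t\to(\text{Laurent coefficients})\otimes K_t/R$, and tensoring with flat $A$ preserves kernels and commutes with localisation at $t$.

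The key computation is freeness of rank $4$ of each lateral module. I would take the left case, with $\nu=\mu_i-\mu_p$ and $\nu+\lambda_i=\mu_{i+1}-\mu_p$, and write $x=x_2z^{-2}+x_1\rho_\nu$ and $y=y_2z^{-2}+y_1\rho_{\nu+\lambda_i}$. The conditions become finitely many statements that certain $K_t$-linear combinations of $x_1,x_2,y_1,y_2$ lie in $R$. These statements come from the monomials $z^{-3},z^{-2},\dots$ appearing in $zy-(z-1)t^{-1}x$ and in $x-ty$. The module in question is therefore the preimage of $R^m$ under a $K_t$-linear map. It is a finitely generated torsion-free $R$-submodule of $K_t^4$ containing $t^n R^4$ for large $n$, because $t^{-1}$ appears at most to first order. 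Since $R$ is a DVR, such a module is free of rank $4$. I expect this to be the main obstacle, since the case analysis on whether $\nu$ and $\nu+\lambda_i$ are in $\mathbb Z_{\geq0}$ changes which monomials appear. The structural DVR argument avoids that case analysis, at the cost of an explicit basis. The rank count $2+4(r-2)+(r-2)(r-3)$ is then bookkeeping, and representability by $q^0+\mathbb V_S(\mathscr L)$ follows directly.

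For part (iii), set $t=0$ in the transported universal connection. The diagonal is $\mathrm{diag}(\dots,\mu_{i+1}z^{-1}+\dots)$, and after the diagonal normalisation of \Cref{lem:diagonal-normalisation} (the holomorphic diagonal parts $\mp\lambda_i$ and $\pm tzq_i$ vanish or are removed) the residues are $\boldsymbol\mu'$. The lower entry $(i+1,i)$ vanishes at $t=0$. The raw special coefficients are linear in the lattice coordinates, so I would then apply the height induction of \Cref{prop:existence}. At height $h$, the new coefficient is $\pi$ of the raw coefficient plus the projection of commutator terms built from the gauges $p_{ab}$ of lower height. Each such $p_{ab}$ depends linearly on the raw data at its own height, corrected by lower-order terms. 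An induction on $h$ using \eqref{eq:height-bracket} then shows that the correction is polynomial, of degree at most $h$, and without terms of degree below $2$. The active root formula is \Cref{thm:rank-two-wall}(ii). For part (iv), every step is coefficient extraction and linear algebra over $R$, which commutes with flat base change by \Cref{lem:relative-scalar-splitting} and flatness.
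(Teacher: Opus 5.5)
Your overall route is the paper's: reduce to \Cref{prop:levi-local-raw-transport}, apply the rank-two barrier to the direct entry, describe the lateral conditions as $R$-lattices and pass to flat $A$ via kernels, then run the height induction. Part~(i) is fine, and you use exactly the relevant hypothesis $A/tA\neq0$ to exclude $\lambda_i/t\in A$.

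The gap is in the step you yourself single out: freeness of the lateral modules. Your justification (``$t^{-1}$ appears at most to first order'') gives only the lower inclusion $t\,(V^{\boldsymbol\mu}_{p,i}\oplus V^{\boldsymbol\mu}_{p,i+1})\otimes R\subseteq\mathscr L^{\mathrm L}_{p,i}$, that is, that the module has rank~$4$. Finite generation is asserted but not proved, and it is the whole content of the DVR argument. The preimage of $R^m$ under a $K_t$-linear map $K_t^4\to K_t^m$ contains an entire $K_t$-line as soon as the map has a kernel. Moreover, torsion-free $R$-modules such as $K_t$ are not free.

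What is missing is the upper bound, which comes from inverting the transport. Put $(u,v)=(x-ty,\ zy-(z-1)t^{-1}x)$. Multiplying by $H_t$ gives $x=zu+tv$ and $y=(z-1)t^{-1}u+v$. Hence $u,v\in z^{-2}R\{z\}$ forces $\mathscr L^{\mathrm L}_{p,i}\subseteq t^{-1}(V^{\boldsymbol\mu}_{p,i}\oplus V^{\boldsymbol\mu}_{p,i+1})\otimes R$, and the same argument with $H_t^{-1}$ bounds the right lateral modules. Only this two-sided sandwich places the module inside a finite free module over the Noetherian ring $R$. That makes it finitely generated, hence free of rank~$4$; this is how the paper argues.

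A smaller point in part~(iii): you should not invoke \Cref{lem:diagonal-normalisation}. Substitute $t\alpha_{i,i+1}=\lambda_i+1+ta_i$ and $t\beta_{i,i+1}=-\lambda_i+tb_i$ into \eqref{eq:all-rank-wall-direct-11} and \eqref{eq:all-rank-wall-direct-22}. This gives $(\mu_{i+1}-\lambda_i-1-ta_i)z^{-1}-tb_i$ and $(\mu_i+\lambda_i+1+ta_i)z^{-1}+tb_i$, so at $t=0$ the diagonal is exactly $M_{\boldsymbol\mu'}/z$, with no holomorphic remainder. This exact cancellation is needed, not optional. A non-trivial diagonal gauge as in \eqref{eq:diagonal-gauge} would rescale the off-diagonal coefficients by exponentials in the wall coordinates, destroying the polynomiality and the degree bounds you then prove.

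With these two points supplied, the rest of your argument agrees with the paper's proof. This includes your flatness formulation through kernels into $K_t/R$, the height induction, and the base-change step.
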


\begin{proof}
Part~\textup{(i)} is the rank-two resonance barrier.  The direct entry \eqref{eq:all-rank-wall-direct-12} is independent of every lateral coordinate, so no off-block term can cancel its $z$-coefficient obstruction.  The same coefficient argument remains valid after flat base change because $t$ is a non-zero divisor.

Assume $\lambda_i\notin\mathbb Z_{\geq1}$.  By \Cref{thm:rank-two-wall}, the direct root of every extending arc is uniquely the minimal crossing \eqref{eq:all-rank-minimal-crossing} plus an element of $V_{i,i+1}^{\boldsymbol\mu}\otimes R$.  The formulas of \Cref{prop:levi-local-raw-transport} show that the remaining extension conditions split independently into the left lateral pairs, the right lateral pairs and the unaffected roots.  This gives \eqref{eq:all-rank-wall-affine-translate} over~$R$ and the direct-sum decomposition \eqref{eq:all-rank-wall-lattice}.  Since the defining coefficient maps are maps of finite free $R$-modules, flat pullback preserves their kernels and yields the same formula over every flat~$A$.
For a left pair, let $T^{\mathrm L}(x,y)$ denote the two expressions in its definition.  If $x$ and $y$ are divisible by~$t$, then $T^{\mathrm L}(x,y)$ is holomorphic in~$t$; hence
\[
 t(V_{p,i}^{\boldsymbol\mu}\oplus V_{p,i+1}^{\boldsymbol\mu})\otimes R
 \subseteq\mathscr L^{\mathrm L}_{p,i}.
\]
Conversely, writing $T^{\mathrm L}(x,y)=(u,v)$ and multiplying by $H_t$ gives
$
 x=zu+tv,
$ and $
 y=(z-1)t^{-1}u+v.
$
Thus the $t$-adic valuations of the coefficients of $x$ and $y$ are at least $-1$.  Therefore
\[
 \mathscr L^{\mathrm L}_{p,i}
 \subseteq
 t^{-1}(V_{p,i}^{\boldsymbol\mu}\oplus V_{p,i+1}^{\boldsymbol\mu})\otimes R.
\]
The same argument, using $H_t^{-1}$, gives the identical two-sided bound for every right lateral lattice.  The quotient
\[
 t^{-1}(V_{p,i}^{\boldsymbol\mu}\oplus V_{p,i+1}^{\boldsymbol\mu})\otimes R
 \big/
 t(V_{p,i}^{\boldsymbol\mu}\oplus V_{p,i+1}^{\boldsymbol\mu})\otimes R
\]
has finite length over the Noetherian ring~$R$.  The two-sided bound therefore
makes $\mathscr L^{\mathrm L}_{p,i}$ a finitely generated torsion-free
$R$-module.  It has rank~$4$, since it becomes the full four-dimensional
space after inverting~$t$, and is consequently free over the discrete
valuation ring~$R$.  The same argument applies to the right lateral modules.
The displayed rank count proves the assertion for
$\mathscr L_{i,\boldsymbol\mu}$.

For part~\textup{(iii)}, the special transported matrix is upper triangular: the only potentially lower entry is \eqref{eq:all-rank-wall-direct-21}, which vanishes at $t=0$.  Substitution of the distinguished direct crossing shows that its diagonal is $M_{\boldsymbol\mu'}/z$.  Its off-diagonal coefficients lie in $z^{-2}\cO$, so \Cref{prop:admissibility} makes it an admissible regular-singular family.  Apply the relative normal-form theorem to the universal special connection over the affine special fibre of the wall torsor.  All raw special coefficients are linear functions of the wall coordinates.

At height one the normalising gauge and the scalar projection are linear.  Suppose inductively that every normalising gauge coordinate of height less than~$h$ is polynomial of total degree at most its height.  In the height-$h$ gauge equation, each correction involving lower roots contains at least two lower-height off-diagonal factors, and their root heights add to~$h$.  Such a correction therefore has total degree between~$2$ and~$h$.  The scalar splitting is linear, so both the height-$h$ gauge coordinate and the resulting normal coordinate have total degree at most~$h$; moreover, the terms not coming from the raw height-$h$ coefficient have degree at least~$2$.  This proves \eqref{eq:all-rank-height-triangular-map} and the assertion about its degree-one part.  The active-root formula is \Cref{thm:rank-two-wall}.

Finally, for each lateral pair only the normal monomials and their products with $z$ and $z-1$ occur.  They span a finite-dimensional space of Laurent polynomials in~$z$, so the extension conditions are kernels of coefficient maps between finite free $R$-modules.  Flat base change preserves these kernels and the direct-sum decomposition.  The universal transport is algebraic in the coefficients, and the relative normalisation commutes with pullback by \Cref{thm:relative-normal-form}.  This proves part~\textup{(iv)}.
\end{proof}

\begin{corollary}\label{cor:resonance-filtered-root-lift}
Fix the elementary wall frame $H_i(t)$ and the embedded rank-two root-subgroup arc
\[
 t\longmapsto G_i(t)I,
 \qquad
 G_i(t)=I_r+t z^{-1}E_{i+1,i}.
\]
The corresponding distinguished fully framed connection-wall problem with residue vector $\boldsymbol\mu$ is non-empty if and only if
\[
 \mu_{i+1}-\mu_i\notin\mathbb Z_{\geq1}.
\]
When this condition holds, the extension functor is a relative affine torsor of relative dimension $r(r-1)$.  The limiting connection is governed by the polynomial specialisation map \eqref{eq:all-rank-specialisation-map}, which is triangular by root height.  This is a statement for the chosen elementary local-model datum; it does not assert that the original unsliced moduli functor carries a global position morphism to an affine flag variety.
\end{corollary}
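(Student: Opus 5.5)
The corollary is essentially a repackaging of \Cref{thm:all-rank-affine-wall} for the embedded root-subgroup datum, so the plan is to reduce it to that theorem after making the link between $G_i(t)$ and $H_i(t)$ explicit. First I would record, as in the rank-two discussion of \Cref{sec:intro-affine-flag}, that the comparison lattice determined by the meromorphic horizontal frame in the elementary model \eqref{eq:rank-two-wall-fixed}, embedded in positions $i,i+1$, is represented by $G_i(t)=I_r+tz^{-1}E_{i+1,i}$. The adapted holomorphic frame of the fixed lattice on $S^\times$ is $H_i(t)$. Concretely, the horizontal section $e_i+tz^{-1}e_{i+1}$ is the $i$-th column of $G_i(t)$, and \Cref{prop:rank-two-lattice-jump} identifies the saturated intersection with the span of $ze_i+te_{i+1}$, the first column of $H_i(t)$ in the block. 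Hence the fully framed connection-wall problem attached to the arc $G_i(t)I$ is, by definition, the functor $\mathbf{Wall}_{i,\boldsymbol\mu}$: meromorphic arcs in the distinguished slice \eqref{eq:all-rank-wall-open} whose transport by $H_i(t)$ extends with pole order at most two.

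With this identification, non-emptiness is read off from \Cref{thm:all-rank-affine-wall}. If $\mu_{i+1}-\mu_i\in\mathbb Z_{\geq1}$, part~(i) gives $\mathbf{Wall}_{i,\boldsymbol\mu}(A)=\varnothing$, via the unremovable $m/t$ coefficient of $z$ in \eqref{eq:all-rank-wall-direct-12}. Otherwise, part~(ii) shows that the minimal crossing $q^0_{i,\boldsymbol\mu}$ of \eqref{eq:all-rank-minimal-crossing} is an element, so the problem is non-empty. Part~(ii) also gives the torsor structure under $\mathbb V_S(\mathscr L_{i,\boldsymbol\mu})$, with $\mathscr L_{i,\boldsymbol\mu}$ free of rank $r(r-1)$. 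Hence the relative dimension is $r(r-1)$.

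The assertion about the limiting connection is exactly part~(iii): the specialisation map \eqref{eq:all-rank-specialisation-map} lands in $\cS_{\boldsymbol\mu'}$ and is height-triangular by \eqref{eq:all-rank-height-triangular-map}. The final disclaimer needs no proof, but I would note the reason for it. All constructions used a fixed comparison datum, namely the choice of $H_i(t)$ and of the orientation. By \Cref{thm:intro-loop-root}, only the position of $G_i(t)$ in the rank-two Levi is identified, so no morphism from the unsliced functor to $\operatorname{Fl}_r$ is constructed.

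The main obstacle is the first step: checking carefully that the problem phrased via the arc $G_i(t)I$ coincides with $\mathbf{Wall}_{i,\boldsymbol\mu}$, rather than with a problem transported by $G_i(t)$ itself. The two matrices must not be conflated, as stressed earlier. I would handle this by verifying in the $2\times2$ block that $H_i(t)^{-1}G_i(t)$ is, on $S^\times$, a holomorphic gauge in $z$ up to the lattice change. This reduces to \Cref{prop:rank-two-lattice-jump}, and the off-block rows and columns are untouched by both matrices.
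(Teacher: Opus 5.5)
Your proposal is correct and follows the paper's route. The paper's proof identifies the active Levi block of the arc $G_i(t)I$ with the rank-two curve of \Cref{prop:rank-two-affine-position}, notes that the fixed wall frame $H_i(t)$ supplies the lateral comparison data, and then reads off the existence criterion, the rank-$r(r-1)$ torsor and the height-triangular specialisation from \Cref{thm:all-rank-affine-wall}. Your additional check through \Cref{prop:rank-two-lattice-jump} is a harmless elaboration of that identification; indeed $H_t^{-1}G(t)=\begin{pmatrix} z^{-1}&-(z-1)/t\\ 0&z\end{pmatrix}$ is an upper triangular lattice change.
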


\begin{proof}
The active Levi block is the rank-two curve of \Cref{prop:rank-two-affine-position}, while the chosen wall frame fixes the lateral comparison data.  The existence criterion and the description of all extending connection arcs are given by \Cref{thm:all-rank-affine-wall}.
\end{proof}

\subsection{The composite-root correction at a two-wall intersection}

\begin{theorem}\label{thm:rank-three-central-cocycle}
Put
\[
 G_{12}(t)=I_3+\frac tzE_{21},
 \qquad
 G_{23}(s)=I_3+\frac szE_{32},
 \qquad
 K_{13}(u)=I_3+\frac{u}{z^2}E_{31}.
\]
Then
\begin{equation}\label{eq:rank-three-central-cocycle}
 G_{23}(s)G_{12}(t)
 =K_{13}(st)G_{12}(t)G_{23}(s).
\end{equation}
The factor $K_{13}(st)$ is central in the lower unitriangular subgroup
generated by $E_{21},E_{32},E_{31}$.  In coordinates $(x,y,w)$ on this
subgroup, multiplication is
\[
 (x,y,w)(x',y',w')=(x+x',y+y',w+w'+yx'),
\]
so $c((x,y),(x',y'))=yx'$ is a central group $2$-cocycle.  Modifications with
disjoint supports commute.  Consequently, the first codimension-two
compatibility datum is a central composite-root cocycle, rather than an
equality of the two ordered simple-wall charts.
\end{theorem}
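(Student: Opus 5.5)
The identity \eqref{eq:rank-three-central-cocycle} is a direct computation with nilpotent matrix units, and I will carry it out first. Write $X=tz^{-1}E_{21}$ and $Y=sz^{-1}E_{32}$. The relevant products are $E_{32}E_{21}=E_{31}$, $E_{21}E_{32}=0$ and $E_{21}^2=E_{32}^2=0$. These give
\[
 G_{23}(s)G_{12}(t)=(I_3+Y)(I_3+X)=I_3+X+Y+stz^{-2}E_{31},
\]
\[
 G_{12}(t)G_{23}(s)=I_3+X+Y.
\]
The matrix $E_{31}$ annihilates $E_{21}$, $E_{32}$ and $E_{31}$ on both sides. Hence $K_{13}(u)(I_3+X+Y)=I_3+X+Y+uz^{-2}E_{31}$, and taking $u=st$ gives the identity.

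Centrality follows from the same vanishing products. Indeed $E_{31}E_{ab}=0=E_{ab}E_{31}$ for every $(a,b)\in\{(2,1),(3,2),(3,1)\}$, so $K_{13}(u)$ commutes with every generator of the subgroup.

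For the group law I parametrise the subgroup by
\[
 g(x,y,w)=I_3+xz^{-1}E_{21}+yz^{-1}E_{32}+wz^{-2}E_{31}.
\]
These matrices are closed under multiplication, and the only non-zero cross term is $yz^{-1}E_{32}\cdot x'z^{-1}E_{21}=yx'z^{-2}E_{31}$. This yields the stated product $(x+x',y+y',w+w'+yx')$. The map $c((x,y),(x',y'))=yx'$ is bilinear, and bilinear maps satisfy the $2$-cocycle identity. It is valued in the central factor, so it is a central extension cocycle. It is not a coboundary in the symmetric sense, since its antisymmetrisation $yx'-y'x$ is non-zero; this matches the commutator computed above.

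For modifications with disjoint supports, the products of the two matrix units vanish in both orders, for example $E_{21}$ against $E_{43}$ in rank at least four. The two unipotent factors then commute by the same expansion. The final interpretive sentence follows from \eqref{eq:rank-three-central-cocycle}, because $K_{13}(st)\neq I_3$ for $st\neq0$.

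No step is a genuine obstacle. The only point needing care is the order of the factors and the sign convention: the correction must sit on the left with coefficient $+st$, and I will check this against the convention $X_{kj}(b)X_{ji}(a)=X_{ki}(ab)X_{ji}(a)X_{kj}(b)$ of \Cref{thm:intro-loop-root}.
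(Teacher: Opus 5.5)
Your proof is correct and follows the paper's argument. Like the paper, you expand both ordered products using $E_{32}E_{21}=E_{31}$, $E_{21}E_{32}=0$ and the vanishing of every product involving $E_{31}$, read off the group law from the single cross term, and obtain the cocycle identity from bilinearity. Your extra remark that $c$ is not a coboundary is correct but goes beyond what the paper states: coboundaries with trivial coefficients on an abelian group are symmetric, whereas the antisymmetrisation $yx'-y'x$ of $c$ is non-zero.
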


\begin{proof}
One has $E_{32}E_{21}=E_{31}$ and $E_{21}E_{32}=0$.  Moreover, $E_{31}$
commutes with $E_{21}$ and $E_{32}$.  Expanding both sides gives
\[
 G_{23}(s)G_{12}(t)
 =I_3+\frac tzE_{21}+\frac szE_{32}+\frac{st}{z^2}E_{31},
\]
whereas
\[
 G_{12}(t)G_{23}(s)
 =I_3+\frac tzE_{21}+\frac szE_{32}.
\]
This proves \eqref{eq:rank-three-central-cocycle}.  The displayed
multiplication law follows from the same calculation, and the cocycle identity
\[
 c(u,v)+c(u+v,w)=c(v,w)+c(u,v+w)
\]
follows from bilinearity.  Matrix units with disjoint supports multiply
to zero in both orders.
\end{proof}

\subsection{Root-subgroup structure in the loop group}\label{sec:wall-loop-root}

The preceding rank-three identity is the first non-trivial instance of a group law valid in arbitrary rank.  Put
\begin{equation}\label{eq:wall-diagonal-conjugator}
 D_r=\diag(1,z,\ldots,z^{r-1})
\end{equation}
and let $U_r^-$ denote the lower unitriangular algebraic group.

\begin{theorem}\label{thm:wall-loop-root-group}
Conjugation by $D_r$ gives an algebraic-group isomorphism
\[
 U_r^-\xrightarrow{\sim}\mathcal U_r^{\mathrm{wall}}
 \coloneqq D_r^{-1}U_r^-D_r
 \subset GL_r(\C((z))),
\]
where
\begin{equation}\label{eq:wall-loop-group}
 \mathcal U_r^{\mathrm{wall}}
 =\left\{
 I_r+\sum_{j>i}u_{ji}z^{-(j-i)}E_{ji}
 \;\middle|\;
 u_{ji}\in\C
 \right\}.
\end{equation}
The construction is functorial after replacing $\C$ by any commutative complex algebra.  For $j>i$, set
\[
 X_{ji}(a)=I_r+a z^{-(j-i)}E_{ji}.
\]
Then, for $k>j>i$,
\begin{equation}\label{eq:wall-chevalley-relation}
 X_{kj}(b)X_{ji}(a)
 =X_{ki}(ab)X_{ji}(a)X_{kj}(b),
\end{equation}
and two such root subgroups commute whenever both ordered products of the corresponding matrix units vanish.  The simple elements
\[
 G_i(a)=X_{i+1,i}(a)
 \qquad (1\leq i<r)
\]
generate $\mathcal U_r^{\mathrm{wall}}$.  Hence every higher-root correction in the lattice part of an ordered wall product is the usual type-$A$ Chevalley correction, weighted by the pole order $z^{-(j-i)}$.
\end{theorem}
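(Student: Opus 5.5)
The plan is to compute everything explicitly from the conjugation formula. For a lower matrix unit one has $D_r^{-1}E_{ji}D_r=z^{-(j-1)}z^{i-1}E_{ji}=z^{-(j-i)}E_{ji}$. Since $D_r^{-1}E_{ji}D_r=z^{-(j-i)}E_{ji}$ and conjugation by the invertible element $D_r\in GL_r(\C((z)))$ is a group automorphism, the image of $I_r+\sum_{j>i}u_{ji}E_{ji}$ is exactly $I_r+\sum_{j>i}u_{ji}z^{-(j-i)}E_{ji}$. This gives \eqref{eq:wall-loop-group} and shows the map is an injective homomorphism onto that set. It is algebraic because the coordinates $u_{ji}$ are carried identically. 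Nothing in this step uses properties of $\C$ beyond commutativity, so it holds verbatim over any commutative complex algebra $\Lambda$, with $\C((z))$ replaced by $\Lambda((z))$. This proves functoriality.

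The relations will be obtained by transport rather than by recomputation. Write $x_{ji}(a)=I_r+aE_{ji}$ for the constant root elements of $U_r^-$, so that $X_{ji}(a)=D_r^{-1}x_{ji}(a)D_r$.

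For $k>j>i$ one has $E_{kj}E_{ji}=E_{ki}$, while $E_{ji}E_{kj}=0$ because $i\neq k$. Hence
\[
 x_{kj}(b)x_{ji}(a)=I_r+aE_{ji}+bE_{kj}+abE_{ki}.
\]
For the right-hand side, note that $x_{ji}(a)x_{kj}(b)=I_r+aE_{ji}+bE_{kj}$. Left-multiplying by $x_{ki}(ab)$ adds $abE_{ki}$, and the remaining cross terms $E_{ki}E_{ji}$ and $E_{ki}E_{kj}$ vanish, since $i\neq j$ and $i\neq k$. So both sides agree. Conjugating by $D_r$ gives \eqref{eq:wall-chevalley-relation}. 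As a consistency check, the pole orders agree: $(k-j)+(j-i)=k-i$.

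For commutation, suppose $E_{ji}E_{lk}=0=E_{lk}E_{ji}$. Then both products $x_{ji}(a)x_{lk}(b)$ and $x_{lk}(b)x_{ji}(a)$ equal $I_r+aE_{ji}+bE_{lk}$. Conjugation again transports this to the $X$'s.

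For generation, it suffices, by the isomorphism, to show that the simple elements $x_{i+1,i}(a)$ generate $U_r^-$. This is the main step, though it is standard. Using the relation just proved with $k=j+1$, one gets
\[
 x_{ki}(ab)=x_{kj}(b)x_{ji}(a)x_{kj}(b)^{-1}x_{ji}(a)^{-1},
\]
since $x_{ki}$ commutes appropriately. Inducting on the height $j-i$, every root subgroup $x_{ji}$ therefore lies in the subgroup generated by the simple ones. Finally, every element of $U_r^-$ is an ordered product of root elements. This follows by column elimination: order the roots by increasing height and peel off the factors, each step using only that $U_r^-$ is unipotent with the height filtration of \eqref{eq:height-bracket}, transposed.

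The concluding sentence of the statement is then interpretive. In an ordered product of simple wall elements, reordering produces exactly the correction factors $X_{ki}(ab)$ from \eqref{eq:wall-chevalley-relation}, and these carry pole order $z^{-(k-i)}$. The rank-three case recovers \Cref{thm:rank-three-central-cocycle}.
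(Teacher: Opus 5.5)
Your proof is correct and follows the same route as the paper. The identity $D_r^{-1}E_{ji}D_r=z^{-(j-i)}E_{ji}$ gives the description of $\mathcal U_r^{\mathrm{wall}}$ and its functoriality, the relations come from $E_{kj}E_{ji}=E_{ki}$ and $E_{ji}E_{kj}=0$ transported by conjugation, and generation is transported from $U_r^-$; the only difference is that you actually prove the generation of $U_r^-$ by its simple root subgroups (commutator identity with induction on height, plus factorisation into root elements), which the paper simply quotes as standard.
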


\begin{proof}
For $j>i$ one has
\[
 D_r^{-1}E_{ji}D_r=z^{-(j-i)}E_{ji},
\]
which proves \eqref{eq:wall-loop-group} and the asserted functorial isomorphism.  The relation \eqref{eq:wall-chevalley-relation} follows from
\[
 E_{kj}E_{ji}=E_{ki},
 \qquad
 E_{ji}E_{kj}=0.
\]
If both ordered products of the relevant matrix units vanish, the root subgroups commute.  Finally, the ordinary lower unitriangular group is generated by its simple-root subgroups; conjugation by $D_r$ transports this statement to $\mathcal U_r^{\mathrm{wall}}$.
\end{proof}

\begin{corollary}\label{cor:pairwise-wall-compatibility}
For the elementary matrices $G_i(a)=I_r+a z^{-1}E_{i+1,i}$ one has
\[
 G_i(a)G_j(b)=G_j(b)G_i(a)
 \qquad\text{if }|i-j|>1,
\]
whereas
\begin{equation}\label{eq:all-rank-adjacent-wall}
 G_{i+1}(b)G_i(a)
 =X_{i+2,i}(ab)G_i(a)G_{i+1}(b).
\end{equation}
Consequently, the pairwise compatibility of the lattice-modification matrices in every rank is determined by the rank-two and rank-three Levi subgroups.  This statement concerns the lattice part only: the scalar normal projections and resonance barriers of the connection still have to be analysed in the relevant residue blocks.
\end{corollary}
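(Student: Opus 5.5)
The plan is to deduce the corollary from \Cref{thm:wall-loop-root-group} by specialising its root subgroups to the simple elements and to the one composite root that arises. First, note that $G_i(a)=X_{i+1,i}(a)$. For $|i-j|>1$, the relevant matrix units are $E_{i+1,i}$ and $E_{j+1,j}$. Their products in both orders, $E_{i+1,i}E_{j+1,j}$ and $E_{j+1,j}E_{i+1,i}$, vanish, since $i\neq j+1$ and $j\neq i+1$. The commutation clause of \Cref{thm:wall-loop-root-group} then gives $G_i(a)G_j(b)=G_j(b)G_i(a)$. One can also check this directly: both products equal $I_r+az^{-1}E_{i+1,i}+bz^{-1}E_{j+1,j}$.

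Second, for the adjacent relation \eqref{eq:all-rank-adjacent-wall}, I will apply the Chevalley relation \eqref{eq:wall-chevalley-relation} with the triple $(k,j,i)$ replaced by $(i+2,i+1,i)$. This gives
\[
X_{i+2,i+1}(b)X_{i+1,i}(a)=X_{i+2,i}(ab)X_{i+1,i}(a)X_{i+2,i+1}(b),
\]
which is exactly the displayed identity, since $X_{i+2,i+1}(b)=G_{i+1}(b)$ and $X_{i+1,i}(a)=G_i(a)$. The pole weights are consistent: $z^{-1}\cdot z^{-1}=z^{-2}$, which is the weight of $X_{i+2,i}$.

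Third, the ``consequently'' clause. Every pairwise relation between two simple generators is one of these two types, and both involve only the indices $\{i,i+1\}\cup\{j,j+1\}$. In the adjacent case this index set is $\{i,i+1,i+2\}$, so the relation lives in the embedded rank-three Levi block. In the non-adjacent case the two elements sit in disjoint rank-two Levi blocks. The rank-three case is, after relabelling, \Cref{thm:rank-three-central-cocycle}. The final caveat, that this concerns only the lattice part, needs no proof: the statement makes no assertion about normal projections or resonance barriers beyond recording that \Cref{thm:all-rank-affine-wall} governs them blockwise.

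The only step needing care is the index bookkeeping in the Chevalley specialisation, in particular the orientation (which factor comes first). I will confirm it by the expansion used in the proof of \Cref{thm:rank-three-central-cocycle}. The product $E_{i+2,i+1}E_{i+1,i}$ equals $E_{i+2,i}$, while $E_{i+1,i}E_{i+2,i+1}=0$. Hence the left-hand side carries the extra term $abz^{-2}E_{i+2,i}$ and the reordered product does not. There is no substantive obstacle.
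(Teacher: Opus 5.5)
Your proposal is correct and matches the paper's route: the paper gives no separate proof, because the corollary is an immediate specialisation of \Cref{thm:wall-loop-root-group}. There $G_i(a)=X_{i+1,i}(a)$, the commutation clause handles the case $|i-j|>1$, and the Chevalley relation at $(k,j,i)=(i+2,i+1,i)$ gives \eqref{eq:all-rank-adjacent-wall}, which is the embedded form of \Cref{thm:rank-three-central-cocycle}, exactly as you check.
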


\begin{proposition}\label{prop:rank-two-affine-position}
Let
$
 G(t)=I_2+t z^{-1}E_{21}
$ ,$
 \Lambda_t=G(t)O^2,
$
and let $I\subset GL_2(O)$ be the upper Iwahori subgroup.  Then:
\begin{enumerate}[(i)]
 \item the map $t\mapsto G(t)I$ is the standard opposite root-subgroup chart of the minimal affine Schubert curve in $\operatorname{Fl}_2$; it sends $t=0$ to the base point and $t\ne0$ to the Bruhat cell $I\dot s_0I/I$, where
 \[
  \dot s_0=\begin{pmatrix}0&-z\\ z^{-1}&0\end{pmatrix};
 \]
 \item for $t\ne0$, the relative elementary divisors of $\Lambda_t$ and $O^2$ are $z^{-1}$ and $z$, while $\Lambda_0=O^2$;
 \item the matrix $H_t$ of \eqref{eq:rank-two-H} lies in $GL_2(O)$ for $t\ne0$ and satisfies $H_tO^2=O^2$.  It is therefore an adapted holomorphic frame of the prescribed lattice, not the matrix that records the position in the affine Grassmannian.
\end{enumerate}
\end{proposition}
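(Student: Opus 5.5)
The plan is to verify the three assertions of \Cref{prop:rank-two-affine-position} by direct matrix factorisations in $GL_2(K)$. Each one is reduced to a membership statement in $GL_2(O)$, in $I$, or in $\dot s_0 I$.

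For (i), we first note that $G(t)$ lies in the opposite root subgroup $U_{-\alpha_0}$, where $\alpha_0$ is the affine simple root attached to $z^{-1}E_{21}$. We then exhibit an explicit factorisation, for $t\neq0$,
\[
 G(t)=\begin{pmatrix}1&0\\ tz^{-1}&1\end{pmatrix}
 =\begin{pmatrix}1&t^{-1}z\\0&1\end{pmatrix}\dot s_0\, b_t,
\]
with $b_t\in I$. Concretely, we solve for $b_t=\dot s_0^{-1}X(-t^{-1}z)G(t)$, where $X(c)=I_2+cE_{12}$, and check that its entries satisfy the Iwahori conditions. These conditions are: all entries lie in $O$, the lower-left entry lies in $zO$, and the determinant is a unit. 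A short computation gives $b_t=\begin{pmatrix}t&z^{0}\cdot\ast\\0&t^{-1}\end{pmatrix}$ up to sign conventions, and I would carry this calculation out carefully. Since $X(t^{-1}z)\in I$, it follows that $G(t)I\in I\dot s_0I/I$.

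It remains to see that this is the closure of a one-dimensional cell. The standard identification $I\dot s_0I/I\cong\mathbb A^1$, via $c\mapsto X(cz)\dot s_0 I$, applies here. Adding the point $t=0$, which maps to the base point, gives the $\mathbb P^1$ closure. On this $\mathbb P^1$, the map $t\mapsto G(t)I$ is the opposite chart around the base point.

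For (ii), we compute the Smith normal form of $G(t)$ over the discrete valuation ring $O$. Its entries have minimal valuation $-1$, coming from the entry $tz^{-1}$, and its determinant is $1$. Hence for $t\ne0$ the elementary divisors are $z^{-1}$ and $z$. This is also visible from the factorisation in (i): the image of $\dot s_0$ in $\operatorname{Gr}_2$ has exactly these divisors. For $t=0$ we have $G(0)=I_2$, so $\Lambda_0=O^2$.

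For (iii), we note that $\det H_t=z-(z-1)=1$ and that all entries of $H_t$ are polynomials in $z$ when $t\neq0$. Hence $H_t\in GL_2(O)$, and therefore $H_tO^2=O^2$.

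The only step needing care is the explicit Iwahori factorisation in (i), in particular keeping track of the convention that $I$ is the upper Iwahori. The rest is bookkeeping.
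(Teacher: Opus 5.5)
Your proof of the membership $G(t)\in I\dot s_0I$, and your proofs of (ii) and (iii), are correct and follow the paper's proof. Completing your computation gives $b_t=\dot s_0^{-1}X(-t^{-1}z)G(t)=\bigl(\begin{smallmatrix}t&z\\0&t^{-1}\end{smallmatrix}\bigr)=\diag(t,t^{-1})X(t^{-1}z)$. Since $\diag(t^{-1},t)\dot s_0=\dot s_0\diag(t,t^{-1})$, your factorisation $G(t)=X(t^{-1}z)\dot s_0b_t$ is the paper's $x_+(b)\diag(t^{-1},t)\dot s_0x_+(b)$. Your valuation-and-determinant Smith form argument and the check $\det H_t=1$ are also what the paper does.

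The step that fails is the identification $I\dot s_0I/I\cong\mathbb A^1$, and before it the claim that $z^{-1}E_{21}$ is the negative of an affine simple root.

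\emph{Why the cell has dimension three.} For the upper Iwahori ($g_{21}\in zO$) one has $\dot s_0^{-1}(z^kE_{12})\dot s_0=-z^{k-2}E_{21}$, which lies outside $I$ exactly for $k=0,1,2$. On the other hand $\dot s_0^{-1}(z^kE_{21})\dot s_0=-z^{k+2}E_{12}\in I$. Hence $I\dot s_0I/I=\{X(c_0+c_1z+c_2z^2)\dot s_0I\}\cong\mathbb A^3$. Modulo the torus, $\dot s_0$ is the reflection in the affine root of $zE_{12}$. For this Iwahori that root is $\alpha_0+2\alpha_1$, which is not simple, and the reflection has length three.

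\emph{The same fact in lattice terms.} $I$ stabilises the chain $O^2\supset Oe_1\oplus zOe_2$. For $t\neq0$, $G(t)$ moves both members; already $G(t)e_1=e_1+tz^{-1}e_2\notin O^2$. So $G(t)I$ lies on no minimal Schubert curve through the base point. The closure of $t\mapsto G(t)I$, obtained by adding $\dot s_0I$ at $t=\infty$, is only a torus-stable real-root $\mathbb P^1$ inside the three-dimensional Schubert variety $\overline{I\dot s_0I/I}$.

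Under the stated convention, then, the minimal-curve part of (i) is false and cannot be proved. The paper's proof does not establish it either: after the factorisation it only cites \cite{PappasRapoport}.

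\emph{How to repair it.} Your argument goes through verbatim for the lower Iwahori $I^-=\{g\in GL_2(O):g_{12}\in zO\}$, which stabilises $O^2\supset zOe_1\oplus Oe_2$. Then $G(t)$ fixes $zOe_1\oplus Oe_2$ and $\dot s_0$ has length one. The cell $I^-\dot s_0I^-/I^-=\{X(cz)\dot s_0I^-\}\cong\mathbb A^1$ is exactly your parametrisation, and $X(t^{-1}z),b_t\in I^-$. The condition $b\in zO$ stressed in the paper's proof is precisely what membership in $I^-$ requires; for the upper Iwahori any $b\in O$ would do. If you keep the upper Iwahori instead, the opposite chart of the minimal curve is $I_2+tz^{-1}E_{12}$, with $\bigl(\begin{smallmatrix}0&z^{-1}\\-z&0\end{smallmatrix}\bigr)$ as the lift of $s_0$.
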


\begin{proof}
For $t\ne0$, put $b=t^{-1}z$ and $x_+(b)=I_2+bE_{12}$.  The rank-one factorisation
\[
 G(t)
 =x_+(b)
 \begin{pmatrix}0&-b\\ t z^{-1}&0\end{pmatrix}
 x_+(b)
 =x_+(b)\diag(t^{-1},t)\dot s_0x_+(b)
\]
shows that $G(t)\in I\dot s_0I$, because $b\in zO$ and $\diag(t^{-1},t)\in I$.  Together with $G(0)=I_2$, this is the usual opposite root-subgroup chart of the rank-one affine Schubert curve~\cite{PappasRapoport}.
For the Grassmannian position, multiply by $z$:
\[
 zG(t)=\begin{pmatrix}z&0\\ t&z\end{pmatrix}.
\]
When $t\ne0$, elementary row and column operations over $O$ put this matrix into Smith normal form $\diag(1,z^2)$.  Hence $G(t)$ has elementary divisors $z^{-1}$ and $z$.  At $t=0$ it is the identity.  Finally, \eqref{eq:rank-two-H} has entries in $O$ and determinant~$1$ for $t\ne0$, proving the last assertion.
\end{proof}

\begin{remark}\label{rem:rank-two-levi-embedding}
For every $1\leq i<r$, the block embedding of $G(t)$ in the coordinates
$(i,i+1)$ has the same elementary divisors, with all remaining elementary
divisors equal to~$1$.  It is the minimal affine Schubert curve in the affine
flag variety of that rank-two Levi subgroup.  Its image in the full affine
flag variety is a real-root curve; no assertion about its codimension in the
full Schubert stratification is needed here.
\end{remark}

\subsection{Full saturation over the dual numbers}

\begin{proposition}\label{prop:nilpotent-saturation-wall}
Let $A=\C[\varepsilon]/(\varepsilon^2)$, $R=A\{z\}$, and
$
 I=R(ze_1+\varepsilon e_2)\subset R^2.
$
Then
\begin{equation}\label{eq:full-dual-saturation}
 I^{\mathrm{sat}_z}=I+R\varepsilon e_1,
 \qquad
 I^{\mathrm{sat}_z}/I\simeq\C\varepsilon,
\end{equation}
where the quotient is supported at $z=0$.  In particular,
$\varepsilon e_1\in I^{\mathrm{sat}_z}\setminus I$.
\end{proposition}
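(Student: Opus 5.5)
The plan is a direct computation in the free module $R^2$ over $R=A\{z\}$, with $A=\C[\varepsilon]/(\varepsilon^2)$; every element of $R$ is written as $f+\varepsilon g$ with $f,g\in\C\{z\}$.

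\emph{Step 1: $\varepsilon e_1$ lies in the saturation.} This is immediate from
\[
 z\cdot\varepsilon e_1=\varepsilon(ze_1+\varepsilon e_2)\in I,
\]
since $\varepsilon^2=0$. Hence $I+R\varepsilon e_1\subseteq I^{\mathrm{sat}_z}$.

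\emph{Step 2: $\varepsilon e_1$ does not lie in $I$.} I will show that no $h=f+\varepsilon g$ satisfies
\[
 h(ze_1+\varepsilon e_2)=\varepsilon e_1.
\]
The $e_2$-component is $\varepsilon f$. Its vanishing forces $f=0$, because $\varepsilon f=0$ in $R$ means $f\in\varepsilon\C\{z\}$, and $f$ has no $\varepsilon$-part. The $e_1$-component is then $\varepsilon zg$, and $\varepsilon zg=\varepsilon$ is impossible, since $z$ is not a unit in $\C\{z\}$.

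\emph{Step 3: the reverse inclusion $I^{\mathrm{sat}_z}\subseteq I+R\varepsilon e_1$.} This is the main step. Take $s=(x,y)$ with $z^Ns\in I$, so that
\[
 z^Nx=hz,\qquad z^Ny=\varepsilon h
\]
for some $h=f+\varepsilon g$. The second equation gives $z^Ny=\varepsilon f$. Write $y=y_0+\varepsilon y_1$. Then $z^Ny_0=0$, so $y_0=0$, and $z^Ny_1=f$. Thus $f=z^Ny_1$.

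Next write $x=x_0+\varepsilon x_1$. The first equation splits into
\[
 z^Nx_0=zf=z^{N+1}y_1,\qquad z^Nx_1=zg.
\]
The first of these gives $x_0=zy_1$. Therefore
\[
 s=y_1(ze_1+\varepsilon e_2)+\varepsilon x_1e_1,
\]
where the identity uses $y=\varepsilon y_1$. This exhibits $s\in I+R\varepsilon e_1$, with no constraint needed on $x_1$.

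\emph{Step 4: identify the quotient.} The quotient $(I+R\varepsilon e_1)/I$ is generated by the class of $\varepsilon e_1$. This class is annihilated by $z$ (Step 1) and by $\varepsilon$ (as $\varepsilon^2=0$). Hence the quotient is a quotient of $R/(z,\varepsilon)=\C$. It is nonzero by Step 2, so it is $\C\varepsilon$. It is supported at $z=0$ because it is killed by $z$.

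The only delicate point is the bookkeeping in Step 3: one must separate the $\varepsilon^0$ and $\varepsilon^1$ parts and cancel powers of $z$ only in the domain $\C\{z\}$, never in $R$, since $R$ has zero divisors.
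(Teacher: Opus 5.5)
Your proof is correct and follows the paper's argument essentially verbatim: the same inclusion via $z\cdot\varepsilon e_1=\varepsilon(ze_1+\varepsilon e_2)$, and the same splitting into $\varepsilon^0$- and $\varepsilon^1$-parts giving $y_0=0$, $x_0=zy_1$ and $s=y_1(ze_1+\varepsilon e_2)+\varepsilon x_1e_1$. The only difference is in the last step, which is just a repackaging of the same fact: the paper computes $I\cap R\varepsilon e_1=Rz\varepsilon e_1$ directly, while you show the quotient is cyclic, killed by $(z,\varepsilon)$, and non-zero by your explicit Step 2.
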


\begin{proof}
The inclusion $I+R\varepsilon e_1\subset I^{\mathrm{sat}_z}$ follows from
\[
 z(\varepsilon e_1)=\varepsilon(ze_1+\varepsilon e_2)\in I.
\]
Conversely, suppose
\[
 z^N(xe_1+ye_2)=c(ze_1+\varepsilon e_2).
\]
Write
\[
 x=x_0+\varepsilon x_1,
 \qquad y=y_0+\varepsilon y_1,
 \qquad c=c_0+\varepsilon c_1,
 \qquad x_i,y_i,c_i\in\C\{z\}.
\]
The second coordinate gives $z^Ny_0=0$ and $z^Ny_1=c_0$, hence $y_0=0$.  Reducing the first coordinate modulo $\varepsilon$ gives
$z^Nx_0=zc_0=z^{N+1}y_1$, and therefore $x_0=zy_1$.  It follows that
\[
 xe_1+ye_2-y_1(ze_1+\varepsilon e_2)=\varepsilon x_1e_1\in R\varepsilon e_1,
\]
which proves the reverse inclusion.  Finally,
$I\cap R\varepsilon e_1=Rz\varepsilon e_1$, so the quotient in
\eqref{eq:full-dual-saturation} is one-dimensional over $\C$.
\end{proof}

\subsection{Loop-group interpretation of the wall atlas}\label{sec:wall-affine-flag}

Passing to the completion at the pole, put $K=\C((z))$ and $O=\C[[z]]$.  The standard affine Grassmannian and affine flag variety are
\[
 \operatorname{Gr}_r=GL_r(K)/GL_r(O),
 \qquad
 \operatorname{Fl}_r=GL_r(K)/I,
\]
with $I$ the upper Iwahori subgroup~\cite{PappasRapoport}.  A comparison lattice gives a point of $\operatorname{Gr}_r$, and an Iwahori-level lattice chain gives a point of $\operatorname{Fl}_r$.  Such a comparison lattice is an additional local-model datum: it is not supplied by the original unsliced prescribed-lattice functor.

\Cref{prop:rank-two-affine-position} identifies the relevant matrix.  The meromorphic horizontal frame $G(t)=I_2+t z^{-1}E_{21}$ carries the non-trivial affine position and traces the opposite chart of the minimal affine Schubert curve in the rank-two Levi.  The matrix $H_t$, by contrast, lies in $GL_2(O)$ and merely chooses an adapted holomorphic basis of the fixed prescribed lattice.  Thus the wall theorem is a lifting criterion from a rank-two Levi root-subgroup arc to the distinguished normal slice of connections; the forbidden range $\lambda\in\mathbb Z_{\geq1}$ is the extra resonance obstruction imposed by the connection.

In arbitrary rank, \Cref{thm:wall-loop-root-group} gives the lattice-theoretic structure.  The wall matrices form the conjugated finite unipotent group $D_r^{-1}U_r^-D_r$, and their higher-root corrections are the ordinary type-$A$ Chevalley corrections with the prescribed pole weights.  Non-adjacent simple factors commute, while adjacent factors produce the composite-root term of \eqref{eq:all-rank-adjacent-wall}.  The rank-three correction is central only inside its minimal $3\times3$ Levi block; in higher rank the same root subgroup may interact with further roots.  Accordingly, the $A_1$, $A_1\times A_1$ and $A_2$ terminology refers to Levi root-subgroup blocks.  It should not be read as a proof that every embedded wall is a codimension-one Schubert stratum of the full affine flag variety.

Finally, \Cref{prop:nilpotent-saturation-wall} shows that reduced loop-group position data do not represent the relative connection problem: the saturation has an additional length-one infinitesimal defect over the dual numbers.  Any global local-model stack must retain both the comparison-lattice position and its nilpotent deformation theory, and must prove independence of the chosen logarithmic or comparison lattice.  The present results supply the rank-two affine-position theorem and the root-subgroup compatibility law in arbitrary rank, but do not construct that global stack.

\subsection{Consequences and the global gluing problem}\label{sec:global-gluing-problem}

The preceding results determine the local geometry of a fixed residue stratum, a permitted simple wall in arbitrary rank, and the first compatibility between adjacent walls.  Each residue stratum has an affine normal-form slice.  A simple wall is either excluded by positive integral resonance or represented by a smooth affine torsor whose specialisation to the shifted residue slice is polynomial and triangular by root height.  For adjacent walls, the first non-commuting term lies in the composite-root direction and is already visible in the corresponding rank-three Levi subgroup.

These results provide the local data expected in a global wall-crossing space, but they do not construct such a space.  Three issues remain.  An affine flag position requires a comparison lattice, which is not part of the original prescribed-lattice functor.  The wall torsors and their transition data must be descended from the fully framed slices rather than from their individual coordinates.  Finally, the rank-three calculation controls the first adjacent-wall compatibility, whereas higher intersections and non-reduced bases require further descent and saturation arguments.

\begin{question}\label{ques:global-wall-gluing}
After adjoining a comparison lattice, can the elementary wall torsors be glued over the Schubert configurations associated with $A_1$ and $A_2$ Levi subgroups in an affine flag local model, so that the transition for adjacent walls is governed by the composite-root correction and the resulting object is independent of the chosen comparison lattice?
\end{question}

This question lies beyond the moduli problem treated here.  It requires a global descent construction and is not used in any of the preceding results.  A positive answer would provide a setting in which to ask whether wall-crossing data descend further to residual atoms.

\section{Residual atoms and residual chemical formulae}\label{sec:residual-atoms}

The preceding sections separate two distinct issues.  The affine normal forms describe complex analytic regular-singular connections after a choice of ordered framing, whereas \Cref{sec:forgetting-frame} identifies the descent from those rigidified charts to flagged and unflagged objects.  The atomic factors of quantum cohomology are intrinsically unframed and are constructed in the broader formal and non-Archimedean categories of $F$-bundles~\cite{HYZZ,KKPYY}.  After defining the unframed vertical residual target for arbitrary order-two lattices, we show that the scalar branch of a one-branch $F$-bundle splits functorially from a zero-branch residual factor.  This places the global theory in the same notation as the chemical formulae of~\cite[\S5]{KKPYY}.

\subsection{The vertical residual stack}

Let $k$ be $\C$, a characteristic-zero field in the formal category, or a complete algebraically closed non-Archimedean field of characteristic zero.  Let $S$ be an admissible base in the corresponding category and let $\mathbb D_u$ denote the germ of the $u$-disc.

\begin{definition}\label{def:order-two-residual-stack}
The groupoid $\mathscr M_{\leq2,k}^{\mathrm{unfr}}(S)$ has as objects pairs $(\cE,\nabla_{/S})$, where $\cE$ is a vector bundle on $S\times\mathbb D_u$ and
\[
 \nabla_{/S}:\cE\longrightarrow
 \cE\otimes\Omega^1_{(S\times\mathbb D_u)/S}(*\{u=0\})
\]
is a flat relative meromorphic connection satisfying
\begin{equation}\label{eq:order-two-residual-bound}
 \nabla_{/S}(\cE)
 \subset
 \cE\otimes u^{-2}\Omega^1_{(S\times\mathbb D_u)/S}.
\end{equation}
Morphisms are bundle isomorphisms intertwining the relative connections.  Neither a framing nor an invariant flag is included.  The regular-singular subfunctor is denoted by $\mathscr M_{\mathrm{rs},k}^{\mathrm{unfr}}$.
\end{definition}

\begin{proposition}\label{prop:valued-residual-stack-descent}
In each of the three categories, the assignments $S\mapsto\mathscr M_{\leq2,k}^{\mathrm{unfr}}(S)$ and $S\mapsto\mathscr M_{\mathrm{rs},k}^{\mathrm{unfr}}(S)$ are stacks for the corresponding analytic, formal or admissible topology.  They commute with arbitrary base change.
\end{proposition}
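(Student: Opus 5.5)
The plan is to check the two stack axioms directly: for morphisms, that isomorphisms glue and form a sheaf; for objects, that compatible local objects glue. Base change will then follow from the observation that every defining condition is a condition on local sections of a vector bundle and is therefore preserved by pullback. First I construct the pullback functor. Given a morphism $f:S'\to S$ in the relevant category, I set $F=f\times\mathrm{id}:S'\times\mathbb D_u\to S\times\mathbb D_u$. Pullback $F^*\cE$ is again a vector bundle. The relative differential pulls back, $F^*\Omega^1_{(S\times\mathbb D_u)/S}\cong\Omega^1_{(S'\times\mathbb D_u)/S'}$, because relative differentials commute with base change; this holds because in every category the disc factor is $\mathbb D_u$ itself, with the single generator $du$. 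Hence $\nabla_{/S}$ induces a relative connection $F^*\nabla_{/S}$. Flatness is vacuous in the single relative direction, or is preserved by pullback in any case. The bound \eqref{eq:order-two-residual-bound} says that the matrix of $u^2\nabla_{u\text{-direction}}$ has holomorphic entries in any local frame, and that property is preserved by pullback. Pseudo-functoriality $(fg)^*\cong g^*f^*$ is the standard one for pullback of bundles. This makes $\mathscr M^{\mathrm{unfr}}_{\leq2,k}$ a fibred category, and the base-change claim amounts to this construction being compatible with the defining conditions.

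Second, morphisms form a sheaf. Take two objects over $S$ and a cover $\{S_a\}$ of $S$. The products $S_a\times\mathbb D_u$ then cover $S\times\mathbb D_u$; here I use that the germ along $u=0$ is handled as a sheaf on a neighbourhood, with the coverings of the base inducing coverings of the product. Bundle isomorphisms glue because $\mathcal{H}om(\cE,\cE')$ is a coherent sheaf, and this is Zariski, analytic or admissible descent for coherent sheaves respectively. Intertwining the connections is a local condition, so it glues as well. Third, objects descend. Suppose we have local objects $(\cE_a,\nabla_a)$ and isomorphisms $\phi_{ab}$ on overlaps that satisfy the cocycle condition. Descent for vector bundles produces a bundle $\cE$ on $S\times\mathbb D_u$. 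The connections $\nabla_a$ agree under the $\phi_{ab}$, and a relative connection is a sheaf map $\cE\to\cE\otimes\Omega^1(*0)$, so they glue to a map $\nabla$. The order-two bound and flatness are local, hence they hold for $\nabla$.

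For the regular-singular subfunctor, I must show that regular singularity is stable under pullback and is local on $S$. Locality is immediate, since the condition is imposed fibrewise or on local frames. For pullback, I would use the sectorial moderate-growth criterion (cf.\ \Cref{lem:analytic-regular}) in the analytic case. In the formal and non-Archimedean cases I would use the existence, locally on $S$, of a lattice in which the pole is logarithmic. Either criterion pulls back. I expect the main obstacle to be this step in the non-Archimedean and non-reduced settings, where a fibrewise definition of regular singularity need not obviously be a local condition stable under base change. I would fix the definition to be \emph{locally on $S$ there exists a meromorphic gauge to a logarithmic lattice}, a condition that is manifestly local and stable under pullback, and then check that it agrees with the fibrewise notion over reduced points. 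The second delicate point is descent for bundles on the germ $S\times\mathbb D_u$ in the admissible topology. I would handle it by Kiehl's theorem (Tate acyclicity), applied to a fixed-radius relative disc.
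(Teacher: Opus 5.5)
Your proposal is correct and follows essentially the same route as the paper. Both arguments use effective descent for vector bundles, glue the connections as morphisms of coherent sheaves, and note that flatness and the order-two pole bound are local. For the regular-singular subfunctor, both take regular singularity to mean local existence of a logarithmic lattice, which is local on $S$ and stable under pullback by construction. The extra comparison with a fibrewise notion of regular singularity over reduced points that you plan is not needed, since the paper adopts exactly the local-logarithmic-lattice convention; that comparison is also genuinely delicate for relative connections, so you can drop it.
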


\begin{proof}
Vector bundles satisfy effective descent in each of the three categories.  Suppose that $(\cE_i,\nabla_i)$ are objects on a covering $\{S_i\to S\}$ and that the descent isomorphisms intertwine the connections.  The local morphisms $\nabla_i$ then glue, by descent for morphisms of coherent sheaves, to a relative meromorphic connection on the descended bundle~$\cE$.  Flatness is the vanishing of the curvature morphism and is local on~$S$.  The pole inclusion~\eqref{eq:order-two-residual-bound} is also local and therefore descends.

For the regular-singular subfunctor, regular singularity is imposed locally through the existence of a logarithmic lattice and is preserved by pullback.  Such a lattice is auxiliary and is not part of the object, so the descent problem concerns only the meromorphic connection itself.  In the formal category one uses completed tensor products; in the non-Archimedean category one uses effective descent for vector bundles and coherent morphisms in the admissible topology, as in the foundational setup of~\cite[\S2]{HYZZ}.  Base-change compatibility follows from the same local description.
\end{proof}

The stack $\mathscr M_{\leq2,k}^{\mathrm{unfr}}$ records the vertical meromorphic connection together with its prescribed lattice; it does not, by itself, retain the horizontal part of an $F$-bundle over the base.  Accordingly, a residual atom below contains the full zero-branch $F$-bundle as well as the induced classifying morphism to this vertical stack.

\subsection{Scalar branches and exponential shifts}

For an $F$-bundle $(\cH,\nabla)/S$, put
\[
 \kappa_{\nabla}
 \coloneqq\left.\nabla_{u^2\partial_u}\right|_{\cH/u\cH}.
\]
This is the residual endomorphism used in the spectral decomposition theory~\cite[\S3]{HYZZ} and in the definition of atoms~\cite[\S\S3--5]{KKPYY}.

\begin{definition}\label{def:one-branch-category}
Let $(\cH,\nabla)/S$ have locally constant rank $n$ (interpreted componentwise on $S$).  For $a\in\Gamma(S,\cO_S)$, let $\Fbun_a(S)$ be the full subcategory of $F$-bundles for which the characteristic polynomial of the residual endomorphism satisfies
\[
 \det(TI-\kappa_{\nabla})=(T+a)^n
 \qquad\text{in }\Gamma(S,\cO_S)[T].
\]
Let $\Fbun_{\mathrm{1br}}(S)$ be the full subcategory consisting of the objects which belong to $\Fbun_a(S)$ for some $a$, and let $\Fbun_0(S)$ denote the zero-branch category.  Over a reduced base, this condition is equivalent to requiring the single fibrewise eigenvalue $-a$; the characteristic-polynomial formulation remains unambiguous over non-reduced bases.

The variable exponential line of branch $a$ is the rank-one $F$-bundle
\[
 \ExpShift_a
 \coloneqq\left(\cO_{S\times\mathbb D_u},\ d+d(a/u)\right).
\]
Its residual endomorphism is multiplication by $-a$.
\end{definition}

\begin{lemma}\label{lem:intrinsic-branch}
For an object of $\Fbun_{\mathrm{1br}}(S)$ of locally constant rank $n$, the branch is unique and is recovered intrinsically, componentwise, by
\[
 a=-\frac{1}{n}\operatorname{tr}(\kappa_{\nabla}).
\]
Moreover, $N_{\nabla}\coloneqq\kappa_{\nabla}+aI$ satisfies $N_{\nabla}^{n}=0$.  Branch extraction commutes with arbitrary base change.
\end{lemma}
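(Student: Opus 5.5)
The plan is to read everything off the defining identity $\det(TI-\kappa_{\nabla})=(T+a)^n$ in $\Gamma(S,\cO_S)[T]$. We work componentwise on $S$, so that the rank $n$ is constant. Write $\chi(T)=\det(TI-\kappa_{\nabla})=T^n-\operatorname{tr}(\kappa_{\nabla})T^{n-1}+\cdots$. Here $\kappa_{\nabla}$ is an $\cO_S$-linear endomorphism of the locally free $\cO_S$-module $\cH/u\cH$ of rank $n$, so $\chi$ is a well-defined monic polynomial. Its coefficients are computed locally in any trivialisation and are independent of the trivialisation.

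\emph{Recovering the branch.} Comparing the coefficients of $T^{n-1}$ in $\chi(T)=(T+a)^n$ gives $-\operatorname{tr}(\kappa_{\nabla})=na$. Since we are in characteristic zero, $n$ is invertible in $\cO_S$, so $a=-n^{-1}\operatorname{tr}(\kappa_{\nabla})$. This determines $a$ uniquely as a section of $\cO_S$, even over a non-reduced base: if also $\chi=(T+a')^n$, the same comparison gives $na'=na$, hence $a'=a$. On a component where $n=0$ the statement is vacuous, and we can take the convention $a=0$ or simply exclude that case, since the rank is positive for $F$-bundles of interest.

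\emph{Nilpotence.} We apply the Cayley--Hamilton theorem to $\kappa_{\nabla}$, which holds over any commutative ring and hence locally over $\cO_S$ on a trivialising cover. It gives $\chi(\kappa_{\nabla})=0$, that is, $(\kappa_{\nabla}+aI)^n=0$. So $N_{\nabla}^n=0$. The local identities glue because the statement is an equality of global endomorphisms.

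\emph{Base change.} For $f:S'\to S$, the formation of $\cH/u\cH$ commutes with pullback, because $\cH$ is locally free on $S\times\mathbb D_u$ and $\cH/u\cH$ is the restriction to $u=0$. The residual endomorphism of $f^*\nabla$ is $f^*\kappa_{\nabla}$, since $\nabla_{u^2\partial_u}$ is a vertical operator compatible with pullback. Trace commutes with base change, so $-n^{-1}\operatorname{tr}(f^*\kappa_{\nabla})=f^*a$. In particular $f^*(\cH,\nabla)$ lies in $\Fbun_{f^*a}(S')$.

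The only point needing care is the non-reduced setting: pointwise eigenvalue arguments are unavailable there. This is why the argument should use only coefficient comparison and Cayley--Hamilton, both of which are purely polynomial identities.
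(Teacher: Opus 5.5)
Your proposal is correct and follows essentially the same route as the paper's proof: comparing the $T^{n-1}$-coefficients, inverting $n$ in characteristic zero, applying Cayley--Hamilton, and noting that trace and characteristic polynomial commute with pullback. The only cosmetic difference is that the paper applies Cayley--Hamilton to $N_{\nabla}$, whose characteristic polynomial is $T^n$, whereas you apply it to $\kappa_{\nabla}$ and substitute into $(T+a)^n$; the two are equivalent.
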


\begin{proof}
If $\det(TI-\kappa_{\nabla})=(T+a)^n$, comparison of the coefficient of $T^{n-1}$ gives $-\operatorname{tr}(\kappa_{\nabla})=na$.  Since the bases are defined over a characteristic-zero field, the integer $n$ is invertible in $\cO_S$ on each rank component; hence this identity determines $a$ uniquely.  The characteristic polynomial of $N_{\nabla}=\kappa_{\nabla}+aI$ is $T^n$, so Cayley--Hamilton gives $N_{\nabla}^{n}=0$.  Trace, characteristic polynomial and scalar translation commute with pullback, proving base-change compatibility.
\end{proof}

For constant $a$, this is the exponential-shift symmetry considered in~\cite[\S3.3]{KKPYY}; allowing $a$ to vary over the base is essential for atomic spectral components.

\begin{theorem}\label{thm:valued-residualisation-equivalence}
For every admissible base $S$, scalar residualisation defines an equivalence
\begin{equation}\label{eq:branch-splitting-functor}
 \Fbun_{\mathrm{1br}}(S)
 \xrightarrow{\sim}
 \Gamma(S,\cO_S)_{\mathrm{disc}}\times\Fbun_0(S),
 \qquad
 (\cH,\nabla)
 \longmapsto
 \left(a,\cH,\nabla-d(a/u)I\right),
\end{equation}
where, on each connected rank component, $a=-(1/\operatorname{rk}\cH)\operatorname{tr}(\kappa_{\nabla})$, and we write $\Res_a(\cH,\nabla)=(\cH,\nabla-d(a/u)I)$.  These equivalences commute with arbitrary base change and therefore assemble to an equivalence of stacks.  They are symmetric monoidal with respect to addition of branches and compatible with duality:
\begin{align}
 \Res_{a+b}(\cH_1\otimes\cH_2)
 &\cong
 \Res_a(\cH_1)\otimes
 \Res_b(\cH_2),
 \label{eq:res-tensor}\\
 \Res_{-a}(\cH^{\vee})
 &\cong
 \Res_a(\cH)^{\vee}.
 \label{eq:res-dual}
\end{align}
The same statement holds for external products over different bases.
\end{theorem}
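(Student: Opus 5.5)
The plan is to write down an explicit inverse functor and then check that both composites are the identity, working with the same underlying bundle $\cH$ throughout. First I show that $\Res_a$ lands in $\Fbun_0(S)$. The scalar form $d(a/u)I=u^{-1}d_Sa\,I-au^{-2}du\,I$ is closed and central, so $\nabla-d(a/u)I$ is again a flat meromorphic connection. Its contractions $(\nabla-d(a/u))_{u\xi}=\nabla_{u\xi}-\xi(a)$ and $(\nabla-d(a/u))_{u^2\partial_u}=\nabla_{u^2\partial_u}+a$ differ from those of $\nabla$ by holomorphic scalars, so the $F$-bundle pole conditions are preserved. The induced residual endomorphism is $\kappa_\nabla+aI=N_\nabla$. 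By \Cref{lem:intrinsic-branch} its characteristic polynomial is $T^n$, so the image is zero-branch.

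The inverse sends $(b,(\cH',\nabla'))$ to $(\cH',\nabla'+d(b/u)I)=\ExpShift_b\otimes(\cH',\nabla')$. The same computation shows that its residual endomorphism is $\kappa_{\nabla'}-bI$, with characteristic polynomial $(T+b)^n$. By uniqueness in \Cref{lem:intrinsic-branch}, the extracted branch is exactly $b$. Both composites are then the identity on objects.

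On morphisms I argue as follows. A morphism in $\Fbun_{\mathrm{1br}}(S)$ is a bundle isomorphism $\phi$ intertwining the connections. Conjugation by $\phi$ preserves the trace of $\kappa$, so source and target have the same branch $a$; this is why the first factor of the target is the discrete groupoid. Since $d(a/u)I$ is scalar, $\phi$ intertwines $\nabla$ and $\tilde\nabla$ if and only if it intertwines $\nabla-d(a/u)$ and $\tilde\nabla-d(a/u)$. The functor is therefore fully faithful, and it is bijective on objects, hence an equivalence.

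For base change, pullback along $f:S'\to S$ commutes with trace, with forming $\kappa$ (since $\cH/u\cH$ commutes with pullback), and with subtracting $d(f^*a/u)$. Combined with the stack property of $F$-bundles and of $S\mapsto\Gamma(S,\cO_S)$, this assembles the equivalences into an equivalence of stacks.

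For \eqref{eq:res-tensor} I use the tensor connection $\nabla_1\otimes1+1\otimes\nabla_2$. Its residual endomorphism is $\kappa_1\otimes1+1\otimes\kappa_2$, whose eigenvalues are sums of eigenvalues, so over a general base I compute the characteristic polynomial directly: $\kappa_1\otimes1+1\otimes\kappa_2+(a+b)I=N_1\otimes1+1\otimes N_2$, which is nilpotent because it is a sum of commuting nilpotents. The tensor product is therefore one-branch with branch $a+b$. The identity map of $\cH_1\otimes\cH_2$ then realises \eqref{eq:res-tensor}, because $d((a+b)/u)=d(a/u)\otimes1+1\otimes d(b/u)$. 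The unit $\ExpShift_0$ and the associativity and symmetry constraints are the identity maps, so the monoidal structure is strict. For \eqref{eq:res-dual}, the dual connection has residual endomorphism $-\kappa^{\mathsf T}$, so the dual has branch $-a$, and subtracting $d(-a/u)$ on the dual is dual to subtracting $d(a/u)$. External products follow by the same computation after pulling back to $S_1\times S_2$.

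The main obstacle is keeping the argument valid over non-reduced bases, where eigenvalue arguments are unavailable. There I avoid fibrewise reasoning and use only characteristic polynomials, Cayley--Hamilton and nilpotence of sums of commuting nilpotents, as in \Cref{lem:intrinsic-branch}. A second point to check is that $\Fbun_0$ is exactly the zero-branch case, which holds by \Cref{def:one-branch-category}.
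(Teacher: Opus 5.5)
Your argument follows the paper's proof essentially step for step: an explicit inverse obtained by adding $d(b/u)I$, invariance of the pole conditions and of the intertwining equation under a central closed scalar form, the shift $\kappa\mapsto\kappa+aI$, and the formulas $\kappa_1\otimes I+I\otimes\kappa_2$ and $-\kappa^{\mathsf T}$ for tensor products and duals. One step, however, does not do what you say it does: the tensor-product step over a non-reduced base. You announce that you ``compute the characteristic polynomial directly'', but you only show that $N_1\otimes I+I\otimes N_2$ is nilpotent, and then conclude that $\cH_1\otimes\cH_2$ lies in $\Fbun_{a+b}(S)$. By \Cref{def:one-branch-category}, membership in $\Fbun_{a+b}(S)$ is the identity $\det(TI-\kappa)=(T+a+b)^{n_1n_2}$. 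The case $a=b=0$ gives closure of $\Fbun_0(S)$ under $\otimes$, which the monoidal statement needs. Over a non-reduced base, nilpotence does not imply this identity: over $\C[\varepsilon]/(\varepsilon^2)$ the $1\times1$ matrix $(\varepsilon)$ is nilpotent with characteristic polynomial $T-\varepsilon$. So ``nilpotence of sums of commuting nilpotents'', listed in your last paragraph, is precisely the tool that is \emph{not} safe over non-reduced bases. The paper's proof contains the same elision. Over a reduced base your argument is fine, since vanishing of characteristic-polynomial coefficients can then be checked pointwise.

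The repair is to use the stronger information you actually have, namely $\chi_{N_1}=T^{n_1}$ and $\chi_{N_2}=T^{n_2}$ from \Cref{lem:intrinsic-branch}, together with the universal identity
\[
 \det\bigl(TI-A\otimes I-I\otimes B\bigr)=\operatorname{Res}_x\bigl(\chi_A(x),\chi_B(T-x)\bigr),
\]
which is valid over every commutative ring. It holds for generic matrices over the domain $\mathbb Z[x_{ij},y_{kl}]$: after passing to an algebraic closure of the fraction field and triangularising both matrices, each side equals $\prod_{i,j}(T-\alpha_i-\beta_j)$. The identity then specialises to any ring; equivalently, the coefficients of the left side are universal integer polynomials in those of $\chi_A$ and $\chi_B$. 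Taking $A=N_1$ and $B=N_2$ gives $T^{n_1n_2}$, hence $\det(TI-\kappa_{\nabla_1\otimes\nabla_2})=(T+a+b)^{n_1n_2}$, and the rest of your argument goes through unchanged.
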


\begin{proof}
The branch function is intrinsic and base-change compatible by \Cref{lem:intrinsic-branch}.  Since $d(a/u)$ is a closed scalar one-form, subtracting it from the connection does not alter the curvature.  The total differential is
\begin{equation}\label{eq:valued-total-differential}
 d(a/u)=\frac{d_S a}{u}-\frac{a}{u^2}\,du.
\end{equation}
For a vector field $\xi$ on~$S$ one obtains
\begin{align}
 (\nabla^{\mathrm{res}})_{u\xi}
 &=\nabla_{u\xi}-\xi(a)I,
 \label{eq:valued-horizontal-residual}\\
 (\nabla^{\mathrm{res}})_{u^2\partial_u}
 &=\nabla_{u^2\partial_u}+aI.
 \label{eq:valued-vertical-residual}
\end{align}
Both correction terms are regular endomorphisms of the intrinsic lattice.  Hence the $F$-bundle pole conditions are preserved.  The residual endomorphism becomes
\[
 \kappa_{\nabla^{\mathrm{res}}}=\kappa_{\nabla}+aI,
\]
whose characteristic polynomial is $T^n$; thus the result lies in $\Fbun_0(S)$.  Subtracting a central one-form does not alter the intertwining equation for morphisms.  Conversely, from a pair $(a,\cH,\nabla^0)$ with zero branch, adding $d(a/u)I$ produces characteristic polynomial $(T+a)^n$.  The two operations are mutually inverse and commute with pullback, hence define an equivalence of stacks.

For tensor products, the residual endomorphism is
\[
 \kappa_{\nabla_1\otimes\nabla_2}
 =\kappa_{\nabla_1}\otimes I+I\otimes\kappa_{\nabla_2}.
\]
Writing $\kappa_{\nabla_i}=-a_iI+N_i$ with $N_i$ nilpotent, the commuting sum $N_1\otimes I+I\otimes N_2$ is nilpotent, so the branch is $a_1+a_2$.  Since
\[
 d((a_1+a_2)/u)=d(a_1/u)+d(a_2/u),
\]
this gives~\eqref{eq:res-tensor}.  The dual residual endomorphism is $-\kappa_{\nabla}^{\mathsf t}$, hence its branch is $-a$, and the dual connection changes a scalar one-form by its negative; this proves~\eqref{eq:res-dual}.  Symmetry and associativity are inherited from the tensor category of $F$-bundles~\cite[\S3.2]{KKPYY}.
\end{proof}

The operation~\eqref{eq:branch-splitting-functor} changes the connection on the same intrinsic lattice.  It is not a holomorphic gauge transformation at $u=0$.  In the complex analytic category, its restriction to the punctured disc is represented by the scalar exponential $e^{-a/u}$, which is single-valued and therefore has trivial topological monodromy.

\subsection{Pairings, duality and enhanced atoms}

Let $\nu:S\times\mathbb D_u\to S\times\mathbb D_u$ be given by $\nu(s,u)=(s,-u)$.  Non-degenerate pairings on $F$-bundles and the associated duality automorphisms are discussed in~\cite[\S6.4]{KKPYY}.

\begin{proposition}\label{prop:enhanced-residualisation}
Let $(\cH,\nabla)/S$ be a one-branch $F$-bundle of branch $a$ and let
\[
 \psi:\cH\otimes\nu^*\cH\longrightarrow\cO_{S\times\mathbb D_u}
\]
be a horizontal non-degenerate pairing.  Then the same morphism is horizontal for the residualised connection.  In the complex analytic category, scalar residualisation leaves the punctured monodromy, and hence every duality automorphism defined from that monodromy, unchanged.  Consequently the construction extends to atoms enhanced by such pairings and duality automorphisms.
\end{proposition}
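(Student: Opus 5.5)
The plan is to write the residualised connection explicitly as $\nabla^{\mathrm{res}}=\nabla-\omega I$ with $\omega=d(a/u)$, and to check horizontality of $\psi$ by computing how the pairing connection changes. The connection on $\cH\otimes\nu^*\cH$ is $\nabla\otimes1+1\otimes\nu^*\nabla$. Residualising each factor therefore changes it by $-(\omega+\nu^*\omega)$ times the identity. From \eqref{eq:valued-total-differential},
\[
 \nu^*\omega=d\bigl(a/(-u)\bigr)=-\omega,
\]
because $\nu$ fixes $S$ and replaces $u$ by $-u$. So the scalar correction on the tensor product is zero. The induced connection on $\cH\otimes\nu^*\cH$ is literally unchanged, and $\psi$ stays horizontal for the trivial connection on $\cO_{S\times\mathbb D_u}$. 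Non-degeneracy does not involve the connection, so it is automatic.

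This sign identity is where care is needed. I will check that the convention of \cite[\S6.4]{KKPYY} pairs $\cH$ with $\nu^*\cH$ and not with $\nu^*\cH^\vee$ twisted by a scalar. I will also check that the pullback $\nu^*$ acts on the $u$-coordinate as above. If the pairing instead lands in a twisted line $\ExpShift_b$, the same computation shows that residualisation replaces $b$ by $b-a+a=b$, so nothing changes. Finally, I will confirm that the residualised object is again one-branch with branch $a$ on $\nu^*\cH$ as well, using \Cref{lem:intrinsic-branch}.

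For the monodromy claim in the complex analytic category, I will use the observation after \Cref{thm:valued-residualisation-equivalence}. On the punctured disc, residualisation is gauge-equivalent to multiplication by the single-valued scalar function $e^{-a/u}$. A horizontal section $y$ of $\nabla$ on a sector gives the horizontal section $e^{a/u}y$ of $\nabla^{\mathrm{res}}$ (up to the sign fixed by the gauge convention). Analytic continuation around $u=0$ commutes with multiplication by a single-valued function. Hence the monodromy operators on the local systems of horizontal sections correspond under this scalar isomorphism, and in a fixed fibre they coincide as endomorphisms. The same holds for the Stokes and duality data built functorially from the monodromy and from $\psi$. Any duality automorphism defined from the monodromy and the pairing is therefore unchanged.

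The last step is to conclude that enhanced atoms transport. An enhanced atom is a triple consisting of the $F$-bundle, $\psi$, and the duality automorphism. Residualisation sends it to a triple of the same type, and it is inverted by adding $d(a/u)I$, which by the same argument also preserves these structures. Both operations are compatible with morphisms, since the scalar form is central, and with base change, by \Cref{thm:valued-residualisation-equivalence}. The main obstacle is purely conventional: pinning down the exact form of the pairing in \cite{KKPYY} so that the cancellation $\nu^*\omega=-\omega$ applies as stated.
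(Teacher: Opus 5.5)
Your argument is correct and is essentially the paper's proof: the scalar corrections cancel on $\cH\otimes\nu^*\cH$ because $\nu^*d(a/u)=-d(a/u)$. The punctured monodromy is unchanged because the two connections differ by the single-valued scalar gauge $e^{\pm a/u}$. One small correction to your side check: $\nu^*$ negates the $u^{-2}\,du$ coefficient, so $\nu^*\cH$ has branch $-a$, not $a$. This is exactly the identity $\nu^*\Res_a(\cH)=\Res_{-a}(\nu^*\cH)$ that underlies the cancellation.
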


\begin{proof}
The scalar correction on the first factor is $-d(a/u)$.  Since
\[
 \nu^*d(a/u)=d(a/(-u))=-d(a/u),
\]
the correction on $\nu^*\cH$ is the opposite scalar form.  The two terms cancel in the tensor-product connection, so the horizontality equation for~$\psi$ is unchanged.  On each punctured fibre, the two connections are related by the single-valued scalar exponential $e^{-a/u}$; the corresponding gauge is single-valued and scalar, so the monodromy is unchanged.  Hence every duality automorphism defined through the complex Riemann--Hilbert description of~\cite[\S6.4]{KKPYY} is preserved.
\end{proof}

The proposition does not address integral or $\Gamma$-corrected structures.  Their non-Archimedean formulation requires a substantially finer Riemann--Hilbert theory, and is deliberately left outside the present paper; compare the discussion in~\cite[\S6.4]{KKPYY}.

\subsection{Residual atoms}

Fix a motivic symmetry pair $(G,\epsilon_G)$ and a characteristic-zero base field~$K$.  Let $k$ be the coefficient field used for the corresponding non-Archimedean $A$-model $F$-bundle.  The notation $\Atoms_G^{K,F}$ and $\CF_G^F(X)$ is that of~\cite[\S5.3]{KKPYY}. 
For a connected component $U$ of the unramified reduced spectral cover, write $(\cA_U,\nabla_U,a_U)/U$ for the associated one-branch geometric $G$-atomic $F$-bundle.  Scalar branch splitting gives a zero-branch factor
\[
 (\cA_U,\nabla_U^{\mathrm{res}})
 =\Res_{a_U}(\cA_U,\nabla_U)
\]
and a classifying morphism
\[
 \Phi_U:U\longrightarrow\mathscr M_{\leq2,k}^{\mathrm{unfr}}.
\]

\begin{definition}\label{def:residual-atom-category}
A local residual $G$-atom is the branch-labelled datum
\[
 \mathfrak r_U
 =\bigl(U,a_U,(\cA_U,\nabla_U^{\mathrm{res}}),
 \Phi_U,\epsilon_U,\rho_U,\mathcal T_U\bigr),
\]
where $(\cA_U,\nabla_U^{\mathrm{res}})/U$ is the full zero-branch residual $F$-bundle, $\Phi_U$ is the classifying morphism of its vertical prescribed-lattice connection, $\epsilon_U$ is the parity, $\rho_U$ is the $G$-action, and $\mathcal T_U$ denotes the remaining horizontal structures retained by the chosen atomic theory.  Isomorphisms preserve all these data.

A residual atomic correspondence is obtained by applying scalar branch splitting to an atomic correspondence.  The set $\RAtoms_G^K$ of residual $G$-atoms is the set of equivalence classes of local residual atoms under the equivalence relation generated by residual atomic correspondences and automorphisms of the underlying varieties.  The base~$U$, the branch~$a_U$, the residual $F$-bundle, and the classifying morphism~$\Phi_U$ are all part of the data; in particular, a non-constant family is never replaced by a single fibre.
\end{definition}

\begin{proposition}\label{prop:atomic-correspondence-residualises}
Scalar branch splitting induces a bijection
\[
 \mathfrak{res}:\Atoms_G^{K,F}\xrightarrow{\sim}\RAtoms_G^K.
\]
Every atomic correspondence induces the corresponding equivalence of residual atoms and preserves the intrinsic lattice, parity, $G$-action, spectral multiplicity and the enhanced structures covered by \Cref{prop:enhanced-residualisation}.
\end{proposition}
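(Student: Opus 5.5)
The plan is to define $\mathfrak{res}$ on representatives and show that it is well defined, injective and surjective by transporting everything through the equivalence of \Cref{thm:valued-residualisation-equivalence}.

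\emph{Construction.}  Take a geometric $G$-atomic $F$-bundle $(\cA_U,\nabla_U,a_U)/U$ attached to a connected component $U$ of the unramified reduced spectral cover.  It is one-branch, so \Cref{lem:intrinsic-branch} recovers $a_U=-(\operatorname{rk}\cA_U)^{-1}\operatorname{tr}\kappa_{\nabla_U}$ intrinsically; the branch is therefore not extra data.  Send the atom to the local residual atom $\mathfrak r_U$, with zero-branch factor $\Res_{a_U}(\cA_U,\nabla_U)$ and classifying morphism $\Phi_U$ furnished by \Cref{def:order-two-residual-stack} and \Cref{prop:valued-residual-stack-descent}.  Parity, $G$-action and the remaining structures $\mathcal T_U$ are carried along unchanged, because residualisation keeps the same underlying lattice $\cA_U$.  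Spectral multiplicity is the rank of $\cA_U$, which is also unchanged.

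\emph{Transport of correspondences and enhanced structures.}  An atomic correspondence between geometric atoms is, in the KKPYY formalism, an isomorphism of $F$-bundles over an identification of bases, possibly after pullback, external product, tensoring with other atoms, or dualising.  I would check compatibility case by case:
\begin{enumerate}[(i)]
 \item an isomorphism intertwining the connections intertwines their residualisations, since subtracting a central scalar one-form does not alter the intertwining equation, and the branches agree by intrinsic extraction;
 \item pullback is handled by base-change compatibility in \Cref{lem:intrinsic-branch} and \Cref{thm:valued-residualisation-equivalence};
 \item external products and tensor products are handled by \eqref{eq:res-tensor}, with branches adding;
 \item duals are handled by \eqref{eq:res-dual};
 \item pairings and duality automorphisms are handled by \Cref{prop:enhanced-residualisation};
 \item automorphisms of the underlying varieties act on both sides by pullback, so they commute with residualisation by (ii).
\end{enumerate}
Consequently every generator of the equivalence relation on $\Atoms_G^{K,F}$ maps to a generator of the relation on $\RAtoms_G^K$.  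Since the latter relation is defined as the image of the former, the map descends to equivalence classes.

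\emph{Bijectivity.}  Surjectivity holds by definition, because every local residual atom arises from some geometric atom.  For injectivity, suppose $\mathfrak{res}$ identifies two classes.  Then a chain of residual correspondences links their images.  By definition each link is the residualisation of an atomic correspondence.  The branch is part of the residual data, and the functor $(a,\cH,\nabla^0)\mapsto(\cH,\nabla^0+d(a/u)I)$ is a quasi-inverse by \Cref{thm:valued-residualisation-equivalence}.  Hence each link lifts uniquely to an atomic correspondence between the original atoms, and the two classes already agree in $\Atoms_G^{K,F}$.

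\emph{Main obstacle.}  I expect the subtle point to be injectivity.  One has to verify that the residual relation cannot identify more than the atomic one, for instance through a residual isomorphism that fails to preserve the branches.  The plan is to insist that isomorphisms of residual atoms preserve the branch label $a_U$; this is required in \Cref{def:residual-atom-category}.  Because the inverse functor then reconstructs both $\nabla_U$ and all enhancements exactly, no such extra identifications can occur.
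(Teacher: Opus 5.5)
Your proposal is correct and follows essentially the same route as the paper. You recover the branch intrinsically via \Cref{lem:intrinsic-branch} and use the quasi-inverse $(a,\cH,\nabla^0)\mapsto(\cH,\nabla^0+d(a/u)I)$ of \Cref{thm:valued-residualisation-equivalence}. An atomic correspondence $\Psi:f^*(\cA',\nabla')\xrightarrow{\sim}(\cA,\nabla)$ forces $a=f^*a'$, so it intertwines the residualised connections because the correction is central. Pairings and duality are handled by \Cref{prop:enhanced-residualisation}, and your extra cases for tensor products and duals are harmless.

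One small correction: in the paper the spectral multiplicity is the degree $m_\alpha$ of the spectral-cover component $\widetilde U_{X,\alpha}\to U_X$, not $\operatorname{rk}\cA_U$. It is still preserved, for the simpler reason that residualisation changes neither the base nor the spectral cover.
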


\begin{proof}
On a one-branch factor the branch function is intrinsic and is recovered by the trace formula of \Cref{lem:intrinsic-branch}.  By \Cref{thm:valued-residualisation-equivalence}, the pair $(a_U,\nabla_U^{\mathrm{res}})$ reconstructs $\nabla_U$ uniquely by adding $d(a_U/u)I$.  Hence scalar branch splitting is fully faithful and essentially surjective on branch-labelled objects, and the full residual connection retained in \Cref{def:residual-atom-category} makes the reconstruction unique.

An atomic correspondence is represented, after the prescribed base and quantum-variable changes, by an isomorphism
\[
 \Psi:f^*(\cA',\nabla')\xrightarrow{\sim}(\cA,\nabla).
\]
Conjugacy identifies the residual endomorphisms and therefore gives $a=f^*a'$.  Consequently
\[
 \Psi\circ f^*\!\left(\nabla'-d(a'/u)I\right)
 =\left(\nabla-d(a/u)I\right)\circ\Psi.
\]
This is the residual correspondence.  The remaining structures are preserved because the correction is central, and the assertions about pairings and duality follow from \Cref{prop:enhanced-residualisation}.  The inverse reconstruction shows that no further identifications have been introduced.
\end{proof}

\subsection{Residual chemical formulae}

Let $X$ be a smooth projective $K$-variety.  Let $\widetilde U_X\to U_X$ be the unramified reduced spectral cover of its maximal $A$-model $F$-bundle, and let $m_\alpha$ be the degree of the component $\widetilde U_{X,\alpha}\to U_X$.

\begin{definition}\label{def:RAtom-X}
The residual chemical formula of~$X$ is the finitely supported function
\begin{equation}\label{eq:RAtom-X}
 \RCF_G(X)
 \coloneqq\sum_{\alpha\in\pi_0(\widetilde U_X)}
 m_\alpha\,\delta_{\mathfrak r_\alpha}
 \in\mathbb N^{(\RAtoms_G^K)},
\end{equation}
where $\delta_{\mathfrak r}$ denotes the basis function supported at $\mathfrak r$.  Components which determine the same residual atom contribute additively to the same coefficient.  The image of this function in the group completion is denoted by the same symbol and belongs to $\mathbb Z^{(\RAtoms_G^K)}$.
\end{definition}

\begin{theorem}\label{thm:canonical-RAtom}
The residual chemical formula is independent of all local trivialisations, auxiliary framings, rigid points and decomposition isomorphisms used in its construction.  It is compatible with atomic correspondences and satisfies
\begin{align}
 \RCF_G(X_1\sqcup X_2)
 &=\RCF_G(X_1)+\RCF_G(X_2),
 \label{eq:RAtom-disjoint}\\
 \RCF_G(\operatorname{Bl}_{Z}X)
 &=\RCF_G(X)+(c-1)\RCF_G(Z),
 \label{eq:RAtom-blowup}\\
 \RCF_G(\mathbb P_X(V))
 &=m\,\RCF_G(X),
 \label{eq:RAtom-projective}
\end{align}
where $Z\subset X$ is smooth of codimension $c\geq2$ and $V$ is a vector bundle of rank~$m$.
\end{theorem}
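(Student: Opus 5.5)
The plan is to transport the corresponding statements for the atomic chemical formula $\CF_G^F$ of \cite[\S5]{KKPYY} through the bijection $\mathfrak{res}$ of \Cref{prop:atomic-correspondence-residualises}. The key identity to establish first is
\[
 \RCF_G(X)=\mathfrak{res}_*\,\CF_G^F(X)\in\mathbb N^{(\RAtoms_G^K)},
\]
where $\mathfrak{res}_*$ is the map on finitely supported functions induced by $\mathfrak{res}$. Since $\mathfrak{res}$ is a bijection, $\mathfrak{res}_*$ is an isomorphism of monoids, and its group completion is an isomorphism $\mathbb Z^{(\Atoms_G^{K,F})}\cong\mathbb Z^{(\RAtoms_G^K)}$. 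To prove the identity, I would compare \eqref{eq:RAtom-X} term by term with the definition of $\CF_G^F(X)$. Both are sums over $\pi_0(\widetilde U_X)$ with the same degrees $m_\alpha$. The residual atom $\mathfrak r_\alpha$ is, by \Cref{def:residual-atom-category}, the image under scalar branch splitting of the atomic $F$-bundle attached to the component $\widetilde U_{X,\alpha}$.

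For independence of auxiliary choices, I would argue as follows. A change of local trivialisation, rigid point or decomposition isomorphism replaces the local atomic $F$-bundle by an isomorphic one, or by one related through an atomic correspondence. Under such a change, KKPYY already shows that the class in $\Atoms_G^{K,F}$ is unchanged. \Cref{prop:atomic-correspondence-residualises} then transports this to equality of residual classes. The branch is extracted intrinsically by the trace formula of \Cref{lem:intrinsic-branch}, which commutes with base change, so restricting to different rigid points introduces no ambiguity in $a_U$. Auxiliary framings enter only through the classifying morphism $\Phi_U$ to the unframed stack $\mathscr M_{\leq2,k}^{\mathrm{unfr}}$. No framing is part of that datum, and \Cref{prop:valued-residual-stack-descent} guarantees that $\Phi_U$ is well defined by descent. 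Compatibility with atomic correspondences is again \Cref{prop:atomic-correspondence-residualises}.

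For the three identities, I would apply $\mathfrak{res}_*$ to the corresponding identities for $\CF_G^F$ proved in \cite[\S5]{KKPYY}: additivity for disjoint unions, the blow-up formula coming from the decomposition of quantum cohomology of $\operatorname{Bl}_ZX$ into $X$ and $c-1$ copies of $Z$, and the projective bundle formula with $m$ copies of $X$. Since $\mathfrak{res}_*$ is additive, each identity transfers immediately. The disjoint union case can also be seen directly, because the spectral cover of $X_1\sqcup X_2$ is the disjoint union of the two covers, with degrees preserved.

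The main obstacle is the identification $\RCF_G(X)=\mathfrak{res}_*\CF_G^F(X)$ itself. One must check that the spectral multiplicity $m_\alpha$ and the equivalence relation defining $\RAtoms_G^K$ match KKPYY's conventions exactly. In particular, the KKPYY blow-up and projective bundle isomorphisms involve shifts of the branch by varying functions, for instance translations on the copies of $Z$. I would verify that these shifts are realised by atomic correspondences, and hence are absorbed by the branch label. The argument relies on $a=f^*a'$ from the proof of \Cref{prop:atomic-correspondence-residualises}, together with the fact that the branch is part of the residual datum rather than being forgotten.
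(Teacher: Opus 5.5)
Your proposal is correct and is essentially the paper's argument. Independence of choices is inherited from the corresponding independence for geometric atomic $F$-bundles: the paper cites the decomposition theorems of \cite{HYZZ}, the uniqueness of extension and reconstruction of framings in \cite[\S4]{HYZZ}, and the removal of the rigid point in \cite[Definition~5.21 and Proposition~5.22]{KKPYY}, together with base-change compatibility of scalar branch splitting. The three identities come from applying the bijection of \Cref{prop:atomic-correspondence-residualises} term by term to the KKPYY identities, which is exactly your relation $\RCF_G(X)=\mathfrak{res}_*\CF_G^F(X)$. The branch shifts you flag as the main obstacle need no separate check, because the KKPYY blow-up and projective-bundle identities already hold at the level of atomic classes and $\mathfrak{res}$ is a bijection on those classes.
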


\begin{proof}
The spectral factors are determined over connected spectral components by the formal and non-Archimedean decomposition theorems~\cite[Theorems~3.31 and~3.41]{HYZZ}; the extension and reconstruction of framings are unique in the sense of~\cite[\S4]{HYZZ}.  The equivalence relation defining geometric atomic $F$-bundles removes the choice of a rigid point inside a connected component~\cite[Definition~5.21 and Proposition~5.22]{KKPYY}.  Since scalar branch splitting commutes with restriction and base change, the residual families glue and are independent of all such choices.

The disjoint-union identity follows directly from the decomposition into connected components.  The blow-up and projective-bundle identities for atomic $F$-bundle chemical formulae are proved from the $F$-bundle decompositions in~\cite[Theorems~4.5 and~4.11, Proposition~5.22]{KKPYY}, using the existence theorems of Iritani~\cite{Iritani} and Iritani--Koto~\cite{IritaniKoto} and the uniqueness results of~\cite[\S5]{HYZZ}.  Applying the bijection of \Cref{prop:atomic-correspondence-residualises} term by term gives~\eqref{eq:RAtom-disjoint}--\eqref{eq:RAtom-projective}.
\end{proof}

The set $\Atoms_G^{K,F}$ carries the multiplication obtained by decomposing products of smooth projective varieties into geometric atomic $F$-bundles; it makes $\mathbb N^{(\Atoms_G^{K,F})}$ a semiring and $\mathbb Z^{(\Atoms_G^{K,F})}$ a ring~\cite[\S5.3]{KKPYY}.  Transport this multiplication through the bijection of \Cref{prop:atomic-correspondence-residualises}.

\begin{corollary}\label{cor:residual-grothendieck-ring}
The monoid $\mathbb N^{(\RAtoms_G^K)}$ and the group $\mathbb Z^{(\RAtoms_G^K)}$ acquire, respectively, a commutative semiring structure and a commutative ring structure.  The assignment
\[
 [X]\longmapsto\RCF_G(X)
\]
extends to a ring homomorphism
\[
 \mathsf{cf}^{\mathrm{res}}_G:
 K_0(\Var_K)\longrightarrow\mathbb Z^{(\RAtoms_G^K)}.
\]
\end{corollary}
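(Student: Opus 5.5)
The plan is to transport the multiplicative structure of the atomic chemical formulae of~\cite[\S5.3]{KKPYY} through the bijection $\mathfrak{res}$ of \Cref{prop:atomic-correspondence-residualises}, and then to compose with the atomic motivic homomorphism. Concretely, $\mathfrak{res}$ extends linearly to an isomorphism of commutative monoids $\mathbb N^{(\Atoms_G^{K,F})}\to\mathbb N^{(\RAtoms_G^K)}$ and of abelian groups on the group completions. We declare the product of two residual atoms to be $\mathfrak{res}$ of the product of the corresponding atoms. With this definition, $\mathfrak{res}$ is tautologically an isomorphism of semirings, respectively rings. Commutativity, associativity, distributivity and the unit are inherited from the atomic side, the unit being the atom of $\operatorname{Spec}K$. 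No new verification is needed beyond the bijectivity already proved.

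Next we need to check that this transported product is the intrinsic one, so that the structure is not merely formal. On a product $X_1\times X_2$ the spectral components are products of components. The corresponding one-branch atomic factor is the external tensor product of the factors, and its branch is $a_1+a_2$. By \eqref{eq:res-tensor} and its external-product version in \Cref{thm:valued-residualisation-equivalence}, residualisation of the product is the external tensor product of the residualisations, with branch labels added. Hence the transported product is computed by branch addition and external tensor product of the zero-branch factors. Multiplicities $m_\alpha$ multiply because the degrees of the product covers multiply. This gives $\RCF_G(X_1\times X_2)=\RCF_G(X_1)\RCF_G(X_2)$, and therefore $\RCF_G=\mathfrak{res}\circ\CF_G^F$ termwise by \Cref{def:RAtom-X}.

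Finally, $K_0(\Var_K)$ is generated by classes of smooth projective varieties. By Bittner's presentation, its relations are $[\emptyset]=0$ and the blow-up relations $[\operatorname{Bl}_ZX]-[E]=[X]-[Z]$, with exceptional divisor $E=\mathbb P_Z(N_{Z/X})$.
\begin{itemize}
\item The disjoint-union identity \eqref{eq:RAtom-disjoint} makes $\RCF_G$ additive.
\item The projective-bundle identity \eqref{eq:RAtom-projective} gives $\RCF_G(E)=c\,\RCF_G(Z)$.
\item Combining this with \eqref{eq:RAtom-blowup} verifies the blow-up relation: $\RCF_G(\operatorname{Bl}_ZX)-c\RCF_G(Z)=\RCF_G(X)-\RCF_G(Z)$.
\end{itemize}
So $\RCF_G$ descends to a group homomorphism, and the product formula above shows it is multiplicative on generators. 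Products of smooth projective varieties generate multiplicatively, so the map is a ring homomorphism.

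The main obstacle is the second step. One must verify that the spectral decomposition of a product, together with its rigid-point choices, agrees with the product of decompositions, so that the atomic multiplication really is the external tensor product. I would cite the multiplicativity of $\CF_G^F$ from~\cite[\S5.3]{KKPYY} rather than reprove it, and reduce to the compatibility \eqref{eq:res-tensor}.
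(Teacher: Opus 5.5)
Your proposal is correct and follows the paper's proof. Like the paper, you transport the atomic multiplication through the bijection $\mathfrak{res}$ of \Cref{prop:atomic-correspondence-residualises}, then check Bittner's blow-up relation from \eqref{eq:RAtom-blowup} and \eqref{eq:RAtom-projective} together with additivity for disjoint unions. Your second step is dispensable: $\RCF_G=\mathfrak{res}\circ\CF_G^F$ holds directly by \Cref{def:RAtom-X} and the construction of $\mathfrak{res}$, so multiplicativity of $\RCF_G$ is inherited from that of $\CF_G^F$ in \cite[\S5.3]{KKPYY} without any analysis of the spectral covers of products.
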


\begin{proof}
The branch-residual splitting is a bijection on atomic equivalence classes, so the multiplication on atomic $F$-bundle formulae transports uniquely to residual atoms.  To verify Bittner's blow-up relation, let $E=\mathbb P_Z(N_{Z/X})$ be the exceptional divisor and put $c=\operatorname{codim}_X Z$.  By \eqref{eq:RAtom-blowup} and \eqref{eq:RAtom-projective},
\[
 \RCF_G(\operatorname{Bl}_{Z}X)-\RCF_G(E)
 =\RCF_G(X)+(c-1)\RCF_G(Z)-c\RCF_G(Z)
 =\RCF_G(X)-\RCF_G(Z).
\]
Together with additivity for disjoint unions, this is the relation in Bittner's presentation of $K_0(\Var_K)$ in characteristic zero~\cite{Bittner}.  The transported product gives multiplicativity.
\end{proof}

\begin{corollary}\label{cor:residual-nonrationality}
Let $X$ be a smooth projective variety of dimension $d$.  If a residual atom occurring in $\RCF_G(X)$ does not occur in the residual chemical formula of any smooth projective variety of dimension at most $d-2$, then $X$ is not rational.
\end{corollary}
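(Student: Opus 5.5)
The plan is to reduce the statement to the KKPYY rationality criterion, transported through the bijection of \Cref{prop:atomic-correspondence-residualises}. Suppose, for contradiction, that $X$ is rational of dimension $d$. Then $X$ is birational to $\mathbb P^d$. By weak factorisation, this birational map factors into a finite sequence of blow-ups and blow-downs along smooth centres, each centre having dimension at most $d-2$. I will track residual chemical formulae along this chain.

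The first step applies the blow-up identity \eqref{eq:RAtom-blowup} of \Cref{thm:canonical-RAtom} at each stage of the factorisation. Each blow-up along a smooth centre $Z$ with $\dim Z\leq d-2$ adds $(c-1)\RCF_G(Z)$, and each blow-down removes such a term. Comparing the two ends of the chain, and working in the group completion $\mathbb Z^{(\RAtoms_G^K)}$, gives an identity
\[
 \RCF_G(X)=\RCF_G(\mathbb P^d)+\sum_\ell n_\ell\,\RCF_G(Z_\ell),
 \qquad n_\ell\in\mathbb Z,\ \dim Z_\ell\leq d-2.
\]

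Next I handle $\RCF_G(\mathbb P^d)$. Applying \eqref{eq:RAtom-projective} to $\mathbb P^d=\mathbb P_{\mathrm{pt}}(\cO^{d+1})$ gives $\RCF_G(\mathbb P^d)=(d+1)\RCF_G(\mathrm{pt})$. For $d\geq2$, the point has dimension $0\leq d-2$, so this term has the same form as the centre terms. The case $d\leq1$ is vacuous or classical and will be handled separately.

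Taking the coefficient at the hypothesised residual atom $\mathfrak r$ then finishes the argument. The right-hand side is supported on residual atoms occurring in formulae of varieties of dimension at most $d-2$, so its coefficient at $\mathfrak r$ is $0$. The coefficient on the left is a positive spectral multiplicity, since $\mathfrak r$ occurs in $\RCF_G(X)$ and all coefficients of $\RCF_G(X)$ lie in $\mathbb N$. This is a contradiction, so $X$ is not rational.

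The main obstacle is the cancellation issue in the group completion. The support of a signed sum of formulae could a priori be smaller than the union of the supports, but that does not matter here. The point is only that $\mathfrak r$ lies outside every individual support on the right, which is exactly the hypothesis. One remaining point to check is that weak factorisation applies over the characteristic-zero field $K$, which is standard (Abramovich--Karu--Matsuki--W\l odarczyk). Alternatively, I would simply invoke the atomic criterion of \cite{KKPYY} directly and transport it through $\mathfrak{res}$.
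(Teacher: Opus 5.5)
Your argument is correct for $d\geq2$, but it takes a different route from the paper. The paper does no birational geometry here. It quotes the non-rationality criterion of \cite[\S6]{KKPYY} for geometric atomic $F$-bundles and transports it through the bijection $\mathfrak{res}:\Atoms_G^{K,F}\to\RAtoms_G^K$ of \Cref{prop:atomic-correspondence-residualises}. The only observation needed is that a residual atom occurs in dimension $\leq d-2$ exactly when its atomic preimage does, which holds because $\RCF_G(Y)$ is the image of $\CF_G^F(Y)$ under $\mathfrak{res}$.

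You instead re-derive the criterion inside the residual formalism. Weak factorisation, with projective intermediate varieties over a characteristic-zero field, combined with \eqref{eq:RAtom-blowup} and \eqref{eq:RAtom-projective}, gives $\RCF_G(X)=(d+1)\RCF_G(\mathrm{pt})+\sum_\ell n_\ell\RCF_G(Z_\ell)$ in $\mathbb Z^{(\RAtoms_G^K)}$. Comparing coefficients at $\mathfrak r$ then finishes the argument. Your route shows the obstruction is a formal consequence of the identities in \Cref{thm:canonical-RAtom} alone, so it needs no separate check that occurrence is preserved by $\mathfrak{res}$. The paper's route is shorter and keeps weak factorisation hidden inside \cite{KKPYY}. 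Since \Cref{thm:canonical-RAtom} is itself obtained by transporting the atomic identities of \cite{KKPYY} through the same bijection, both proofs rest on the same inputs.

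One small piece of bookkeeping should be made explicit. A centre may be disconnected with components of different codimensions, so blow up the components one at a time; codimension-one components give isomorphisms. Then use additivity to split each $\RCF_G(Z_\ell)$ over connected components of dimension $\leq d-2$.

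One correction: the case $d\leq1$ is neither vacuous nor classical, because the statement is false there. For $X=\mathbb P^1$, the point atom occurs in $\RCF_G(\mathbb P^1)=2\RCF_G(\mathrm{pt})$. It trivially occurs in no formula of a variety of dimension $\leq-1$, yet $\mathbb P^1$ is rational; the same happens for $X=\mathrm{pt}$. The corollary must therefore be read with $d\geq2$, as is implicit in the KKPYY criterion the paper invokes. You should drop the promise to handle $d\leq1$ separately.
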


\begin{proof}
The birational input is the non-rationality criterion of Katzarkov--Kontsevich--Pantev--Yu~\cite[\S6]{KKPYY}: if a geometric atomic $F$-bundle occurring in the chemical formula of a smooth projective $d$-fold does not occur for any smooth projective variety of dimension at most $d-2$, then the $d$-fold is not rational.  By \Cref{prop:atomic-correspondence-residualises}, branch-residual splitting induces a bijection
\[
 \Atoms_G^{K,F}\xrightarrow{\sim}\RAtoms_G^K
\]
which preserves atomic correspondences and hence the dimension filtration.  Therefore a residual atom occurs in dimension at most $d-2$ if and only if its unique geometric atomic preimage does.  Applying the criterion of~\cite[\S6]{KKPYY} to that preimage proves the assertion.

Thus the corollary transports the KKPYY obstruction through the branch-residual correspondence rather than strengthening the birational criterion.  The contribution here is the residual presentation of the same obstruction, together with the lattice strata carried functorially by its residual representatives.
\end{proof}

\subsection{Determinantal strata of residual families}

\begin{definition}\label{def:leading-residual-endomorphism}
For $(\cE,\nabla_{/S})\in\mathscr M_{\leq2,k}^{\mathrm{unfr}}(S)$, define the residual endomorphism
\begin{equation}\label{eq:theta-leading}
 \kappa^{\mathrm{res}}_{\cE}
 \coloneqq\left.u^2\nabla_{\partial_u}\right|_{u=0}
 \in\End_{\cO_S}(\cE/u\cE).
\end{equation}
For the vertical connection underlying an object of $\Fbun_0(S)$, this endomorphism has characteristic polynomial $T^n$ and satisfies $(\kappa^{\mathrm{res}}_{\cE})^n=0$ by \Cref{lem:intrinsic-branch}.  More generally, the rank-profile construction below applies whenever the endomorphism is fibrewise nilpotent.  If $n=\operatorname{rk}\cE$, its lattice Jordan profile at $s\in S$ is the rank tuple
\[
 \JType_{\mathrm{lat}}(s)
 \coloneqq\bigl(\operatorname{rk}\kappa_s^{\mathrm{res}},
 \operatorname{rk}(\kappa_s^{\mathrm{res}})^2,\ldots,
 \operatorname{rk}(\kappa_s^{\mathrm{res}})^{n-1}\bigr).
\]
This tuple determines, and is determined by, the Jordan partition of the nilpotent endomorphism $\kappa_s^{\mathrm{res}}$.
\end{definition}

\begin{theorem}\label{thm:residual-stratification}
The rank loci of the powers of $\kappa^{\mathrm{res}}_{\cE}$ determine a finite stratification of~$S$ by locally closed rank-profile strata, in the chosen analytic, formal or non-Archimedean category.  The function $\JType_{\mathrm{lat}}$ is constant on each stratum.  Every closed determinantal locus, and hence the resulting locally closed decomposition, commutes with base change.  Under a residual atomic correspondence represented by an isomorphism after base change, the decomposition is carried to the pullback decomposition.  Thus each representative spectral component contributing to $\RCF_G(X)$ carries a functorial residual lattice stratification.
\end{theorem}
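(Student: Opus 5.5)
The plan is to realise each rank-profile locus as a determinantal locus of a bundle endomorphism on the finite locally free $\cO_S$-module $\cE/u\cE$. By \eqref{eq:order-two-residual-bound}, $u^2\nabla_{\partial_u}$ preserves $\cE$. Hence $\kappa=\kappa^{\mathrm{res}}_{\cE}$ is an $\cO_S$-linear endomorphism of the rank-$n$ bundle $\cE_0=\cE/u\cE$, and so is each power $\kappa^k$. For integers $k$ and $m$ let $Z_{k,m}\subset S$ be the closed subspace cut out by the $(m+1)\times(m+1)$ minors of $\kappa^k$. The ideal of minors is a Fitting-type ideal: it is independent of the local trivialisation of $\cE_0$, since minors transform by the Cauchy--Binet formula under a change of basis. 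So $Z_{k,m}$ is a well-defined closed subspace in each of the three categories. At a point $s$ we have $s\in Z_{k,m}$ if and only if $\operatorname{rk}\kappa_s^k\le m$. Formation of $\cE/u\cE$, of $\kappa$, and of the minors commutes with pullback. Therefore $f^{-1}Z_{k,m}=Z_{k,m}(f^*\cE)$ scheme-theoretically for every base change $f$, which is the asserted base-change compatibility of the closed loci.

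Next I define the strata. For a tuple $\mathbf m=(m_1,\dots,m_{n-1})$ put
\[
 S_{\mathbf m}=\bigcap_k Z_{k,m_k}\setminus\bigcup_k Z_{k,m_k-1}.
\]
This is locally closed, and its points are exactly those $s$ with $\JType_{\mathrm{lat}}(s)=\mathbf m$. Only finitely many tuples occur, since $0\le m_k\le n$; when $\kappa$ is fibrewise nilpotent the possible tuples correspond to partitions of $n$. The $S_{\mathbf m}$ are pairwise disjoint and cover $S$ set-theoretically, so they form a finite stratification on which $\JType_{\mathrm{lat}}$ is constant by construction. The identification of the tuple with the Jordan partition is the standard fact that $\operatorname{rk}N^k-\operatorname{rk}N^{k+1}$ counts the Jordan blocks of size greater than $k$. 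Base change of the strata follows from base change of the $Z_{k,m}$, because preimages commute with intersections and complements. I will state the stratification at the level of locally closed subspaces with their natural closed-minus-closed structure, to avoid committing to a reduced structure.

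For functoriality, consider a residual atomic correspondence represented by $\Psi:f^*(\cA',\nabla'^{\mathrm{res}})\xrightarrow{\sim}(\cA,\nabla^{\mathrm{res}})$, as in the proof of \Cref{prop:atomic-correspondence-residualises}. The isomorphism $\Psi$ preserves the lattice, so it intertwines $u^2\nabla_{\partial_u}$ on $\cA$ with the pullback of $u^2\nabla'_{\partial_u}$. The quantum-variable rescalings allowed in correspondences multiply $\kappa$ by a unit and so do not change the rank loci; I would verify this explicitly. Reducing modulo $u$, $\Psi_0$ conjugates $\kappa^{\mathrm{res}}$ to $f^*\kappa'^{\mathrm{res}}$. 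Conjugate endomorphisms have the same minor ideals for all powers. Hence $Z_{k,m}(\cA)=f^{-1}Z_{k,m}(\cA')$, and the decomposition is the pullback decomposition.

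Applied to each component $(\cA_U,\nabla^{\mathrm{res}}_U)$, which lies in $\Fbun_0(U)$ and is therefore fibrewise nilpotent by \Cref{lem:intrinsic-branch}, this yields the functorial stratification for each spectral component of $\RCF_G(X)$. The step needing most care is the invariance under the prescribed quantum-variable changes of an atomic correspondence. I expect these to act on $\kappa$ by a unit scalar and, at worst, by a base automorphism, neither of which affects ranks; I would check this against the definition in \cite[\S5]{KKPYY}. Finiteness in the non-Archimedean case should require no extra argument, since only finitely many tuples exist.
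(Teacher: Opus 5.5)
Your proposal is correct and follows essentially the same route as the paper: closed loci cut out by the minors of the powers of $\kappa^{\mathrm{res}}_{\cE}$, strata formed as intersections of closed-minus-closed differences, base change via pullback of the minor ideals, and functoriality via the conjugation of $\kappa^{\mathrm{res}}$ induced by the correspondence isomorphism. Your Cauchy--Binet argument for frame-independence and your remark that a quantum-variable rescaling only multiplies $\kappa$ by a unit make explicit two points that the paper leaves implicit; for the second it simply cites \Cref{prop:atomic-correspondence-residualises}.
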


\begin{proof}
A lattice isomorphism conjugates $\kappa^{\mathrm{res}}_{\cE}$, so the ranks of all its powers are invariant.  Locally choose a frame of $\cE/u\cE$.  For fixed $m$ and $q$, let
\[
 D_{m,q}\coloneqq\bigl\{\operatorname{rk}((\kappa^{\mathrm{res}})^m)\leq q\bigr\}.
\]
Scheme-theoretically, $D_{m,q}$ is the closed subspace defined by the $(q+1)$-minors of a matrix for $(\kappa^{\mathrm{res}})^m$.  For every admissible rank tuple $\mathbf q=(q_1,\ldots,q_{n-1})$, the locus on which
\(
 \operatorname{rk}((\kappa^{\mathrm{res}})^m)=q_m
\)
for all $m$ is
\[
 \bigcap_{m=1}^{n-1}
 \bigl(D_{m,q_m}\setminus D_{m,q_m-1}\bigr),
 \qquad D_{m,-1}=\varnothing.
\]  These locally closed loci are pairwise disjoint and cover~$S$.  Only finitely many occur because a nilpotent endomorphism of an $n$-dimensional fibre has nilpotency index at most~$n$.  Determinantal ideals commute with pullback, and compatibility with residual atomic correspondences follows from the conjugacy statement in \Cref{prop:atomic-correspondence-residualises}.
\end{proof}

\begin{proposition}\label{prop:nilpotent-stack-map}
Let $(\cE,\nabla_{/S})$ be the vertical residual connection underlying a rank-$n$ object of $\Fbun_0(S)$, and let $\mathcal N_n\subset\mathfrak{gl}_n$ be the nilpotent cone over the coefficient field.  The residual endomorphism defines a morphism
\[
 \jmath_{\cE}:S\longrightarrow[\mathcal N_n/\GL_n]
\]
in the corresponding analytic, formal or non-Archimedean category.  It is functorial under base change and invariant under isomorphism of residual families.  The rank-profile stratification of \Cref{thm:residual-stratification} is the pullback of the Jordan-type stratification of $[\mathcal N_n/\GL_n]$.  Under specialisation, the Jordan partition can only move downwards in the dominance order.
\end{proposition}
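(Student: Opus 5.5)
The plan is to build $\jmath_{\cE}$ from the standard presentation of the quotient stack: a morphism $S\to[\mathcal N_n/\GL_n]$ is the same as a $\GL_n$-torsor $P\to S$ together with a $\GL_n$-equivariant morphism $P\to\mathcal N_n$. We take $P=\Isom(\cO_S^{\,n},\cE/u\cE)$, the frame torsor of the intrinsic lattice fibre. Since $\cE$ is a vector bundle on $S\times\mathbb D_u$, its restriction $\cE/u\cE$ is locally free of rank $n$ on $S$, so $P$ is a torsor in each of the three categories, locally trivial for the corresponding topology. The equivariant map sends a frame $\phi$ to $\phi^{-1}\kappa^{\mathrm{res}}_{\cE}\phi$. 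By \Cref{lem:intrinsic-branch} and \Cref{def:leading-residual-endomorphism}, this matrix has characteristic polynomial $T^n$, scheme-theoretically over arbitrary, possibly non-reduced, bases. We use the scheme structure on $\mathcal N_n$ cut out by the vanishing of the non-leading coefficients of the characteristic polynomial, which is the standard reduced nilpotent cone in characteristic zero. With this structure the map factors through $\mathcal N_n$, and equivariance under change of frame is immediate.

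Functoriality comes next. Forming $\cE/u\cE$, $\kappa^{\mathrm{res}}$ and frame torsors all commute with pullback along $S'\to S$, as in \Cref{prop:valued-residual-stack-descent}. Hence $\jmath_{f^*\cE}\simeq\jmath_{\cE}\circ f$, with a canonical $2$-isomorphism. An isomorphism of residual families $\cE\cong\cE'$ intertwines the connections, so it restricts to an isomorphism $\cE/u\cE\cong\cE'/u\cE'$ conjugating $\kappa^{\mathrm{res}}$. This isomorphism induces an isomorphism of torsors compatible with the equivariant maps, which is exactly a $2$-isomorphism $\jmath_{\cE}\Rightarrow\jmath_{\cE'}$.

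For the stratification, the Jordan-type strata of $\mathcal N_n$ are the $\GL_n$-stable locally closed loci cut out by the same determinantal conditions $\operatorname{rk}(X^m)=q_m$ that appear in the proof of \Cref{thm:residual-stratification}. These loci are nilpotent orbits, since in the nilpotent case the rank tuple determines the partition. Their ideals are generated by minors of the powers $X^m$, which are $\GL_n$-invariant, so they descend to substacks of $[\mathcal N_n/\GL_n]$. Pulling back along the equivariant map from $P$ gives the minors of $(\phi^{-1}\kappa\phi)^m$, which generate the same ideals as the minors of $\kappa^m$ on $S$. Thus the pullback to $S$ is the decomposition of \Cref{thm:residual-stratification}.

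The specialisation statement is the step needing a genuine input. Take a point $s_0$ in the closure of a stratum $\{\JType=\mathbf q\}$, for instance the special point of an arc or formal disc mapping into $S$ with generic point in that stratum. Lower semicontinuity of rank gives $\operatorname{rk}\kappa_{s_0}^m\le q_m$ for all $m$. By the classical Gerstenhaber--Hesselink description of nilpotent orbit closures in $\mathfrak{gl}_n$, this rank inequality for all powers is equivalent to the Jordan partition at $s_0$ being dominated by that of the generic point. The main obstacle I expect is making \emph{specialisation} precise in the formal and non-Archimedean settings. I would reduce to rank semicontinuity: the closed conditions $D_{m,q}$ are preserved under passage to closures in each topology. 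This reduction avoids needing orbit-closure geometry beyond the combinatorial equivalence between the rank inequalities and dominance.
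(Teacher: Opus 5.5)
Your proposal is correct and follows the paper's proof essentially step for step: you use the frame bundle of $\cE/u\cE$ and the equivariant conjugation map to $\mathcal N_n$, which descends to the quotient stack; base change and conjugation-invariance then give functoriality and invariance; the strata arise as inverse images of the rank-of-powers loci; and specialisation follows from semicontinuity of rank together with the combinatorial equivalence between the rank inequalities and dominance. Your choice of the characteristic-polynomial scheme structure on $\mathcal N_n$ is a useful refinement of the paper's appeal to Cayley--Hamilton. Over non-reduced bases the condition $X^n=0$ alone does not put the matrix in the reduced cone (for example $\varepsilon I_2$ over the dual numbers).
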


\begin{proof}
Let $V=\cE/u\cE$ and let $\Fr(V)\to S$ be its frame bundle.  By \Cref{lem:intrinsic-branch}, the characteristic polynomial of $\kappa^{\mathrm{res}}_{\cE}$ is $T^n$, so Cayley--Hamilton makes it a nilpotent endomorphism of~$V$.  In a frame of $V$ it is represented by a point of $\mathcal N_n$, and change of frame acts by conjugation.  Thus $\kappa^{\mathrm{res}}_{\cE}$ is equivalently a $\GL_n$-equivariant morphism
\[
 \Fr(V)\longrightarrow\mathcal N_n,
\]
which descends to the stated morphism to the quotient stack.  Pullback of $V$, its frame bundle and its endomorphism proves base-change compatibility, while an isomorphism of residual families conjugates the endomorphisms.  Finally, the Jordan type of a nilpotent endomorphism is determined by the ranks of its powers, so the determinantal strata are the inverse images of the Jordan-type strata.  Since ranks of matrices are upper semicontinuous, the ranks of all powers can only drop under specialisation; this is equivalent to downward movement in the dominance order on nilpotent Jordan partitions.
\end{proof}

\subsection{A low-dimensional representative criterion}

The preceding non-rationality criterion becomes effective once one can exclude all atoms arising in dimension at most two.  The residual endomorphism supplies a simple necessary condition for such a low-dimensional origin.

Let
\[
 \mathfrak E=(\cH,\nabla^{\mathrm{res}})/U
\]
be a zero-branch residual family.  For $m\geq1$, define the scheme-theoretic zero locus
\begin{equation}\label{eq:nilpotent-zero-locus}
 \Sigma_m(\mathfrak E)
 \coloneqq\Zero\bigl((\kappa^{\mathrm{res}})^m\bigr)\subset U.
\end{equation}
Its underlying set is
\(
 \{s\in U:(\kappa^{\mathrm{res}}_s)^m=0\}
\).
The closed subspace $\Sigma_m(\mathfrak E)$ is preserved by isomorphisms of residual families and commutes with base change.  Since the transported atomic equivalence relation need not preserve a chosen representative, the criterion below is deliberately stated as an existence assertion for one representative.

\begin{theorem}\label{thm:surface-exclusion}
If a residual atom $\mathfrak r$ occurs in the residual chemical formula of a smooth complex projective variety of dimension at most two, then $\mathfrak r$ admits a representative $\mathfrak E=(\cH,\nabla^{\mathrm{res}})/U$ for which $\Sigma_3(\mathfrak E)$ is non-empty.
\end{theorem}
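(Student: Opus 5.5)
The plan is to exhibit, for an atom coming from a variety $Y$ of dimension at most two, an explicit representative family together with a point at which the cube of the residual endomorphism vanishes. Since $\RAtoms_G^K$ is defined through the bijection of \Cref{prop:atomic-correspondence-residualises}, it suffices to work with the residualisation of a spectral component of the maximal $A$-model $F$-bundle of $Y$ itself, or of a point or curve when $\dim Y\le 1$. The key point is that a representative may be chosen; we do not need to control every representative.

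First, I will reduce to the situation where $Y$ has dimension exactly $d\le2$ and $\mathfrak r$ is the residualisation $\mathfrak E_\alpha$ of the one-branch factor over a component $\widetilde U_{Y,\alpha}$. Next, I will find a point of $\widetilde U_{Y,\alpha}$ where the residual endomorphism can be computed. On the $A$-model the residual endomorphism at a point $t$ is quantum multiplication by the Euler field, $E\star_t$, acting on $H^\bullet(Y)$ restricted to the generalised eigenspace. Near the large-radius limit along the classical direction, for example at $t$ with vanishing Novikov contribution in the formal or non-Archimedean model, this operator is $c_1(Y)\cup(-)$ plus the grading-type terms. Cup product with $c_1$ raises cohomological degree by $2$. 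Since $H^\bullet(Y)$ lives in degrees $0,\dots,2d\le4$, any product of three degree-raising cup operators vanishes. This forces $(\kappa^{\mathrm{res}})^3=0$ at such a point.

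Concretely, I will choose a point $s$ of the component lying over the rigid large-radius point (or over the base point used to define the atom) at which $\kappa_s$ is, up to the branch shift, the nilpotent operator $c_1\cup$ restricted to the relevant summand. After subtracting the branch, which \Cref{lem:intrinsic-branch} identifies with $-\tfrac1n\operatorname{tr}\kappa$, the residual $\kappa^{\mathrm{res}}_s$ is the nilpotent part. This nilpotent part is a restriction of a degree-raising map on a graded space of length at most $2d+1\le5$ in steps of $2$. Hence its cube is zero, and $s\in\Sigma_3(\mathfrak E)$. The base-change compatibility of $\Sigma_3$ then makes the non-emptiness intrinsic to the chosen representative.

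The main obstacle is the middle step: showing that the chosen component actually contains a point where $\kappa$ equals a graded degree-raising operator. Away from large radius, quantum corrections mix degrees, and the unramified spectral cover may exclude the classical point. I would first try to use the $\mathbb Z$-grading or Euler-field homogeneity of the $A$-model. The degree-$2$ shift and the unit/Euler scaling give a $\mathbb G_m$-action that contracts a point of each component toward a fixed point where $\kappa^{\mathrm{res}}$ is graded of positive degree. Failing that, I would fall back on the classification of surfaces. Atoms of surfaces arise, after blow-ups via \Cref{thm:canonical-RAtom}, from minimal models and points, and I would verify the cube-zero condition case by case, for example with the $K3$ square-zero representative mentioned in the introduction.
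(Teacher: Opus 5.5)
Your closing degree count is right, and it is how the paper finishes. Wherever a representative is defined at the large-radius point $q^\beta=0$, $t=0$, one has $\kappa^{\mathrm{res}}=-c_1\smile(-)$, and three degree-two multiplications kill $H^{\leq 4}$. The gap is in producing such a representative, which is the step you flag yourself, and neither of your proposed repairs closes it.

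Because $\kappa$ is nilpotent at the large-radius point, every eigenvalue branch of $-\mathsf{Eu}\star$ tends to $0$ there. So whenever the maximal $A$-model of $Y$ has more than one spectral component, that point lies in the collision locus, outside the unramified spectral cover. This happens for $\mathbb P^2$, ruled surfaces and blow-ups, where curves with $c_1\cdot\beta>0$ split the spectrum for $q^\beta\neq0$. In that situation a component of $Y$ itself of rank greater than one need not contain any point where $\kappa^{\mathrm{res}}$ is the graded operator $-c_1\smile$.

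The Euler $\mathbb G_m$-contraction does not help, for three reasons:
\begin{enumerate}[(i)]
 \item By homogeneity, $\kappa$ at one point of an Euler orbit is conjugate to a non-zero rescaling of $\kappa$ at another, so the Jordan type of $\kappa^{\mathrm{res}}$ is constant along the orbit.
 \item The limit point of the contraction is precisely the collision point, where the one-branch factor is not defined.
 \item Semicontinuity of ranks runs the wrong way: vanishing at a limit implies nothing about nearby points.
\end{enumerate}

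The missing idea is to use the freedom to choose the representative from a \emph{different} variety, via the atomic equivalence relation, instead of working with a component of $Y$. This is the paper's argument and what your fallback gestures at, but it needs two inputs you do not supply.

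First, you need the blow-up and projective-bundle identities of \Cref{thm:canonical-RAtom}. Contracting $(-1)$-curves represents every atom of a surface by an atom of its minimal model or of a point. Moreover $\mathbb P^1$, $\mathbb P^2$ and geometrically ruled surfaces are projective bundles, so their atoms are atoms of a point or of a curve.

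Second, you need the one-atom statement for varieties with nef canonical class \cite[Lemma~5.24]{KKPYY}, applied to curves of genus $\geq1$ and to minimal surfaces of Kodaira dimension $\geq0$. This lemma makes the unique atom the entire maximal $A$-model, whose logarithmic formal model extends to the large-radius boundary. Your degree argument then applies there, and the point atom is trivial because its residual endomorphism is zero.

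Case-by-case checks such as the $K3$ square-zero profile cover only special minimal surfaces. Without the nef-$K$ one-atom lemma, a general minimal surface of non-negative Kodaira dimension is not handled.
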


\begin{proof}
For a point, the maximal $A$-model is the rank-one trivial factor, its residual endomorphism is zero, and hence its cube-zero locus is the whole base.  Let $C$ be a smooth complex projective curve.  If $C\simeq\mathbb P^1$, the projective-bundle identity identifies each of its atoms with the atom of a point.  If $g(C)\geq1$, then $K_C$ is nef, so~\cite[Lemma~5.24]{KKPYY} shows that the maximal $A$-model $F$-bundle has a single atom.  In the logarithmic formal model, this atom extends to the large-radius boundary.  At the boundary point $q^\beta=0$, $t=0$, the defining formula of the quantum connection~\cite[\S3.5]{KKPYY} identifies the residual endomorphism with $-\,c_1(T_C)\smile(-)$.  Its square vanishes for dimensional reasons, and hence so does its cube.

Let $S$ be a smooth complex projective surface.  Successive contractions of $(-1)$-curves reduce $S$ to a minimal model $S_{\min}$.  The blow-up identity for atomic chemical formulae shows that every atom of $S$ is represented either by an atom of $S_{\min}$ or by the atom of a point.  If $S_{\min}$ has Kodaira dimension $-\infty$, it is either $\mathbb P^2$ or a geometrically ruled surface.  The projective-bundle identity then represents each of its atoms by an atom of a point or of a curve.

It remains to consider a minimal surface with non-negative Kodaira dimension.  Its canonical divisor is nef.  By the one-atom statement for varieties with nef canonical class in~\cite[Lemma~5.24]{KKPYY}, the unique atom is represented by the maximal $A$-model $F$-bundle.  Its logarithmic formal model extends to the large-radius boundary, where the defining formula of the $A$-model connection~\cite[\S3.5]{KKPYY} shows that the quantum product specialises to the ordinary cup product and the Euler element specialises to $c_1(T_S)$.  Consequently,
\[
 \kappa^{\mathrm{res}}_0
 =-\,c_1(T_S)\smile(-).
\]
Three successive multiplications by a class of degree two land in cohomological degree at least six, but the cohomology of a surface vanishes in those degrees.  Thus $(\kappa^{\mathrm{res}}_0)^3=0$.  In every case the atom therefore admits a geometric representative with a point in its cube-zero locus.  The theorem makes no assertion that non-emptiness is preserved under an arbitrary change of representative in the transported atomic equivalence relation.
\end{proof}

\begin{corollary}\label{cor:surface-exclusion-fourfold}
Let $X$ be a smooth complex projective fourfold, and let $\mathfrak r$ occur in $\RCF_G(X)$.  Assume that every representative of $\mathfrak r$ has empty cube-zero locus.  Then $X$ is not rational.
\end{corollary}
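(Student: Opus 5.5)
The proof is by contraposition, combining \Cref{thm:surface-exclusion} with the transported non-rationality criterion of \Cref{cor:residual-nonrationality}. Here $d=4$, so the relevant comparison class is smooth projective varieties of dimension at most $d-2=2$. Suppose, for a contradiction, that $\mathfrak r$ occurs in the residual chemical formula of some smooth complex projective variety $Y$ with $\dim Y\leq 2$. \Cref{thm:surface-exclusion} then produces a representative $\mathfrak E=(\cH,\nabla^{\mathrm{res}})/U$ of $\mathfrak r$ with $\Sigma_3(\mathfrak E)\neq\varnothing$. By hypothesis, every representative of $\mathfrak r$ has empty cube-zero locus. This is a contradiction.

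Hence $\mathfrak r$ does not occur in $\RCF_G(Y)$ for any smooth projective $Y$ of dimension at most two. Since $\mathfrak r$ does occur in $\RCF_G(X)$ and $\dim X=4$, \Cref{cor:residual-nonrationality} applies with $d=4$ and shows that $X$ is not rational.

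Two points of bookkeeping need checking, though neither should be a real obstacle.

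\emph{Same notion of representative.} ``Every representative'' in the hypothesis must be read in the same sense as the existence statement of \Cref{thm:surface-exclusion}. That theorem produces a geometric representative, namely a local residual atom in the sense of \Cref{def:residual-atom-category} lying in the equivalence class $\mathfrak r$. The hypothesis quantifies over all such local residual atoms in the class, so the geometric one is covered, and the contradiction is legitimate. This matching is the only point where care is needed, because $\Sigma_3$ is not asserted to be an invariant of the class. The hypothesis is deliberately universal precisely so that the existential conclusion of \Cref{thm:surface-exclusion} suffices.

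\emph{Coefficient field.} The statement is over $\C$, so $K=\C$. This agrees with the complex setting of \Cref{thm:surface-exclusion}, and \Cref{cor:residual-nonrationality} is stated for an arbitrary characteristic-zero field, so it applies with $K=\C$.
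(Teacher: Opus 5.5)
Your proof is correct and follows the paper's argument exactly. \Cref{thm:surface-exclusion} gives every atom realised in dimension at most two a representative with non-empty cube-zero locus, so the universal hypothesis excludes all such realisations, and \Cref{cor:residual-nonrationality} with $d=4$ then applies; your point that the hypothesis must range over the same representatives that \Cref{thm:surface-exclusion} produces matches the paper's remark that $\Sigma_3$ is not claimed to be an invariant of the atomic class.
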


\begin{proof}
By \Cref{thm:surface-exclusion}, an atom occurring in dimension at most two has at least one representative with non-empty cube-zero locus.  The hypothesis therefore excludes every realisation of $\mathfrak r$ in dimension at most two, and \Cref{cor:residual-nonrationality} applies.
\end{proof}

The corollary is deliberately stated uniformly over all representatives.  A single empty cube-zero locus is sufficient only when the comparison map to every other representative is known to preserve emptiness, for example under an isomorphism or a surjective base change.  The criterion is not asserted to be necessary.

\subsection{The residual profile of a projective K3 surface}

For a projective $K3$ surface, the preceding dimensional bound can be sharpened considerably.  By cosection localisation, the ordinary virtual class of the moduli space of stable maps vanishes for every non-zero curve class; the non-trivial enumerative theory is the reduced theory~\cite{KiemLi,MPT}.  The ordinary genus-zero quantum product entering the $A$-model $F$-bundle is therefore the classical cup product.

Let $S$ be a projective $K3$ surface, write $\mathbf 1\in H^0(S)$ for the unit and $\mathsf p\in H^4(S)$ for the point class, and use coordinates $t^0$ and $t^4$ in these two directions.  Coordinates in $H^2(S)$ have Euler weight zero.  Since $c_1(T_S)=0$, the Euler element on the Hodge base is
\begin{equation}\label{eq:k3-euler-element}
 \mathsf{Eu}=t^0\mathbf 1-t^4\mathsf p.
\end{equation}

\begin{theorem}\label{thm:k3-residual-profile}
The maximal $A$-model $F$-bundle of a projective $K3$ surface has a single spectral branch.  After scalar branch removal, its residual endomorphism is
\begin{equation}\label{eq:k3-residual-operator}
 \kappa^{\mathrm{res}}=t^4\,L_{\mathsf p},
 \qquad
 L_{\mathsf p}(x)=\mathsf p\smile x.
\end{equation}
In particular,
\[
 (\kappa^{\mathrm{res}})^2=0,
 \qquad
 \operatorname{rk}\kappa^{\mathrm{res}}\leq1
\]
identically on the Hodge base.  The residual lattice stratification has the following two profiles
\[
 \operatorname{rk}\kappa^{\mathrm{res}}=0
 \quad\text{on }\{t^4=0\},
 \qquad
 \operatorname{rk}\kappa^{\mathrm{res}}=1
 \quad\text{on }\{t^4\neq0\}.
\]
\end{theorem}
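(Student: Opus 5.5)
The proof computes the residual endomorphism directly from the defining formula of the $A$-model quantum connection, using the triviality of the genus-zero quantum product for a $K3$ surface. By cosection localisation~\cite{KiemLi,MPT}, every ordinary genus-zero Gromov--Witten invariant with non-zero curve class vanishes. Hence the big quantum product on the Hodge base is the cup product, $\star_t=\smile$, for every $t$. The $u^2\partial_u$-component of the quantum connection in the convention of~\cite[\S3.5]{KKPYY} then gives
\[
 \kappa_\nabla=-\,\mathsf{Eu}\smile(-)+(\text{grading operator})\cdot u|_{u=0}.
\]
After restriction to $\cH/u\cH$, only the Euler multiplication survives. So, up to the paper's sign convention,
\[
 \kappa_\nabla=-\mathsf{Eu}\smile(-)=-t^0\,\mathrm{Id}+t^4L_{\mathsf p},
\]
using \eqref{eq:k3-euler-element}. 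Here the coordinates $t^2$ do not appear, because $c_1(T_S)=0$ and $H^2$ directions have Euler weight zero.

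Next I read off the spectrum. Since $L_{\mathsf p}$ raises cohomological degree by four, $L_{\mathsf p}^2=0$. Therefore $\kappa_\nabla+t^0I=t^4L_{\mathsf p}$ is nilpotent, and $\det(TI-\kappa_\nabla)=(T+t^0)^{n}$ with $n=\dim H^*(S)=24$. So the bundle lies in $\Fbun_{t^0}$, with a single spectral branch $a=t^0$. This agrees with \Cref{lem:intrinsic-branch}: $\operatorname{tr}(t^4L_{\mathsf p})=0$, so $-n^{-1}\operatorname{tr}\kappa_\nabla=t^0$. Applying \Cref{thm:valued-residualisation-equivalence}, specifically \eqref{eq:valued-vertical-residual}, gives $\kappa^{\mathrm{res}}=\kappa_\nabla+t^0I=t^4L_{\mathsf p}$, which is \eqref{eq:k3-residual-operator}.

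Finally, $(\kappa^{\mathrm{res}})^2=(t^4)^2L_{\mathsf p}^2=0$. The image of $L_{\mathsf p}$ is $H^4(S)=\C\mathsf p$, spanned by $L_{\mathsf p}(\mathbf 1)=\mathsf p$, so $L_{\mathsf p}$ has rank exactly one. Consequently $\operatorname{rk}\kappa^{\mathrm{res}}_s$ equals $1$ where $t^4\neq0$ and $0$ where $t^4=0$. By \Cref{thm:residual-stratification}, these are the two rank-profile strata; all higher-power ranks vanish.

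The main obstacle is the first step: pinning down precisely which product and Euler field enter the maximal $A$-model $F$-bundle of~\cite{KKPYY}, and in particular that the big (not merely small) quantum product is classical. I would justify this by noting that the vanishing of the ordinary virtual class for $\beta\neq0$ kills all genus-zero invariants with arbitrary insertions. I would also check that the sign convention yields branch $t^0$ rather than $-t^0$; only the sign of $\kappa^{\mathrm{res}}$ could change, which does not affect the rank statements.
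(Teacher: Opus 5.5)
Your proposal is correct and follows essentially the same route as the paper: classical quantum product, hence $\kappa_\nabla=-L_{\mathsf{Eu}}=-t^0I+t^4L_{\mathsf p}$, a single branch $a=t^0$ read off from the characteristic polynomial $(T+t^0)^{\dim H^\bullet(S)}$, and residualisation leaving $t^4L_{\mathsf p}$. The square-zero and rank-one statements then come from $L_{\mathsf p}$ sending $\mathbf 1$ to $\mathsf p$ and killing $H^2(S)\oplus H^4(S)$, and your trace-formula cross-check via \Cref{lem:intrinsic-branch} is a harmless addition.
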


\begin{proof}
Because the ordinary quantum product is the cup product, quantum multiplication by the Euler element~\eqref{eq:k3-euler-element} is
\[
 L_{\mathsf{Eu}}=t^0I-t^4L_{\mathsf p}.
\]
With the $A$-model connection convention used here, the residual endomorphism is $\kappa_{\nabla}=-L_{\mathsf{Eu}}$; hence
\[
 \kappa_{\nabla}=-t^0I+t^4L_{\mathsf p}.
\]
The operator $L_{\mathsf p}$ sends $\mathbf 1$ to $\mathsf p$ and annihilates $H^2(S)\oplus H^4(S)$.  Thus $L_{\mathsf p}^2=0$ and $\operatorname{rk}L_{\mathsf p}=1$.  The characteristic polynomial of $\kappa_{\nabla}$ is $(T+t^0)^{\dim H^\bullet(S)}$, so the spectral cover has the single branch $a=t^0$.  Scalar residualisation adds $t^0I$ to $\kappa_{\nabla}$ and leaves~\eqref{eq:k3-residual-operator}.  If $t^4=0$, the operator vanishes.  If $t^4\neq0$, it is a non-zero scalar multiple of the rank-one map $L_{\mathsf p}$.  This gives the two stated rank strata.
\end{proof}

\begin{corollary}\label{cor:k3-exclusion-test}
A residual family isomorphic to the standard maximal $A$-model residual representative of a projective $K3$ surface necessarily satisfies
\[
 (\kappa^{\mathrm{res}})^2=0,
 \qquad
 \operatorname{rk}\kappa^{\mathrm{res}}\leq1
\]
throughout its base.  Hence a residual family violating either bound is not isomorphic, as a residual family, to that standard $K3$ representative.
\end{corollary}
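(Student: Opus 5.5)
The plan is to deduce the corollary directly from \Cref{thm:k3-residual-profile} together with the conjugation invariance of the residual endomorphism. An isomorphism of residual families is a bundle isomorphism intertwining the residualised connections, possibly after a base identification $f$. Restricting it to $u=0$ gives an isomorphism of the intrinsic lattice fibres $\cE/u\cE$ which conjugates $\kappa^{\mathrm{res}}$ of the given family to $f^*\kappa^{\mathrm{res}}$ of the standard $K3$ representative. This is the same conjugacy already used in the proofs of \Cref{thm:residual-stratification} and \Cref{prop:nilpotent-stack-map}.

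\textbf{Step 1: the two identities are conjugation invariant.}
\begin{itemize}
\item The identity $(\kappa^{\mathrm{res}})^2=0$ holds for the standard representative by \Cref{thm:k3-residual-profile}.
\item Conjugation by an invertible endomorphism $\Psi_0$ preserves it, since $(\Psi_0^{-1}\kappa\Psi_0)^2=\Psi_0^{-1}\kappa^2\Psi_0$.
\item Pullback along $f$ commutes with composition, so the identity also pulls back.
\item The same holds scheme-theoretically on non-reduced bases, because the identity is an equation between sections of $\End(\cE/u\cE)$ and is transported verbatim.
\end{itemize}

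\textbf{Step 2: the rank bound is conjugation invariant.} The bound $\operatorname{rk}\kappa^{\mathrm{res}}\le 1$ means that all $2\times2$ minors vanish; equivalently, the base equals the determinantal locus $D_{1,1}$ of \Cref{thm:residual-stratification}.
\begin{itemize}
\item Determinantal ideals are invariant under conjugation.
\item They commute with base change.
\end{itemize}
Hence $D_{1,1}$ is the whole base for the given family as well. Pointwise, the rank at $s$ equals the rank of the standard operator $t^4L_{\mathsf p}$ at $f(s)$, which is at most one.

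\textbf{Step 3: the contrapositive.} A family violating either bound cannot be isomorphic to the standard $K3$ representative.

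The only delicate point is to make precise what \enquote{isomorphic as a residual family} allows. If the base identification is an isomorphism, everything is immediate. If one allows a pullback along an arbitrary base morphism, the argument still works, because both conditions are closed and defined by equations that pull back. No genuine obstacle is expected beyond stating this convention.
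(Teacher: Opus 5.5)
Your proposal is correct and matches the paper's one-line proof: an isomorphism of residual families conjugates $\kappa^{\mathrm{res}}$ fibrewise, after pullback along the base identification, and the square-zero identity and the rank bound of \Cref{thm:k3-residual-profile} are both conjugation-invariant, so they transfer. Your remarks on non-reduced bases and determinantal ideals are harmless elaborations of that same argument.
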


\begin{proof}
An isomorphism of residual families conjugates the residual endomorphisms fibrewise, so the assertion follows directly from \Cref{thm:k3-residual-profile}.
\end{proof}

\begin{corollary}\label{cor:k3-fourfold-obstruction}
Let $X$ be a smooth complex projective fourfold and let $\mathfrak r$ occur in $\RCF_G(X)$.  Assume that an independent Hodge-theoretic argument forces every surface realisation of $\mathfrak r$ to be a projective $K3$ surface.  If every representative of $\mathfrak r$ violates at least one of the bounds
\[
 (\kappa^{\mathrm{res}})^2=0,
 \qquad
 \operatorname{rk}\kappa^{\mathrm{res}}\leq1,
\]
then $X$ is not rational.
\end{corollary}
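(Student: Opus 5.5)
The plan is to reduce the statement to \Cref{cor:residual-nonrationality} with $d=4$. That corollary says it suffices to show that $\mathfrak r$ does not occur in the residual chemical formula of any smooth projective variety of dimension at most $2$. So I would argue by contradiction. Suppose $\mathfrak r$ occurs in $\RCF_G(Y)$ for some smooth projective $Y$ with $\dim Y\leq2$. I would then split into the cases $\dim Y\leq1$ and $\dim Y=2$.

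For $\dim Y=2$, the Hodge-theoretic hypothesis says that every surface realisation of $\mathfrak r$ is a projective $K3$ surface, so $Y=S$ is a $K3$. By \Cref{thm:k3-residual-profile}, the maximal $A$-model $F$-bundle of $S$ has a single branch. Its residual spectral component is therefore the standard maximal $A$-model residual representative, and that representative is a representative of $\mathfrak r$. By \Cref{cor:k3-exclusion-test}, any family isomorphic to this standard representative satisfies both $(\kappa^{\mathrm{res}})^2=0$ and $\operatorname{rk}\kappa^{\mathrm{res}}\leq1$ on its whole base. This contradicts the assumption that every representative of $\mathfrak r$ violates at least one of the bounds.

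For $\dim Y\leq1$, the case analysis in the proof of \Cref{thm:surface-exclusion} already supplies a representative of each atom coming from a point or a curve. For a point, the residual endomorphism is zero. For a curve, it is $-c_1(T_C)\smile(-)$ at the large-radius point, whose square vanishes. In either case the representative satisfies both bounds at least at one point of its base. I need "violates at least one bound" to mean violation somewhere on the base, as opposed to the identity failing identically. With that reading, I need a representative satisfying both bounds on its entire base.

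For a point this is immediate, since the base is a point and $\kappa^{\mathrm{res}}=0$. For a curve of genus at least one, I would use the maximal $A$-model over the Hodge base. The quantum corrections involve only $H^2$ directions, and for a curve the product by any element of positive degree squares to zero. So $\kappa^{\mathrm{res}}$ is the residual part of multiplication by the Euler element, which lands in $H^{\geq1}$ with square zero and rank at most one, since the top-degree part is one-dimensional. I would check this carefully, and if it fails, restrict the representative to the large-radius boundary point. Rational curves reduce to the point case.

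The main obstacle is this interpretive point: the hypothesis must be read as "every representative violates a bound somewhere", and for dimension at most one I need representatives satisfying both bounds globally, or else the point-restricted representative allowed by the paper's representative conventions. Once that is settled, \Cref{cor:residual-nonrationality} gives that $X$ is not rational.
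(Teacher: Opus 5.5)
Your argument is correct, and its core is the paper's own. You reduce to \Cref{cor:residual-nonrationality} with $d=4$. You then use the Hodge-theoretic hypothesis together with \Cref{cor:k3-exclusion-test}, read as you do as a bound holding throughout the base, to exclude surface realisations. The paper's proof consists of exactly these two steps.

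The one difference is that you treat realisations on points and curves explicitly, whereas the paper's two-line proof leaves them implicit. Your treatment does work. A curve of genus $\geq1$ has no non-constant genus-zero stable maps, so its quantum product is classical. On the Hodge base one therefore has $\kappa^{\mathrm{res}}=-L_{c_1(T_C)}$ identically, which has square zero and rank at most one. The point and $\mathbb P^1$ carry the atom of a point, for which $\kappa^{\mathrm{res}}=0$.

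You should drop your fallback of restricting to the large-radius point. \Cref{def:residual-atom-category} explicitly forbids replacing a non-constant family by a single fibre, so that restriction is not a representative.

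The curve computation can also be avoided altogether by the projective-bundle identity \eqref{eq:RAtom-projective}. Suppose $\mathfrak r$ occurs in $\RCF_G(Y)$ with $\dim Y\leq1$. Then it occurs in $\RCF_G\bigl(\mathbb P_Y(\cO_Y^{\oplus(3-\dim Y)})\bigr)$, that is, in $\RCF_G(\mathbb P^2)$ or in $\RCF_G(Y\times\mathbb P^1)$. Neither surface is a $K3$ surface, since both have $p_g=0$. So the Hodge-theoretic hypothesis already excludes every realisation in dimension at most one. Only the $K3$ case then needs the residual bounds, which is the case the paper's proof addresses.
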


\begin{proof}
The hypotheses and \Cref{cor:k3-exclusion-test} exclude every surface realisation of $\mathfrak r$.  The conclusion follows from \Cref{cor:residual-nonrationality}.
\end{proof}

Benedetti--Manivel--Perrin compute the small quantum cohomology of Gushel--Mukai fourfolds, deduce that a very general Gushel--Mukai fourfold is not rational, and show that a rational Gushel--Mukai fourfold has the same rational cohomology as a $K3$ surface~\cite{BMP}.  Their second conclusion makes the $K3$ bounds in \Cref{cor:k3-exclusion-test} relevant to the special locus on which rationality is not excluded by their theorem for very general fourfolds.  The criterion developed here does not reprove their irrationality theorem: an independent residual obstruction would still require computing the residual endomorphism on the relevant $K3$-type spectral component and verifying the uniform hypotheses of \Cref{cor:k3-fourfold-obstruction}.  No new unconditional Gushel--Mukai irrationality statement is claimed in this paper.

\subsection{Unit-direction descent of the \texorpdfstring{$A$}{A}-model residual factors}

Scalar branch splitting is already non-trivial along the universal coordinate of the cohomological unit.  This direction occurs in the $A$-model base of every smooth projective variety and provides a test of the variable-branch formalism.

Let $t^0$ denote the coordinate corresponding to the unit $\mathbf 1\in H^0(X)$, let
\[
 B_X^0\coloneqq\{t^0=0\}\subset B_X,
 \qquad
 p:B_X\longrightarrow B_X^0
\]
be the projection which forgets the unit coordinate, and write $(\cH_X,\nabla_X)/B_X$ for the $A$-model $F$-bundle.  Locally on the remaining coordinates the base is the product $\mathbb A^1_{t^0}\times B_X^0$.

\begin{theorem}\label{thm:unit-direction-residual-descent}
The quantum product is pulled back from $B_X^0$.  Moreover, $\cH_X\cong p^*\cH_X^0$ and the $A$-model connection satisfies
\begin{equation}\label{eq:unit-direction-connection-splitting}
 \nabla_X=p^*\nabla_X^0+d(t^0/u)I,
\end{equation}
where $(\cH_X^0,\nabla_X^0)$ denotes the restriction to $t^0=0$.  Let $\alpha$ be a connected component of the unramified reduced spectral cover over $B_X^0$, with branch function $a_{\alpha,0}$.  Its extension along the unit line has branch
\begin{equation}\label{eq:unit-direction-branch-translation}
 a_\alpha=p^*a_{\alpha,0}+t^0,
\end{equation}
and its residual factor descends:
\begin{equation}\label{eq:unit-direction-residual-pullback}
 \Res_{a_\alpha}(\cH_\alpha,\nabla_\alpha)
 \cong
 p^*\Res_{a_{\alpha,0}}
 (\cH_{\alpha,0},\nabla_{\alpha,0}).
\end{equation}
In particular, the classifying morphism of the zero-branch residual factor is constant along the unit direction.  Before scalar residualisation, the atomic $F$-bundle is not isomorphic, in the prescribed-lattice category, to the pullback of an $F$-bundle on $B_X^0$.
\end{theorem}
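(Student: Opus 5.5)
The plan is to derive everything from the string equation (fundamental class axiom) of genus-zero Gromov--Witten theory, and then apply \Cref{thm:valued-residualisation-equivalence}.

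\textbf{Step 1: the product does not depend on $t^0$.} Write the genus-zero potential as $\Phi=\Phi_{\mathrm{cl}}+\Phi_{\mathrm{q}}$. By the fundamental class axiom, the quantum part $\Phi_{\mathrm{q}}$ is independent of $t^0$. The classical part is cubic, and its $t^0$-dependence enters only through $\tfrac16(t^0)^3\langle\mathbf 1,\mathbf 1,\mathbf 1\rangle$-type terms and $t^0\langle\mathbf 1,x,y\rangle$ terms. These contribute third derivatives that are constant, equal to the Poincaré pairing. Hence the structure constants $\partial_a\partial_b\partial_c\Phi$ are constant in $t^0$. This shows that $\star$ is pulled back from $B_X^0$, and that $\mathbf 1$ remains its unit.

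\textbf{Step 2: comparing the connections.} Write the $A$-model connection in the convention of \cite[\S3.5]{KKPYY}:
\[
\nabla_X=d+u^{-1}\sum_a dt^a\,(T_a\star)+\bigl(-u^{-2}\mathsf{Eu}\star+u^{-1}\mu\bigr)du ,
\]
with the signs chosen to match $\kappa_\nabla=-L_{\mathsf{Eu}}$ as in \Cref{thm:k3-residual-profile}. The Euler field has the form $\mathsf{Eu}=t^0\mathbf 1+\mathsf{Eu}^0$. Here $\mathsf{Eu}^0$ involves only the other coordinates and $c_1$, because $\mathbf 1$ has degree $0$ and so Euler weight $1$. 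It follows that
\[
\mathsf{Eu}\star=t^0 I+p^*(\mathsf{Eu}^0\star),
\qquad
dt^0\,(\mathbf 1\star)=dt^0\, I .
\]
Collecting the $t^0$-terms gives $u^{-1}dt^0-t^0u^{-2}du=d(t^0/u)$. Every remaining term is pulled back from $B_X^0$ by Step 1. This proves \eqref{eq:unit-direction-connection-splitting}, with $\cH_X=p^*\cH_X^0$ as the trivial bundle with fibre $H^\bullet(X)$.

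\textbf{Step 3: branches and descent.} By Step 2, $\kappa_{\nabla_X}=p^*\kappa_{\nabla_X^0}-t^0I$. The generalised eigenspace decomposition is therefore pulled back, and each eigenvalue is shifted by $-t^0$. This identifies the spectral cover over $B_X$ with the product of $\mathbb A^1_{t^0}$ and the cover over $B_X^0$. Each component $\alpha$ then has its summand $\cH_\alpha=p^*\cH_{\alpha,0}$, with connection
\[
p^*\nabla_{\alpha,0}+d(t^0/u)I .
\]
\Cref{lem:intrinsic-branch} then yields \eqref{eq:unit-direction-branch-translation}. By additivity of $d(\cdot/u)$, residualisation gives
\[
\nabla_\alpha-d(a_\alpha/u)
=p^*\bigl(\nabla_{\alpha,0}-d(a_{\alpha,0}/u)\bigr),
\]
which is \eqref{eq:unit-direction-residual-pullback}. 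Constancy of the classifying morphism follows, since that morphism is functorial under pullback.

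\textbf{Step 4: non-descent before residualisation.} Suppose $(\cH_\alpha,\nabla_\alpha)\cong p^*\cG$ for some $F$-bundle $\cG$ on $B_X^0$. Taking the conjugation-invariant trace of $\kappa$ would give a branch $-\tfrac1n\operatorname{tr}\kappa$ that is pulled back, hence independent of $t^0$. This contradicts $\partial_{t^0}a_\alpha=1$.

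The main obstacle is Step 2's bookkeeping of conventions. One must check that the Euler field has coefficient exactly $1$ on $t^0$, and that the signs in the KKPYY connection produce $+d(t^0/u)$ rather than its negative. I would verify both directly against the normalisation $\kappa_\nabla=-\mathsf{Eu}\star$ already used in \Cref{thm:k3-residual-profile}. A second point is working in the formal or non-Archimedean setting, where the unit line is a formal or analytic coordinate. There I would invoke the fundamental class axiom coefficientwise in Novikov variables.
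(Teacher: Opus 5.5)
Your proposal is correct. Steps 1--3 follow the paper's proof closely:
\begin{itemize}
\item the unit (fundamental class) axiom makes $\star$ independent of $t^0$;
\item the Euler field is translated by $t^0\mathbf 1$, so the two $t^0$-terms assemble to $d(t^0/u)I$;
\item the eigenvalues of $\kappa$ shift by $-t^0$;
\item additivity of $d(\cdot/u)$ gives descent of the residual factor.
\end{itemize}
One small point in Step 3. Saying that the generalised eigenspace decomposition of $\kappa$ is pulled back concerns only $u=0$. To identify the $F$-bundle summands $\cH_\alpha$ with $p^*\cH_{\alpha,0}$, you should argue as the paper does. Pull back the spectral decomposition over $B_X^0$ and twist it by the central form $d(t^0/u)I$. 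This is a decomposition of $(\cH_X,\nabla_X)$ into $F$-subbundles with the correct spectra, so it is the spectral decomposition by uniqueness.

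The genuine difference is in Step 4, and both routes work.

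The paper argues in the unit direction. An isomorphism to a pullback connection must satisfy $\partial_{t^0}g=-u^{-1}g$, so $g=e^{-t^0/u}g_0$, which is not holomorphic at $u=0$.

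You instead use that a lattice-preserving isomorphism conjugates $\kappa$. Then $\operatorname{tr}\kappa$, and with it the intrinsic branch of \Cref{lem:intrinsic-branch}, would be independent of $t^0$, contradicting $\partial_{t^0}a_\alpha=1$.

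Your route is shorter and frame-free. It needs only the residual endomorphism at $u=0$, through the branch translation formula. It also identifies the obstruction as exactly the non-constant branch that residualisation removes.

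The paper's route instead exhibits the specific scalar gauge $e^{-t^0/u}$ that would be required. The remark after the theorem uses this: that gauge trivialises $d(t^0/u)$ on the punctured disc but does not extend across $u=0$.
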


\begin{proof}
The unit axiom implies that the quantum product is independent of $t^0$ and that multiplication by $\mathbf 1$ is the identity.  The $A$-model connection and Euler element are given by the standard formulae
\[
 \nabla_{\partial_{t^0}}
 =\partial_{t^0}+u^{-1}I,
 \qquad
 \nabla_{\partial_u}
 =\partial_u-u^{-2}(\mathsf{Eu}\star(-))+u^{-1}\mathsf{Gr},
\]
and
\[
 \mathsf{Eu}(t^0,b)=\mathsf{Eu}(0,b)+t^0\mathbf 1;
\]
see~\cite[Equations~(3.30)--(3.31)]{KKPYY}.  All connection components tangent to $B_X^0$ are therefore pulled back from the zero slice, whereas the two components involving $t^0$ differ from their pullbacks by
\[
 \frac{dt^0}{u}I-\frac{t^0}{u^2}\,du\,I
 =d(t^0/u)I.
\]
This proves~\eqref{eq:unit-direction-connection-splitting}. 
The residual endomorphism is $\kappa_{\nabla_X}=-L_{\mathsf{Eu}}$.  Hence translating the Euler element by $t^0\mathbf 1$ translates every eigenvalue of $\kappa_{\nabla_X}$ by $-t^0$.  Since a branch $a$ is defined by the eigenvalue $-a$, the branch functions satisfy~\eqref{eq:unit-direction-branch-translation}.  Uniqueness of spectral decomposition identifies the corresponding spectral components along the unit line.  Finally,
\begin{align*}
 \nabla_\alpha-d(a_\alpha/u)I
 &=p^*\nabla_{\alpha,0}+d(t^0/u)I
   -d\bigl((p^*a_{\alpha,0}+t^0)/u\bigr)I\\
 &=p^*\bigl(\nabla_{\alpha,0}-d(a_{\alpha,0}/u)I\bigr),
\end{align*}
which gives~\eqref{eq:unit-direction-residual-pullback}.  After residualisation, the component in the unit direction is $\partial_{t^0}$, so the residual factor is a pullback.  Conversely, an isomorphism from the original connection to a pullback connection would, in the unit direction, have to solve
\[
 \partial_{t^0}g=-u^{-1}g.
\]
Thus $g=e^{-t^0/u}g_0$ with $g_0$ independent of $t^0$.  The scalar factor $e^{-t^0/u}$ is not holomorphic at $u=0$ and cannot preserve the prescribed lattice.  Hence no such isomorphism exists in the $F$-bundle category, proving the final assertion.
\end{proof}

On the punctured $u$-disc, the same scalar exponential does remove $d(t^0/u)$.  The obstruction is its failure to extend holomorphically across $u=0$ on the prescribed lattice.  Thus \Cref{thm:unit-direction-residual-descent} is not a change of holomorphic gauge inside the category of $F$-bundles; it is the variable scalar residualisation on the intrinsic lattice.

\begin{corollary}\label{cor:calabi-yau-unit-branch}
Let $X$ be a smooth projective Calabi--Yau variety, let $b_{\mathrm{LR}}\in B_X^0$ denote the large-radius central point, and assume that the maximal $A$-model $F$-bundle has a single spectral branch.  Write $a_{X,0}$ for the branch on the slice $t^0=0$.  Then
\[
 a_X=p^*a_{X,0}+t^0,
 \qquad
 a_{X,0}(b_{\mathrm{LR}})=0.
\]
In particular, the restriction of $a_X$ to the unit line
\(
 \mathbb A^1_{t^0}\times\{b_{\mathrm{LR}}\}
\)
is $t^0$.  Scalar residualisation is the pullback of the zero-branch factor on $t^0=0$.
\end{corollary}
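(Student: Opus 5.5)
The plan is to obtain the corollary directly from \Cref{thm:unit-direction-residual-descent}, adding one computation at the large-radius point. Because the maximal $A$-model $F$-bundle has a single spectral branch, the unramified reduced spectral cover over $B_X^0$ has one component. \Cref{eq:unit-direction-branch-translation} then reads $a_X=p^*a_{X,0}+t^0$, where $a_{X,0}$ is the branch on the slice $t^0=0$. By \Cref{lem:intrinsic-branch}, this branch is intrinsically $a_{X,0}=-n^{-1}\operatorname{tr}(\kappa_{\nabla^0_X})$, with $n=\dim H^\bullet(X)$. For the same reason, \Cref{eq:unit-direction-residual-pullback} identifies the scalar residualisation with the pullback of the zero-branch factor on $t^0=0$. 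That disposes of the first and last assertions.

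It remains to prove $a_{X,0}(b_{\mathrm{LR}})=0$. As in the proofs of \Cref{thm:k3-residual-profile,thm:surface-exclusion}, I would use $\kappa_{\nabla}=-L_{\mathsf{Eu}}$. At the large-radius central point the quantum product specialises to the cup product, via the logarithmic formal model and the connection formula of~\cite[\S3.5]{KKPYY}. On $t^0=0$ the Euler element has the form $\mathsf{Eu}=c_1(T_X)+\sum_{\deg\ge 4}(\text{weighted coordinates})$, and $c_1(T_X)=0$ because $X$ is Calabi--Yau. At the central point all coordinates of nonzero Euler weight vanish, since only $H^2$ coordinates have weight zero. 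Hence $\mathsf{Eu}(b_{\mathrm{LR}})=0$, or at worst $\mathsf{Eu}(b_{\mathrm{LR}})$ is a class of positive cohomological degree, in which case cup product with it is nilpotent. Either way $\kappa_{\nabla}(b_{\mathrm{LR}})$ is nilpotent, its trace vanishes, and the trace formula gives $a_{X,0}(b_{\mathrm{LR}})=0$.

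Restricting $a_X=p^*a_{X,0}+t^0$ to $\mathbb A^1_{t^0}\times\{b_{\mathrm{LR}}\}$ then gives exactly $t^0$, since $p$ maps that line to $b_{\mathrm{LR}}$.

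I expect the main obstacle to be justifying the large-radius specialisation: that the single-branch component extends to the boundary point, and that there the residual endomorphism is cup product with $-\mathsf{Eu}(b_{\mathrm{LR}})$. I would handle this the same way the proof of \Cref{thm:surface-exclusion} does, using the logarithmic formal model. Nilpotency then follows from degree reasons alone, exactly as in that proof, and this does not require the exact form of $\mathsf{Eu}$ in higher-degree coordinates.
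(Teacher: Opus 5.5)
Your proposal is correct and follows essentially the paper's route. You read off the translation and pullback statements from \Cref{thm:unit-direction-residual-descent}, and you obtain $a_{X,0}(b_{\mathrm{LR}})=0$ from $\kappa_{\nabla}=-L_{\mathsf{Eu}}$ with $\mathsf{Eu}(b_{\mathrm{LR}})=c_1(T_X)=0$. Your trace-formula and positive-degree-nilpotency fallback is only a slightly more robust phrasing of the paper's remark that the unique branch is the negative of the unique eigenvalue.
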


\begin{proof}
The translation formula and the residual pullback statement are \eqref{eq:unit-direction-branch-translation} and \eqref{eq:unit-direction-residual-pullback}.  At the large-radius central point, the Euler element is $c_1(T_X)$~\cite[\S3.5]{KKPYY}; it vanishes when $K_X\simeq\cO_X$.  Since the unique branch is the negative of the unique eigenvalue of the residual endomorphism, this gives $a_{X,0}(b_{\mathrm{LR}})=0$.  Restricting the translation formula to the unit line through $b_{\mathrm{LR}}$ yields $a_X=t^0$ there.
\end{proof}

\subsection{A projective illustration: the Pfaffian--Grassmannian pair}

The determinantal strata record the nilpotent part of the vertical connection.  At the formal large-radius boundary, the horizontal logarithmic residues retain a complementary piece of information: the classical divisor algebra.  The comparison requires only the marking specified below.

\begin{definition}\label{def:large-radius-residual-germ}
Let $X$ be a smooth projective variety whose maximal $A$-model $F$-bundle has a single spectral branch.  Its \emph{rationally marked large-radius residual germ} is the logarithmic formal germ of the branch-residual $F$-bundle along the large-radius boundary $q^\beta=0$, centred at $t=0$, together with
\begin{enumerate}[(i)]
 \item the rational N\'eron--Severi space $\operatorname{NS}(X)_{\mathbb Q}$, its ample cone and the corresponding logarithmic divisor directions;
 \item the unit section $\mathbf 1$;
 \item the Poincar\'e pairing; and
 \item the rational $\mathbb Z/2$-graded Hodge realisation retained by the chosen atomic enhancement.
\end{enumerate}
Isomorphisms of such germs are required to preserve these markings and the logarithmic divisor directions.
\end{definition}

This is a marked germ attached to a residual atom, rather than a new unmarked atomic equivalence relation or an integral $\Gamma$-corrected structure.  It retains only the rational divisor lattice already used to construct the logarithmic $A$-model base in~\cite[\S3.5]{KKPYY}.

\begin{proposition}\label{prop:large-radius-recovers-cubic}
Let $X$ be a smooth projective Calabi--Yau threefold.  The rationally marked large-radius residual germ determines the cubic form
\[
 \mathcal C_X(D_1,D_2,D_3)
 \coloneqq\int_X D_1\smile D_2\smile D_3,
 \qquad D_i\in\operatorname{NS}(X)_{\mathbb Q}.
\]
Consequently, an isomorphism of rationally marked large-radius residual germs induces an isomorphism of the corresponding rational cubic spaces which preserves the ample cones.
\end{proposition}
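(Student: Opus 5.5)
The plan is to read the cubic directly off the horizontal logarithmic residues of the residual germ at the large-radius boundary, using only the markings of \Cref{def:large-radius-residual-germ}.

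\textbf{Step 1: logarithmic residues in the divisor directions.} Write the small quantum connection near $q^\beta=0$ using the logarithmic coordinates attached to a rational basis $D_1,\ldots,D_\rho$ of $\operatorname{NS}(X)_{\mathbb Q}$. By the defining formulae recalled in the proof of \Cref{thm:unit-direction-residual-descent} (from~\cite[\S3.5]{KKPYY}), the horizontal component along the divisor direction $D$ is $u^{-1}(D\star_{q,t}-)$. Its logarithmic residue along the boundary, evaluated at the centre $t=0$, is $u^{-1}(D\smile-)$, because every $q^\beta$-correction with $\beta\neq0$ vanishes on the boundary.

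Scalar residualisation only subtracts $d(a/u)I$. By \Cref{cor:calabi-yau-unit-branch}, on the slice through $b_{\mathrm{LR}}$ the branch is $a=t^0$ together with a function whose divisor-direction logarithmic derivative at the boundary is a scalar. I will check that this scalar is zero. At the boundary the spectrum of $D\smile$ is nilpotent, so the trace of $D\smile$ vanishes, and hence so does the derivative of $a=-n^{-1}\operatorname{tr}\kappa$ in those directions. The residues therefore survive unchanged in the residual germ. So the marked germ determines, intrinsically and up to marked isomorphism, the linear map
\[
\operatorname{NS}(X)_{\mathbb Q}\to\End H^\bullet(X),\qquad D\mapsto D\smile(-).
\]

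\textbf{Step 2: recovering the cubic.} Apply these operators successively to the marked unit section $\mathbf 1$ and pair with $\mathbf 1$ using the marked Poincar\'e pairing. This gives
\[
\bigl(D_1\smile D_2\smile D_3\smile\mathbf 1,\mathbf 1\bigr)=\int_X D_1D_2D_3=\mathcal C_X(D_1,D_2,D_3).
\]

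\textbf{Step 3: isomorphisms.} A marked isomorphism of germs preserves the divisor directions, the unit and the pairing. It therefore intertwines the residue operators, and so it induces a linear isomorphism of Néron--Severi spaces carrying $\mathcal C_X$ to $\mathcal C_{X'}$. Since the ample cones are part of the marking, the induced isomorphism maps one ample cone onto the other.

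\textbf{Main obstacle.} The delicate point is Step 1, specifically the identification of the residue as $D\smile$. This requires confirming that the logarithmic extension and the residualised connection agree to leading order at the boundary, with no branch contribution in the divisor directions and no $\mathsf{Gr}$ contamination. The latter is vertical and only affects the $u$-component. I will handle it via the trace argument above and the explicit formula from~\cite{KKPYY}.
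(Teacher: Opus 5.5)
Your proposal is correct and follows the paper's proof. Like the paper, you identify the $u^{-1}$ divisor residue at $q^\beta=0$, $t=0$ with $L_D=D\smile(-)$, check that residualisation leaves it unchanged, recover $\mathcal C_X$ as $\langle L_{D_1}L_{D_2}L_{D_3}\mathbf 1,\mathbf 1\rangle$, and transport everything along marked isomorphisms.

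The only deviation is in how you discard the branch correction $-\partial_D^{\log}(a)I$. The paper simply notes that $\mathsf{Eu}|_{t=0}=c_1(T_X)=0$, so the branch vanishes identically on the Novikov slice. Your step from $\operatorname{tr}(D\smile)=0$ to $\partial_D a=0$ silently uses the flatness identity $\xi(\operatorname{tr}\kappa_\nabla)=-\operatorname{tr}(\xi\star)$, which is the trace of the $u^1$-component of $[\nabla_{u^2\partial_u},\nabla_{u\xi}]=u\nabla_{u\xi}$. You should state that identity; with it, your argument shows more generally that the residualised divisor operators are the traceless parts of $D\star$.
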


\begin{proof}
For $D\in\operatorname{NS}(X)_{\mathbb Q}$, let $\partial_D^{\log}$ denote the logarithmic vector field on the Novikov torus determined by~$D$.  The defining formula for the $A$-model connection is
\[
 \nabla_{\partial_D^{\log}}
 =\partial_D^{\log}+u^{-1}L_D^{\star},
\]
where $L_D^{\star}$ is quantum multiplication by~$D$~\cite[\S3.5]{KKPYY}.  At the formal boundary $q^\beta=0$, $t=0$, the quantum product specialises to the cup product, so the coefficient of $u^{-1}$ becomes
\[
 L_D(x)=D\smile x.
\]
On the Novikov slice $t=0$, the Euler element of the $A$-model connection is $c_1(T_X)$~\cite[\S3.5]{KKPYY}; it therefore vanishes identically when $X$ is Calabi--Yau.  The unique scalar branch and its logarithmic differentials consequently vanish on this slice.  Scalar residualisation does not change the divisor residues at the formal large-radius boundary, and the marked residual germ recovers every operator~$L_D$ on rational cohomology.  The unit and the Poincar\'e pairing then recover
\[
 \mathcal C_X(D_1,D_2,D_3)
 =\bigl\langle L_{D_1}L_{D_2}L_{D_3}\mathbf 1,\mathbf 1\bigr\rangle.
\]
All constructions are preserved by an isomorphism of marked germs, proving the final assertion.
\end{proof}

\begin{proposition}\label{prop:picard-one-obstruction}
Let $X$ and $Y$ be smooth projective Calabi--Yau threefolds of Picard rank one whose maximal $A$-model $F$-bundles have a single spectral branch.  Let $H_X$ and $H_Y$ be the primitive ample generators of their N\'eron--Severi groups.  If their rationally marked large-radius residual germs are isomorphic, then
\[
 (H_X^3/ H_Y^3)\in(\mathbb Q_{>0}^{\times})^3.
\]
Equivalently, there exists $c\in\mathbb Q_{>0}$ such that $H_X^3=c^3H_Y^3$.
\end{proposition}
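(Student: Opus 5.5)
The plan is to reduce the statement to \Cref{prop:large-radius-recovers-cubic}. Suppose $\Psi$ is an isomorphism of rationally marked large-radius residual germs from $X$ to $Y$. By \Cref{prop:large-radius-recovers-cubic}, $\Psi$ induces a $\mathbb Q$-linear isomorphism
\[
 \phi:\operatorname{NS}(X)_{\mathbb Q}\longrightarrow\operatorname{NS}(Y)_{\mathbb Q}
\]
which carries $\mathcal C_X$ to $\mathcal C_Y$ and the ample cone of $X$ onto that of $Y$. Here $\phi$ is the isomorphism of the marked divisor directions in \Cref{def:large-radius-residual-germ}(i). It is compatible with the recovered operators $L_D$, since the marking preserves the logarithmic divisor directions, the unit and the Poincar\'e pairing. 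Because both Picard ranks are one, each of these spaces is the line spanned by the primitive ample generator. Hence $\phi(H_X)=cH_Y$ for a unique $c\in\mathbb Q^\times$.

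Next, I would use the ample-cone condition to fix the sign. The ample cone of $X$ is $\mathbb Q_{>0}H_X$ and that of $Y$ is $\mathbb Q_{>0}H_Y$. Since $\phi$ maps one onto the other, $c>0$.

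Evaluating the cubic form then gives
\[
 H_X^3=\mathcal C_X(H_X,H_X,H_X)=\mathcal C_Y(cH_Y,cH_Y,cH_Y)=c^3H_Y^3 .
\]
The degrees $H_X^3$ and $H_Y^3$ are positive integers, because the generators are ample. So the quotient $H_X^3/H_Y^3=c^3$ lies in $(\mathbb Q_{>0}^\times)^3$, which is the claimed equivalence.

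The only step needing care is that the isomorphism of germs yields a genuinely $\mathbb Q$-linear map $\phi$ intertwining the cubic forms, rather than merely an abstract identification of some operators. I would justify this directly from the marking in \Cref{def:large-radius-residual-germ}. There $\operatorname{NS}_{\mathbb Q}$ is part of the data and isomorphisms preserve the logarithmic divisor directions. Together with preservation of the unit and pairing used in the proof of \Cref{prop:large-radius-recovers-cubic}, this makes $\mathcal C$ transport along $\phi$.

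Note that $c$ need not be an integer. The marking is only rational, so $\phi$ need not preserve the integral lattices. This is why the invariant is a class modulo cubes in $\mathbb Q_{>0}^\times$ rather than the degree itself.
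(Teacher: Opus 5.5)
Your proposal is correct and matches the paper's proof: use \Cref{prop:large-radius-recovers-cubic} to obtain an isomorphism of the one-dimensional spaces $\operatorname{NS}_{\mathbb Q}$ that preserves the cubic forms and ample cones, deduce $H_X\mapsto cH_Y$ with $c\in\mathbb Q_{>0}$, and evaluate the cubic to get $H_X^3=c^3H_Y^3$. Your additional remarks, fixing the sign of $c$ via the ample cone and noting that $c$ need not be integral, are accurate elaborations of steps the paper states in a single line.
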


\begin{proof}
By \Cref{prop:large-radius-recovers-cubic}, an isomorphism of marked residual germs induces an isomorphism
\[
 \operatorname{NS}(X)_{\mathbb Q}\longrightarrow
 \operatorname{NS}(Y)_{\mathbb Q}
\]
that preserves the cubic forms and the ample cones.  Both vector spaces are one-dimensional, so the primitive ample generator $H_X$ must be sent to $cH_Y$ for a uniquely determined $c\in\mathbb Q_{>0}$.  Preservation of the cubic gives
\[
 H_X^3=(cH_Y)^3=c^3H_Y^3.
\]
\end{proof}

\begin{corollary}\label{cor:degree-cube-class}
Let $X$ be a Picard-rank-one Calabi--Yau threefold in the scope of \Cref{prop:picard-one-obstruction}.  For any non-zero rational ample class $D\in\operatorname{NS}(X)_{\mathbb Q}$, the class
\[
 \delta_{\mathrm{res}}(X)
 \coloneqq[D^3]
 \in
 \mathbb Q_{>0}^{\times}/(\mathbb Q_{>0}^{\times})^3
\]
is independent of the choice of~$D$ and is an invariant of the rationally marked large-radius residual germ.
\end{corollary}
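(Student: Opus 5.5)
The plan is to deduce the corollary from \Cref{prop:picard-one-obstruction} by handling the choice of $D$ and the invariance separately. Since $X$ has Picard rank one, $\operatorname{NS}(X)_{\mathbb Q}$ is spanned by the primitive ample generator $H_X$. Any non-zero rational ample class therefore has the form $D=cH_X$. Ampleness forces $c\in\mathbb Q_{>0}$, because the ample cone of a one-dimensional space is the positive ray through $H_X$. We also need $H_X^3\in\mathbb Q_{>0}$, which holds since the top self-intersection of an ample divisor is positive. These two facts place $[D^3]$ in $\mathbb Q_{>0}^\times$ in the first place.

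\emph{Independence of $D$.} With $D=cH_X$ we have $D^3=c^3H_X^3$. Hence $[D^3]=[H_X^3]$ in $\mathbb Q_{>0}^{\times}/(\mathbb Q_{>0}^{\times})^3$, so the class does not depend on $D$. We may therefore write $\delta_{\mathrm{res}}(X)=[H_X^3]$.

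\emph{Invariance.} Let $Y$ be a second threefold in the same scope whose rationally marked large-radius residual germ is isomorphic to that of $X$. \Cref{prop:picard-one-obstruction} gives $H_X^3=c^3H_Y^3$ for some $c\in\mathbb Q_{>0}$, and hence $\delta_{\mathrm{res}}(X)=\delta_{\mathrm{res}}(Y)$.

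One can put the same point more intrinsically, which shows the class is read off from the germ itself. \Cref{prop:large-radius-recovers-cubic} recovers the cubic form $\mathcal C_X$ and the ample cone from the marked germ. Then $\delta_{\mathrm{res}}(X)=[\mathcal C_X(D,D,D)]$ for any $D$ in the recovered ample cone, and independence of $D$ has already been shown.

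The only step needing care is checking that $D^3$ lands in $\mathbb Q_{>0}$ and that the isomorphism supplied by the proposition preserves ample cones. The positivity of $D^3$ is the ampleness argument above. The preservation of ample cones is exactly what \Cref{prop:large-radius-recovers-cubic} records, so no genuine obstacle is expected.
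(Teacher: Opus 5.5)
Your proposal is correct and matches the paper's argument: independence of $D$ comes from rescaling by a positive rational number, and invariance comes from the fact that an isomorphism of marked germs preserves the cubic form and the ample cone. The paper cites \Cref{prop:large-radius-recovers-cubic} directly for the second step, which is your intrinsic paragraph; routing through \Cref{prop:picard-one-obstruction} is equivalent, and your explicit check that $D^3>0$ is a small detail the paper leaves implicit.
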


\begin{proof}
Any two non-zero rational ample classes are related by $D'=cD$ for a unique $c\in\mathbb Q_{>0}$, and hence $(D')^3=c^3D^3$.  Thus the displayed class is well-defined.  An isomorphism of marked residual germs preserves the rational cubic space and the ample cone by \Cref{prop:large-radius-recovers-cubic}; it therefore preserves this class modulo rational cubes.
\end{proof}

Let $V$ be a seven-dimensional complex vector space.  For a general seven-dimensional subspace $W\subset\wedge^2V^*$, let
\[
 X_{\mathrm{Gr}}
 \coloneqq\operatorname{Gr}(2,V)\cap\mathbb P(\operatorname{Ann}(W))
\]
be the codimension-seven Grassmannian linear section, and let
\[
 X_{\mathrm{Pf}}
 \coloneqq\operatorname{Pf}(V)\cap\mathbb P(W)
\]
be the dual Pfaffian linear section.  Borisov and C\u{a}ld\u{a}raru prove that both are smooth Calabi--Yau threefolds with
$
 h^{1,1}=1
$, $
 h^{2,1}=50,$
and construct an equivalence
\[
 D^b(X_{\mathrm{Gr}})\simeq D^b(X_{\mathrm{Pf}})
\]
for dual general sections~\cite{BorisovCaldararu}.  The cohomological Fourier--Mukai transform identifies the relevant odd rational Hodge structures; together with the common Tate-type even contribution, this yields isomorphic $\mathbb Z/2$-graded rational Hodge realisations in the sense used here~\cite{CaldararuCY}.  Nevertheless, if $H_{\mathrm{Gr}}$ and $H_{\mathrm{Pf}}$ denote the primitive ample generators, then
\begin{equation}\label{eq:pfaffian-grassmannian-degrees}
 H_{\mathrm{Gr}}^3=42,
 \qquad
 H_{\mathrm{Pf}}^3=14
\end{equation}
by R{\o}dland's calculation~\cite{Rodland,BorisovCaldararu}.

\begin{theorem}\label{thm:pfaffian-grassmannian-residual-separation}
Along the unit lines through their marked large-radius points, the unit-extended atomic $F$-bundles of $X_{\mathrm{Gr}}$ and $X_{\mathrm{Pf}}$ have the same non-constant scalar branch $a=t^0$.  After scalar residualisation, their descended rationally marked large-radius residual germs are not isomorphic, although their $\mathbb Z/2$-graded rational Hodge realisations are isomorphic.
\end{theorem}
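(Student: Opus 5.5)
The theorem has three parts: the common branch, the non-isomorphism of the residual germs, and the isomorphism of Hodge realisations. I will prove them in that order, relying entirely on results already established.

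\emph{Common branch.} Both $X_{\mathrm{Gr}}$ and $X_{\mathrm{Pf}}$ are smooth Calabi--Yau threefolds, so $c_1(T_X)=0$. At the large-radius point the Euler element is therefore zero, and the residual endomorphism is nilpotent there. To apply \Cref{cor:calabi-yau-unit-branch}, I need the maximal $A$-model $F$-bundle to have a single spectral branch along the relevant germ. I will take this from the nef-canonical one-atom statement \cite[Lemma~5.24]{KKPYY}, used exactly as in the proof of \Cref{thm:surface-exclusion}: $K_X\simeq\cO_X$ is nef, so there is a unique atom. \Cref{cor:calabi-yau-unit-branch} then shows that the branch restricted to the unit line through $b_{\mathrm{LR}}$ equals $t^0$ for both varieties. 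This function is non-constant, and by \Cref{thm:unit-direction-residual-descent} the residual factor descends to $t^0=0$.

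\emph{Non-isomorphism of the residual germs.} Suppose the descended rationally marked large-radius residual germs were isomorphic. Both varieties have $h^{1,1}=1$, so they fall within the scope of \Cref{prop:picard-one-obstruction}. That proposition would give $42/14=3\in(\mathbb Q_{>0}^{\times})^3$ from \eqref{eq:pfaffian-grassmannian-degrees}. This is false: if $3=(p/q)^3$ in lowest terms, then $3q^3=p^3$, so $3\mid p$, hence $27\mid 3q^3$ and $3\mid q$, a contradiction. Equivalently, $\delta_{\mathrm{res}}(X_{\mathrm{Gr}})\neq\delta_{\mathrm{res}}(X_{\mathrm{Pf}})$ in the sense of \Cref{cor:degree-cube-class}.

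\emph{Isomorphism of Hodge realisations.} This part is quoted from the Borisov--C\u{a}ld\u{a}raru derived equivalence, using the cohomological Fourier--Mukai transform on the odd part together with the matching even Tate-type pieces, as recorded before the statement~\cite{BorisovCaldararu,CaldararuCY}.

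\emph{Main obstacle.} The delicate point is justifying the single-branch hypothesis in the exact sense required by \Cref{prop:picard-one-obstruction}. Concretely, the unique atom must be the whole maximal $A$-model germ at large radius, so that the marked residual germ is defined and \Cref{prop:large-radius-recovers-cubic} applies. I will argue this by first using nilpotency of $\kappa$ at $b_{\mathrm{LR}}$. On the local formal germ the characteristic polynomial is then $(T+a)^n$ with $a(b_{\mathrm{LR}})=0$, so no spectral splitting occurs near that point. The argument therefore does not depend on global spectral information.
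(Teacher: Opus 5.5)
Your first three paragraphs reproduce the paper's proof. You obtain a single atom from \cite[Lemma~5.24]{KKPYY} because $K_X\simeq\cO_X$. You get the common branch $t^0$ and the descent from \Cref{cor:calabi-yau-unit-branch}. You rule out an isomorphism because $42/14=3$ is not a rational cube, via \Cref{prop:picard-one-obstruction}; the paper invokes the equivalent \Cref{cor:degree-cube-class}. Finally, you quote the Hodge statement from \cite{BorisovCaldararu,CaldararuCY}, as the paper does. With the citation of \cite[Lemma~5.24]{KKPYY} for the single-branch hypothesis, the argument is complete and matches the paper's.

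The local argument in your final paragraph, however, is invalid, and you should not rely on it for the single-branch hypothesis.
\begin{itemize}
\item Nilpotency of $\kappa$ at $b_{\mathrm{LR}}$ only says that the lower coefficients of $\det(TI-\kappa)$ vanish at the centre of the germ. It does not force $\det(TI-\kappa)=(T+a)^n$ on the germ. For example, $\kappa_s=\begin{pmatrix}0&1\\ s&0\end{pmatrix}$ is nilpotent at $s=0$, but its characteristic polynomial is $T^2-s$, which has two ramified branches.
\item What nilpotency at one point gives is only a single summand in the spectral decomposition centred there. That is weaker than membership in $\Fbun_{\mathrm{1br}}$, and the latter is what you need for $\Res_a$, and hence the marked germ of \Cref{def:large-radius-residual-germ}, to be defined.
\item If you want a genuinely local proof, you must show that $L_{\mathsf{Eu}}-t^0I$ is nilpotent on the whole germ. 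For these Calabi--Yau threefolds (where $H^1(X)=0$), $\mathsf{Eu}-t^0\mathbf 1$ involves only classes of degree at least three, since divisor directions have Euler weight zero and $c_1(T_X)=0$.
\item The virtual-dimension count together with the fundamental class axiom shows that only divisor insertions contribute for non-zero curve classes. So quantum multiplication by such classes is the classical cup product, which raises degree and is therefore nilpotent. This gives $\det(TI-\kappa)=(T+t^0)^n$ on the germ.
\end{itemize}
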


\begin{proof}
Because the canonical classes are trivial, each maximal $A$-model $F$-bundle has a single atomic factor and hence a single branch-residual factor~\cite[Lemma~5.24]{KKPYY}.  By \Cref{cor:calabi-yau-unit-branch}, the restrictions of the two branches to the unit lines through the marked large-radius points are both $t^0$, and residualisation identifies their zero-branch factors with pullbacks from the slice $t^0=0$. 
If the descended rationally marked residual germs were isomorphic, \Cref{cor:degree-cube-class} would imply
\[
 \frac{H_{\mathrm{Gr}}^3}{H_{\mathrm{Pf}}^3}
 =\frac{42}{14}=3
 \in(\mathbb Q_{>0}^{\times})^3.
\]
Thus there would be $c\in\mathbb Q_{>0}$ with $c^3=3$.  Write $c=p/q$ in lowest terms with positive integers $p,q$.  Then $p^3=3q^3$, so $3\mid p$; writing $p=3p_1$ gives $9p_1^3=q^3$, hence $3\mid q$, contradicting coprimality.  Therefore $3$ is not a rational cube, and the marked residual germs are not isomorphic.
\end{proof}

\begin{corollary}\label{cor:residual-finer-than-hodge}
The forgetful assignment from unit-extended, branch-labelled, rationally marked residual germs to their $\mathbb Z/2$-graded rational Hodge realisations is not injective.  This already occurs for derived-equivalent Calabi--Yau threefolds of Picard rank one whose scalar branches have the same universal unit contribution and restrict to $t^0$ along the marked large-radius unit lines.
\end{corollary}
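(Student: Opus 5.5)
The corollary follows directly from \Cref{thm:pfaffian-grassmannian-residual-separation}; the task is to exhibit two inputs with the same Hodge realisation but non-isomorphic residual germs. I take $X_{\mathrm{Gr}}$ and $X_{\mathrm{Pf}}$ for dual general sections. The Borisov--C\u{a}ld\u{a}raru equivalence and the cited comparison of cohomological Fourier--Mukai transforms give isomorphic $\mathbb Z/2$-graded rational Hodge realisations. So the images of the two unit-extended, branch-labelled, rationally marked residual germs under the forgetful assignment coincide, up to isomorphism in the target.

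I then check that both germs are genuinely objects in the domain with the stated properties. Both varieties are Calabi--Yau threefolds with $h^{1,1}=1$, so they have Picard rank one. By \cite[Lemma~5.24]{KKPYY}, applied to the trivial canonical class, each maximal $A$-model $F$-bundle has a single spectral branch, which is the hypothesis needed for \Cref{def:large-radius-residual-germ}. \Cref{thm:unit-direction-residual-descent} shows that each branch has the universal unit contribution, namely $a_X=p^*a_{X,0}+t^0$. \Cref{cor:calabi-yau-unit-branch} shows that this branch restricts to $t^0$ along the unit line through the marked large-radius point. Thus the branch labels agree on those lines, and residualisation yields descended germs.

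Finally, \Cref{thm:pfaffian-grassmannian-residual-separation} states that the descended marked germs are not isomorphic. That argument uses \Cref{cor:degree-cube-class} together with the degrees $42$ and $14$, whose ratio $3$ is not a rational cube. Two non-isomorphic objects with isomorphic images therefore exist, so the forgetful assignment is not injective.

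The main point needing care is the interpretation of injectivity on isomorphism classes. I need the forgetful assignment to be well defined, meaning that isomorphisms of marked germs induce isomorphisms of realisations. This holds because the Hodge realisation is part of the marking in \Cref{def:large-radius-residual-germ}(iv), and marked isomorphisms are required to preserve it. No further computation is needed.
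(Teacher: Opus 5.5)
Your proposal is correct and takes essentially the same route as the paper. The paper also deduces the corollary directly from \Cref{thm:pfaffian-grassmannian-residual-separation} for the Pfaffian--Grassmannian pair, using isomorphic $\mathbb Z/2$-graded rational Hodge realisations, non-isomorphic marked residual germs, and both branches restricting to $t^0$ on the marked unit lines. Your extra check that the forgetful assignment is well defined on isomorphism classes is a harmless clarification that the paper leaves implicit; it holds because marked isomorphisms must preserve the Hodge realisation in \Cref{def:large-radius-residual-germ}(iv).
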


\begin{proof}
The Pfaffian--Grassmannian pair in \Cref{thm:pfaffian-grassmannian-residual-separation} has isomorphic $\mathbb Z/2$-graded rational Hodge realisations but non-isomorphic rationally marked residual germs, while both branches restrict to $t^0$ along the marked large-radius unit lines.  Hence the forgetful assignment is not injective.
\end{proof}

This gives a projective example in which scalar residualisation acts non-trivially as a descent operation.  The geometric distinction used in the proof is not new: the degrees in~\eqref{eq:pfaffian-grassmannian-degrees}, the derived equivalence and the non-birationality of the pair are already contained in~\cite{Rodland,BorisovCaldararu}.  What is new here is the structural interpretation.  Before residualisation, the unit-extended connection contains the universal non-constant scalar contribution $t^0$ and does not descend to the slice $t^0=0$.  The subtraction of $d(t^0/u)$ removes this universal scalar direction and exposes the zero-branch factor on which the divisor cubic is recovered.  For the two varieties, the branches have the same restriction $t^0$ along the marked large-radius unit lines; their separation occurs in the descended residual factors rather than in that universal unit contribution.  No claim is made that the unmarked residual equivalence relation contains more information than the full atomic $F$-bundle of~\cite{KKPYY}; rather, scalar residualisation supplies the decomposition used to isolate and compare the projective part of that information.

\subsection{Residual lattices beyond monodromy}

On the complex regular-singular locus, Deligne's Riemann--Hilbert correspondence~\cite{Deligne} gives a forgetful morphism
\[
 \RH:\mathscr M_{\mathrm{rs},\C}^{\mathrm{unfr}}
 \longrightarrow\LocSys
\]
which remembers the punctured local system and forgets the prescribed lattice.

\begin{theorem}\label{thm:not-factor-monodromy}
The isomorphism class of a prescribed residual lattice, and even its lattice Jordan profile, is not determined by its image under~$\RH$.  There exists a one-parameter family in $\mathscr M_{\mathrm{rs},\C}^{\mathrm{unfr}}$, admitting a flat total connection, whose local system is constant and trivial while its lattice Jordan profile is non-constant.
\end{theorem}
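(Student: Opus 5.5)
The plan is to use the isomonodromic family of \Cref{thm:isomonodromic-lattice-family} in its simplest scalar form, \Cref{cor:scalar-isomonodromic-family}, with $R=0$. Take $S=\mathbb A^1_t$ (or a disc), $V=\C^2$, $\mathfrak a=\C E_{12}$ and $F(t)=tE_{12}$. This gives
\[
 \nabla=d+\frac{E_{12}}{u}\,dt-\frac{tE_{12}}{u^2}\,du ,
\]
the rank-two example already displayed after \Cref{cor:scalar-isomonodromic-family}. By \Cref{thm:isomonodromic-lattice-family}, the total connection is flat on $S\times\mathbb D_u$. The underlying vertical family $(\cO^2,\nabla_{/S})$ has pole order at most two, so it is an object of $\mathscr M_{\leq2,\C}^{\mathrm{unfr}}(S)$.

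The first step is to check regular singularity fibrewise, so that the family lies in $\mathscr M_{\mathrm{rs},\C}^{\mathrm{unfr}}$. For each $t$, the meromorphic gauge $G=\exp(tE_{12}/u)=I+tE_{12}/u$ identifies the fibre with the trivial connection $d$. Regular singularity is a property of the meromorphic connection, so each fibre is regular singular. Alternatively, one can invoke \Cref{lem:analytic-regular}, since the fibre lies in $\cA_{\boldsymbol\mu}$ with $\boldsymbol\mu=0$.

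The second step identifies the image under $\RH$. The same gauge $G$ is single-valued on the punctured disc, so every fibre has the trivial local system with identity monodromy. Because the total connection is flat and $G$ trivialises it globally on $S\times\mathbb D_u^\times$, the family of local systems is the constant trivial one. Hence the composite $S\to\LocSys$ is constant.

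The third step computes the lattice Jordan profile from \Cref{def:leading-residual-endomorphism}. We have $\kappa^{\mathrm{res}}=u^2\nabla_{\partial_u}|_{u=0}=-tE_{12}$. Its rank is $1$ for $t\neq0$ and $0$ at $t=0$, so $\JType_{\mathrm{lat}}$ is non-constant. Since an isomorphism in $\mathscr M^{\mathrm{unfr}}$ conjugates $\kappa^{\mathrm{res}}$, the fibres at $t=0$ and $t=1$ are non-isomorphic, even though their $\RH$ images agree.

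The one point needing care is that ``not determined by $\RH$'' must refer to honest isomorphism classes in the unframed stack. The invariance of $\kappa^{\mathrm{res}}$ under holomorphic lattice isomorphisms settles this. No further obstacle is expected, because all ingredients are already established in \Cref{thm:isomonodromic-lattice-family} and \Cref{thm:residual-stratification}.
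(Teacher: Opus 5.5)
Your proposal is correct and is essentially the paper's proof. The paper uses an arbitrary $N\neq0$ with $N^2=0$ where you take $N=E_{12}$, and otherwise follows the same steps: it realises the family as the case $R=0$, $F=tN$ of \Cref{thm:isomonodromic-lattice-family}, trivialises each fibre by the single-valued gauge $I+tN/u$, and reads off $\kappa^{\mathrm{res}}_t=-tN$, whose rank jumps at $t=0$.
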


\begin{proof}
Let $N\neq0$ satisfy $N^2=0$, and let $t$ be the coordinate on the parameter line.  Consider the total connection
\[
 \nabla=d+\frac{N}{u}\,dt-\frac{tN}{u^2}\,du.
\]
Its vertical part defines a family in $\mathscr M_{\mathrm{rs},\C}^{\mathrm{unfr}}$.  The total connection is the family of \Cref{thm:isomonodromic-lattice-family} with $R=0$ and $F=tN$, hence it is flat.  For fixed~$t$, put $G_t=\exp(tN/u)=I+tN/u$.  Since $N^2=0$,
\[
 G_t^{-1}dG_t=-\frac{tN}{u^2}\,du,
\]
and therefore $\nabla_t=G_t^{-1}dG_t$.  Every punctured fibre has trivial monodromy.  The intrinsic lattice is fixed, however, and
\[
 \kappa_t^{\mathrm{res}}
 =\left.u^2(\nabla_t)_{\partial_u}\right|_{u=0}
 =-tN.
\]
Thus its rank is zero at $t=0$ and equals $\operatorname{rk}N$ for $t\neq0$.  Each fibre is regular singular because it is meromorphically equivalent to the trivial logarithmic connection.  Thus fibres with the same local system may have different residual lattice profiles.
\end{proof}

The residual moduli target therefore retains lattice information forgotten by the Betti local system.  Together with \Cref{thm:unit-direction-residual-descent,thm:pfaffian-grassmannian-residual-separation}, this gives two complementary illustrations of the residual presentation.  In the first, scalar residualisation removes the variable universal scalar branch and exposes a descended factor not determined by the Hodge realisation.  In the second, a flat isomonodromic family has a constant local system but a varying prescribed lattice.

\subsection{Rigidified charts and intrinsic residual atoms}

Every geometric atomic factor has an intrinsic branch-residual presentation.  For a complex analytic regular-singular residual germ, choosing an invariant full flag, an ordered frame, a triangular reduction and a locally constant ordered residue vector yields, by \Cref{thm:relative-normal-form}, a classifying map
\[
 S\longrightarrow\mathfrak n\oplus\mathfrak n.
\]
The descent results of \Cref{sec:forgetting-frame} give the transformation laws under re-framing, identify the flagged quotient, and determine the further fibre over the unflagged constituent.  Thus the normal form for arbitrary spectra provides coordinates on rigidified regular-singular strata of the global residual target, while the residual atom itself is represented only after the corresponding frame-and-flag data have been forgotten.  The definition of residual atoms and residual chemical formulae requires none of these rigidifications and does not require regular singularity.
The unresolved global step is the descent problem formulated in
\Cref{ques:global-wall-gluing}.  It asks for an enhanced local model that
retains comparison-lattice data, recovers the fixed-stratum quotient and the
elementary wall charts, and is independent of the auxiliary comparison
lattice.  The present paper does not construct this stack, and none of the
definitions or results on residual atoms depends on its existence.

\end{document}